\documentclass[
	fontsize=10pt, 
	twoside=false, 
	a4paper,
]{scrartcl}
 
\usepackage[bookmarkdepth=subsection,font=test]{templ}

\usepackage{tikz}
\usetikzlibrary{patterns}

\usepackage{bm}
\usepackage{mathrsfs}

\usepackage{tikz}
\usetikzlibrary{arrows}
\usetikzlibrary{positioning}
\usetikzlibrary{shapes.geometric}
\usepackage{pgf}
\usetikzlibrary{patterns}
\usetikzlibrary{patterns.meta}
\usepgflibrary{plothandlers} %
\usepgflibrary[plothandlers] %
\usetikzlibrary{plothandlers} %
\usetikzlibrary[plothandlers] %
\usetikzlibrary{topaths} %
\usetikzlibrary[topaths] %
\usetikzlibrary{decorations.markings}

\makeatletter
\newcommand{\leqnomode}{\tagsleft@true\let\veqno\@@leqno}
\newcommand{\reqnomode}{\tagsleft@false\let\veqno\@@eqno}
\makeatother
\newcommand{\eps}{\varepsilon}
\newcommand{\sem}{\setminus\{0\}}
\newcommand{\RR}{{\mathbb{R}}}

\newcommand{\CC}{{\mathbb{C}}}

\newcommand{\dd}{\mathrm{d}}

\newcommand{\norm}[1]{\left\lVert#1\right\rVert}
\newcommand{\abs}[1]{\left\lvert#1\right\rvert}
\newcommand{\del}{\partial}
\newcommand{\supp}{\mathrm{supp}}

\newcommand{\td}[1]{\tilde{#1}}

\newcommand{\id}{\mathrm{id}}
\newcommand{\wf}{\mathrm{WF}_{\mathrm{a}}}

\newcommand{\ud}[1]{\underline{#1}}

\newcommand{\bdry}{(h\td{\del}_\nu+(\td{\del}_\nu\phi))}

\usepackage{xparse}

\NewDocumentCommand{\op}{O{h}m}{\mathrm{Op}_{#1}\left(#2\right)}
\NewDocumentCommand{\tbdry}{O{}}{\tau_{#1}\bdry}

\newcommand{\ccdot}{\,\cdot\,}

\begin{document} 

\title{Density of Products of Solutions and the Linearized Calderón Problem in an Analytic Background}

\author{Leonard Busch\and Tony Liimatainen \and Gunther Uhlmann}

\date{\today}

\maketitle

\qquad

\begin{abstract}
We study the linearization, with respect to zeroth-order perturbations, of the local Dirichlet-to-Neumann map of a formally self-adjoint elliptic operator $P=-\Delta_g+Q$ on a compact Riemannian manifold $(M,g)$ of dimension $n\geq 2$ with boundary, where $Q$ is of order at most one. Near a boundary point where the boundary, metric, and $P$ are analytic, we show that the linearized map is a classical analytic pseudodifferential operator of order $-1$ if and only if the zeroth-order perturbation $q$ extends analytically across the boundary. We also prove that local boundary data in an arbitrary open analytic boundary patch determines $q$ locally. In particular, if the measurement set meets every connected component of $M$, the local linearized DN map is injective on $L^\infty$ perturbations. Finally, we obtain a global completeness result for products of pairs of solutions whose boundary traces are supported in the measurement set. The global recovery proceeds by local steps using Runge approximation.
\end{abstract}

\section{Introduction}

In \cite{MR590275}, Calder\'on posed the problem of whether one can recover the electrical conductivity of the interior of some Euclidean medium from measurements of voltage and current at the boundary. In fact, in the same paper, Calder\'on proved the injectivity of the linearization of the measurement operator at constant conductivities. The nonlinear problem, now called the Calder\'on problem, has since been studied extensively and we refer to the survey articles \cite{zbMATH06457660}, \cite{zbMATH05655833}. For a sufficiently regular scalar conductivity on a bounded Euclidean domain, the conductivity equation underlying the Calder\'on problem can be reduced to a Schr\"odinger equation, and the inverse problem then corresponds to determining a scalar potential from its Dirichlet-to-Neumann map \cite{zbMATH04015323}. 

Let $(M,g)$ be a smooth compact Riemannian manifold of dimension $n\geq 2$ with smooth boundary. Let $Q$ be a first-order differential operator with smooth coefficients on $M$ and define
\begin{equation}\label{eq:def_of_P}
    P \coloneqq -\Delta_g + Q\,,
\end{equation}
where $\Delta_g = \dd^\ast \dd$. We assume that $L=-P$ is strongly elliptic, formally self-adjoint and does not have Dirichlet eigenvalue $0$ (see \cref{sec:EVrunge}). This assumption implies that the Green's and Poisson operators $G = G_P = -G_L$ and $K=K_P=K_L$ are well-defined for $P$ on $M$ (\cref{prop:GK}). The Dirichlet-to-Neumann map for $P$ is given by 
\[
    \Lambda\colon H^{1/2}(\del M) \ni f \mapsto \Lambda f \coloneqq \del_\nu Kf\vert_{\del M} \in H^{-1/2}(\del M)\,.
\]

When, in the Calder\'on problem, the unknown conductivity is allowed to be anisotropic, i.e., depends on direction, it may be represented geometrically by a Riemannian metric, motivating the setting of a general manifold. The \emph{anisotropic} Calder\'on problem asks to determine an unknown metric from the Dirichlet-to-Neumann map for the conductivity equation, or by a reformulation valid in dimension $n\geq 3$ (see \cite{zbMATH04150775}), for the Schr\"odinger equation. In our notation this corresponds to recovering $g$ from the knowledge of $\Lambda$, when $Q=0$ in the definition of $P$. In dimensions $n\geq 3$, the Dirichlet-to-Neumann map is invariant under pullbacks by boundary-fixing diffeomorphisms, so that recovery of the metric is possible only modulo this gauge. It has recently been announced that the anisotropic Calder\'on problem has been resolved in the smooth setting \cite{oa1,oa2}, we comment on this later. The real-analytic case, however, is more classical: in \cite{zbMATH04150775}, Lee and Uhlmann proved a result under additional geometric and topological assumptions, and Lassas and Uhlmann proved recovery of a real-analytic metric from Cauchy data on an arbitrary non-empty open analytic boundary patch on connected real-analytic manifolds \cite{zbMATH01736651}. See also \cite{zbMATH07203082}, which developed the Poisson embedding method, reformulating the Calder\'on problem in terms of solutions rather than coefficients.

When the unknown metric is restricted to a conformal class, the anisotropic Calder\'on problem is reduced to recovering a scalar potential on a known manifold from the Dirichlet-to-Neumann map: replacing $Q$ in $P$ by a scalar potential $V$, can one determine $V$ from the knowledge of $\Lambda$? In the Euclidean setting, this problem was solved in \cite{zbMATH04015323,zbMATH05254953} for $n\geq 3$ and $n=2$ respectively. It has also been solved on conformally transversally anisotropic (CTA) manifolds under conditions on the transversal geometry, initially under simplicity or injectivity assumptions for the geodesic ray transform \cite{zbMATH05602517,zbMATH06656250}. 

In practical situations, it is more realistic to assume that one only has access to the map $\Lambda$ on some open subset $\Sigma\subset \del M$. Define 
\[
    H^{s}_\Sigma \coloneqq \{f\in H^{s}(\del M), \supp f \subset \Sigma\}\,, \quad s\in \RR\,,\qquad \Lambda^\Sigma \colon H^{1/2}_\Sigma \ni f\mapsto \Lambda f\vert_\Sigma \in H^{-1/2}(\Sigma)\,.
\]
The local conformal Calder\'on problem asks whether, replacing $Q$ in $P$ by a scalar potential $V$, one can recover $V$ from $\Lambda^\Sigma$. Via an application of the Green’s identity, this problem is resolved if one answers the following question positively: for two real scalar potentials $V_1, V_2$ on $M$, does
\begin{equation}\label{eq:localcald}
    \int_M (V_1-V_2)u_1u_2 \dd V_g(y) = 0
\end{equation}
for all $u_j$ solving $(-\Delta_g+V_j)u_j=0$ with $\supp u_j\vert_{\del M}\subset \Sigma, j\in\{1,2\}$ imply that $V_1 = V_2$? 

One may distinguish the sets upon which one prescribes data ($f \in H^{1/2}_{\Sigma_1}$) and on which one measures data ($\Lambda f\vert_{\Sigma_2}$) where one allows $\Sigma_1 \neq \Sigma_2$. In this case one speaks of the \emph{partial data} Calder\'on problem, whereas in our setting when $\Sigma_1=\Sigma_2 = \Sigma$, i.e., the input and output sets coincide we use the term \emph{local}  Calder\'on problem. In dimension $n=2$, the local (conformal) problem is resolved for Euclidean domains (\cite{zbMATH05775680}) and for Riemann surfaces (\cite{zbMATH05904312}). In dimensions $n\geq 3$, the local problem remains open in general, even in the Euclidean setting. In dimension $n=3$, \cite{2606.10247v1} proves local recovery for strictly convex Euclidean domains under additional smoothness and support assumptions. Under suitable geometric assumptions, there are results in the partial data case, beginning with \cite{recoveringCauchydata} and \cite{zbMATH05180743}, see also a systematic account in \cite{recentProg}. 

The Dirichlet-to-Neumann map depends nonlinearly on the scalar potential $V$, so that one may also study its linearization at a fixed background. Using the same argument leading to \cref{eq:localcald}, injectivity of the linearized Dirichlet-to-Neumann map corresponds to completeness, or equivalently density in $L^1(M)$ of products of solutions, of the same background equation. As mentioned previously, this result was proved for the Euclidean Laplacian with full boundary data in \cite{MR590275}. Subsequently, in \cite{zbMATH05707695}, the local-data analogue was proved: products of harmonic functions whose boundary values are supported in an arbitrary non-empty open boundary subset are dense in $L^1$ on a bounded Euclidean domain. Our first main result removes the restriction to the Laplacian on a Euclidean domain.
\begin{theorem}\label{thm:harmonic}
    Assume that $(M^{\mathrm{int}},g)$ is analytic and $P$ has analytic coefficients in $M$. Let $q\in L^\infty(M)$ and assume there is some open $\Sigma \subset \del M$ that has non-empty intersection with each connected component of $M$ and assume $\del M$ is analytic on $
    \Sigma$. If
    \[
        0 = \int_M q u_1 \overline{u_2} d V_g
    \]
    for all solutions $u_j \in C^\infty(M)$ to $P u_j = 0$ with $\supp u_j\vert_{\del M}\subset \Sigma$, $j \in \{1,2\}$, then $q=0$ a.e.
\end{theorem}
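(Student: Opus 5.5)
The plan is to argue locally near $\Sigma$ first and then propagate inward using Runge approximation, as the abstract indicates. Write $B(u_1,u_2)=\int_M q u_1\overline{u_2}\,dV_g$. Viewing $B$ as a bilinear form in the boundary data $f_1,f_2\in C_c^\infty(\Sigma)$, it is the Schwartz kernel of the linearized local Dirichlet-to-Neumann map $\dot\Lambda_q^\Sigma$. The hypothesis says exactly that $\dot\Lambda_q^\Sigma=0$ on $\Sigma$.

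\textbf{Step 1: vanishing of $q$ near $\Sigma$.} Fix $x_0\in\Sigma$ and work in analytic boundary normal coordinates near $x_0$. I would represent $\dot\Lambda_q$ as $\tau\,\partial_\nu G\, q\, K$ restricted to $\Sigma$. Near $x_0$, the Poisson operator $K$ and the boundary trace of the Green's function have analytic parametrices, since $P$, $g$ and $\partial M$ are analytic there. The symbol of $\dot\Lambda_q$ is then a boundary-normal Laplace-type transform of $q$: for large $|\xi'|$ its leading behaviour is
\[
\int_0^\delta e^{-2 x_n |\xi'|}\, q(x',x_n)\,dx_n
\]
plus lower-order terms with analytic amplitudes. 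The local analytic microlocal statement announced in the abstract is the key input. It says $\dot\Lambda_q$ is an analytic pseudodifferential operator iff $q$ extends analytically across the boundary, with the symbol encoding the jets of $q$; more generally, smoothing near $x_0$ forces $q$ to vanish in a neighbourhood of $x_0$. Assuming this, $\dot\Lambda_q^\Sigma=0$ yields an analytic wave-front argument in the style of Kashiwara's watermelon theorem or Sjöstrand's theorem on the boundary. I would use it to show that $q$ vanishes in a one-sided neighbourhood $U\subset M$ of $x_0$. This is the step I expect to be the main obstacle. For merely $L^\infty$ $q$ one must show that vanishing of the full analytic symbol, that is, exponential decay of the Laplace transform in $x_n$ uniformly in $x'$, forces $q=0$ near the boundary rather than only killing its Taylor jets. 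I would first try to obtain exponential decay $O(e^{-c|\xi'|})$ of the FBI transform of the kernel, and then invert the Laplace transform in $x_n$ using Paley–Wiener type estimates.

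\textbf{Step 2: propagation inward.} Let $\Omega\subset M^{\mathrm{int}}$ be the largest open set, connected to $\Sigma$, on which $q=0$. Suppose $\Omega$ is not all of a component. I would enlarge the manifold near a boundary point $z$ of $\Omega$ in the following way. Choose an analytic hypersurface piece $\Gamma\subset\partial\Omega'$ touching the support of $q$, where $M'=M\setminus\Omega'$ and $\Omega'\subset\Omega$ is a tubular region joining $\Sigma$ to a neighbourhood of $z$. Using the Runge approximation of \cref{sec:EVrunge} together with unique continuation, solutions of $Pu=0$ on $M'$ with Cauchy data localized on $\Gamma$ can be approximated in $L^2(\operatorname{supp} q)$ by solutions on $M$ with boundary traces in $\Sigma$. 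Here I use that $M^{\mathrm{int}}$ and $P$ are analytic, so $\Gamma$ and $M'$ are analytic near $\Gamma$, and I choose $M'$ to avoid Dirichlet eigenvalue $0$ by a small perturbation. Hence the orthogonality relation persists for solutions on $M'$ with data on $\Gamma$. Applying Step 1 on $M'$ at $\Gamma$ shows $q=0$ near $z$, which is a contradiction.

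\textbf{Step 3: conclusion.} A connectedness argument now gives $q=0$ on every component meeting $\Sigma$, and by assumption every component meets $\Sigma$. The only delicate points are that $\operatorname{supp}q$ is defined only up to null sets, so I would work with essential supports, and that the Runge density must hold in $L^2(\operatorname{supp} q)$, which suffices because $q\in L^\infty$.
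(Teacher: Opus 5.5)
Your Steps 2--3 match the paper's propagation argument. The paper removes from $M$ a tube starting in $\Sigma$ together with a small geodesic ball, all inside the region where $q$ is already known to vanish. The new boundary then contains a piece of geodesic sphere, which is analytic because $g$ is. It then perturbs where $q=0$ to avoid the Dirichlet eigenvalue $0$ (\cref{cor:perturbDEV}), transfers the orthogonality to $M'$ with \cref{thm:runge}, and reapplies the local result. One point to tighten in your maximality argument: take $z$ to be a point where such a small ball inside $\Omega$ touches $\partial\Omega\cap M^{\mathrm{int}}$. At an arbitrary boundary point of $\Omega$ there need be no analytic $\Gamma$ through $z$ bounding a subregion of $\Omega$.

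The genuine gap is Step 1, which carries essentially all of the difficulty. You either assume it (it is \cref{thm:localmain}, whose proof occupies most of the paper) or sketch a route that does not work as stated.
\begin{itemize}
\item An FBI transform of the kernel of $\dot\Lambda_q^\Sigma$ carries no information, since that kernel vanishes identically by hypothesis. What must be shown to decay exponentially is an FBI transform of $q$ itself.
\item Your symbol description, $\int_0^\delta e^{-2x_n|\xi'|}q(x',x_n)\,dx_n$ plus lower-order terms, is only asymptotic. It arises by ignoring that the true damping rate is $|\xi'|+|\xi'+\eta'|$, with $\eta'$ dual to $y'$. That is harmless modulo $O(|\xi'|^{-\infty})$ for smooth $q$, but not at exponential accuracy for $q\in L^\infty$.
\item Consequently its vanishing determines only the boundary jets of $q$, as you yourself note. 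There is no exact Laplace transform on which a Paley--Wiener argument could run.
\end{itemize}

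The missing idea is how to turn the orthogonality relation into an exponentially accurate elliptic transform of $q$. The paper does this in two stages.
\begin{itemize}
\item \emph{Special solutions.} It pairs $K(\chi e^{i\psi_0/h})$ (a Gaussian wave packet) with $K(\tilde\chi_{j(h)}e^{i\psi_1/h})$ (a plane wave truncated by an Ehrenpreis cut-off sequence at $j(h)\sim\vartheta/h$). Analytic WKB shows both are $e^{i\phi/h}$ times an elliptic cl.a.s., up to $\mathcal{O}^{(-\omega)}$ errors (\cref{prop:K1,prop:K2}). This gives $Sq\in\mathcal{O}^{(-\omega)}$ for an explicit transform $S$ in $n$ complex parameters $(v',\tau)$.
\item \emph{FIO inversion.} It factors $S=\mathcal{T}A$ modulo exponentially small terms. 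Here $A$ has the quadratic phase $\hat\phi$, and $\mathcal{T}$ is an elliptic germ FIO whose phase forms a Sj\"ostrand triad with a strongly well-shaped contour (\cref{lem:tdTinvnew,lem:contour,lem:replace}). Inverting $\mathcal{T}$ yields $Aq$ exponentially small near $0$.
\end{itemize}

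Your Paley--Wiener instinct corresponds to the next step. Since $\partial_y\hat\phi(0,0)=(0,2ir^{1/2}(0,\xi'_0))$ is imaginary (the Laplace-type damping you describe), $A$ is not yet an FBI transform at $z=0$. A Phragm\'en--Lindel\"of argument in $z_n$ (\cref{lem:complexshift}) moves to a point $z^\circ_\nu$ where it is one, giving $(0,-\nu\hat c e_n)\notin\mathrm{WF}_{\mathrm{a}}(q)$. Microlocal analytic continuation with $\mathrm{supp}\, q\subset\{y_n\ge 0\}$ (your watermelon step) then gives $q=0$ near the boundary point. Without the special solutions and the inversion of $\mathcal{T}$, there is nothing to feed into those last two steps.
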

This result extends \cite{zbMATH05707695} to a general geometric setting and replaces the Euclidean Laplacian at zero background $-\Delta$ with the more general $P$, while imposing additional analyticity of a boundary patch. 

There are also full-boundary-data results known on manifolds with additional global structure. On K\"ahler manifolds having sufficiently many holomorphic functions, completeness was first proved in \cite{zbMATH07076582}, and later generalized in \cite{zbMATH07940027}. On transversally anisotropic manifolds, \cite{zbMATH07323401} showed the recovery of transversal wavefront set under geometric assumptions on pairs of geodesics, while \cite{zbMATH07534686} established a completeness result when the transversal manifold is analytic and every transversal unit covector is generated by a suitable pair of geodesics.

In contrast with these results, \cref{thm:harmonic} relies only on the analyticity of the background geometry and operator. Its proof relies on Runge approximation for a global to local reduction and a local recovery result that is essentially already contained in \cite{zbMATH06600504}. We give the full proof nevertheless. We point out that as in \cite{zbMATH07305900}, \cref{thm:harmonic} has immediate implications for inverse problems for quasi-linear differential operators.

Finally, we comment on the announced resolutions of anisotropic Calderón problems \cite{oa2,oa1}, which appeared shortly before this preprint. Our method of proof is a density argument, whereas theirs relies on unique continuation rather than on the Alessandrini identity. Consequently, it is not clear that the injectivity they announce for the nonlinear map implies injectivity of the linearization. That approach therefore does not appear to yield density of products of pairs of solutions. Traditionally, these two problems have been closely related. A solution method for the nonlinear problem via the Alessandrini identity has also yielded a solution to the linearized problem, and hence density. Our approach, in contrast, directly establishes the required density. Moreover, unlike ours, the announced results do not consider general zeroth-order terms in the equations.

Let us next describe the linearized Dirichlet-to-Neumann map in detail. For arbitrary $q\in L^\infty(M)$, consider the family of operators $P_t \coloneqq P + tq$, and let $\Lambda_t$ be the corresponding Dirichlet-to-Neumann maps for $t \in \RR$. For sufficiently small $t$, a Neumann series argument implies that each $\Lambda_t$ is well-defined. The linearization $\dot \Lambda_q$ at $t=0$ of $\Lambda_t$ is given by 
\[
    \dot\Lambda_q = -K^\ast q K\,,
\]
see \cite{zbMATH06600504}, where $K^\ast$ is the (Hilbert-space) adjoint of $K$. Observe that the linearized map is linear in the perturbation $q$. 

There are classical boundary determination results that show that the full boundary jet of a smooth scalar conductivity can be recovered from local (nonlinear) boundary measurements (\cite{zbMATH03939884}), which thus determines a real-analytic conductivity. This boundary determination was extended to the anisotropic setting in \cite{zbMATH04150775}. We shall show, in some sense, that the linearized map $\dot\Lambda_q$ is defined via the full boundary jet of the perturbation $q$ when it exists.
\begin{theorem}\label{thm:symb}
    Let $\hat y\in \partial M$ near which $\del M, g$ and (the coefficients of) $P$ are analytic. For any $q \in L^\infty(M)$, $\dot \Lambda_q$ is a classical analytic pseudodifferential operator of order $-1$ near $\hat y$ if and only if $q$ extends analytically across the boundary $\del M$ near $\hat y$. In this situation it is elliptic if and only if $\lim_{M^{\mathrm{int}}\ni y\to \hat y}q(y) \neq 0$.
\end{theorem}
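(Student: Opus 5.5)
We study the Schwartz kernel of $\dot\Lambda_q=-K^\ast qK$ near $\hat y$. Work in analytic boundary normal coordinates $(x',x_n)$ near $\hat y$, with $x_n\geq 0$ the distance to $\del M$. Write $K_{\hat y}(x,y')$ for the Poisson kernel of $P$.

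\textbf{Step 1 (parametrix).} By the analytic boundary-value calculus (the Boutet de Monvel / Sjöstrand analytic framework, as in \cite{zbMATH06600504}), $K$ agrees, modulo an analytic-regularizing error, with an analytic Fourier integral
\[
Kf(x',x_n)=(2\pi h)^{-(n-1)}\int e^{\frac{i}{h}(x'-y')\cdot\xi'-\frac{1}{h}x_n\rho(x,\xi')}a(x,\xi')f(y')\,dy'd\xi'\,.
\]
Here $\rho$ solves the analytic eikonal equation with $\rho(x',0,\xi')=|\xi'|_{g}$, and $a$ is a classical analytic symbol with $a|_{x_n=0}=1$. The kernel of $\dot\Lambda_q$ is then
\[
-\int q(x)\,\overline{K(x,y')}K(x,z')\,dV_g(x)\,.
\]
Inserting the parametrices and integrating in $x'$ by stationary phase (analytic version), $\dot\Lambda_q$ becomes, microlocally near $\hat y$, an operator whose symbol is a Laplace-type transform in $x_n$:
\[
\sigma(y',\xi')\sim\int_0^\infty q(y',x_n)\,e^{-2x_n|\xi'|}b(y',x_n,\xi')\,dx_n\,,
\]
with $b$ an elliptic analytic classical symbol. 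Contributions from $x$ away from $\hat y$ are analytic-smoothing near the diagonal, because $K$ has analytic kernel off the boundary diagonal.

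\textbf{Step 2 (if).} If $q$ extends analytically across $x_n=0$, expand $q(y',x_n)=\sum_k q_k(y')x_n^k$ with Cauchy bounds. Each term contributes $\int_0^\infty x_n^k e^{-2x_n|\xi'|}\dots=c_k|\xi'|^{-1-k}$. This gives a formal analytic classical symbol of order $-1$ whose principal part is $-q(\hat y)/(2|\xi'|)$ up to a nonzero normalizing constant. The analytic-symbol estimates $|\sigma_k|\le C^{k+1}k!$ come from the Cauchy estimates together with $\int x^ke^{-x}=k!$. The cutoff region $x_n>\delta$ yields exponentially small terms, which are analytic-smoothing. So $\dot\Lambda_q$ is a classical analytic $\Psi$DO of order $-1$, elliptic iff its principal symbol $c\,q(\hat y)|\xi'|^{-1}$ is nonzero, i.e. iff $\lim q\neq0$.

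\textbf{Step 3 (only if; main obstacle).} Suppose $\dot\Lambda_q$ has an analytic classical symbol $\sum\sigma_k$ of order $-1$. We must invert the Laplace-type transform of Step 1. The first attempt is as follows.

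1. Compare the asymptotic expansion in $|\xi'|\to\infty$ of $\int_0^\infty q\,e^{-2x_n|\xi'|}b\,dx_n$ with $\sum\sigma_k$. Working order by order in the kernel $b$, which is itself elliptic and analytic, this determines "moments" $q_k(y')$ with $|q_k|\le C^{k+1}$.
2. Form the analytic function $\tilde q=\sum q_kx_n^k$.
3. The difference $q-\tilde q$ then has a Laplace transform in $x_n$ that is $O(e^{-\epsilon|\xi'|})$ uniformly, since the symbol remainder is analytic-smoothing.
4. A Paley–Wiener / Watson-type uniqueness argument for the Laplace transform of an $L^\infty$ function on $[0,\delta)$ with exponentially small transform then forces $q=\tilde q$ on $0<x_n<\epsilon/2$.

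The delicate point is the $y'$-dependence. The symbol at $(y',\xi')$ mixes $q$ over a neighborhood, and $b$ depends on $\xi'/|\xi'|$. I would handle this with an FBI transform in $y'$ at scale $h=|\xi'|^{-1}$, so that the analytic wave front set characterization of analytic-regularizing operators applies. This reduces the claim to a one-dimensional Laplace inversion with analytic parameters. Controlling the uniform analytic bounds in that reduction is the expected technical core.
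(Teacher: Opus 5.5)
Your Steps 1 and 2, and items 1--2 of Step 3, match the paper: the principal term $(2r^{1/2})^{-1}\tilde q(x',0)$ and the triangular construction of $\tilde q$ from the symbol expansion in Proposition~\ref{thm:foundtdq}. The gap is in the ``only if'' direction, at items 3--4, and this is where the substance of the theorem lies.

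\textbf{Why item 3 does not follow.} Your Laplace-transform formula for the symbol came from stationary phase in the tangential variables. That presupposes regularity of $q$ in $y'$, which $u=q-\tilde q\in L^\infty$ does not have. The exact symbol (compare $\sigma_b$ in \eqref{eq:definesigma}) is nonlocal in $y'$. The normal decay $e^{-x_n(r^{1/2}(\xi')+r^{1/2}(\xi'+\zeta'))}$ couples the tangential frequency $\zeta'$ of $u$ to $x_n$. In the flat model the symbol is therefore, to leading order and formally, $\int_0^\infty e^{-2x_n\abs{\xi'}}u(x'+ix_n\xi'/\abs{\xi'},x_n)\,\dd x_n$. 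It samples $u$ at complex tangential points at distance $\sim x_n$, not at the fixed real $y'$. So exponential smallness of $\sigma_a^u$ says nothing about $\int_0^\delta u(y',x_n)e^{-2x_ns}\,\dd x_n$ for fixed real $y'$. The one-variable Paley--Wiener/Phragm\'en--Lindel\"of uniqueness (which is correct in one variable) then has nothing to act on.

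\textbf{Why your repair does not close it.} An FBI transform in $y'$ at scale $h=\abs{\xi'}^{-1}$ ties the FBI parameter to the Laplace parameter. You get a different one-variable function for each $h$, so a uniqueness theorem for a fixed function under a large parameter no longer applies. Also, the true phase is not the product ``Gaussian in $y'$ times $e^{-2r_0y_n/h}$''. It has cross terms and an $h$-dependent amplitude that must be conjugated away. ``One-dimensional Laplace inversion with analytic parameters'' is thus not a reduction; it restates the missing local injectivity result, Corollary~\ref{cor:vanish}.

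\textbf{How the paper proves that step.}
\begin{enumerate}
\item Exponential smallness of $\sigma_a^u$ becomes exponential smallness of $\dot\Lambda_u$ applied to plane waves truncated by Ehrenpreis cut-offs (Proposition~\ref{prop:anasymb}).
\item Pairing with Gaussian wave packets (Propositions~\ref{prop:K1}, \ref{prop:K2}, Corollary~\ref{cor:Ks}) gives $S(\chi u)\in\bl{O}^{(-\omega)}_{(0,1)}$, holomorphically in the complex parameters $(v',\tau)$. This holomorphy in tangential position and frequency is what gets used, not merely decay as $\abs{\xi'}\to\infty$.
\item $S$ is factored as $\mathcal{T}A$ modulo exponentially small terms. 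Here $A$ has the quadratic model phase $\hat\phi$, and $\mathcal{T}$ is an elliptic germ FIO. Its phase forms a Sj\"ostrand triad with $\Phi^\sharp,\Phi^{\sharp,\ast}$ and admits the strongly well-shaped contour $\mathfrak{C}_\rho$. Inverting $\mathcal{T}$ gives $Au\in\bl{O}^{(-\omega)}_0$.
\item Since $\del_y\hat\phi(0,0)$ is imaginary, $A$ is not an FBI transform at $z=0$. The bound by $\Phi^\flat$, which encodes $\supp u\subset\{y_n\geq 0\}$, and a subharmonic two-constants argument in $z_n$ move the decay to $z^\circ_\nu$. There $\hat\phi$ is a genuine FBI phase, so $(0,-\nu\hat c e_n)\notin\mathrm{WF}_a(u)$.
\item Microlocal analytic continuation with $\supp u\subset\{y_n\geq 0\}$ then gives $u=0$ near $\hat y$.
\end{enumerate}
Step~3 needs an argument of this strength, one that handles the full $n$-dimensional complex phase rather than a fiberwise Laplace transform.
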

\cref{prop:sigmatotq} expresses the full formal symbol of $\dot\Lambda_q$ in terms of the boundary jet of $q$. In particular, by order $-1$, we mean order at most $-1$, depending on the vanishing of the boundary jet of $q$. The nomenclature `classical analytic pseudodifferential operator' is justified after the statement of \cref{thm:localsymb}, which is a stronger statement than \cref{thm:symb}: the symbol of $\dot\Lambda_q$ need only be a classical analytic symbol along a one-dimensional fiber direction, rather than in the full cotangent bundle to conclude that $q$ must be analytic up to the boundary. This restriction is natural because it makes the problem formally determined: the unknown $q$ depends on $n$ variables, while the symbol of a pseudodifferential operator on the boundary depends on $2(n-1)$ variables; restricting the fiber variable to one dimension leads to $n-1+1$ variables, compare with \cite{zbMATH06600504}. 

\cref{thm:localsymb}, and thus its corollary \cref{thm:symb} are generalizations of the Euclidean \cite[Thm.~1.6,~Prop.~1.7]{zbMATH06600504} to a general geometric setting and general background operator $P$. Our method of proof partially follows theirs, but differs when we identify the formal symbol of the pseudodifferential operator from the boundary jet of the perturbation in \cref{prop:sigmatotq}. We point out that \cite{zbMATH06600504} has been superficially corrected after publication in the revised \cite{1312.4065v2}. 

For an open $\Sigma \subset \del M$ recall for $s\in \RR$ the notation $H^{s}_\Sigma \coloneqq \{f\in H^{s}(\del M), \supp f \subset \Sigma\}$ and for arbitrary $q\in L^\infty(M)$ define
\[   
    \dot\Lambda_q^{\Sigma} \colon H^{1/2}_\Sigma  \ni f \mapsto \dot\Lambda_q(f)\vert_{\Sigma} \in H^{-1/2}(\Sigma)\,.
\]
As a corollary of \cref{thm:harmonic}, we have the global injectivity statement
\begin{theorem}
    Assume the hypotheses of \cref{thm:harmonic}, and let $q\in L^\infty(M)$ be arbitrary. If $\dot\Lambda^\Sigma_q = 0$, then $q=0$ a.e.; in other words, the map $q \mapsto \dot\Lambda^\Sigma_q$ is injective.
\end{theorem}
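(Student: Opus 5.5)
The plan is to reduce the statement to \cref{thm:harmonic} through the integral identity that pairs $\dot\Lambda_q$ with boundary data. Take $f_1,f_2\in C^\infty(\del M)$ with $\supp f_j\subset\Sigma$, and set $u_j=Kf_j$. Each $u_j$ is smooth up to the boundary, solves $Pu_j=0$, and has trace $u_j\vert_{\del M}=f_j$ supported in $\Sigma$. Using $\dot\Lambda_q=-K^\ast qK$ and the definition of the Hilbert-space adjoint,
\[
    \langle \dot\Lambda_q f_1, f_2\rangle_{L^2(\del M)} = -\langle qKf_1, Kf_2\rangle_{L^2(M)} = -\int_M q\, u_1\overline{u_2}\,\dd V_g\,.
\]
The pairing on the left uses the sesquilinear convention for $K^\ast$. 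Since $f_2$ is supported in $\Sigma$, only the restriction $\dot\Lambda_q f_1\vert_\Sigma=\dot\Lambda^\Sigma_q f_1$ enters this pairing. So the hypothesis $\dot\Lambda^\Sigma_q=0$ forces the integral to vanish for all such $f_1,f_2$.

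The next step is to see that this family exhausts the solutions allowed in \cref{thm:harmonic}. Let $u\in C^\infty(M)$ solve $Pu=0$ with $\supp u\vert_{\del M}\subset\Sigma$. Since $0$ is not a Dirichlet eigenvalue, uniqueness for the Dirichlet problem (\cref{prop:GK}) gives $u=K(u\vert_{\del M})$. Hence every admissible pair $u_1,u_2$ in \cref{thm:harmonic} arises as above. Then $\int_M q u_1\overline{u_2}\,\dd V_g=0$ for all admissible pairs, and \cref{thm:harmonic} yields $q=0$ a.e.

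Because the map $q\mapsto\dot\Lambda^\Sigma_q$ is linear, having trivial kernel is the same as being injective. This gives the reformulation in the statement.

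The one point that needs care is justifying the pairing identity. The map $\dot\Lambda_q$ acts $H^{1/2}\to H^{-1/2}$, so the pairing should be read as a duality pairing. One must also check that the formula $\dot\Lambda_q=-K^\ast qK$ holds with $K^\ast$ the adjoint for the boundary $L^2$ pairing, and that its conjugation convention matches the product $u_1\overline{u_2}$. For smooth boundary data these are routine bookkeeping, and nothing deeper is involved.
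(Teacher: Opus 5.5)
Your proposal is correct and follows the paper's proof: set $u_j=Kf_j$ for $f_j\in C_c^\infty(\Sigma)$, use $\dot\Lambda_q=-K^\ast qK$ to get $\langle\dot\Lambda^\Sigma_q f_1,f_2\rangle=-\int_M q\,u_1\overline{u_2}\,\dd V_g$, and invoke \cref{thm:harmonic}. Your added remark that every smooth solution with $\supp u\vert_{\del M}\subset\Sigma$ equals $K(u\vert_{\del M})$, by uniqueness for the Dirichlet problem, makes explicit a step the paper leaves implicit.
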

\begin{proof}
    For arbitrary $f_j \in C_c^\infty(\Sigma)$ set $u_j = Kf_j, j\in \{1,2\}$. Because $\dot\Lambda_q = -K^\ast q K$, we have
    \[
        \langle \dot\Lambda^\Sigma_q f_1, f_2 \rangle = -\int_M q u_1 \overline{u_2}\dd V_g\,.
    \]
    In particular, if $\dot\Lambda_q^\Sigma =0$, then \cref{thm:harmonic} implies that $q=0$ a.e., completing the proof. 
\end{proof}
In the Euclidean Schr\"odinger setting, the corresponding result is \cite[Cor.~1.8]{zbMATH06600504}. 

Finally, we prove local injectivity of the linearized map at arbitrary $L^\infty$ perturbations. 
\begin{theorem}\label{thm:lin}
	Let $\hat y\in \partial M$ near which $\del M, g$ and (the coefficients of) $P$ are analytic. For any open neighborhood $\Sigma\subset \del M$ of $\hat y$, and any $q_1, q_2 \in L^\infty(M)$, if we have 
	\[
		\dot\Lambda_{q_1}^\Sigma = \dot\Lambda_{q_2}^\Sigma\,,
	\]
	then $q_1 = q_2$ a.e. near $\hat y$. 
    
    In particular, if every connected component of $M$ contains a boundary point $\in \del M$ near which $\del M, g$ and $P$ are analytic, and $\Sigma\subset \del M$ is an open neighborhood of these (finitely many) boundary points, and if $q\in C^\omega(M^{\mathrm{int}})\cap L^\infty(M)$ is a potential so that $\dot\Lambda^\Sigma_q = 0$, then $q\equiv 0$ on $M$.
\end{theorem}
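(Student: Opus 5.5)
By linearity, $\dot\Lambda_{q_1}^\Sigma-\dot\Lambda_{q_2}^\Sigma=\dot\Lambda_{q}^\Sigma$ with $q=q_1-q_2\in L^\infty(M)$. So it suffices to show that $\dot\Lambda_q^\Sigma=0$ forces $q=0$ a.e.\ near $\hat y$. By the identity used in the proof of the previous corollary, the hypothesis is equivalent to
\[
\int_M q\,u_1\overline{u_2}\,\dd V_g=0
\qquad\text{for all } u_j=Kf_j,\ f_j\in C_c^\infty(\Sigma).
\]

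My plan is to follow the Euclidean argument of \cite{zbMATH06600504}, adapted to the geometric setting, and to reduce to \cref{thm:localsymb}.

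\textbf{Step 1 (the zero operator as an analytic symbol).} Since $\dot\Lambda_q^\Sigma=0$ near $\hat y$, it is trivially a classical analytic pseudodifferential operator there: its full symbol vanishes identically, in particular along the one-dimensional fiber direction. By \cref{thm:localsymb}, $q$ then extends analytically across $\del M$ near $\hat y$.

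\textbf{Step 2 (vanishing of the boundary jet).} By \cref{prop:sigmatotq}, the terms of the formal symbol of $\dot\Lambda_q$ are determined by the boundary jet of $q$, via a triangular relation. The leading term is a nonzero multiple of $q|_{\del M}$ times $|\xi'|^{-1}$ (up to the metric). The $k$-th term involves $\del_\nu^k q$ with a nonvanishing coefficient, plus lower-order normal derivatives. Inductively, the vanishing of the full symbol therefore gives $\del_\nu^k q=0$ on $\Sigma$ near $\hat y$ for every $k$.

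\textbf{Step 3 (conclusion).} The analytic extension of $q$ has vanishing full jet at boundary points near $\hat y$. Hence $q\equiv 0$ in a one-sided neighborhood of $\hat y$ in $M$, i.e.\ $q_1=q_2$ a.e.\ near $\hat y$.

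For the second statement, take $q\in C^\omega(M^{\mathrm{int}})\cap L^\infty(M)$ with $\dot\Lambda^\Sigma_q=0$. Apply the first part at each distinguished boundary point; $\Sigma$ is a neighborhood of each, and restricting to a smaller neighborhood preserves the hypothesis. This gives $q=0$ on a nonempty open subset of every connected component of $M^{\mathrm{int}}$. Each component is connected, so analytic continuation gives $q\equiv0$ on $M^{\mathrm{int}}$, hence on $M$ a.e.

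The main obstacle is Step 2: proving that the map from the boundary jet to the symbol terms in \cref{prop:sigmatotq} is injective. One has to check that the coefficient of the highest normal derivative in each symbol term is nonvanishing. In the Euclidean case this is an explicit Fourier computation. Here, my first attempt would be to work in boundary normal coordinates with the fiber variable restricted to one direction. In that setting the Poisson operator parametrix is $e^{-x_n|\xi'|_g}$ to leading order, and the pairing becomes a Laplace transform in $x_n$. This produces the term $\del_\nu^k q\,(2|\xi'|)^{-k-1}k!$ modulo contributions from lower derivatives. The injectivity of this Laplace-transform–type map on Taylor coefficients would give the triangular structure.
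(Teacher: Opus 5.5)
Your proof is correct, but it takes a different and considerably longer route than the paper.

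\textbf{The paper's route.} With $q=q_1-q_2$, the hypothesis $\dot\Lambda^\Sigma_q=0$ and the identity $-\dot\Lambda_q=K^\ast qK$ make the pairing
\[
\int_M q\,\overline{K(\chi e^{\frac{i}{h}\psi_0})}\,K(\td{\chi}_{j_\vartheta(h)} e^{\frac{i}{h}\psi_1})\,\dd V_g
\]
vanish identically once the cut-offs are supported in $\Sigma$. \cref{thm:localmain} (the FBI/Sj\"ostrand argument of \cref{cor:vanish} followed by microlocal analytic continuation) then gives $q=0$ near $\hat y$ immediately. The second statement is the identity theorem, as in your last paragraph.

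\textbf{Your route.} You use \cref{thm:localsymb} as a black box and then the jet calculus of \cref{sec:findq}. This is valid. Your ``main obstacle'' is already settled in the paper: the proof of \cref{thm:foundtdq} derives from \cref{eq:tkformula} that $\mathrm{Op}^R(t_k)=(2r^{1/2}(x',0,\xi'_0))^{-k-1}\del_{x_n}^k+\sum_{j<k}\del_{x_n}^jP_{k,j}(x',D';\xi'_0)$. This is exactly your Laplace-transform computation, except that the factor $k!$ you wrote should not be there. You should make two points explicit:
\begin{enumerate}
\item $\sigma_a^q\equiv 0$, because the kernel of $\dot\Lambda_q$ vanishes on $\Sigma\times\Sigma$.
\item Since $q=\mathbb{1}_{\{y_n\geq 0\}}\td q$ only near $\hat y$, you need that $\sigma_a$ of a potential vanishing near $\hat y$ lies in $\bl{O}^{(-\omega)}_{(0,1)}$. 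The paper proves this by a contour deformation in the proof of \cref{thm:localsymb}. Only then can you conclude that every $\mathrm{Op}^R(t_k)\td q(x',0)$ vanishes.
\end{enumerate}

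\textbf{Comparison.} Your route buys nothing extra for this theorem. \cref{thm:localsymb} is itself proved via \cref{cor:vanish}. For the zero symbol, its proof constructs $q_k=0$ for all $k$, hence $\td q=0$, and then applies \cref{cor:vanish} to $-q$. So your Step 1, once opened, already yields $q=0$ near $\hat y$. Steps 2--3 re-derive this through the symbol machinery (\cref{prop:symbcalc,prop:allthingst,prop:sigmatotq}), which the paper needs only for \cref{thm:symb}.
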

In total, our results demonstrate that the Euclidean local completeness and analytic boundary determination results of \cite{zbMATH05707695} and \cite{zbMATH06600504} remain true on general analytic geometric backgrounds and for general analytic background differential operators.

\subsubsection*{Sketch of the proof of \cref{thm:harmonic}}

By applying a Runge approximation-type argument (\cref{sec:EVrunge}), the global result of \cref{thm:harmonic} is reduced to proving local vanishing of the unknown potential $q$ near a point $\hat y\in\del M$ near which $\del M$ is analytic (\cref{thm:localmain}). Relying on microlocal analytic continuation (\cite[Thm.~8.5.6']{hoermander1}), vanishing of $q$ near $\hat y$ is reduced to showing $(\hat y, \hat\xi)\not\in \mathrm{WF}_a(q)$, where $\hat\xi$ is the normal of $\del M$ at $\hat y$, and this in turn corresponds to showing that some (germ) FBI transform of $q$ is exponentially decaying (\cite{AST_1982__95__R3_0}, \cite[Thm.~22.3.15]{MR4436039}). In particular, we have reduced the proof of \cref{thm:harmonic} to showing that there are solutions $u_j$ of $P u_j = 0$ so that $u_1 \overline{u_2}$ is the kernel of some (germ) FBI transform (with $\supp u_j\vert_{\del M}\subset \Sigma, j\in\{1,2\}$ where $\Sigma\subset\del M$ is some open set). Constructing $u_1, u_2$ via analytic WKB solutions (\cref{prop:K1,prop:K2}) then achieves this goal. 

We point out explicitly that if we had desired to prove only \cref{thm:harmonic}, the WKB solution with FBI-type phase in \cref{prop:K1} would have sufficed, and we could have foregone \cref{prop:K2}. After appropriate modification of \cref{sec:localinj}, \cref{thm:localmain} would be shown to hold with only WKB solutions of the type coming from \cref{prop:K1}, which would then prove \cref{thm:harmonic}. In particular, the shortest route to prove \cref{thm:harmonic} would rely only on \cref{prop:K1} and suitably modified \cref{cor:Ks,thm:localmain,sec:localinj}. The entirety of \cref{sec:findq} and many results in \cref{sec:localsol} thus exist only to prove \cref{thm:symb}. \cref{thm:localmain} (or rather the most significant part of it, \cref{cor:vanish}) thus serves the dual purpose of being the main local recovery result for both \cref{thm:harmonic} and \cref{thm:symb} simultaneously.

\subsubsection*{Outline of the paper}

\cref{sec:localsol} is dedicated to constructing and analyzing $Kf$ when $f$ is a (Gaussian) wave packet or a (localized) plane wave. We also construct and study the `analytic symbol' of the operator $\dot\Lambda_q$. The main local results are stated in \cref{thm:localmain,thm:localsymb}. In \cref{sec:localinj} we prove local recovery of a potential $q$ from local boundary measurements using the special solutions constructed previously, after which we give the proofs of \cref{thm:harmonic,thm:lin}, the proof of the former relying on a Runge approximation argument. In \cref{sec:findq}, we analyze the symbol of $\dot\Lambda_q$ and reduce \cref{thm:symb} to the local recovery result proved in \cref{sec:localinj} and give its proof. The appendices~\ref{sec:cla} and~\ref{sec:comb} recall standard facts about classical analytic symbols and some combinatorial estimates, while in \cref{sec:EVrunge} we prove a general Runge approximation argument and related results.

\subsubsection*{Statement of AI use}
AI tools were used to improve the readability of the manuscript. The mathematical results and proofs were developed and verified by the authors without the use of AI.

\subsubsection*{Acknowledgments}
T. L. is partly supported by the Research Council of Finland through the Centre of Excellence in Inverse Modelling and Imaging and the FAME Flagship, grants 353091 and 359208.

\subsection{Notation and definitions}

The proof of the main theorems involve Fourier-integral-operator (FIO)-type operators, which, in the analytic setting, only admit a well-established calculus on weighted spaces of holomorphic functions and germs of those spaces (\cite{AST_1982__95__R3_0}). Results on the composition or inversion of these operators exist only in this setting and we will follow Tr\`eves' exposition of it (\cite[Part~VI]{MR4436039}) in this paper. In particular, our definitions and cited results are taken to match those given by Tr\`eves, while these results were originally established by Sj\"ostrand, see \cite{AST_1982__95__R3_0} and references therein. 

The convenient notion of localization in the setting of analytic/holomorphic functions is that of germs: For an open set $\Omega \subset \CC^n$ we write $\mathcal{O}(\Omega)$ for the set of holomorphic functions on $\Omega$. When $z \in \Omega$, then define $\bl{O}_z$ as the germ of holomorphic functions at $z$: 
\[
	\bl{O}_z \coloneqq \left\{f \colon \exists\ \text{open neighborhood}\ \Omega' \text{of}\ z\colon f\in \mathcal{O}(\Omega')\right\}/ \sim\,.
\]
Here, $\sim$ denotes the equivalence relation of functions that are holomorphic near $z$ and agree near $z$.

FIO-type operators in the analytic setting will have phases with non-trivial imaginary part and the handling thereof is taken care of by spaces of holomorphic functions defined by an exponent weight following \cite[Def.~19.1.10,~Def.~20.2.7]{MR4436039}, see also \cite{AST_1982__95__R3_0}, \cite{zbMATH07060662}. 
\begin{definition}
	For any $\Phi \in C(\Omega;\RR)$, let $\mathcal{O}^{(\Phi)}(\Omega)$ be the set of holomorphic functions $f(z;h)$ on $\Omega$ so that for all $\eps>0$ and $K\subset \Omega$ compact there is $C>0$ so that for all $0<h\leq 1$
	\[
		\max_K \abs{e^{-h^{-1}\Phi}f} \leq C e^{h^{-1}\eps}\,.
	\]
	We define the space $\mathcal{O}^{(-\omega)}(\Omega)$ as the set of holomorphic functions $f(z;h)$ on $\Omega$ so that for any compact $K\subset \Omega$ there are $\eps,C>0$ so that for all $0<h\leq 1$,
	\[
		\max_K \abs{f} \leq C e^{-h^{-1}\eps}\,.
	\]
	Finally, for any $z\in \Omega$ we let $\bl{O}^{(\Phi)}_z$ and $\bl{O}^{(-\omega)}_z$ be the germs of these spaces at $z$. 
\end{definition}
For any $f\in \mathcal{O}(\Omega)$ we write $f^\ast(z) \coloneqq \bar f(\bar z)$ for the holomorphic conjugate of $f$ where it is defined. Throughout the rest of this document we use the following conventions: when $z\in \CC^n$, then $z^{\RR}$ and $z^{i\RR}$ denote the real and imaginary parts of $z$ respectively, and if $f$ is a complex-valued function (with domain in $\CC^n$), $\del_z f = \frac{1}{2}\left(\del_{z^\RR}-i\del_{z^{i\RR}}\right)f$ always denotes the Wirtinger derivative. Note that when $f$ is holomorphic, we have $\bar\del_z f = \frac{1}{2}\left(\del_{z^\RR}+i\del_{z^{i\RR}}\right)f=0$ so that $\del_z f = \del_z f  +\bar \del_z f = \del_{z^\RR}f$, which is the usual partial derivative in $z^{\RR}$. In particular, $\del_z f^{i\RR} = \frac{1}{2i} \del_z f = \frac{1}{2i} \del_{z^{\RR}} f$ and $\del_z^2 f^{i\RR} = \frac{1}{2i} \del_z^2 f = \frac{1}{2i}\del_{z^\RR}^2 f$ for holomorphic $f$. Note that the chain rule for Wirtinger derivatives is (componentwise) $\del_z (f\circ g) = ((\del_z f)\circ g)\del_z g+((\bar \del_z f)\circ g)\del_z \bar g$, which reduces to $((\del_z f)\circ g)\del_z g$ if $f$ is holomorphic.

We shall use the FBI transform on germ spaces (\cite{AST_1982__95__R3_0}, \cite{zbMATH07060662}, \cite{MR4436039}) to test for the presence of analytic wavefront set. This requires that we introduce a few more notions. 

\begin{definition}[{\cite[\S~20.2]{MR4436039}}]\label{def:well-shaped}
    Let $\Omega\subset \CC^n$ be a domain. A \emph{contour of integration} in $\Omega$ is a connected open subset $\mathfrak{C}$ of a totally real smooth submanifold $Y$ of $\Omega$ so that $\mathfrak{C}$ has real dimension $n$ and $\bar{\mathfrak{C}}\subset Y$ and $\del\mathfrak{C}$ in $Y$ is Lipschitz continuous. Here, a linear subspace $L$ of $\CC^n$ is \emph{totally real} if $L\cap i L = \{0\}$, see also the paragraph directly after \cite[Def.~9.1.14]{MR4436039}. 
    
    If $f \in C(\Omega;\RR)$, a contour of integration $\mathfrak{C}$ in $\Omega$ is \emph{strongly well-shaped} for $f$ at $\hat z \in \mathfrak{C}$ if there is $c>0$ so that
    \begin{equation}\label{eq:f_neg_def}
            f(z)-f(\hat z) \leq -c \abs{z-\hat z}^2
    \end{equation}
    for all $z\in \mathfrak{C}$. 

    If $f \in C^\infty(\Omega;\RR)$, then $\hat z \in \Omega$ is a \emph{saddle point} of $f$ if $d f(\hat z) = 0$ and $\mathrm{sign}(\del_{(z^\RR,z^{i\RR})}^2 f)(\hat z) = (n,n)$ (which automatically implies the non-degeneracy of this Hessian). Here $\mathrm{sign}$ is the \emph{signature} of a matrix, which is the pair of amounts of positive and negative eigenvalues of said matrix. 
\end{definition}

In the proof of the main result we will encounter an elliptic germ FIO-like operator which we would like to invert. A central assumption to facilitate this inversion will be establishing the following property for its phase.
\begin{definition}[{\cite[Def.~20.5.1]{MR4436039}}]\label{def:sjostrand_Triad}
	Let $\Omega_j \subset \CC^{n_j}$, $\Xi \subset \CC^N$, $j\in \{1,2\}$, $\hat z \in \Omega_1$, $\hat w\in \Omega_2$, $\hat y\in \Xi$ and $\varphi \in \mathcal{O}(\Omega_1\times\Omega_2\times \Xi)$, and $\Phi_j \in C^\infty(\Omega_j;\RR)$ exponent weights so that $\varphi(\hat z, \hat w, \hat y) = 0 = \Phi_1(\hat z) = \Phi_2(\hat w)$. Define
    \[
        \Psi_1(z,w,y) = \Phi_1(z)-\varphi^{i\RR}(z,w,y)\,,\quad \Psi_2(z,w,y) = \Phi_2(w)+\varphi^{i\RR}(z,w,y)\,.
    \]

	We call $(\varphi,\Phi_1,\Phi_2)$ a \emph{Sj\"ostrand triad at $(\hat z, \hat w, \hat y)$} if
    \begin{enumerate}
        \item $(\hat z, \hat y)$ is a saddle point of $\Psi_1(z,\hat w, y)$,
        \item $(\hat w, \hat y)$ is a saddle point of $\Psi_2(\hat z, w, y)$,
        \item the critical value of $(z,y)\mapsto \Psi_1(z,w,y)$ is equal to $\Phi_2(w)$ near $(\hat z, \hat w, \hat y)$.
    \end{enumerate}
\end{definition}
Throughout \cref{sec:localinj}, we will reference Fourier-like and Fourier integral operator-like operators defined on weighted germ spaces, for which we refer to \cite[Part~VI]{MR4436039}, see also \cite{AST_1982__95__R3_0}, \cite{zbMATH07060662}. We shall, however, introduce the notion of an FBI phase from \cite[Def.~22.1.1]{MR4436039}, see also \cite[\S~7]{AST_1982__95__R3_0}, \cite[\S~2.6]{zbMATH07060662}.
\begin{definition}
	Let $\Omega,\Xi\subset \CC^n$ be domains, $\hat z \in \Omega, \hat y \in \Xi \cap \RR^n$ and $\varphi \in \mathcal{O}(\Omega\times \Xi)$. We call $\varphi$ an \emph{FBI phase function at $(\hat z, \hat y)$} if
\begin{enumerate}
	\item $\varphi(\hat z, \hat y) = 0, \del_y \varphi(\hat z, \hat y) = -\hat \xi \in \RR^n\sem$,
	\item $\det \del_{z}\del_y\varphi(\hat z, \hat y) \neq 0$, 
	\item $\mathrm{Im} \del_y^2 \varphi(\hat z, \hat y)$ is positive definite.
\end{enumerate}
\end{definition}
In the definition above one may think of $y$ as the input variable and $z$ as the output variable. FBI phases capture microlocal information by allowing the output $z$ to be complex: its real part corresponds to the base and its imaginary part replaces the usual fiber variable, see also the remarks near \cite[Eq.~(3.1.2)]{MR1872698} and the beginning of \cite[\S~9.6]{hoermander1}. The usual example of an FBI phase is
\[
    \varphi(z,y) = -(y-\hat y)\cdot z +\frac{i}{2} (z-\hat y)\cdot (z-\hat y)
\]
with distinguished points $\hat z= 0$ and $\hat y$.

In the analytic setting we shall work with classical analytic symbols, see \cite{AST_1982__95__R3_0}.
\begin{definition}\label{def:sym}
	For $m\in \RR, \Omega \subset \CC^n$ open and $\Gamma \subset \CC^N\sem$ an open set conic under real dilations, when $(p_j)_{j\in\mathbb{N}} \subset \mathcal{O}(\Omega\times \Gamma)$ is a sequence that is positively homogeneous of degree $m-j$ in $\xi\in \Gamma$ satisfying: for all compact $K\subset \Omega \times (\Gamma\cap\{\abs{\xi}=1\})$ there is $C>0$ so that $\sup_K \abs{p_j} \leq C^{j+1}j!$ for all $j$, we call the formal sum $\ud{p}(z,\xi) \coloneqq \sum_{j\geq 0} p_j(z,\xi)$ a \emph{formal classical analytic symbol (formal cl.a.s.) of order $m$}. 

	We say that a measurable function $p(z,\xi)$ with $p(\cdot,\xi) \in \mathcal{O}(\Omega)$ for all $\xi\in \Gamma$ is a \emph{finite realization} of $\ud{p}$ if for all sufficiently large $R>0$, for all $K\subset \Omega\times (\Gamma\cap \{\abs{\xi}=1\})$ compact there are $C,\eps>0$ so that for all $h \in (0,1]$,
	\[
		\sup_{(z,\xi)\in K}\abs{p(z,h^{-1}\xi) - \sum_{0 \leq j\leq R^{-1}h^{-1}} h^{j-m}p_j(z,\xi)} \leq Ce^{-\eps h^{-1}}\,,
	\]
	in which case we write $p\sim \ud{p}$ and say that $p$ is a \emph{classical analytic symbol (cl.a.s.).} We shall also use the notation $p(z,\xi;h)$ for $p(z,h^{-1}\xi)$ and $\ud{p}(z,\xi;h)$ for $\sum_{j\geq 0} h^{j-m}p_j(z,\xi)$ and restrict $\xi$ to a compact subset of $\Gamma$ in this case. 
\end{definition}
That finite realizations of such formal symbols exist is shown in \cite[\S~1]{AST_1982__95__R3_0}, \cite[\S~17.2.7]{MR4436039}.

We will also require the notion of a sequence of cut-offs satisfying some growth conditions.
\begin{definition}
	Let $U' \subset U$ be two open subsets of $\RR^n$ with $\mathrm{dist}(\RR^n\setminus U, U') > 0$. Let $(\chi_j)_{j\in\mathbb{N}}\subset C_c^\infty(\RR^n)$ be a bounded sequence (in $C_c^\infty$) satisfying $0 \leq \chi_j \leq 1$ and $\chi_j\vert_{U'} \equiv 1$ and $\chi_j\vert_{\RR^n\setminus U} \equiv 0$ for all $j\in \mathbb{N}$. If there is some $C>0$ so that for all multi-indices $\beta$ there is $C_\beta > 0$ so that for all $j$ we have
	\[
		\max\abs{\del^\alpha \del^\beta \chi_j} \leq C_\beta (C j)^{\abs{\alpha}} \quad \forall \abs{\alpha} \leq j\,,
	\]
	then $(\chi_j)_{j\in \mathbb{N}}$ is called an \emph{Ehrenpreis cut-off sequence}.
\end{definition}
That such a sequence always exists follows from \cite[Thm.~1.4.10]{hoermander1}, see also \cite[Prop.~3.2.1]{MR4436039}, \cite[Lem.~2.2]{zbMATH03359011}. 

To have convenient notation for a set to which an analytic positively homogeneous function (think phases) can be holomorphically extended, we introduce 
\[
	\Theta_\eps \coloneqq \left\{\eta' \in \CC^{n-1} \colon \eps\abs{(\eta')^{\RR}} > \abs{(\eta')^{i\RR}}\right\}\,,\quad \Theta_\eps^\circ \coloneqq \Theta_\eps \cup B_\eps^{\CC^{n-1}}(0)\,.
\]
Throughout the paper we shall use the notation $\abs{\ccdot}_+ \coloneqq \max\{1,\abs{\ccdot}\}$. 

\section{Local solution constructions}\label{sec:localsol}

Recall that we assumed that $P$ does not have Dirichlet eigenvalue $0$ on $(M,g)$, so that the Green's and Poisson kernel are well-defined, see \cref{prop:GK}. Let $\hat y \in \del M$ be fixed and assume that $\partial M$ is analytic near $\hat y$ and let $W$ be an open neighborhood of $\hat y$ on which we can introduce analytic boundary normal coordinates. We assume also that the metric $g$ and the coefficients of $P$, defined in \eqref{eq:def_of_P}, are analytic on $W$.  In these coordinates in $W$, we shall assume $\hat y = 0$, and $W$ is a subset of $\{y_n\geq 0\}$.

We may always assume that in $W$ we can write
\[
    -P = D_{y_n}^2 + A(y)D_{y_n} + R(y, D_{y'})
\]
where $R$ is a second order differential operator with positive principal symbol $r(y,\eta')$. This follows from \cite[Thm.~C.5.1,~Cor.~C.5.3]{zbMATH05129478} in the case $Q = 0$, and writing $Q(y,\del_{y}) = A(y)D_{y_n} + Q'(y,D_{y'})$, %
we have the desired conclusion. %

In the following we will write $r^{1/2}(y,\eta')$ for the principal branch of the square root of $r$, which is holomorphic for complex $y$ near $0$ and $\eta'\in \Theta_\eps$ for some $\eps >0$. We point out for emphasis that the positivity of $r$ implies that for all complex $y$ near $0$ and $\eta'$ away from the branch cut we have $\mathrm{Re}r^{1/2}(y,\eta') > 0$. %

Before the next proposition, we fix the notation. Let $\xi'_0 \in \RR^{n-1}$ be the fixed tangential frequency. We write $y = (y',y_n) \in M$ for a point in boundary normal coordinates, with $y_n \geq 0$ and $\partial M = \{y_n = 0\}$, and let $x' \in \RR^{n-1}$ denote the boundary variable, so that $x' = y'$ on $\partial M$. The output parameters are $(v',\tau) \in U_1 \subset \CC^{n-1} \times \CC$, a complex neighborhood of $(0,1)$. %
Throughout, $h \in (0,1]$ is the semiclassical parameter. The operator $K$ denotes the Poisson operator sending boundary data on $\partial M$ to the solution of $Pu = 0$ in $M$.

\begin{proposition}\label{prop:K1}
    For fixed $\xi'_0 \in \RR^{n-1}\sem$, define $\psi_0(v',\tau,x') \coloneqq \frac{i}{2}(v'-x')^2+\frac{i}{2}(\tau-1)^2 - x'\cdot \xi'_0$ which is an entire function. There is a complex open neighborhood $U=U_1\times U_2$ of $((0,1),0) \in \CC^{n}\times \CC^{n}$ so that the following is true. For any $\chi \in C_c^\infty(U_2\cap \RR^n)$ identically $1$ near $0$, for $y\in M$ and $(v',\tau)\in U_1$ and $h>0$ sufficiently small, we have
    \[
        K(\chi\vert_{\{x_n=0\}}e^{\frac{i}{h}\psi_0})(v',\tau,y;h) = \chi(y) e^{\frac{i}{h}\phi_0(v',\tau,y)} b(v',\tau,y;h) + C^\infty(M;\mathcal{O}^{(-\omega)}(U_1))\,,
    \]
    where $b(v',\tau,y;h)$ is any finite realization of some elliptic analytic symbol of order $0$ satisfying
    \[
        \mathcal{O}(U)\ni b(v',\tau,y;h) \sim \sum_{j\geq 0}h^j b_j(v',\tau,y)\,,\quad %
	(b_j)_{j\in \mathbb{N}} \subset \mathcal{O}(U)\,,\quad b_j(v',\tau,y',0) = \delta_{0j}\,, \quad b(v',\tau,y',0)=1\,,
    \]
    where $\delta_{0j}$ is the Kronecker delta and $\phi_0 \in \mathcal{O}(U)$ satisfies 
    \[
        \phi_0(v',\tau,y',0) = \psi_0(v',\tau,y')\,,\quad (\del_{y_n}\phi_0)(v',\tau,y) = ir^{1/2}(y,\del_{y'}\phi_0(v',\tau,y))\,.%
    \]
    Here and in the proof below, we write $f \in C^\infty(M;\mathcal{O}^{(-\omega)}(U_1))$ if $f(y,(v',\tau))\in C^\infty(M\times U_1)$ and for all $\alpha$ and all $y\in M$ we have $\del_y^\alpha f(y, \cdot) \in \mathcal{O}^{(-\omega)}(U_1)$.
\end{proposition}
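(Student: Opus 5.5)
This is an analytic WKB construction followed by a parametrix correction. The plan is to build an approximate solution $\chi e^{i\phi_0/h}b$ that solves $Pu=0$ up to an exponentially small error near the relevant region. Then we use the Green's operator $G$ (from \cref{prop:GK}) to correct the error, and check that the correction is $O(e^{-\eps/h})$ together with all its $y$-derivatives, uniformly and holomorphically in $(v',\tau)\in U_1$.

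\textbf{Step 1: the phase.} We solve the eikonal equation $\del_{y_n}\phi_0 = i r^{1/2}(y,\del_{y'}\phi_0)$ with $\phi_0|_{y_n=0}=\psi_0$ by the holomorphic Cauchy--Kovalevskaya theorem (equivalently, holomorphic Hamilton--Jacobi theory). At $(v',\tau,y')=(0,1,0)$ we have $\del_{y'}\psi_0=-\xi_0'$, which is real and nonzero. Hence $\del_{y'}\psi_0$ stays in $\Theta_\eps$ for $(v',\tau,y')$ in a small complex neighborhood, where $r^{1/2}$ is holomorphic. This gives $\phi_0\in\mathcal{O}(U)$ after shrinking $U$.

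The key sign information is the following. At $y_n=0$ and real $y'$ we have $\mathrm{Im}\,\psi_0 = \tfrac12|v'-y'|^2$ for real $(v',\tau)$, so $\mathrm{Im}\,\psi_0\ge0$. Moreover $\del_{y_n}\mathrm{Im}\,\phi_0=\mathrm{Re}\,r^{1/2}>0$. Therefore, shrinking $U_2$ in the $y_n$ direction, we get $\mathrm{Im}\,\phi_0(v',\tau,y)\ge c y_n + \tfrac12|v'^{\RR}-y'|^2 - C|(v',\tau)^{i\RR}|^2$ locally. This bounds $|e^{i\phi_0/h}|$ by $e^{C\delta/h}$ with $\delta$ small in $U_1$, and it is the estimate I will need later.

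\textbf{Step 2: the amplitude.} Conjugating, we get $e^{-i\phi_0/h}(-P)(e^{i\phi_0/h}b)= h^{-1}(\text{transport operator})b + (\text{second-order term})b$. The $h^{-2}$ term vanishes by the eikonal equation. We solve the transport equations recursively: $T b_0 = 0$, and $T b_j = -(-P)b_{j-1}$ conjugated, with $b_j|_{y_n=0}=\delta_{0j}$. Here $T$ is a first-order holomorphic vector field that is transversal to $\{y_n=0\}$, since its $\del_{y_n}$ coefficient is $2\del_{y_n}\phi_0+\dots\neq0$.

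The main obstacle is getting the Cauchy-type estimates $\sup|b_j|\le C^{j+1}j!$ on a fixed complex neighborhood. I would use the standard nested-domains (Nirenberg--Ovsyannikov / Sjöstrand formal symbol) argument, with Cauchy inequalities on shrinking polydiscs of radius $(1-k/j)$. These are the estimates recalled in \cref{sec:cla,sec:comb}. Taking a finite realization $b$, we obtain $e^{-i\phi_0/h}Pe^{i\phi_0/h}b = O(e^{-\eps/h})$ on $U$, with ellipticity since $b_0|_{y_n=0}=1$.

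\textbf{Step 3: correction.} Set $u_{\mathrm{app}}=\chi e^{i\phi_0/h}b$. Its boundary trace equals $\chi e^{i\psi_0/h}$ exactly, because $b|_{y_n=0}=1$. Hence
\[
K(\chi e^{i\psi_0/h}) - u_{\mathrm{app}} = -G(Pu_{\mathrm{app}}),
\]
with sign conventions as in \cref{prop:GK}. The source $Pu_{\mathrm{app}}$ splits into two parts. The first is $\chi e^{i\phi_0/h}(e^{-i\phi_0/h}Pe^{i\phi_0/h}b)$, which is exponentially small. The second is the commutator term $[P,\chi]e^{i\phi_0/h}b$, supported where $\nabla\chi\ne0$, i.e. away from $y=0$. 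There, by Step 1, $\mathrm{Im}\,\phi_0\ge c_0>0$ for real $(v',\tau)$, and hence $\ge c_0/2$ on a small enough $U_1$. So this part is also $O(e^{-c/h})$ in every $C^k$ norm and holomorphic in $(v',\tau)$.

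Since $G$ is bounded between Sobolev spaces (and commutes with holomorphic dependence on parameters), elliptic regularity up to the boundary gives that $G(Pu_{\mathrm{app}})$ lies in $C^\infty(M;\mathcal{O}^{(-\omega)}(U_1))$. The smallness in each $y$-derivative comes from Sobolev embedding of $H^{k}$ bounds.

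Finally, independence of the choice of finite realization is automatic. Two finite realizations differ by $O(e^{-\eps/h})$, and multiplying by $e^{i\phi_0/h}$, whose modulus is at most $e^{C\delta/h}$ with $\delta\ll\eps$ after shrinking $U$, keeps the difference in the remainder class.
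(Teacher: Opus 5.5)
Your proposal is correct and follows the paper's proof essentially step by step. Both use Cauchy--Kovalevskaya for $\phi_0$, an analytic WKB amplitude with exponentially small error, and the correction $K(\chi e^{\frac{i}{h}\psi_0}) = u - GPu$. Both control the commutator term $[P,\chi]e^{\frac{i}{h}\phi_0}b$ by $\mathrm{Im}\,\phi_0>0$ away from the origin, which is exactly what the paper gets from $\phi_0=\psi_0+iy_n\int_0^1 r^{1/2}\,\dd s$, and both finish with Sobolev continuity of $G$ and elliptic regularity.

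The one difference is that the paper cites the analytic WKB results of Tr\`eves and Sj\"ostrand for the $C^{j+1}j!$ bounds on the $b_j$ and for $P(e^{\frac{i}{h}\phi_0}b)\in\mathcal{O}^{(-\omega)}(U)$, whereas you sketch them. If you write that sketch out, note that plain Cauchy estimates on shrinking polydiscs lose two derivatives per step, since $P$ is second order, and so only give growth like $(2j)!$ rather than $j!$. You genuinely need the gain from integrating along the transversal field $T$ in $y_n$, as in the Nirenberg--Ovsyannikov argument you name.
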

\begin{proof}
    The Cauchy-Kovalevskaya theorem \cite[Thm.~9.4.6]{hoermander1}  guarantees the existence of $\phi_0$ satisfying the stated properties, i.e. it solves the eikonal equation (we use that the hypersurface $\{y_n=0\}$ is non-characteristic). 

    According to the analytic WKB method (\cite[Prop.~23.2.7]{MR4436039} or after reduction to a zero-th order equation \cite[Thm.~2.8.1]{zbMATH07060662}) there is a formal analytic symbol $\ud{b} = \sum_{j\geq 0} b_j(v',\tau,y)$ so that $P(e^{\frac{i}{h}\phi_0}\ud{b}) = 0$ formally, and the $b_j$ satisfy the properties in the statement. Letting $b \sim \ud{b}$ be a finite realization satisfying the properties imposed by the statement, \cite[Prop.~24.5.2]{MR4436039} implies that 
    \[
        P(e^{\frac{i}{h}\phi_0}b) \in \mathcal{O}^{(-\omega)}(U)
    \]
    if $U$ is sufficiently contracted (ie. chosen sufficiently small) about $0$.

    For $\chi$ as in the statement we define
    \[
        u = \chi e^{\frac{i}{h}\phi_0}b\,, \qquad \hat u = u - GP u\,.
    \]
    One easily verifies that $\hat u\vert_{\partial M} = \chi\vert_{\{y_n=0\}} e^{\frac{i}{h}\psi_0}$ and $P\hat u = 0$ using the properties of the Green's function $G$, so that indeed
    \[
        K(\chi\vert_{\{y_n=0\}}e^{\frac{i}{h}\psi_0}) = \hat u\,.
    \]
    Thus, showing that $GP u \in C^\infty(M;\mathcal{O}^{(-\omega)}(U_1))$ will complete the proof. Note that
    \[
        P u = \chi P(e^{\frac{i}{h}\phi_0}b) + [P,\chi]e^{\frac{i}{h}\phi_0}b\,,
    \]
    where $[P,\chi]$ is supported away from $0$. By the fundamental theorem of calculus, we have
    \[
        \phi_0(v',\tau,y) = \psi_0(v',\tau,y') + iy_n\int_0^1 r^{1/2}(y',sy_n,\del_{y'}\phi_0(v',\tau,y',sy_n)) ds\,,
    \]
    so that $\phi_0^{i\RR} > \delta > 0$ on the support of $[P,\chi]$ as long as $\abs{(v')^{i\RR}}, \abs{\tau^{i\RR}}$ are sufficiently small (which we can phrase as taking $U_1$ sufficiently small). Here we use the positivity of $(r^{1/2})^{\RR}$.

    We conclude that $P u \in C^\infty(M; \mathcal{O}^{(-\omega)}(U_1))$ and the continuity of $G$ on $H^s(M), s\geq 0$ implies that $\sup_{v' \in U_1}\norm{GPu}_{H^{s+2}(M)} \leq C \sup_{v' \in U_1}\norm{Pu}_{H^s(M)}$ with $C>0$ independent of $u$, which together with elliptic regularity (\cite[Cor.~8.3.2]{hoermander1}) gives
    \[
        GPu \in C^\infty(M; \mathcal{O}^{(-\omega)}(U_1))\,,
    \]
    completing the proof. 
\end{proof}
\begin{remark}\label{rem:K1imp}
	The result of \cref{prop:K1} remains true with all constants uniform for $\xi'_0$ if we allow $\xi'_0$ to vary in a compact subset of $\RR^{n-1}\sem$. 
\end{remark}

As explained in the introduction, it would suffice to rely entirely on solutions coming from \cref{prop:K1} to prove \cref{thm:harmonic}. To prove \cref{thm:symb} it becomes necessary to prove a similar statement when the phase $\psi_0$ above is replaced by the usual plane wave $x'\cdot \xi'$. The main difference to the previous result is that the characterization of the solution is valid only in some neighborhood of the origin and not on all of $M$. Furthermore, since we have lost the localization provided by the Gaussian factor in the phase, we will work with a sequence of cut-off functions.
\begin{proposition}\label{prop:K2}
     Let $\xi'_0 \in S^{n-2}$ be fixed, and define $\psi_1(\tau,x') = -\tau x'\cdot \xi'_0$ which is entire. There is a complex open neighborhood $U=U_1\times U_2$ of $(1,0) \in \CC\times \CC^{n}$ so that the following is true. For any $(\chi_j)_{j\in\mathbb{N}} \subset C_c^\infty(U_2)$ Ehrenpreis cut-off sequence identically $1$ near $0$ there is some complex open neighborhood $U^\ast$ of $0$ so that for all $\tau \in U_1$ and $y\in U^\ast$, and $h>0$ sufficiently small, we have
    \[
        K(\chi_{j}\vert_{\{x_n=0\}}e^{\frac{i}{h}\psi_1})(\tau,y;h) = e^{\frac{i}{h}\phi_1(\tau, y)}d(\tau,y;h) + R_j(\tau,y;h)\,,\quad \abs{R_j} \leq h^m C (Cjh)^j e^{\eps/h(\abs{\tau^{i\RR}}+\abs{y^{i\RR}})}
    \]
    where $\eps, m \in \RR, C>0$ are independent of $j$ and $R_j$ is holomorphic with respect to $(\tau, y)\in U_1\times U^\ast$, and where $d(\tau,y;h)$ is any finite realization of some elliptic classical analytic symbol of order $0$ satisfying
    \[
        \mathcal{O}(U)\ni d(\tau,y;h) \sim \sum_{j\geq 0}h^j d_j(\tau,y)\,,%
	\quad (d_j)_{j\in \mathbb{N}} \subset \mathcal{O}(U)\,,\quad d_j(\tau,y',0) = \delta_{0,j}\,,\qquad d(\tau,y',0) = 1\,,
    \]
    and $\phi_1 \in \mathcal{O}(U)$ is homogeneous of degree $1$ in $\tau$ and satisfies 
    \begin{equation}\label{eq:phi}
		\phi_1(\tau, y',0) = \psi_1(\tau,y') = -\tau y'\cdot\xi'_0\,,\quad \del_{y_n}\phi_1(\tau,y) =  ir^{1/2}(y,\del_{y'}\phi_1(\tau,y))\,.
	\end{equation}
	In particular, there is $\vartheta_0>0$ so that for all $\vartheta \in (0,\vartheta_0]$, defining $j_\vartheta(h) \coloneqq \lfloor \vartheta h^{-1}\rfloor$, possibly contracting $U_1$ about $1$ and $U^\ast$ about $0$, we have
	\[
		K(\chi_{j_\vartheta(h)}\vert_{\{x_n=0\}}e^{\frac{i}{h}\psi_1})(\tau,y;h) = e^{\frac{i}{h}\phi_1(\tau, y)}d(\tau,y;h) + \mathcal{O}^{(-\omega)}(U_1\times U^\ast)\,.
	\]
\end{proposition}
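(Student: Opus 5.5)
Our approach follows the proof of \cref{prop:K1}. The new difficulty is that the boundary phase $\psi_1$ is real, so it no longer localizes the solution. We therefore cannot work on all of $M$ and must track how the remainder depends on $j$ through the Ehrenpreis cut-offs.

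\textbf{Phase and amplitude.} We obtain $\phi_1$ from the Cauchy--Kovalevskaya theorem applied to the eikonal equation \eqref{eq:phi}, with Cauchy data $\psi_1$ on the non-characteristic surface $\{y_n=0\}$. Since $\psi_1$ is linear in $\tau$ and $r^{1/2}$ is homogeneous of degree one in $\eta'$, the function $\tau\phi_1(1,y)$ solves the same problem. Uniqueness then gives homogeneity of degree $1$ in $\tau$. The phase is holomorphic for $\tau$ near $1$ because $\del_{y'}\phi_1=-\tau\xi_0'+O(y_n)$ stays in $\Theta_\eps$.

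The analytic WKB construction (\cite[Prop.~23.2.7]{MR4436039}) gives a formal cl.a.s.\ $\ud d$ with $P(e^{\frac ih\phi_1}\ud d)=0$ formally and $d_j(\tau,y',0)=\delta_{0j}$. By \cite[Prop.~24.5.2]{MR4436039}, any finite realization $d$ satisfies $P(e^{\frac ih\phi_1}d)\in\mathcal O^{(-\omega)}(U)$ after shrinking $U$. Note that $\phi_1^{i\RR}=y_n\,\mathrm{Re}\,r^{1/2}+O(\abs{\tau^{i\RR}}+\abs{y^{i\RR}})$. Hence $e^{\frac ih\phi_1}$ is bounded by $e^{\eps h^{-1}(\abs{\tau^{i\RR}}+\abs{y^{i\RR}})}$, but it decays only in $y_n$ and not in $y'$.

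\textbf{The remainder.} Set $u_j=\chi_j e^{\frac ih\phi_1}d$ and $\hat u_j=u_j-GPu_j$. Then $K(\chi_j\vert e^{\frac ih\psi_1})=\hat u_j$, so it remains to control $GPu_j$ near $0$. We write
\[
Pu_j=\chi_jP(e^{\frac ih\phi_1}d)+[P,\chi_j]\,e^{\frac ih\phi_1}d .
\]
The first term is exponentially small. The second is supported in $U\setminus U'$, where it is $O(j)$ in size but not exponentially small in $y'$. The key point is that $G$ has a kernel that is analytic off the diagonal, so for $y$ near $0$ we have $GPu_j(y)=\int G(y,x)\,[P,\chi_j](e^{\frac ih\phi_1}d)(x)\,\dd x$, with $x$ ranging over $\supp\nabla\chi_j$, away from $y$. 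We integrate by parts $j$ times, using $e^{\frac ih\phi_1}=\bigl(\tfrac{h}{i\,\del_{x'}\phi_1\cdot\xi'_0\,\tau^{-1}}\,\xi'_0\cdot\del_{x'}\bigr)^{j}e^{\frac ih\phi_1}$; here $\del_{x'}\phi_1\approx-\tau\xi'_0\neq0$. Each integration by parts produces a factor $h$ and one derivative. That derivative falls either on $\nabla\chi_j$, costing at most $Cj$ by the Ehrenpreis bound with $\abs\alpha\le j$, or on the analytic factors $G(y,\cdot)$, $d$ and $1/\del\phi_1$, costing $C^{k}k!\le (Cj)^k$.

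The result is a bound $h^mC(Cjh)^je^{\eps h^{-1}(\abs{\tau^{i\RR}}+\abs{y^{i\RR}})}$. The first term $\chi_jP(e^{\frac ih\phi_1}d)$ is handled as in \cref{prop:K1}. Holomorphy of $R_j$ in $(\tau,y)\in U_1\times U^\ast$ follows because $y$ lies off the support, where $G(\cdot,x)$ extends holomorphically, and because the $\tau$-dependence is entire. This Leibniz/Cauchy bookkeeping, needed to get exactly $(Cjh)^j$ uniformly in $j$, is the main obstacle. We would organize it with a standard combinatorial estimate of the kind in \cref{sec:comb}, treating the non-cut-off factors as analytic symbols with Cauchy estimates on a fixed complex polydisc.

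\textbf{The final claim.} Choose $j=\lfloor\vartheta/h\rfloor$. Then $(Cjh)^j\le (C\vartheta)^{\vartheta/h-1}=e^{-c/h}$ with $c>0$ once $C\vartheta<1$. Shrinking $U_1,U^\ast$ so that $\eps(\abs{\tau^{i\RR}}+\abs{y^{i\RR}})<c/2$ shows that $R_{j_\vartheta(h)}\in\mathcal O^{(-\omega)}(U_1\times U^\ast)$.
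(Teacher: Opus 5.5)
Your proposal is correct and follows the paper's proof essentially step by step. The shared steps are: Cauchy--Kovalevskaya and analytic WKB for $\phi_1$ and $d$; the representation $\hat u=u-GPu$ with $Pu$ split into the exponentially small term $\chi_jP(e^{\frac ih\phi_1}d)$ and the commutator term; $j$ tangential integrations by parts against the Green kernel, which is analytic off the diagonal, to get $(Cjh)^j$; and finally the choice $j=\lfloor\vartheta/h\rfloor$. The differences are cosmetic. You use the directional field $\frac{h}{i\,\xi'_0\cdot\del_{x'}\phi_1}\,\xi'_0\cdot\del_{x'}$ where the paper uses $\frac{h}{i}\frac{(\del_{y'}\phi_1)^\ast\cdot\del_{y'}}{\abs{\del_{y'}\phi_1}^2}$. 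The stray factor $\tau^{-1}$ in your formula should be dropped, although keeping it would only cost a factor $\tau^{\pm j}$ that is absorbed into $C^j$. You also make explicit the homogeneity of $\phi_1$ in $\tau$ via uniqueness in Cauchy--Kovalevskaya, which the paper leaves implicit.
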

\begin{proof}
    As in the previous result, we employ the Cauchy-Kovalevskaya theorem and analytic WKB method to construct $u = \chi_{j} e^{\frac{i}{h}\phi_1}d$ and $\hat u = K(\chi_{j}\vert_{\{x_n=0\}}e^{\frac{i}{h}\psi_1})(\tau,y;h) = u - GPu$, and we merely have to investigate the error term $GP u$. Note again that
    \[
        Pu = \chi_{j} P(e^{\frac{i}{h}\phi_1}d) + [P,\chi_{j}]e^{\frac{i}{h}\phi_1}d\,,
    \]
    where the second term vanishes on the closed set $U_0 = \{\chi_{j}\equiv 1\forall j\}$. We assume that $U_1, U_2$ (and thus $\supp \chi_{j}, U_0$) are sufficiently small so that $P(e^{\frac{i}{h}\phi_1}d)\in \mathcal{O}^{(-\omega)}(U_1\times U_2)$. In particular, $G(\chi_{j} P(e^{\frac{i}{h}\phi_1}d))$ satisfies the bounds imposed on $R_j$ for all $j$.

    To estimate the second term above, note that for $z$ near $0$
    \[
	G([P,\chi_{j}]e^{\frac{i}{h}\phi_1}d)(z) = \int_M G(y,z) [P,\chi_{j}]e^{\frac{i}{h}\phi_1}d \dd V_g(y)\,,
    \]
    and the kernel $G(y,z)$ of the Green's function is jointly analytic in its domain of integration. We thus perform $j$ partial integrations with the operator $L = \frac{h}{i} \frac{(\del_{y'} \phi_1)^\ast \del_{y'}}{\abs{\del_{y'} \phi_1}^2}$ to obtain 
    \[
	G([P,\chi_{j}]e^{\frac{i}{h}\phi_1}d)(z) = \int_M e^{\frac{i}{h}\phi_1} (L^\ast)^j(G(y,z) [P,\chi_{j}]d)\dd V_g(y)\,,
    \]
    which, using that $\phi_1(\tau,y',0)= -\tau y'\cdot \xi'_0$, is bounded above by $h^m C(Chj)^{j}e^{\eps/h(\abs{\tau^{i\RR}}+\abs{y^{i\RR}})}$ for some $\eps, C>0, m\in\RR$ independent of $j$. For emphasis we point out that we have thus differentiated $\chi_j$ up to $j+2$ many times, but this merely modifies the constant $C>0$.

    Finally, choosing $\vartheta_0 = e^{-1}C^{-1}$ for the $C$ arising from the repeated partial integrations, %
    we obtain the desired exponential decay if $(\tau, y) \in U^\ast$ with complex open $U^\ast$ chosen sufficiently contracted about $0$.
	This concludes showing that for some $U^\ast$ we have $GPu \in \mathcal{O}^{(-\omega)}(U_1\times U^\ast)$.  
\end{proof}

\begin{remark}\label{rem:uniformity}
    The construction in Proposition~\ref{prop:K2} is uniform in  near a fixed direction: for every $\xi'_{0} \in S^{n-2}$ there is a neighborhood $V \subset S^{n-2}$ of $\xi'_{0}$ such that the neighborhoods $U_1$, $U^\ast$, the constants $\eps, m, C, \vartheta_0 > 0$, and the symbol $d$ can be chosen uniformly for all $\xi'_{0\ast} \in V$. This follows from continuous dependence of the construction on $\xi'_0$ as the Cauchy--Kovalevskaya solution, the transport equations, and the error estimates all depend continuously on $\xi'_0$.
\end{remark}

Combining the two results we obtain
\begin{corollary}\label{cor:Ks}
    Let $q\in L^\infty(M)$ and $\xi'_0 \in S^{n-2}$ be fixed. Let $\chi, (\td{\chi})_{j_\vartheta(h)} \in C_c^\infty(W)$ be supported sufficiently close to $0$ and identically $1$ near $0$, where $\supp \chi \subset \{\td{\chi}_{j_\vartheta(h)}\equiv 1\}$ and $\td{\chi}_{j_\vartheta(h)}$ is the $j_\vartheta(h)$-th element of a sequence of Ehrenpreis cut-offs as in \cref{prop:K2}. We have 
    \[
    \langle K^\ast qK\td{\chi}_{j_\vartheta(h)} e^{\frac{i}{h}\psi_1}, \chi e^{\frac{i}{h}\psi_0}\rangle = \int_M q \overline{K(\chi e^{\frac{i}{h}\psi_0})}K(\td{\chi}_{j_\vartheta(h)} e^{\frac{i}{h}\psi_1}) d V_g(y) = \int_M e^{\frac{i}{h}(-\phi_0^\ast+\phi_1)}b^\ast d \chi q \dd V_g(y) + \bl{O}^{(-\omega)}_{(0,1)}\,,
    \]
    which is a function of $(v',\tau) \in \CC^{n}$ defined near $(0,1)$. 
\end{corollary}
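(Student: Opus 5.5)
The first equality follows from $\dot\Lambda_q=-K^\ast qK$ and the definition of the Hilbert-space adjoint: for boundary data $f_1=\td{\chi}_{j_\vartheta(h)} e^{\frac{i}{h}\psi_1}$ and $f_2=\chi e^{\frac{i}{h}\psi_0}$ we have $\langle K^\ast qKf_1,f_2\rangle=\langle qKf_1,Kf_2\rangle_{L^2(M)}$. Writing this pairing as an integral gives the middle expression. Both $f_j$ are smooth and compactly supported, so $Kf_j\in C^\infty(M)$ and the integral makes sense for $q\in L^\infty$.

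For the second equality we substitute the expansions of \cref{prop:K1} and \cref{prop:K2}. By \cref{prop:K1},
\[
K(\chi e^{\frac{i}{h}\psi_0})=\chi e^{\frac{i}{h}\phi_0}b+R_0,\qquad R_0\in C^\infty(M;\mathcal{O}^{(-\omega)}(U_1)).
\]
Complex conjugation turns the holomorphic dependence on $(v',\tau)$ into antiholomorphic dependence. To stay holomorphic in the parameters we pair instead with the data at the conjugate parameter $(\bar v',\bar\tau)$, which is what the notation $\phi_0^\ast$, $b^\ast$ encodes: for real $y$, $\overline{e^{\frac{i}{h}\phi_0(\bar v',\bar\tau,y)}}=e^{-\frac{i}{h}\phi_0^\ast(v',\tau,y)}$. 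By \cref{prop:K2} with $\vartheta\le\vartheta_0$,
\[
K(\td\chi_{j_\vartheta(h)}e^{\frac{i}{h}\psi_1})=e^{\frac{i}{h}\phi_1}d+R_1\quad\text{on } U^\ast,\qquad R_1\in\mathcal{O}^{(-\omega)}.
\]
Off $U^\ast$ we only have the general bound $\abs{R_j}\le h^mC(Cjh)^j e^{\eps/h(\dots)}$, or just polynomial $H^s$ bounds from continuity of $K$. Expanding the product gives four terms: the main term, plus three terms each involving $R_0$ or $R_1$.

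The main term is $\int_M q\,\chi e^{\frac{i}{h}(-\phi_0^\ast+\phi_1)}b^\ast d\,\dd V_g$, provided $\supp\chi\subset U^\ast\cap\RR^n$. We arrange this by choosing $\chi$ supported sufficiently close to $0$, as the statement allows.

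The cross terms are handled as follows.
\begin{itemize}
\item \textbf{Terms with $R_0$.} $R_0$ is exponentially small uniformly on $M$. The other factor $K(\td\chi e^{\frac{i}{h}\psi_1})$ is bounded in $L^2(M)$ by $C e^{C'\abs{\tau^{i\RR}}/h}$, since its boundary data has that size. Contracting the parameter neighbourhood around $(0,1)$ makes this growth smaller than the decay rate of $R_0$, so the product stays in $\bl{O}^{(-\omega)}_{(0,1)}$.
\item \textbf{Term $\chi e^{-\frac{i}{h}\phi_0^\ast}b^\ast R_1$.} This is supported in $\supp\chi\subset U^\ast$, where $R_1$ is exponentially small.
\item \textbf{Growth of $e^{-\frac{i}{h}\phi_0^\ast}$.} On $\supp\chi$ this factor is bounded by $e^{-\phi_0^{i\RR}/h}$ up to $O(\abs{\mathrm{Im}(v',\tau)}/h)$. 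By the integral formula in the proof of \cref{prop:K1}, $\phi_0^{i\RR}\ge 0$ for real parameters, so this factor causes no trouble after shrinking the parameter set.
\end{itemize}

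Holomorphy of the result in $(v',\tau)$ follows by differentiating under the integral sign, and each error term is holomorphic because $R_0$ and $R_1$ are.

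I expect the main obstacle to be the bookkeeping of these exponential weights. The decay rate $\eps$ in $\mathcal{O}^{(-\omega)}$ has to beat the $e^{C\abs{\mathrm{Im}}/h}$ growth of the complementary factor. I will handle this by shrinking to a sufficiently small neighbourhood of $(0,1)$, which is permitted because the claim is only at the level of germs at $(0,1)$.
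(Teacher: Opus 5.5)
Your proposal is correct and follows the paper's argument. You first remove the remainder from \cref{prop:K1}, using a \cref{prop:GK} Sobolev bound on $K(\td{\chi}_{j_\vartheta(h)}e^{\frac{i}{h}\psi_1})$ that grows at most like $e^{\eps\abs{\tau^{i\RR}}/h}$; you then replace that solution on $\supp\chi$ by $e^{\frac{i}{h}\phi_1}d$ via \cref{prop:K2}, shrinking the parameter neighbourhood so the exponential decay wins. The only slip is that this bound carries an extra polynomial factor (the paper has $Ch^{-2}e^{\eps\abs{\tau^{i\RR}}/h}$ in $H^2$), which is harmless against exponential decay.
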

\begin{proof}
    Due to \cref{prop:GK}, for any compact $V\subset U_1$, there are $C,\eps>0$ so that
    \[
        \sup_{\tau \in V}\norm{K(\td{\chi}_{j_\vartheta(h)} e^{\frac{i}{h} \psi_1})}_{H^2(M)} \leq C\sup_{\tau \in V}\norm{\td{\chi}_{j_\vartheta(h)} e^{\frac{i}{h}\psi_1}}_{H^{3/2}(\del M)} \leq Ch^{-2}\sup_{\tau \in V} e^{\frac{\eps}{h} \abs{\tau^{i\RR}}}\,,
    \]
    and we obtain from \cref{prop:K1} that, for $\abs{\tau^{i\RR}}$ sufficiently small (ie. $\tau$ sufficiently near $1$ in $\CC$),  
    \[
        \int_M q \overline{K(\chi e^{\frac{i}{h}\psi_0})} K(\td{\chi}_{j_\vartheta(h)} e^{\frac{i}{h}\psi_1}) d V_g(y) = \int_M q\chi e^{-\frac{i}{h}\phi_0^\ast} b^\ast K(\td{\chi}_{j_\vartheta(h)} e^{\frac{i}{h}\psi_1}) d V_g(y) + \bl{O}_{(0,1)}^{(-\omega)}\,.
    \]
    A similar application of \cref{prop:K2} completes the proof.
\end{proof}

Furthermore, using \cref{prop:K2}, we have a formula for the kernel of $K$ near the diagonal. We represent 
it as an oscillatory integral (an FIO kernel) plus an analytic error.
\begin{corollary}\label{cor:kerndia}
    There are complex open neighborhoods $U_1$ of $\hat y=0$ and $\eps>0$ so that for $y \in U_1 \cap M$ and $x' \in U_1 \cap \del M$, we have %
    \[
        K(y,x') = \int e^{i((y'-x')\cdot \eta' + \psi(y,\eta'))} a(y,\eta') \dd \eta' + K_a(y,x')\,,\quad K_a \in \bl{O}_{(0,0)}\,,
    \]
    where $a(y,\eta') \in \mathcal{O}(U_1\times \Theta_\eps^\circ)$ is the finite realization of some elliptic classical analytic symbol of order $0$, 
    \begin{equation}\label{eq:symbola}
        a(y,\eta') \sim \sum_{j\geq 0}\abs{\eta'}^{-j} a_j(y,\abs{\eta'}^{-1}\eta')\,,\quad %
	(a_j)_{j\in \mathbb{N}} \subset \mathcal{O}(U_1\times U_2)\,,\quad a_j(y',0,\eta') = (2\pi)^{1-n}\delta_{0j}\,,\quad a(y',0,\eta')=(2\pi)^{1-n}\,,
    \end{equation}
    and $\psi = \phi - y'\cdot \eta'$, where $\phi \in \mathcal{O}(U_1\times \Theta_\eps)$ is homogeneous of degree $1$ in $\eta'$ and satisfies 
    \begin{equation}\label{eq:phi2}
		\phi(y',0,\eta') = y'\cdot\eta'\,,\quad \del_{y_n}\phi(y,\eta') =  ir^{1/2}(y,\del_{y'}\phi(y,\eta'))\,.
    \end{equation}
\end{corollary}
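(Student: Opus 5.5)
The plan is to write the Poisson kernel as the superposition of the plane-wave solutions from \cref{prop:K2}. Formally, for $x'$ near $0$ the distribution $\delta_{x'}$ on $\del M$ is $(2\pi)^{1-n}\int e^{i(y'-x')\cdot\eta'}\dd\eta'$. So $K(\ccdot,x')$ should be $(2\pi)^{1-n}\int e^{-ix'\cdot\eta'}K(e^{iy'\cdot\eta'})\dd\eta'$.

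For each direction, \cref{prop:K2} (with $\xi'_0$ replaced by $-\eta'/\abs{\eta'}$, $h=\abs{\eta'}^{-1}$, $\tau=1$) gives
\[
K(e^{iy'\cdot\eta'}) \approx e^{i\phi(y,\eta')}d(y,\eta').
\]
Here $\phi(y,\eta')\coloneqq \abs{\eta'}\phi_1(1,y)$ is homogeneous of degree $1$, solves \eqref{eq:phi2}, and satisfies $\phi(y',0,\eta')=y'\cdot\eta'$. The symbol $a\coloneqq(2\pi)^{1-n}d$, with $a_j(y,\omega)=(2\pi)^{1-n}d_j$ evaluated at direction $\omega$, then has the normalization \eqref{eq:symbola}. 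By \cref{rem:uniformity} and compactness of $S^{n-2}$, finitely many directional neighbourhoods suffice. Cauchy--Kovalevskaya applied to the eikonal problem with $\eta'$ as an analytic parameter shows that $\phi$ and the $a_j$ extend holomorphically to $U_1\times\Theta_\eps$. A finite realization $a$ can then be chosen in $\mathcal{O}(U_1\times\Theta^\circ_\eps)$, using \cite[\S~1]{AST_1982__95__R3_0} and redefining $a$ on the bounded set $B_\eps$ as any holomorphic function, since that only changes $K$ by an analytic term.

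The rigorous argument localizes. Fix a cut-off $\chi$ near $0$ on $\del M$, so that for $x'$ near $0$ we have $K(y,x')=K(\chi\delta_{x'})(y)$. I split $\chi\delta_{x'}$ dyadically in frequency, or better write $\chi\delta_{x'}=(2\pi)^{1-n}\int e^{-ix'\cdot\eta'}\chi e^{iy'\cdot\eta'}\dd\eta'$, so that $K(y,x')=(2\pi)^{1-n}\int e^{-ix'\cdot\eta'}K(\chi e^{iy'\cdot\eta'})(y)\dd\eta'$. The point is to replace the fixed $\chi$ by the Ehrenpreis cut-off $\chi_{j(\eta')}$ with $j=\lfloor\vartheta\abs{\eta'}\rfloor$. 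The difference $(\chi-\chi_{j})e^{iy'\cdot\eta'}$ vanishes near $0$. Its image under $K$, evaluated at $y$ near $0$ and paired against $e^{-ix'\cdot\eta'}$ with $x'$ near $0$, contributes an analytic function of $(y,x')$. This holds because the Poisson kernel is analytic off the diagonal near analytic boundary, or equivalently by the same $j$-fold integration by parts in $y'$ as in the proof of \cref{prop:K2}, which gives decay $(Cj/\abs{\eta'})^j\lesssim e^{-c\abs{\eta'}}$.

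On the main term, \cref{prop:K2} gives
\[
K(\chi_{j}e^{iy'\cdot\eta'})=e^{i\phi}a(2\pi)^{n-1}+R,
\]
with $R=\mathcal{O}(e^{-c\abs{\eta'}})$ uniformly for complex $y$ near $0$. Hence $\int e^{-ix'\cdot\eta'}R\,\dd\eta'$ converges for $x'$ in a complex neighbourhood with $\abs{x'^{i\RR}}<c$, and defines a holomorphic function. These exponentially small and off-diagonal contributions together make up $K_a\in\bl{O}_{(0,0)}$.

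The main obstacle is this bookkeeping. I need uniformity in $\eta'$ over the whole sphere, so that the $h$-dependent Ehrenpreis choice $j_\vartheta(h)$ still yields remainders $\mathcal{O}^{(-\omega)}$ with constants independent of direction; \cref{rem:uniformity} handles this. I also need to check that $K(\chi e^{iy'\cdot\eta'})$ depends measurably and tempered-ly on $\eta'$, so that the integral defining $K(y,x')$ makes sense as an oscillatory integral. That follows from \cref{prop:GK}, which gives $\norm{K(\chi e^{iy'\cdot\eta'})}_{H^s}\lesssim\abs{\eta'}^{s}$.
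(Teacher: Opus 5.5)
You take a different route from the paper. You superpose the full conclusion of Proposition~\ref{prop:K2}, including its Green's-function correction, over $\eta'$. The paper instead uses only the glued WKB data to form $A(y,x')=\int e^{i((y'-x')\cdot\eta'+\psi)}a\,\dd\eta'$, observes that $P_yA=\int e^{-ix'\cdot\eta'}R\,\dd\eta'$ is holomorphic and that $A\vert_{y_n=0}=\delta(x'-y')$, and then proves joint holomorphy of $K_a=K-A$. It does this by analytic elliptic boundary regularity (uniform in $x'$), holomorphy in $x'$ via $-K_a=GR'$, and a Cauchy-estimate/power-series argument.

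Your treatment of the main term and of $\int e^{-ix'\cdot\eta'}R\,\dd\eta'$ is fine. The step you call off-diagonal, however, is a genuine gap. Write
\[
E(y,x')=(2\pi)^{1-n}\int e^{-ix'\cdot\eta'}F(y,\eta')\,\dd\eta'\,,\qquad F(y,\eta')=\int K(y,z')\,(\chi-\chi_{j(\eta')})(z')\,e^{iz'\cdot\eta'}\,\dd z'\,.
\]
Then $K-A=E+(2\pi)^{1-n}\int e^{-ix'\cdot\eta'}R\,\dd\eta'$, and the last term is holomorphic. So joint holomorphy of $E$ near $(0,0)$ is \emph{equivalent} to the assertion $K_a\in\mathcal{O}_{(0,0)}$. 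Your decomposition has moved the main difficulty of the corollary into the term you dismiss.

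Neither of your justifications handles this term.

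\textbf{Integration by parts.} The $j$-fold integration by parts in $z'$ differentiates the fixed cut-off $\chi$ about $\vartheta\abs{\eta'}$ times. A compactly supported smooth $\chi$ satisfies no bounds of type $C^{k+1}k!$ or $(Cj)^k$. In fact $F$ is in general not $\mathcal{O}(e^{-c\abs{\eta'}})$. On the transition region of $\chi$ one has $\chi_j=0$ for all $j$. For real $y$ with $y_n$ large compared to $\vartheta$, the $\chi_j$-part is exponentially small. So $F(y,\eta')$ is, up to such an error, the Fourier transform of the fixed compactly supported function $\chi K(y,\cdot)$, which by Paley--Wiener cannot decay exponentially. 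Hence $\int e^{-ix'\cdot\eta'}F\,\dd\eta'$ exists only as an oscillatory integral for real $x'$, and it cannot be continued to complex $x'$ by absolute convergence.

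\textbf{Off-diagonal analyticity of $K$.} This gives holomorphy of $F(\cdot,\eta')$ in $y$, but with only polynomial bounds in $\eta'$. Because the amplitude depends on $\eta'$ through $j(\eta')$, you cannot use Fourier inversion to see that the $\eta'$-integral is regular near $x'=0$.

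\textbf{A repair within your framework.} Drop $\chi$ and set $E\coloneqq K-A_j$, with $A_j(y,x')=(2\pi)^{1-n}\int e^{-ix'\cdot\eta'}K(\chi_{j(\eta')}e^{i(\cdot)\cdot\eta'})(y)\,\dd\eta'$. Then $P_yE(\cdot,x')=0$. Since every $\chi_j$ is $\equiv1$ on a fixed neighbourhood of $0$, the Dirichlet trace of $E(\cdot,x')$ vanishes there, and analytic boundary regularity gives analyticity in $y$. You still need uniform control, holomorphy in $x'$, and the argument combining them into joint holomorphy. That is exactly the final part of the paper's proof.

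\textbf{A smaller point.} Redefining $a$ on $B_\eps$ destroys holomorphy across $\del B_\eps\cap\Theta_\eps$, and Sj\"ostrand's truncated-sum realization is not holomorphic in $\eta'$. A realization in $\mathcal{O}(U_1\times\Theta_\eps^\circ)$ comes from Lemma~\ref{lem:holsymb}.
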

\begin{proof} 
	Using the compactness of the unit sphere and homogeneity of degree $1$ of the phase $\phi_1$ (writing $h^{-1}\tau \xi'_0 = \eta'$, $h^{-1} = \abs{\eta'}, \abs{\tau}=1$) we glue local (unique) solutions from \cref{prop:K2} together to obtain $\phi$ and $a$ as in the statement, so that
	\[
		P \left(e^{i(y'\cdot \eta'+\psi(y,\eta'))}a(y,\eta')\right) = R(y,\eta')
	\]
	where $R$ is some function that is holomorphic in $y$ near $0$ and measurable in $\eta'$ so that $\abs{R(y,\eta')} \leq Ce^{-\eps \abs{\eta'}}$ for some $C,\eps>0$ and all $y$ in an open complex neighborhood of $0$. This argument required relying on \cref{rem:uniformity}. 
	
	Thus, if we define
	\[
		A(y,x') \coloneqq \int e^{i((y'-x')\cdot \eta' + \psi(y,\eta'))} a(y,\eta')\dd\eta'\,,
	\]
	as an oscillatory integral for $y$ near $0$, %
	we have 
	\[
		P_yA(y,x') = \int e^{-ix'\cdot \eta'}R(y,\eta')\dd\eta'\,,
	\]
	which can be holomorphically extended in $(y,x')$ near $(0,0)$ since the integrand remains absolutely integrable in $\eta'$ there.
	Similarly, because $\psi\vert_{y_n=0}=0$ and $a\vert_{y_n=0}=(2\pi)^{1-n}$, we have $A(y,x')\vert_{y_n = 0} = \delta(x'-y')$. 

	Finally, after extending $A(y,x')$ smoothly outside of the coordinate neighborhood and appealing to local analytic elliptic boundary regularity (\cite[Thm.~A]{zbMATH03134527}, applicable by reduction to zero-boundary data using the Cauchy-Kovalevskaya theorem), we deduce that $K_{a}(y,x') \coloneqq K(y,x')-A(y,x')$ is analytic in $y$ near $0$ for each fixed complex $x'$ near $0$. In fact, \cite[Thm.~2]{zbMATH03134527} states that the neighborhood of analyticity in $y$ of $K_a$ is locally uniform with respect to $x'$. 

	Note that we have, for $R' \coloneqq \int e^{-ix'\cdot\eta'}R(y,\eta')\dd\eta' = P_yA$, writing $K_a = K-A$ and $K_a = G_yP_y K_a + K_y K_a$, and using that $K_a\vert_{y_n=0} = K - A\vert_{y_n=0}=0$, we obtain
	\[
		-K_a(\cdot, x') = G P(A-K)(\cdot,x') = GR'(\cdot,x')
	\]
	for all complex $x'$ sufficiently near $0$ (where we suppressed notation for smoothly extending functions above). Since $G, K$ are continuous linear operators with the properties stated in \cref{prop:GK}, using the Sobolev embedding, for each fixed real $y$ near $0$, and all multi-indices $\alpha$
	\[
		x' \mapsto \del^\alpha_y K_a(y,x')
	\]
	is holomorphic in some fixed complex neighborhood of $0$ independent of $\alpha$. Using the local uniformity (with respect to $x'$) of the neighborhood on which $K_a$ is analytic in $y$, we have
	\begin{equation}\label{eq:joint}
		\sup_{y \in U, x'\in V}\abs{\del^\alpha_y K_a(y,x')} \leq C R^{\abs{\alpha}}\alpha!\quad \forall \alpha
	\end{equation}
	for $V$ some complex neighborhood of $0$ and $U$ some real neighborhood of $0$, and $C,R>0$ some constants. In particular, for fixed $y_0 \in U$ real, 
	\[
		\sum_{\alpha \in \mathbb{N}^n} \frac{\del^\alpha_y K_a(y_0,x')}{\alpha!}(y-y_0)^\alpha
	\]
	is absolutely convergent for complex $y$ near $y_0$, and since each coefficient of this series depends holomorphically on $x'$, using \cref{eq:joint}, this series defines a holomorphic function in $(y,x')$ near $(0,0)$. Since this series agrees with $K_a(y,x')$ when $y$ is real, the proof is complete. 
\end{proof}

We give a description of the symbol of $-\dot \Lambda_q$. 

\begin{proposition}\label{prop:anasymb}%
	Let $q\in L^\infty(M)$ and assume that $\dot\Lambda_q(x',z')$ is analytic on $W'\times W'\setminus \Delta_{W'}$, where $W' \subset W$ is an open neighborhood of the origin. Let $r_0>0$ be sufficiently small and non-negative $\rho \in C^\infty(B_{r_0}(0))$ vanishing on $\abs{t'} \leq r_0/2$ and identically $1$ on $B_{r_0}(0)\setminus B_{5r_0/8}(0)$. For $\gamma_\theta(t') = t'+ir_0\rho(t')\theta\abs{\theta}^{-1}$, define the contour integral, for $\xi'\in \RR^{n-1}\sem$, real $x'$ near $0$,
	\[
		\sigma_a^q(x',\xi') = - \int_{\gamma_{\xi'}(B_{r_0}(0))} e^{i(z'-x')\cdot\xi'}\dot\Lambda_q(x',z')\dd z'\,.
	\]
	For $x', z'$ sufficiently near the origin, we have
	\begin{equation}\label{eq:sigma_aissymb}
		(2\pi)^{1-n}\int e^{i(x'-z')\cdot\xi'} \sigma_a^q(x',\xi') \dd \xi' = -\dot\Lambda_q(x',z') + R(x',z')\,,
	\end{equation}
	where $R$ is analytic in a neighborhood of $(x',z')$.

	Furthermore, if $(\chi_N)_{N\in\mathbb{N}} \subset C_c^\infty(B_{7r_0/8}(0))$ is a sequence of Ehrenpreis cut-offs that is identically $1$ on $B_{3r_0/4}(0)$, then there is some open complex neighborhood $U$ of $(0,1)$ so that for all compact $K\subset U$ and $(x',\tau)\in K$ there are some $C,c,c'>0$ so that for all $\xi'_0\in S^{n-2}$,
	\[
		\abs{\sigma_a^q(x',h^{-1}\tau\xi'_0)+e^{-\frac{i}{h}\tau x'\cdot\xi'_0}\dot\Lambda_q(\chi_N(\cdot)e^{\frac{i}{h} \tau(\cdot)\cdot\xi'_0})(x')}\leq C^{N+1}e^{\frac{c'}{h}(\abs{(x')^{i\RR}}+\abs{\tau^{i\RR}})}((Nh)^N+e^{-c\tau^\RR r_0/h})
	\]
	for all $N\geq 1$, and the term in the absolute value on the LHS is holomorphic with respect to $(x',\tau)$ in $U$.
\end{proposition}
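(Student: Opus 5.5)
We treat \cref{prop:anasymb} as a standard contour-deformation argument for kernels that are analytic off the diagonal, and handle the two claims separately.

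\textbf{The symbol identity \eqref{eq:sigma_aissymb}.} Fix real $x'$ near $0$ and split the integral defining $\sigma_a^q$ into the part over $\{\abs{t'}\le r_0/2\}$, where $\gamma_{\xi'}(t')=t'$ is real, and the part over $\{r_0/2<\abs{t'}<r_0\}$, where the contour is shifted by $ir_0\rho(t')\xi'/\abs{\xi'}$.

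The integrand $z'\mapsto e^{i(z'-x')\cdot\xi'}\dot\Lambda_q(x',z')$ is holomorphic off the diagonal by the hypothesis on $W'\times W'\setminus\Delta_{W'}$. Stokes' theorem therefore lets us replace the shifted contour by the real ball $B_{r_0}(0)$, up to a boundary term on $\abs{t'}=r_0$ coming from the homotopy $s\mapsto t'+isr_0\rho\,\xi'/\abs{\xi'}$. The homotopy never meets $z'=x'$, since $\abs{x'}<r_0/2$. So $\sigma_a^q$ equals the Fourier transform in $z'$ of $-\mathbf{1}_{B_{r_0}}(z')\dot\Lambda_q(x',z')$, plus a boundary term $B(x',\xi')$.

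On the boundary piece, $\operatorname{Im}(z'\cdot\xi')\ge 0$, with equality only at the real endpoint. Hence $B$ is bounded, and it is exponentially small except on a piece where $z'$ stays a fixed distance $\ge r_0/2$ from $x'$.

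Applying $(2\pi)^{1-n}\int e^{i(x'-z')\cdot\xi'}\,\cdot\,\dd\xi'$ now gives two contributions:
\begin{itemize}
\item Fourier inversion returns $-\dot\Lambda_q(x',z')$ for $\abs{z'}<r_0$.
\item The boundary term yields an integral of the form $\int_{\abs{t'}=r_0}\int e^{i(w-z')\cdot\xi'}(\dots)\,\dd\xi'$ with $\abs{w-z'}$ bounded below. Deforming $\xi'$ into the complex domain (or using the directional exponential decay) shows this is analytic in $(x',z')$ near the origin.
\end{itemize}
This analytic contribution is the remainder $R$. This is the usual proof that analytic wavefront set of a kernel lies on the conormal of the diagonal, and we follow \cite{zbMATH06600504}.

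\textbf{The estimate.} Take $\xi'=h^{-1}\tau\xi'_0$. By the definition of $\dot\Lambda_q$ as a distribution kernel,
\[
e^{-\frac{i}{h}\tau x'\cdot\xi'_0}\dot\Lambda_q(\chi_N e^{\frac{i}{h}\tau(\cdot)\cdot\xi'_0})(x')=\int e^{\frac{i}{h}\tau(z'-x')\cdot\xi'_0}\chi_N(z')\dot\Lambda_q(x',z')\dd z'.
\]
Away from the diagonal this is an honest integral. Near $z'=x'$ the singular kernel is handled exactly as in the first part: both objects agree on $\abs{z'}\le r_0/2$ after the contour deformation. The difference between $-\sigma_a^q$ and this expression then consists of two pieces.

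\emph{Real annulus piece.} On the annulus $r_0/2<\abs{z'}<7r_0/8$ we have $(1-\chi_N)$ times the kernel on the real domain, compared with the deformed contour. Deform the real integral against $\chi_N$ in the direction $\xi'_0$ by a fixed amount $\delta$, where $\delta$ is small relative to the analyticity width. Alternatively, integrate by parts $N$ times with $\frac{h}{i\tau}\xi'_0\cdot\partial_{z'}$, as in \cref{prop:K2}. The Ehrenpreis bounds give $C^{N+1}(Nh)^N$, times $e^{c'\abs{\tau^{i\RR}}/h}$, where the extra exponential comes from $\abs{e^{i\tau(\cdot)/h}}$ for complex $\tau$.

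\emph{Shifted contour piece.} On $\rho=1$ we have $\operatorname{Im}((z'-x')\cdot\tau\xi'_0)\approx \tau^\RR r_0$ up to small errors in $x'^{i\RR},\tau^{i\RR}$. This gives the $e^{-c\tau^\RR r_0/h}$ term.

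Holomorphy in $(x',\tau)$ follows because each integral is over a fixed contour with holomorphic integrand, and complex $x'$ costs $e^{c'\abs{x'^{i\RR}}/h}$.

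\textbf{Main obstacle.} The hard step is making the comparison near the transition region $r_0/2\le\abs{t'}\le 5r_0/8$ where $\chi_N\equiv1$ but $\rho$ varies. The argument must use only holomorphy off the diagonal, and the constants must be uniform in $\xi'_0\in S^{n-2}$ and in $N$. Here we would use Stokes' theorem on the region between the real ball and $\gamma$, on which $\chi_N=1$, so no $N$-dependence enters.
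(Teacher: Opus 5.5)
Your proof of \eqref{eq:sigma_aissymb} is a legitimate variant of the paper's. The paper splits with a smooth $\beta\equiv 1$ on $B_{5r_0/8}$, so the $(1-\beta)$-part lives where $\rho\equiv 1$; there it is $\mathcal{O}(e^{-r_0\abs{\xi'}})$ and holomorphic in $x'$, hence gives an analytic kernel directly. You instead pass to the sharp real ball by Stokes, and must then show that the side term on $\abs{t'}=r_0$ gives an analytic kernel. That term has no uniform exponential decay, since $s$ runs down to $0$, so a complex deformation of $\xi'$ in a $t'$-dependent direction (cut off near $\xi'=0$) is genuinely needed, and it works.

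The genuine gap is in the estimate. The content of the bound $C^{N+1}(Nh)^N$ is the interaction of the \emph{non-holomorphic} cutoff $\chi_N$ with the contour deformation, and your proposal never controls it. Your split into a ``real annulus piece'' and a ``shifted contour piece'' is not what the difference actually is. The region swept between $B_{r_0}$ and $\gamma_{\xi'_0}(B_{r_0})$ extends out to $\abs{t'}=r_0$, while $\chi_N$ vanishes for $\abs{t'}\geq 7r_0/8$. So Stokes ``where $\chi_N\equiv 1$'' can only be applied on, say, $B_{5r_0/8}$. That leaves two terms:
\begin{itemize}
\item a side term over $\{t'+isr_0\xi'_0\colon \abs{t'}=5r_0/8,\ s\in[0,1]\}$;
\item a real-domain integral of $\chi_N e^{\frac{i}{h}\tau(z'-x')\cdot\xi'_0}\dot\Lambda_q(x',z')$ over $5r_0/8<\abs{z'}<7r_0/8$, with a sharp inner edge.
\end{itemize}
Each is in general only of size a fixed power of $h$ (the $s$-integral of $e^{-sr_0\tau^{\RR}/h}$ yields just one factor $h$). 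Only their sum is $\mathcal{O}((Nh)^N)$, and neither of your tools shows this. Shifting the $\chi_N$-integral by a fixed $\delta$ is not allowed: $\chi_N$ is only $C^\infty$, so the shift produces $\bar\partial$-type terms involving $\nabla\chi_N$. Were it allowed, the error would be $e^{-\delta\tau^{\RR}/h}$ uniformly in $N$; the factor $(Nh)^N$ is exactly the cost of those terms. Likewise, $N$ integrations by parts on an integral with a sharp edge produce boundary terms as large as the side term.

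What is missing is a homotopy that carries the cutoff along with the contour. The paper does this on the family $\gamma_{\xi'_0}(t',s)=t'+isr_0\rho(t')\xi'_0$, using the order-$N$ almost holomorphic extension $\td{\chi}_N(t',s)=\sum_{k<N}\frac{(isr_0)^k}{k!}(\xi'_0\cdot\del_{t'})^k\chi_N(t')$.
\begin{itemize}
\item Closedness of the holomorphic form turns $\del_s I_N$ into an integral against $(\del_s-ir_0\xi'_0\cdot\del_{t'})\td{\chi}_N=\mathcal{O}\bigl((CN)^N s^{N-1}/(N-1)!\bigr)$, by the Ehrenpreis bounds.
\item That integrand is supported where $\rho\equiv 1$, so $\int_0^1 e^{-sr_0\tau^{\RR}/h}s^{N-1}\dd s$ gives $(CNh)^N$.
\item At the endpoints, $s=0$ is exactly the $\chi_N$-pairing, and $\sigma_a^q-I_N(1)$ lives in $\abs{t'}\geq 3r_0/4$, giving $e^{-cr_0\tau^{\RR}/h}$.
\end{itemize}

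A repair closer to your integration-by-parts idea also works. Keep $\chi_N(t')$ unextended along the same homotopy and set $F(z')=e^{i(z'-x')\cdot\xi'}\dot\Lambda_q(x',z')$. Since $\rho\equiv 1$ on $\supp\nabla\chi_N$, one gets
\[
\frac{\dd}{\dd s}\int\chi_N(t')F(\gamma_{\xi'_0}(t',s))J_{\xi'_0}(t',s)\dd t'=-ir_0\int(\xi'_0\cdot\nabla\chi_N)(t')F(t'+isr_0\xi'_0)\dd t'\,.
\]
The remaining term $\int(1-\chi_N)F(\gamma_{\xi'_0}(t',1))J_{\xi'_0}(t',1)\dd t'$ lives where $\rho\equiv 1$ and is $\mathcal{O}(e^{-cr_0\tau^{\RR}/h})$. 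The right-hand side above is compactly supported in $3r_0/4\leq\abs{t'}\leq 7r_0/8$, where the kernel is holomorphic. Only on this term do $N-1$ integrations by parts with $\frac{h}{i\tau}\xi'_0\cdot\del_{t'}$ work: Ehrenpreis bounds on $\chi_N$ plus Cauchy estimates for $\dot\Lambda_q$, followed by $\int_0^1 e^{-sr_0\tau^{\RR}/h}\dd s\lesssim h$, give $C^{N+1}(Nh)^N$. Both surviving terms are integrals away from the diagonal, which also yields the claimed holomorphy in $(x',\tau)$.
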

\begin{proof}
	Write $\xi' = h^{-1}\tau \xi'_0$, and assume for now that $\tau=1$ and $x'$ is real near $0$. If $\beta \in C_c^\infty(B_{r_0}(0))$ is any function that is identically $1$ on $B_{5r_0/8}(0)$, the definition of $\sigma_a^q$ gives
	\begin{equation}\label{eq:sigmaaasdistro}
		-\sigma_a^q(x',\xi') = \langle \dot\Lambda_q(x',t'), \beta(t') e^{i(t'-x')\cdot \xi'} \rangle_{t'} + \int_{B_{r_0}(0)} (1-\beta(t')) e^{i(\gamma_{\xi'_0}(t')-x')\cdot\xi'} \dot\Lambda_q(x',\gamma_{\xi'_0}(t'))J_{\xi'_0}(t')\dd t'\,,
	\end{equation}
	where the first term is a distributional pairing and the second is an ordinary integral where $J_{\theta}(t') = \mathrm{det}(\id+ir_0 \nabla_{t'}\rho\otimes \theta\abs{\theta}^{-1})$, which is well-defined for $r_0>0$ sufficiently small because on the support of $(1-\beta)$ the kernel $\dot\Lambda_q$ has a holomorphic extension. Notice also that the second term is supported where $i\gamma_{\xi'}(t')\cdot\xi' = it' \cdot\xi' - r_0\abs{\xi'}$. In particular, there is an open complex neighborhood of $0$ on which the second term has a holomorphic extension in $x'$ for all $\xi' \neq 0$ and on which the term is bounded above by $Ce^{-c\abs{\xi'}}$ for some $C,c>0$. Since such exponentially decaying terms lead to analytic kernels (\cite[17.1.19]{MR4436039}), and the first term will, by ordinary Fourier inversion, lead to $-\beta(z')\dot\Lambda_q(x',z')$, the proof of \cref{eq:sigma_aissymb} is complete.

	From now on we will allow $(x',\tau)$ to be complex near $(0,1)$ so that \cref{eq:sigmaaasdistro} is seen to be the holomorphic extension of $-\sigma_{a}^q(x',h^{-1}\tau\xi'_0)$ in these variables. Recall that we use the notation $\xi' = h^{-1}\tau\xi'_0$.

	For $\chi_N$ as in the statement, define
	\[
		\td{\chi}_N(t',s) \coloneqq \sum_{k=0}^{N-1} \frac{(isr_0)^k}{k!}\left(\xi'_0\del_{t'}\right)^k \chi_N(t')\,,
	\]
	the order $N$ almost holomorphic extension of $\chi_N$ and remark that by a telescoping property,
	\[
		\abs{\left(\del_s -ir_0\xi'_0\del_{t'}\right)\td{\chi}_N(t',s)} = \abs{-ir_0 \frac{(isr_0)^{N-1}}{(N-1)!}\left(\xi'_0\del_{t'}\right)^N\chi_N(t')}  \leq C \frac{(Cr_0 N)^N}{(N-1)!} s^{N-1}\,.
	\]
	Define the integral
	\[
		I_N(x',\xi';s) \coloneqq -\int_{B_{r_0}(0)} e^{i(\gamma_{\xi'_0}(t',s)-x')\cdot\xi'}\dot\Lambda_q(x',\gamma_{\xi'_0}(t',s))\td{\chi}_N(t',s)J_{\xi'_0}(t',s) \dd t'\,,
	\]
	where $\gamma_{\xi'_0}(t',s) = t'+isr_0\rho(t')\xi'_0$ and $J_{\xi'_0}(t',s) = \det(\id + isr_0 \nabla_{t'}\rho(t')\otimes \xi'_0)$. We calculate that
	\begin{equation}\label{eq:delsIn}
		-\del_s I_N(x',\xi';s) = \int_{B_{r_0}(0)} e^{i(\gamma_{\xi'_0}(t',s)-x')\cdot\xi'}\dot\Lambda_q(x',\gamma_{\xi'_0}(t',s))\left(\del_s-ir_0\xi'_0\del_{t'}\right)\td{\chi}_N(t',s)\dd t'\,,
	\end{equation}
	where we used integration by parts, the identity
	\[
		\del_s(e^{i(\gamma_{\xi'_0}(t',s)-x')\cdot\xi'}\dot\Lambda_q(x',\gamma_{\xi'_0}(t',s))J_{\xi'_0}(t',s)) = ir_0\xi'_0\del_{t'}(\rho e^{i(w'-x')\cdot\xi'}\dot\Lambda_q(x',w'))\vert_{w' = \gamma_{\xi'_0}(t',s)}\,,
	\]
	and that $\rho \equiv 1$ on $\supp \del_s \td{\chi}_N, \supp \nabla_{t'}\td{\chi}_N$. By plugging in $s=0$, we obtain
	\[
		I_N(x',\xi';0) = -e^{-ix'\cdot\xi'}\dot\Lambda_q(\chi_N(\cdot)e^{i(\cdot)\cdot\xi'})(x')
	\]
	and for $s=1$,
	\begin{equation}\label{eq:s-1}
		\sigma_a^q(x',\xi') - I_N(x',\xi';1) = -\int_{B_{r_0}(0)} e^{i(\gamma_{\xi'_0}(t',1)-x')\cdot\xi'}\dot\Lambda_q(x',\gamma_{\xi'_0}(t',1))(1-\td{\chi}_N(t',1))J_{\xi'_0}(t',1) \dd t'\,.
	\end{equation}
	Notice that both integrands in \cref{eq:delsIn,eq:s-1} are supported where $t'$ is away from $0$, so that $\dot\Lambda_q$ is holomorphic where it is evaluated. In particular, writing again $\xi' = h^{-1}\tau\xi'_0$,
	\[
		\sigma_a^q(x',h^{-1}\tau\xi'_0) - I_N(x',h^{-1}\tau\xi'_0;0) = \sigma_a^q(x',h^{-1}\tau\xi'_0) - I_N(x',h^{-1}\tau\xi'_0;1)+\int_0^1 \del_s I_N(x',h^{-1}\tau\xi'_0;s) \dd s\,,
	\]
	is holomorphic with respect to $(x',\tau)$ near $(0,1)$. To bound this term, we observe that (estimating $\abs{\td{\chi}_N}\leq e^{Cr_0 N}$ and using its support properties)
	\[
		\abs{\sigma_a^q(x',h^{-1}\tau\xi'_0) - I_N(x',h^{-1}\tau\xi'_0;1)} \leq C^{N+1}e^{\frac{c'}{h}(\abs{(x')^{i\RR}}+\abs{\tau^{i\RR}})}e^{-\frac{c}{h}r_0\tau^{\RR}}
	\]
	for some $C,c,c'>0$ and 
	\[
		\int_0^1 \del_s I_N(x',h^{-1}\tau\xi'_0;s) \dd s \leq e^{\frac{c'}{h}(\abs{(x')^{i\RR}}+\abs{\tau^{i\RR}})}\int_0^1 e^{-\frac{s}{h}r_0\tau^{\RR}}s^{N-1}\dd s\frac{(Cr_0N)^N}{(N-1)!} \leq e^{\frac{c'}{h}(\abs{(x')^{i\RR}}+\abs{\tau^{i\RR}})}(CNh)^N\,,
	\]
	which leads to the desired estimate. 
\end{proof}

Based on this preparation, we shall prove two local results. 
\begin{theorem}\label{thm:localmain}
    Let $q\in L^\infty(M)$ and $\xi'_0 \in S^{n-2}$ be fixed. Let $\chi, \td{\chi}_{j_\vartheta(h)} \in C_c^\infty(W)$ be supported sufficiently close to $0$ and identically $1$ near $0$, where $\supp \chi \subset \{\td{\chi}_{j_\vartheta(h)}\equiv 1\}$ and $\td{\chi}_{j_\vartheta(h)}$ is the $j_\vartheta(h)$-th element of a sequence of Ehrenpreis cut-offs as in \cref{prop:K2}. If 
    \[
        \int_M q \overline{K(\chi e^{\frac{i}{h}\psi_0})} K(\td{\chi}_{j_\vartheta(h)} e^{\frac{i}{h}\psi_1}) d V_g(y) \in \bl{O}^{(-\omega)}_{(0,1)}\,,
    \]
    then $q \equiv 0$ a.e. near $\hat y$.
\end{theorem}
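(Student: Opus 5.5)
We first apply \cref{cor:Ks} to rewrite the hypothesis as
\[
	T q(v',\tau;h)\coloneqq\int_M e^{\frac{i}{h}\varphi(v',\tau,y)}\,b^\ast d\,\chi q\,\dd V_g(y)\in\bl{O}^{(-\omega)}_{(0,1)},\qquad \varphi\coloneqq\phi_1-\phi_0^\ast .
\]
Here $\chi q$ is extended by zero to a compactly supported $L^\infty$ function of $y=(y',y_n)\in\RR^n$, supported in $\{y_n\ge 0\}$. We view $T$ as a germ FIO-type operator with output variable $z=(v',\tau)\in\CC^n$ near $(0,1)$ and input variable $y\in\RR^n$ near $0$.

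The plan is to show that $\varphi$ is an FBI phase at $((0,1),0)$. For the boundary values we use \cref{prop:K1,prop:K2}: at $y_n=0$,
\[
	\varphi=-\tau y'\cdot\xi'_0+\tfrac i2(v'-y')^2+\tfrac i2(\tau-1)^2+y'\cdot\xi'_0 ,
\]
where we take the same $\xi'_0$ in both propositions. Using the eikonal equations, $\del_{y_n}\varphi = i r^{1/2}(y,\del_{y'}\phi_1)+ i\,\overline{r^{1/2}(\bar y,\overline{\del_{y'}\phi_0})}$. At the base point both tangential gradients equal $-\xi'_0$, so
\[
	\del_{y_n}\varphi = 2i\,r^{1/2}(0,\xi'_0)\neq0 .
\]
The three conditions then check out as follows.
\begin{enumerate}
	\item $\varphi=0$ and $\del_y\varphi=(0,\dots,0,2ir^{1/2})$ at the base point. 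This is purely imaginary rather than real, so we will multiply the phase by a constant, or equivalently replace $h$ by a rescaled complex parameter, or simply shift the roles of the variables. The more natural fix is to note that $\del_y\varphi$ at $\hat y$ is conormal to the boundary. We therefore treat $y_n$-decay as producing an exponentially decaying factor $e^{-2r^{1/2}y_n/h}$, which acts like a Laplace transform in the normal direction, and we combine it with the Gaussian in $y'$.
	\item $\del_z\del_y\varphi$ is invertible: $\del_{v'}\del_{y'}\varphi=-i\,\id$, and $\del_\tau\del_{y'}\varphi=-\xi'_0$, while $\del_\tau\del_{y_n}$ is governed by the homogeneity of $\phi_1$ in $\tau$. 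So the matrix is block triangular with nonzero diagonal.
	\item $\mathrm{Im}\,\del_{y'}^2\varphi=\id>0$ in the tangential block.
\end{enumerate}
With these in hand we aim to use the Sjöstrand theory, \cite[Thm.~22.3.15]{MR4436039}, that for an FBI phase $\varphi$ with elliptic amplitude, exponential decay of $Tq$ near $z=(0,1)$ is equivalent to $(0,\hat\xi)\notin\wf(\chi q)$. The covector $\hat\xi$ is $\del_y\varphi$ made real after the rescaling above, and it is the conormal $\pm dy_n$. The amplitude $b^\ast d$ equals $1$ on the boundary, so it is elliptic. If the phase is not literally FBI, we instead construct a Sjöstrand triad as in \cref{def:sjostrand_Triad} relating $T$ to a standard FBI transform, and invert $T$ on germ spaces.

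Once we know $(0,\pm dy_n)\notin\wf(\chi q)$, we conclude with \cite[Thm.~8.5.6']{hoermander1}, microlocal analytic continuation. The distribution $\chi q$ vanishes on $\{y_n<0\}$, and its analytic wavefront set at $0$ does not contain the conormal of the analytic hypersurface $\{y_n=0\}$. Hence $\chi q$ vanishes in a neighborhood of $0$, that is, $q\equiv0$ a.e. near $\hat y$.

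The main obstacle is the first condition, handled above with a hedge: $\del_y\varphi$ at the base point is imaginary and normal. To handle it, we would first try to use $\tau$ complex, together with the $\tau$-dependence of $\phi_1$. After a complex change of output variables $\tau\mapsto\tau+i\sigma$, the critical point of $\mathrm{Im}\,\varphi-\Phi$ moves so that the effective real covector becomes conormal. Concretely, we would verify that $\Psi_1$ has a saddle point, as required in \cref{def:sjostrand_Triad}, with a suitable weight $\Phi$, and then invoke the germ-space inversion to transfer decay to a standard FBI transform.
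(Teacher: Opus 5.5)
Your use of \cref{cor:Ks} and your last step (conormal direction not in $\wf(\chi q)$, then \cite[Thm.~8.5.6']{hoermander1}) are the same as in the paper, and you identify the right obstacle. The step in between, which is the real content of the theorem, is only a list of hedges, and none of them works.

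At $((0,1),0)$ the phase $\varphi=\phi_1-\phi_0^\ast$ is far from an FBI phase. Its $y$-gradient $2ir^{1/2}(0,\xi'_0)e_n$ is purely imaginary. Moreover, $\mathrm{Im}\,\del_y^2\varphi$ is positive only on the tangential block: for the flat Laplacian one finds $\del_{y_n}^2\varphi=-i$, so $\mathrm{Im}\,\del_y^2\varphi=\mathrm{diag}(\id,-1)$. Your item 2 is also misargued. Both $\del_\tau\del_{y'}\varphi=-\xi'_0$ and $\del_{v'}\del_{y_n}\varphi=\del_{\eta'}r^{1/2}(0,\xi'_0)$ are nonzero, so the matrix is not block triangular; invertibility comes from the Schur complement $2ir^{1/2}(0,\xi'_0)$ via Euler's identity, as in \cref{lem:phi3props}.

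\begin{itemize}
\item Rescaling the phase or $h$ by a complex constant is not available. The hypothesis concerns a fixed kernel for real $h\in(0,1]$ only, and nothing in it is holomorphic in $h$ (for example $j_\vartheta(h)$ and the finite realizations $b,d$).
\item A complex change of output variables, or a Sj\"ostrand-triad inversion, is a germ statement. It transports exponential decay only between points that correspond under the complex canonical relation, so the associated covector stays imaginary.
\item The hypothesis gives decay only in a small neighborhood of $(0,1)$, so a shift $\tau\mapsto\tau+i\sigma$ leaves the region where anything is known.
\end{itemize}

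What is missing is a non-local propagation step. The half-space support $\supp u\subset\{y_n\ge 0\}$ is essential there, not only in the final continuation argument. The paper proceeds as follows.
\begin{itemize}
\item The triad $(\phi_4,\Phi^\sharp,\Phi^{\sharp,\ast})$ of \cref{lem:tdTinvnew} (with $\hat c$ large and the well-shaped contour $\{z=\bar x\}$ of \cref{lem:contour}) gives $Su=\mathcal{T}(Au)$ modulo $\mathcal{O}^{(-\omega)}$, with $\mathcal{T}$ invertible and $A$ having the quadratic phase $\hat\phi$.
\item This yields only $Au\in\mathcal{O}^{(-\omega)}_0$ (\cref{lem:fbifirst}), and $\hat\phi$ still has the imaginary gradient $-\hat\xi$ at $z=0$.
\item The half-space support makes $Au$ entire with the explicit global weight $\Phi^\flat$ of \cref{lem:phiflat}. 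This weight does not depend on $z_n$ where $z_n^\RR<2\hat c^{-1}r^{1/2}(0,\xi'_0)$.
\item A subharmonicity (two-constants) argument for $h\log\abs{Au}$ in the single variable $z_n$, following \cite[Lem.~4.1]{zbMATH05707695}, moves the decay from $z=0$ to $z^\circ_\nu=(2\hat c^{-1}r^{1/2}(0,\xi'_0)+i\nu)e_n$ (\cref{lem:complexshift}). At that point $\hat\phi$ is a genuine FBI phase with real gradient $\hat c\nu e_n$.
\item Only then does \cite[Thm.~22.3.15]{MR4436039} give $(0,-\nu\hat c e_n)\notin\wf(u)$, and your final step applies.
\end{itemize}
Without such a step your argument never reaches a statement about the analytic wavefront set at a real covector.
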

By $\overline{K(\chi e^{\frac{i}{h}\psi_0})}$ we mean the holomorphic conjugate of $K(\chi e^{\frac{i}{h}\psi_0})$ in $(v',\tau)$ when it exists (rather than merely the complex conjugate), which, in particular, is for $(v',\tau)$ near $(0,1)$ by \cref{prop:K1}.

\cref{thm:symb} is now a direct consequence of 
\begin{theorem}\label{thm:localsymb}
    Let $q\in L^\infty(M)$. For any $\xi'_0 \in S^{n-2}$ fixed the following two statements are equivalent
    \begin{enumerate}
	\item $\dot \Lambda_q$ is analytic off $\Delta_W$ near $(0,0)$ and $\sigma_a^q(x',h^{-1}\tau \xi'_0)$ is a cl.a.s. of order $-1$ for $(x',\tau)$ near $(0,1)$
	\item there is some $\td{q} \in C_c^\infty(\RR^n), \td{q}$ analytic near $0$ so that in normal coordinates in $W$ we have $q = \mathbb{1}_{\{y_n\geq 0\}}\td{q}$ near $\hat y=0$. 
    \end{enumerate}
    Furthermore, in this case $\sigma_a^q(x',h^{-1}\tau \xi'_0)$ is elliptic near $(x',\tau)=(0,1)$ if and only if $\td{q}(0) \neq 0$. 
\end{theorem}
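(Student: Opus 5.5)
Both implications run through the kernel representation of $-\dot\Lambda_q = K^\ast qK$ near $(0,0)$ supplied by \cref{cor:kerndia}. The direction (2)$\Rightarrow$(1) is a stationary phase computation; the direction (1)$\Rightarrow$(2) is reduced to the local recovery statement \cref{thm:localmain}.

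\emph{(2)$\Rightarrow$(1).} Substituting the oscillatory representation of $K(y,x')$ from \cref{cor:kerndia} into the kernel $\int_M \overline{K(y,x')}\,q(y)\,K(y,z')\,\dd V_g(y)$ gives, near $(0,0)$, a double oscillatory integral in $(\eta',\zeta')$. The $y$-integration is cut off near $0$; the remaining pieces are analytic off the diagonal by \cref{cor:kerndia} and analytic regularity of $K$ away from $x'$.
\begin{enumerate}
\item Integrate first in $y'$ with $y_n\ge 0$ fixed. Stationary phase in the analytic category (after a contour deformation in $y'$) identifies the full symbol with a one-sided integral $\int_0^\infty e^{i\Phi(x',y_n,\xi')}\td{q}\,\overline{a}\,a\,\dd y_n$. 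Here $\Phi$ is built from $\psi$, and $\mathrm{Im}\,\Phi \sim 2y_n r^{1/2}(x',\xi')$ by \eqref{eq:phi2}.
\item Apply Watson's lemma at the endpoint $y_n=0$. Because $\td{q}$ is analytic, this produces an expansion in $\abs{\xi'}^{-1-k}$ whose coefficients involve $\partial_{y_n}^k\td{q}(x',0)$ and have $k!C^k$ growth, i.e.\ a cl.a.s.\ of order $-1$.
\item The leading term is $\td{q}(x',0)/(2r^{1/2}(x',\xi'))$ up to a nonzero constant. This gives the ellipticity criterion in both directions. The precise form is the content of \cref{prop:sigmatotq}; I would verify it at this step.
\item Off-diagonal analyticity follows from the same representation, since the phase is nonstationary off $x'=z'$ and the integrand is holomorphically continuable.
\item \cref{prop:anasymb} identifies $\sigma_a^q$ with this symbol modulo analytic kernels, via \eqref{eq:sigma_aissymb}.
\end{enumerate}

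\emph{(1)$\Rightarrow$(2).} Assume $\sigma_a^q(x',h^{-1}\tau\xi'_0)\sim\sum_j h^{1+j}\sigma_j$ is a cl.a.s.
\begin{enumerate}
\item Match boundary jets. Test $K^\ast qK$ against the wave packets of \cref{cor:Ks}. By the second estimate of \cref{prop:anasymb} with $N=j_\vartheta(h)$, the pairing equals $\int e^{\frac ih(x'\cdot\tau\xi_0'-\psi_0^\ast)}\chi\,\sigma_a^q\,\dd x'$ modulo $\bl{O}^{(-\omega)}_{(0,1)}$.
\item The symbol coefficients $\sigma_j$ determine the boundary values $\partial^k_{y_n}q(\cdot,0)$ recursively, via the triangular structure of the Watson expansion from step 2 of the first direction. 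Along the fixed direction $\xi'_0$, with $\tau$ and $x'$ varying, this is formally determined, and the factorial bounds on $\sigma_j$ transfer to $\abs{\partial^k_{y_n}q}\le C^{k+1}k!$. More precisely, I would construct a candidate $\td{q}$ by Cauchy--Kovalevskaya-type summation of a Taylor series in $y_n$ whose coefficients are analytic in $x'$.
\item Set $q' = q-\mathbb{1}_{\{y_n\ge0\}}\td{q}$. By the first direction, $\sigma_a^{q'}$ is a cl.a.s.\ whose formal symbol vanishes identically. Its finite realization is therefore $\mathcal{O}^{(-\omega)}$.
\item Feeding this into step 1 shows that the pairing in \cref{thm:localmain} for $q'$ lies in $\bl{O}^{(-\omega)}_{(0,1)}$. \cref{thm:localmain} then gives $q'=0$ near $0$, which is (2).
\end{enumerate}

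\textbf{Main obstacle.} The hardest step is step 2 of (1)$\Rightarrow$(2): inverting the jet-to-symbol map with factorial control so that a genuine analytic $\td{q}$ exists. The explicit formula for $\sigma$ in terms of boundary jets (\cref{prop:sigmatotq}) is needed, and I would try to make the recursion triangular with leading coefficient $(2r^{1/2})^{-1-k}k!$ times a nonvanishing factor, so that Cauchy-type estimates propagate. A secondary difficulty is the uniformity of the stationary phase step in complex $(x',\tau)$.
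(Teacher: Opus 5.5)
\textbf{Direction (1)$\Rightarrow$(2).} This is essentially the paper's argument. You build $\td q$ by the triangular recursion for its $y_n$-Taylor coefficients, as in \cref{thm:foundtdq}, using $\mathrm{Op}^R(t_k)=(2r^{1/2})^{-k-1}\partial_{x_n}^k+\dots$. You subtract $\mathbb{1}_{\{y_n\ge 0\}}\td q$ so that the difference has exponentially small $\sigma_a$. The second estimate of \cref{prop:anasymb} with $N=j_\vartheta(h)$ turns this into an exponentially small pairing against the Gaussian packets, and \cref{thm:localmain}/\cref{cor:vanish} finishes.

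The paper first localizes $q$ by a cut-off $\chi_1$; omitting this is harmless. Your phrase that the $\sigma_j$ ``determine $\partial^k_{y_n}q(\cdot,0)$'' is not meaningful for $q\in L^\infty$. What is determined is the jet of the candidate $\td q$, as you then correctly say.

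\textbf{Direction (2)$\Rightarrow$(1).} Here you take a genuinely different route. You propose analytic stationary phase in $(y',\eta')$ on the kernel representation of \cref{cor:kerndia}, then Watson's lemma in $y_n$, obtaining the cl.a.s.\ property and the coefficients in one computation.

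The paper (\cref{prop:direc1} together with \cref{prop:sigmatotq}) splits this into three separate arguments:
\begin{enumerate}
\item Off-diagonal analyticity comes from an FBI test with the Gaussian packets of \cref{prop:K1}, analytic boundary regularity, and Ehrenpreis cut-offs.
\item The cl.a.s.\ property comes from solving for $G(qe^{i\phi/h}a)$ with the analytic WKB method on Ehrenpreis-truncated plane waves, and transferring the result to $\sigma_a^q$ through \cref{prop:anasymb}.
\item The explicit coefficients $\mathrm{Op}^R(t_k)\td q(x',0)$ are read off from the auxiliary symbol $\sigma_b$, which agrees with $\sigma_a$ only modulo $\mathcal{O}(h^\infty)$ (\cref{prop:symbcalc,prop:allthingst}). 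The paper then uses uniqueness of Poincar\'e expansions and holomorphy in $(x',\tau)$.
\end{enumerate}

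The point your route asserts rather than proves is that $\sigma_a^q$ is a \emph{realization} of a cl.a.s. Bounds of the form $k!C^k$ on the coefficients give only a formal cl.a.s. You need Gevrey remainder bounds as in \cref{lem:recoversymbol}, uniformly for complex $(x',\tau)$. In particular, you must show exponentially small contributions from:
\begin{itemize}
\item the $y'$ cut-off regions;
\item $\eta'$ away from $\xi'$, including $\eta'$ near $0$, where $a$ and $\psi$ are not holomorphic.
\end{itemize}
A stationary phase with ordinary smooth cut-offs yields exactly $\sigma_b$, which the paper stresses is not the analytic symbol. With that precision your steps 2 and 5 would not give a cl.a.s.

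If you carry this out with Ehrenpreis cut-offs or contour deformations, your approach gives a single self-contained derivation of both the cl.a.s.\ property and the explicit symbol. The paper's design avoids exponential-precision stationary phase entirely, at the cost of doing the analytic and the formal computations separately.
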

Note that \cref{eq:sigma_aissymb} justifies the use of the phrasing `classical analytic pseudodifferential operator' in \cref{thm:symb}, see also \cite[Def.~17.2.32]{MR4436039}. 
\cref{thm:localsymb} is a generalization of \cite[Thm.~1.6]{zbMATH06600504}, but we shall give an alternate proof thereof in \cref{sec:findq}.

In fact, we can already prove one of the implications of \cref{thm:localsymb}:
\begin{proposition}\label{prop:direc1}
	The implication 2.$\implies$ 1. in \cref{thm:localsymb} is true.
\end{proposition}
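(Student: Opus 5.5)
Assume 2.: near $\hat y=0$ in boundary normal coordinates $q=\mathbb{1}_{\{y_n\geq0\}}\td q$ with $\td q$ analytic near $0$. We must show that $\dot\Lambda_q$ is analytic off the diagonal near $(0,0)$ and that $\sigma_a^q(x',h^{-1}\tau\xi'_0)$ is a cl.a.s.\ of order $-1$. The plan is to compute $\dot\Lambda_q=-K^\ast qK$ from the kernel representation in \cref{cor:kerndia} and then read off the symbol through the estimate in \cref{prop:anasymb}.

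\textbf{Step 1: split $q$ and dispose of the remote part.} Write $q=q_0+q_1$, where $q_0=\beta\,\mathbb{1}_{\{y_n\ge0\}}\td q$ for a cut-off $\beta$ supported in a small neighborhood of $0$ and $q_1=q-q_0$ vanishes near $0$. For $x',z'$ near $0$ the kernel of $K^\ast q_1K$ is $\int_M \overline{K(y,z')}q_1(y)K(y,x')\,\dd V_g$, where $y$ ranges away from the boundary points $x',z'$. There the Poisson kernel is analytic jointly in $(y,x')$, by analytic elliptic regularity and the argument at the end of \cref{cor:kerndia}. Hence this contribution is analytic in $(x',z')$ near $(0,0)$. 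By \cref{prop:anasymb} it contributes an $\mathcal{O}^{(-\omega)}$ term to $\sigma_a^q$, since analytic kernels yield exponentially decaying symbols. Similarly, $K_a$ in \cref{cor:kerndia} contributes only analytic pieces, so only the oscillatory part $A(y,x')$ matters.

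\textbf{Step 2: compute the near part as an oscillatory integral.} Insert $K=A+K_a$ and write
\[
\int_0^\infty\!\!\int e^{i((y'-x')\cdot\eta'+\psi(y,\eta'))-i((y'-z')\cdot\zeta'+\psi^\ast(y,\zeta'))}a(y,\eta')a^\ast(y,\zeta')\td q(y)\beta(y)\sqrt{g}\,\dd y'\,\dd y_n\,\dd\eta'\,\dd\zeta'.
\]
The $y'$-integration is handled by the analytic stationary phase method, or equivalently by the analytic pseudodifferential calculus. This reduces the expression to a single frequency $\xi'$ with a phase of the form $i(z'-x')\cdot\xi'+\dots$. The $y_n$-integral is a Laplace-type integral: because $\del_{y_n}\phi=ir^{1/2}$ with $\mathrm{Re}\,r^{1/2}>0$, its phase has imaginary part $\gtrsim y_n|\xi'|$. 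Expanding the analytic integrand in Taylor series in $y_n$ and integrating termwise via $\int_0^\infty y_n^k e^{-2y_n r^{1/2}}\dd y_n=k!(2r^{1/2})^{-k-1}$ gives a formal symbol $\sum_j p_j(x',\xi')$ with $p_j$ homogeneous of degree $-1-j$. The leading term is $\td q(x',0)/(2r^{1/2}(x',0,\xi'))$ up to the normalization $(2\pi)^{1-n}$. The cut-off $\beta$ only produces exponentially small errors, because $y_n$ is bounded away from $0$ on $\supp(1-\beta)$.

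\textbf{Step 3: obtain the analytic bounds and the ellipticity claim.} The bounds $|p_j|\le C^{j+1}j!$ follow from Cauchy estimates on $\td q$, $a$ and $\phi$ together with the factor $k!$ produced by the Laplace integral; the combinatorial estimates of \cref{sec:comb} handle the composition. Truncating at $j\le R^{-1}h^{-1}$ with $N=\vartheta h^{-1}$ Ehrenpreis cut-offs then gives the finite-realization estimate, matched with $\sigma_a^q$ through \cref{prop:anasymb}. In that estimate $(Nh)^N$ becomes exponentially small once $\vartheta$ is small, and $\dot\Lambda_q(\chi_N e^{\frac ih\tau(\cdot)\cdot\xi'_0})$ is computed exactly as in Step 2. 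Off-diagonal analyticity of $\dot\Lambda_q$ follows because the amplitude is an analytic symbol and the phase is non-degenerate for $x'\neq z'$, so contour deformation in $\xi'$ gives an analytic kernel. Finally, ellipticity is equivalent to the principal term $\td q(0)/(2r^{1/2})$ being nonzero, that is, to $\td q(0)\neq 0$.

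\textbf{Main obstacle.} The hardest part is Step 2's analytic composition $K^\ast qK$ with the boundary cut-off $\mathbb{1}_{\{y_n\ge0\}}$ while keeping uniform $C^{j+1}j!$ control. To handle it, I would rather avoid a general FIO calculus and test directly against the WKB solutions of \cref{prop:K2}. Concretely, $\dot\Lambda_q(\chi_N e^{\frac ih\psi_1})$ paired with a delta at $x'$ equals $-\int \overline{K(\cdot,x')}\,q\,e^{\frac ih\phi_1}d$. Representing $\overline{K(\cdot,x')}$ by \cref{cor:kerndia} reduces the problem to a one-dimensional $y_n$ Laplace integral after stationary phase in $y'$, which is where the explicit expansion comes from.
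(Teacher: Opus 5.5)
\textbf{Assessment.} Your route differs from the paper's. You compute the composition $K^\ast qK$ explicitly, by stationary phase in $(y',\eta')$ followed by a Laplace integral in $y_n$. The paper instead uses an FBI wave-packet test and an inhomogeneous WKB construction. As written, your proposal has two genuine gaps.

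\textbf{First gap: the argument for off-diagonal analyticity is circular.} \cref{prop:anasymb} presupposes that $\dot\Lambda_q$ is analytic off the diagonal. The symbol $\sigma_a^q$ is defined through a contour that evaluates $\dot\Lambda_q(x',w')$ at complex $w'$ away from $x'$. The comparison with $\dot\Lambda_q(\chi_N e^{\frac ih\tau(\cdot)\cdot\xi'_0})$ uses the same holomorphic extension.

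You invoke that proposition in Step 3 and only afterwards deduce off-diagonal analyticity ``because the amplitude is an analytic symbol''. Your preferred computation, testing against $\chi_N e^{\frac ih\psi_1}$ with $N\sim\vartheta h^{-1}$, does not repair this. To turn these tested values into a representation of the kernel, you must replace the $h$-dependent cut-off by a fixed one. The difference involves $\dot\Lambda_q(x',w')$ for $w'$ in the region where the cut-offs differ, which is exactly where analyticity is not yet known.

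What would suffice is an exact representation of the full kernel, modulo analytic kernels, as $\int e^{i(x'-z')\cdot\xi'}p\,\dd\xi'$ with $p$ holomorphic in a complex conic neighbourhood. Your ``main obstacle'' paragraph sets precisely this aside.

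The paper proves off-diagonal analyticity first and independently, via the FBI criterion. It pairs the kernel with Gaussian packets $f_v\otimes f_u$ with $v_1\neq u_1$. When both frequencies are bounded below, \cref{prop:K1} and the separation of the Gaussian centres give exponential decay. When $\abs{u_2}$ is small, it uses analytic boundary regularity of $Kf_u$, with a bound $e^{c(\delta^2+\delta\abs{u_2})/h}$ on its holomorphic extension. It then performs $N\sim\vartheta h^{-1}$ integrations by parts against Ehrenpreis cut-offs, exploiting the analyticity of $q$ up to the boundary and $\abs{v_2}>\eps$.

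\textbf{Second gap: the finite-realization estimate is asserted, not proved.} Being a cl.a.s. means the function lies within $\mathcal{O}(e^{-\eps/h})$ of its series truncated at $j\le R^{-1}h^{-1}$, uniformly for complex $(x',\tau)$ near $(0,1)$. Coefficient bounds $\abs{p_j}\le C^{j+1}j!$ do not imply this; for example, $e^{-h^{-1/2}}$ has zero expansion but is not $\mathcal{O}(e^{-\eps/h})$.

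You would need remainder bounds $C^{N+1}N!h^N$, uniform in $N\lesssim h^{-1}$, for the complex-phase stationary phase in $(y',\eta')$, uniformly in $y_n$. You would also need to control the non-compact $\eta'$-integration, the term $K_a$, and the remainder from \cref{prop:K2}. Integrating the Taylor series in $y_n$ termwise over $[0,\infty)$ is only formal. Step 3 asserts all of this in one sentence.

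Notably, the paper's own explicit computation in \cref{sec:findq} only reaches $\mathcal{O}(h^\infty)$ agreement (\cref{prop:symbcalc}). It relies on the present proposition for the cl.a.s. property.

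\textbf{The idea that avoids all asymptotic integration is an inhomogeneous analytic WKB solve.} Near $0$ one has $K(\chi_{j_\vartheta(h)}e^{\frac ih\tau(\cdot)\cdot\xi'_0})=e^{\frac ih\phi}a$ modulo exponentially small terms, and $q$ is analytic up to the boundary. One therefore solves a transport equation with source to get $G(qe^{\frac ih\phi}a)=fe^{\frac ih\phi}$. Here $f$ is a cl.a.s. of order $-1$ with $f\vert_{x_n=0}=0$. Taking the normal derivative at $x_n=0$ exhibits $e^{-\frac ih\tau x'\cdot\xi'_0}\dot\Lambda_q(\chi_{j_\vartheta(h)}e^{\frac ih\tau(\cdot)\cdot\xi'_0})(x')$ directly as a cl.a.s. \cref{prop:anasymb}, which is now legitimately applicable, then transfers this to $\sigma_a^q$.

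Your route could in principle be completed with Gevrey-remainder stationary phase and Watson's lemma, plus a separate off-diagonal argument. As written, however, those are exactly the missing pieces.
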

\begin{proof}
	We follow the proof of \cite[Prop.~1.7]{zbMATH06600504} (in fact, see the proof of \cite[Prop.~1.5]{1312.4065v2}). Let $\chi \in C_c^\infty(W)$ have sufficiently small support and be identically $1$ near the origin. To show that $\dot \Lambda_q$ is analytic off $\Delta_{W}$ near $0$, we must prove (\cite[Thm.~22.3.15]{MR4436039}, \cite[\S~6]{AST_1982__95__R3_0}, \cite[\S~2.6]{zbMATH07060662}) that for all $v = (v_1,v_2), u= (u_1,u_2) \in \RR^{2(n-1)}$ with $v_1, u_1$ near $0$ we have
	\[
		v_1\neq u_1\,, \abs{v_2}+\abs{u_2} = 1\implies \langle f_{v}(x')f_{u}(z'), \dot\Lambda_q(x',z') \rangle \in \bl{O}_{(v_1,u_1)}^{(-\omega)}\,,
	\]
	where
	\[
		f_{v}(x') = \chi(x',0)e^{-\frac{1}{2h}(v_1-x')^2 + \frac{i}{h} x'\cdot v_2}
	\]
	and $f_u$ is defined analogously. In this proof we write $\mu(y) \dd y$ for the measure $\dd V_g(y)$ in the coordinate chart containing $W$, where $\mu$ is some positive analytic function. Notice first that
	\[
		-\langle f_{v}(x')f_{u}(z'), \dot\Lambda_q(x',z') \rangle = \int q \overline{Kf_v} K f_u \dd V_g(y)\,,
	\]
	and if, for some $\eps>0$, we have $\abs{v_2}, \abs{u_2} > \eps$, then by \cref{prop:K1} (with $\tau = 1$ therein) and \cref{rem:K1imp}, we have
	\[
		\int q \overline{Kf_v} K f_u \dd V_g(y) = \int q \chi^2 b^\ast b e^{-\frac{1}{2h}(v_1-y')^2 -\frac{1}{2h}(u_1-y')^2+ \frac{i}{h} y'\cdot (u_2-v_2) + \frac{1}{h}\mathcal{O}(y_n)} \mu(y)\dd y + \bl{O}_{(v_1,u_1)}^{(-\omega)}\,,
	\]
	where the $\mathcal{O}(y_n)$ term has non-positive real part, so that because $v_1\neq u_1$, we obtain the desired exponential decay. 

	Thus, the only case left to treat is when, say, $\abs{u_2}$ is near $0$, in which case we obtain
	\begin{equation}\label{eq:someint}
		\int q \overline{Kf_v} K f_u \dd V_g(y) = \int q \chi b^\ast e^{-\frac{1}{2h}(v_1-y')^2 - \frac{i}{h} y'\cdot v_2 + \frac{1}{h}\mathcal{O}(y_n)} Kf_u \mu\dd y+ \bl{O}_{(v_1,u_1)}^{(-\omega)}\,.
	\end{equation}
    Using local elliptic analytic boundary regularity (\cite{zbMATH03134527}), $Kf_u(y)$ is analytic in $y$ near $0$, and we write $Kf_u(y)$ also for its holomorphic extension to a complex neighborhood of $0$. One can deduce from the proof in \cite{zbMATH03134527} (or see the explicit \cite[Prop.~2.1]{zbMATH06073021}), that for sufficiently small $\delta > \delta' > 0$,
    \[
        \sup_{B_{\delta'}^{\CC^n}(0)}\abs{Kf_u} \leq C_{\delta,\delta'} \left(\sup_{B_{\delta}^{\CC^{n-1}}(0)}\abs{f_u} + \norm{Kf_u}_{H^1(M)}\right)\,.
    \]
    By \cref{prop:GK}, the second term on the RHS above can be be bounded by $\norm{f_u}_{H^{3/2}(\del M)} \lesssim C h^m$ for some $m>0$, and by choosing $\delta>0$ small and estimating $f_u$ directly, we obtain 
    \[
        \sup_{B_{\delta'}^{\CC^n}(0)}\abs{Kf_u} \leq e^{c(\delta^2+\delta\abs{u_2})/h}
    \]
    for some $C,c>0$.

	Introduce a sequence of Ehrenpreis cut-offs $(\chi_N)_{N\in\mathbb{N}} \subset C_c^\infty(\{\chi \equiv 1\})$ and identically $1$ near $0$. Split the integral on the RHS of \cref{eq:someint} into two terms corresponding to $\chi = \chi_N + \chi (1-\chi_N)$. The term containing the cut-offs $\chi (1-\chi_N)$ is supported away from the origin, so that the Gaussian factor $e^{-\frac{1}{2h}(v_1-y')^2}$ induces exponential decay if $v_1$ and $\abs{u_2}$ and $\delta>0$ are sufficiently near $0$ (so that the bound $Ce^{c(\delta^2+\delta\abs{u_2})/h}$ for the absolute value of the holomorphic extension of $Kf_u(y)$ is negligible). Thus, we must only estimate 
	\[
		\int q \chi_N b^\ast e^{-\frac{1}{2h}(v_1-y')^2 - \frac{i}{h} y'\cdot v_2 + \frac{1}{h}\mathcal{O}(y_n)} Kf_u \dd y\,,
	\]
	for which we rely on repeated partial integrations and the fact that $\abs{v_2} > \eps$ for some $\eps>0$ and $q$ is analytic near $0$. After $N$ repeated partial integrations, using the Cauchy inequalities, one obtains a bound of the form $(Ch N/\delta)^N e^{c(\delta^2+\delta\abs{u_2})/h}$, so that assuming $\delta, \abs{u_2}>0$ are sufficiently small and taking $N = N(h) = \lfloor \vartheta h^{-1} \rfloor$ for some $\vartheta>0$ sufficiently small leads to the desired exponential decay. 

	We have completed the proof of the fact that $\dot\Lambda_q$ is analytic off the diagonal near $0$. We shall prove that $\sigma_a^q(x',\xi')$ is a cl.a.s. of order $-1$ with fiber variable $\xi' \in \CC^{n-1}\sem$, for which we shall pass to the semi-classical realm: write $\xi' = h^{-1} \tau \xi'_0, \abs{\xi'_0}=1$, with complex $\tau$ near 1. Again, let $(\chi_j)_{j\in\mathbb{N}}$ be a sequence of Ehrenpreis cut-offs identically $1$ near the origin and choose $j_\vartheta(h)$ according to \cref{prop:K2} so that
	\[
		K(\chi_{j_\vartheta(h)} e^{\frac{i}{h}
        \tau(\cdot)\cdot\xi'_0}) = e^{\frac{i}{h}\phi(\tau,y,\xi'_0)} a(y,h^{-1}\tau\xi'_0) + \bl{O}^{(-\omega)}_{(0,1)}\,,
	\]
	where the glued solutions $a, \phi$ are from \cref{cor:kerndia}.
	Using the analytic WKB method to solve an inhomogeneous equation (\cite[Thm.~2.8.1]{zbMATH07060662}), there is a cl.a.s. $f$ of order $-1$ with $f\vert_{x_n=0}=0$ so that modulo a term in $\bl{O}^{(-\omega)}_{0} \subset \bl{O}^{(0)}_{0}$, for $x$ near $0$,
	\[
		G(q e^{\frac{i}{h}\phi}a)(x) = f(x,h^{-1}\tau\xi'_0) e^{\frac{i}{h}\phi(\tau,x,\xi'_0)}\,.
	\]
	Notice also that by the same argument $G(\bl{O}^{(-\omega)}_{(0,1)}) = \bl{O}^{(-\omega)}_{(0,1)}$ so that differentiating once in $y_n$, multiplying by $e^{- \frac{i}{h} \tau x'\cdot \xi'_0}$ and evaluating at $x_n=0$ gives that $e^{-\frac{i}{h}\tau x'\cdot\xi'_0}\dot\Lambda_q(\chi_{j_\vartheta(h)}(\cdot)e^{\frac{i}{h}\tau (\cdot)\cdot\xi'_0})(x')$ is a cl.a.s. (recall $K = \del_{x_n} G \vert_{x_n=0}$). However, by \cref{prop:anasymb}, this property (recall the `optimal' choice of $j=j_\vartheta(h)$ makes the error in this reference exponentially decaying for $\vartheta>0$ sufficiently small) is true also for $\sigma_a^q$, completing this proof.
\end{proof}

We shall introduce a symbol different from $\sigma_a$ that will allow us to explicitly calculate each term of the formal expansion of $\sigma_a$ if it is a cl.a.s. This new symbol $\sigma_b$ will, however, \emph{not} be the analytic symbol of $\dot\Lambda$, it will only agree with $\sigma_a$ modulo $\mathcal{O}(\abs{\xi}^{-\infty})$. 
\begin{proposition}\label{prop:symbcalc}
    For all $x'$ sufficiently near $\hat y = 0$ in $W$, for $\chi \in C_c^\infty(W)$ identically $1$ near $0$ with sufficiently small support and any $\xi' \in \RR^{n-1}\sem$ we define, for $a, \psi$ from \cref{cor:kerndia} and $q\in L^\infty(M)$, 
    \begin{equation}\label{eq:definesigma}
        \sigma_b^q(x',\xi') \coloneqq (2\pi)^{n-1}\iint e^{i(x'-y')\cdot(\eta'-\xi')}e^{i(\psi(y,\xi')-\psi^\ast(y,\eta'))}a(y,\xi') a^\ast(y,\eta')\dd\eta' \chi(y)q(y) \mu(y)\dd y\,,
    \end{equation}
    where $\mu(y) \dd y$ is the measure $\dd V_g(y)$ in the coordinate chart containing $W$, notation we shall suppress in $\sigma_b^q$ from hereon out (one may subsume $\mu$ into $q$, say). 

	Let $\td{q}\in C_c^\infty(\RR^n)$ be analytic near the origin, $\xi'_0\in S^{n-2}$ fixed. We have
	\[
		\left(\sigma_a^{1_{\{y_n\geq 0\}}\td{q}}- \sigma_b^{1_{\{y_n\geq 0\}}\td{q}}\right)(x',h^{-1}\tau\xi'_0) = \mathcal{O}(h^\infty)
	\]
	uniformly for real $(x',\tau)$ near $(0,1)$.

\end{proposition}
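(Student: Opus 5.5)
Both symbols will be compared through the single quantity $\dot\Lambda_q(\chi e^{i(\cdot)\cdot\xi'})$, where $q=1_{\{y_n\geq 0\}}\td{q}$. Proposition~\ref{prop:direc1} already gives that $\dot\Lambda_q$ is analytic off the diagonal near $0$. Hence Proposition~\ref{prop:anasymb} applies: with an Ehrenpreis sequence $\chi_N$ and $N=j_\vartheta(h)$, we get that $\sigma_a^q(x',h^{-1}\tau\xi'_0)$ equals $-e^{-\frac{i}{h}\tau x'\cdot\xi'_0}\dot\Lambda_q(\chi_N e^{\frac{i}{h}\tau(\cdot)\cdot\xi'_0})(x')$ modulo exponentially small errors for real $(x',\tau)$ near $(0,1)$. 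For an $\mathcal{O}(h^\infty)$ statement we may also replace $\chi_N$ by a fixed cut-off $\chi'$, identically $1$ near $0$. The difference is then $\dot\Lambda_q$ applied to $e^{i x'\cdot\xi'}$ times a smooth function vanishing near $x'$. Since the kernel is smooth off the diagonal, non-stationary phase makes this difference $\mathcal{O}(\abs{\xi'}^{-\infty})$.

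Next we express $-\dot\Lambda_q f_1=K^\ast qKf_1$ through the kernel of $K$. Pairing with a test function at $x'$, the kernel of $K^\ast qK$ is
\[
\int \overline{K(y,x')}\,q(y)\,K(y,z')\,\dd V_g(y).
\]
We insert the representation from Corollary~\ref{cor:kerndia}, $K=A+K_a$, on $\supp\chi$, where $\chi$ is the cut-off in \eqref{eq:definesigma}. The contribution from $y$ outside $\{\chi\equiv1\}$ gives a kernel smooth near $(x',x')$, because $K(y,\cdot)$ is smooth in $x'$ when $y$ stays away from the boundary point $x'$. Every term containing $K_a$ is likewise smooth, indeed analytic, near $(0,0)$. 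Smooth kernels contribute $\mathcal{O}(\abs{\xi'}^{-\infty})$ after conjugation by $e^{ix'\cdot\xi'}$ and cut-off, by integration by parts in $z'$.

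It remains to treat the term $\int \overline{A(y,x')}\,\chi q\,A(y,z')\,\mu\,\dd y$ applied to $\chi'(z')e^{iz'\cdot\xi'}$. We write $A(y,z')=\int e^{i((y'-z')\cdot\eta'+\psi(y,\eta'))}a(y,\eta')\dd\eta'$. Integrating over $z'$ against $\chi'(z')e^{iz'\cdot\xi'}$ gives $(2\pi)^{n-1}\hat{\chi'}$-type concentration at $\eta'=\xi'$. Replacing $\chi'$ by $1$ is allowed up to $\mathcal{O}(\abs{\xi'}^{-\infty})$, since $y'$ stays in $\supp\chi\subset\{\chi'\equiv1\}$ and non-stationary phase applies in the region where $z'$ is away from $y'$. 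This collapses the inner integral to $(2\pi)^{n-1}e^{i(y'\cdot\xi'+\psi(y,\xi'))}a(y,\xi')$. The conjugated factor $\overline{A(y,x')}$ supplies $\int e^{-i((y'-x')\cdot\eta'+\psi^\ast(y,\eta'))}a^\ast(y,\eta')\dd\eta'$. Multiplying by $e^{-ix'\cdot\xi'}$ then reproduces exactly the integrand of \eqref{eq:definesigma}. Hence $\sigma_a^q=\sigma_b^q+\mathcal{O}(h^\infty)$, where the sign from $\dot\Lambda_q=-K^\ast qK$ cancels against the sign in Proposition~\ref{prop:anasymb}.

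The main obstacle is making these manipulations rigorous. The oscillatory integrals in $\eta'$ are only conditionally convergent, and the phases are complex with $\mathrm{Im}\,\psi\geq c\,y_n\abs{\eta'}$. Moreover, $q$ is merely piecewise analytic, so the $y$-integral cannot simply be deformed. I would handle this by first regularizing with $e^{-\eps\abs{\eta'}^2}$, or by inserting a dyadic cut-off in $\eta'$, and then justifying the Fubini interchanges. The decay $e^{-cy_n\abs{\eta'}}$ makes the $y_n$-integral produce symbol-type bounds $\abs{\eta'}^{-1}$. All dropped pieces must be checked to be $\mathcal{O}(\abs{\xi'}^{-\infty})$ uniformly in $(x',\tau)$ near $(0,1)$.
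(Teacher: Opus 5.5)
Your proposal is correct and follows essentially the paper's argument. You reduce $\sigma_a$ via Proposition~\ref{prop:anasymb} to $e^{-ix'\cdot\xi'}K^\ast qK$ applied to a cut-off plane wave, discard the smooth-kernel contributions from $(1-\chi)$ and from $K_a$ (the paper's $B_1$, $B_2$), and recognize the remaining term as $\sigma_b^q$ by reading the $\eta'$-integral as $\overline{A(y,x')}$. The only difference is that the paper obtains $(2\pi)^{n-1}e^{i\phi(y,\xi')}a(y,\xi') = Kg_{N,\tau,h}(y)+\mathcal{O}_N(h^N)$ from the WKB construction of Proposition~\ref{prop:K2}, splitting only the factor $\overline{K(y,x')}$, whereas you split both factors and get this identity from Fourier inversion plus pseudolocality. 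Both routes work; note that the cross terms involving $K_a$ are only smooth rather than analytic because of the cut-off $\chi$, but smoothness is all you need.
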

\begin{proof}
    Use \cref{cor:kerndia} to obtain, if $x'$ near $\hat y$ and the support of $\chi$ is sufficiently small,
    \begin{equation}\label{eq:eq}
        \sigma_b^q(x',\xi') = %
        (2\pi)^{n-1}\int \overline{(K(y,x')-K_a(y,x'))}\chi(y)q(y) e^{i(y'-x')\cdot \xi' + i\psi(y,\xi')}a(y,\xi') \dd y\,.
    \end{equation}

	For $(\td{\chi}_j)_{j\in \mathbb{N}}$ an Ehrenpreis cut-off sequence identically $1$ and supported near $0$, define
	\[
		g_{N,\tau,h}(z') = \chi_{N}(z')e^{\frac{i}{h}\tau z'\cdot\xi'_0}\,.%
	\]
	According to \cref{prop:anasymb} we have
	\[
		\sigma_a^{1_{\{y_n\geq 0\}}\td{q}}(x',h^{-1}\tau\xi'_0) = e^{-\frac{i}{h}\tau x'\cdot\xi'_0}K^\ast \td{q}K(g_{N,\tau,h}) + \mathcal{O}_N(h^N)\,.
	\]
	According to \cref{eq:eq}, we have 
	\[
		\sigma_b^{1_{\{y_n\geq 0\}}\td{q}}(x',h^{-1}\tau\xi'_0) = e^{-\frac{i}{h}\tau x'\cdot\xi'_0}(K^\ast-K_a^\ast)(\chi \td{q} K g_{N,\tau,h} + \mathcal{O}_N(h^N))\,.
	\]
	Using the continuity of $K$ on Sobolev spaces (and directly estimating $K_a$) we obtain, for some fixed $m>0$,
	\[
		(\sigma_a - \sigma_b)^{1_{\{y_n\geq 0\}}\td{q}}(x',h^{-1}\tau\xi'_0) = e^{-\frac{i}{h}\tau x'\cdot\xi'_0}(B_1+B_2)g_{N,\tau,h} + \mathcal{O}_N(h^{N-m})\,,
	\]
	where
	\[
		B_1 = K^\ast (1-\chi) \td{q} K\,,\qquad B_2 = K_a^\ast \chi \td{q} K= K^\ast(\overline{K_a(\cdot,x')}\chi(\cdot)\td{q}(\cdot))\,.
	\]
	Since $K_a$ is smooth near the origin, if $\chi$ is supported sufficiently close to the origin, $B_2$ is seen to be a smooth function for each fixed $x'$. Since it also depends smoothly on $x'$, the kernel $B_2$ is smooth in a neighborhood of the origin. That $B_1$ is smooth on a neighborhood of the origin follows from the fact that when $(x',z')$ near the origin, then $y\in \supp(1-\chi)$ is separated from $x',z'$. 

	Finally, by applying repeated partial integrations, one sees that $(B_1+B_2)g_{N,\tau,h} = \mathcal{O}_N(h^{N-m'})$ for some $m'>0$ (independent of $N$), so that in total, taking $N$ sufficiently large the proof is complete.
\end{proof}

\section{Local Injectivity}\label{sec:localinj}

Let us fix $\xi'_0 \in S^{n-2}$ arbitrarily throughout this section. The arguments in this section follow the approach in \cite{zbMATH06600504} but make more explicit choices of phases and transforms considered. Recall the setting of \cref{sec:localsol}, and the functions in \cref{prop:K1,prop:K2}. Motivated by the expression in \cref{cor:Ks}, and that we have access to analytic boundary normal coordinates on the support of $\chi$ therein, define the map
\begin{equation}\label{eq:defofS}
    S \colon L^\infty_{c,+}(\RR^n) \ni u \mapsto \int e^{\frac{i}{h}(-\phi_0^\ast(v',\tau,y)+\phi_1(\tau,y))}b^\ast(v',\tau,y;h) d(\tau,y;h) u(y) \dd y \in \bl{O}_{(0,1)}\,,
\end{equation}
where $L^\infty_{c,+}(\RR^n) = \{v \in L^\infty(\RR^n)\colon \supp\, v \subset \{y_n\geq 0\} \cap W'\ \text{is compact}\}$, where $W'$ is some fixed small open neighborhood of the origin that is sufficiently small for the expressions in the integrand of $S$ to be well-defined. We also write $S_{\xi'_0}$ when we wish to emphasize the dependence on $\xi'_0$, but note that all results of this section hold for any $\xi'_0 \in S^{n-2}$. 

\begin{lemma}\label{lem:phi3props}
    Writing $w = (v',\tau)$, the phase 
    \[
        \phi_2(w,y) \coloneqq -\phi^\ast_0(v',\tau,y) + \phi_1(\tau,y)
    \]
    satisfies at $\hat w = (0,1)$ and $\hat y=0$,
	\begin{enumerate}
		\item $\phi_2(\hat w, \hat y) = 0$, $\del_{w}\phi_2(\hat w, \hat y) =0$, $\del_y \phi_2(\hat w, \hat y) = (0,2ir^{1/2}(0,\xi'_0))^T$,
		\item $\del_{w}^2\phi_2(\hat w, \hat y) = i\id$, $\del_{y'}^2\phi_2(0,1,0) = i\id$,
		\item $\del_y\del_{w}\phi_2(\hat w, \hat y) = -\begin{pmatrix}i\id & \xi'_0 \\ -(\del_{\eta'}r^{1/2}(0,\xi'_0))^T & -ir^{1/2}(0,\xi'_0)\end{pmatrix}$, which is invertible. Starting at the top-left and going clockwise, these blocks are with respect to $\del_{y'}\del_{v'}$, $\del_{y'}\del_{\tau}$, $\del_{y_n}\del_{v'}$, and $\del_{y_n}\del_{\tau}$. 
	\end{enumerate} 
\end{lemma}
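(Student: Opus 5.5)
The plan is a direct Taylor computation at $y_n=0$. On the boundary $\phi_2$ is known explicitly, and every normal derivative is read off from the eikonal equations in \cref{prop:K1,prop:K2}. I would first record the boundary values. From $\phi_0(v',\tau,y',0)=\psi_0$ and the definition $f^\ast(z)=\bar f(\bar z)$ we get $\psi_0^\ast(v',\tau,y')=-\frac{i}{2}(v'-y')^2-\frac{i}{2}(\tau-1)^2-y'\cdot\xi'_0$. Adding $\phi_1(\tau,y',0)=-\tau y'\cdot\xi'_0$ gives
\[
\phi_2(w,y',0)=\tfrac{i}{2}(v'-y')^2+\tfrac{i}{2}(\tau-1)^2+(1-\tau)\,y'\cdot\xi'_0 .
\]
This identity holds for all $w$ and $y'$, so it yields every derivative not involving $\del_{y_n}$. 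Differentiating it gives $\phi_2(\hat w,\hat y)=0$, $\del_w\phi_2=0$ and $\del_{y'}\phi_2=0$ at $(\hat w,\hat y)$. It also gives $\del_w^2\phi_2=i\id$ (with no $v'$--$\tau$ cross term), $\del_{y'}^2\phi_2=i\id$, and the upper blocks $\del_{y'}\del_{v'}\phi_2=-i\id$ and $\del_{y'}\del_\tau\phi_2=-\xi'_0$.

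For the normal derivatives I would use the eikonal equations. Since $r$ has real coefficients, $r^\ast=r$, and for $\mathrm{Re}\,r>0$ the principal branch satisfies $(r^{1/2})^\ast=r^{1/2}$. Taking the holomorphic conjugate of $\del_{y_n}\phi_0=ir^{1/2}(y,\del_{y'}\phi_0)$ therefore gives $-\del_{y_n}\phi_0^\ast=ir^{1/2}(y,\del_{y'}\phi_0^\ast)$, and hence
\[
\del_{y_n}\phi_2(w,y)=ir^{1/2}\bigl(y,\del_{y'}\phi_0^\ast(w,y)\bigr)+ir^{1/2}\bigl(y,\del_{y'}\phi_1(\tau,y)\bigr).
\]
At $y_n=0$ we have $\del_{y'}\phi_0^\ast=i(v'-y')-\xi'_0$ and $\del_{y'}\phi_1=-\tau\xi'_0$. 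Here I use that $r$ is even in $\eta'$, being a quadratic principal symbol. Then $r^{1/2}(0,\cdot)$ is even and positively homogeneous of degree one, and its gradient $\del_{\eta'}r^{1/2}$ is odd. Evaluating at $(\hat w,\hat y)$ gives $\del_{y_n}\phi_2=2ir^{1/2}(0,\xi'_0)$. Differentiating the displayed identity in $v'$ gives $i\,\del_{\eta'}r^{1/2}(0,-\xi'_0)\cdot i=\del_{\eta'}r^{1/2}(0,\xi'_0)$. Differentiating in $\tau$, homogeneity gives $r^{1/2}(0,-\tau\xi'_0)=\tau r^{1/2}(0,\xi'_0)$ for $\tau$ near $1$, so $\del_{y_n}\del_\tau\phi_2=ir^{1/2}(0,\xi'_0)$. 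These entries match the lower row of the claimed matrix.

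For invertibility I would take the Schur complement of the invertible block $i\id$. It equals $-ir^{1/2}+(\del_{\eta'}r^{1/2})^T(i\id)^{-1}\xi'_0=-ir^{1/2}-i\,\xi'_0\cdot\del_{\eta'}r^{1/2}(0,\xi'_0)$. By Euler's relation $\xi'_0\cdot\del_{\eta'}r^{1/2}(0,\xi'_0)=r^{1/2}(0,\xi'_0)$, so the complement is $-2ir^{1/2}(0,\xi'_0)$. This is nonzero by positivity of $r$, so the matrix is invertible.

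The only delicate step is the bookkeeping of holomorphic conjugation combined with the parity of $r^{1/2}$ and $\del_{\eta'}r^{1/2}$. These signs determine the $y_n$-row and the factor $2$. I would verify them by checking the case $r=\abs{\eta'}^2$ explicitly.
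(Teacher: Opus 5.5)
Your proposal is correct and follows the paper's own route. You read off the tangential and parameter derivatives from the Cauchy data of $\phi_0,\phi_1$ on $\{y_n=0\}$, and the $y_n$-row from the holomorphically conjugated eikonal equations, using that $r^{1/2}$ is even and homogeneous of degree one; invertibility then follows from a Schur complement and Euler's relation. Your version spells out the sign bookkeeping that the paper leaves implicit. Your complement $-2ir^{1/2}(0,\xi'_0)$ is for the bracketed matrix and the paper's $2ir^{1/2}(0,\xi'_0)$ is for $\del_y\del_w\phi_2$ itself, so the two agree.
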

\begin{proof}
    The properties of $\phi_2$ all arise from explicit computations using the Cauchy data of $\phi_0, \phi_1$ from \cref{prop:K1,prop:K2} where we also employ the homogeneity of $r^{1/2}$ of degree $1$ in $\eta'$ to obtain point 3., and the fact that $r(0,\xi'_0) = r(0,-\xi'_0)$ to obtain point 1. The Schur complement of $\del_y\del_{w}\phi_2(\hat w, \hat y)$ is $ir^{1/2}(0,\xi'_0)-(\del_{\eta'}r^{1/2}(0,\xi'_0))^T(i\id)(-\xi'_0) = 2ir^{1/2}(0,\xi'_0)$ by homogeneity, which is non-zero, showing invertibility.
\end{proof}

Define the function
\begin{equation}\label{eq:defhatphi}
	\hat \phi(z,y) \coloneqq i\frac{\hat c}{2}y^2 - i\hat cy\cdot z- y \cdot \hat \xi\,,\quad\text{where}\quad\hat \xi = -\begin{pmatrix}0\\ 2ir^{1/2}(0,\xi'_0)\end{pmatrix}\,,%
\end{equation}
and $\hat c>0$ will be chosen sufficiently large throughout the next few results.

For $u\in L^\infty_{c,+}(\RR^n)$ with support sufficiently contracted about $0$ we define, for $z \in \CC^n$ near $0$ and $0< h \leq 1$,
\begin{equation}\label{eq:defofAFBI}
	Au(z;h) \coloneqq \int e^{\frac{i}{h}\hat\phi(z,y)} u(y)\dd y\,,
\end{equation}
which is a germ Fourier-like transform (\cite[\S~20.4]{MR4436039}) with $\hat\phi$ as phase.

\subsection{Obtaining an estimate on $Au$}\label{sec:Au}

In this subsection we will show that $S u$ is, modulo an exponentially decaying term, the image of $Au$ under an elliptic germ FIO transform (\cite[\S~20.5]{MR4436039}) that will be inverted on some suitable function space. To do so we would like to write $Su = SBA u$, where $B$ is some suitable operator and show that $SB$ is invertible. A good candidate for $B$ would be a (germ-space) inverse of $A$, which we expect to be a germ Fourier-like operator with phase $-\hat \phi$, and such $B$ indeed exists (\cite[\S~21.2.5]{MR4436039}). However, in order to guarantee the invertibility of the operator $SB$ we will require a good contour of integration for the phase $\phi_2 - \hat \phi$, which will preclude us from choosing $B$ as this germ Fourier-like inverse of $A$. Nevertheless, this is a technicality that is resolved by part 2. of \cref{lem:contour} and the general method of proof remains. We shall retain $y$ as the variable for $u$, and introduce the new intermediate variable $x\in \CC^n$. 
\begin{lemma}\label{lem:tdTinvnew}
	Defining, with $z = (z',z_n)$, 
	\begin{equation}\label{eq:def_phi4}
		\phi_4(z,w,x)\coloneqq \phi_2(w,x)-\hat\phi(z,x)\,,\quad\text{and}\quad	\Phi^{\sharp}(z)\coloneqq \frac{\hat c}{2}((z')^{\RR})^2\,,%
	\end{equation}
	there is some smooth exponent weight $\Phi^{\sharp,\ast}$ so that $(\phi_4, \Phi^\sharp, \Phi^{\sharp,\ast})$ is a Sj\"ostrand triad (\cite[Def.~20.5.1]{MR4436039}) at $(\hat z, \hat w, \hat x) = (0,(0,1),0)$ and $-\del_z \phi_4(\hat z, \hat w, \hat x) = -2i\del_z\Phi^\sharp(\hat w, \hat x)=0$, and 
	\[
		\det \begin{pmatrix}\del_z\del_w\phi_4 & \del_z\del_x \phi_4 \\ \del_w\del_x\phi_4 & \del_x^2\phi_4
		\end{pmatrix}(\hat z, \hat w, \hat x)\neq 0\,.
	\]
\end{lemma}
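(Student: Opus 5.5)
The plan is to verify everything by explicit second-order Taylor expansion at $(\hat z,\hat w,\hat x)=(0,(0,1),0)$, using \cref{lem:phi3props} for $\phi_2$ and the explicit formula \cref{eq:defhatphi} for $\hat\phi$. The free parameter $\hat c$ will be taken large to make the relevant Hessians nondegenerate with the right signature.

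\textbf{Step 1: first-order quantities.} Since $\del_x\hat\phi(0,0)=-\hat\xi=(0,2ir^{1/2}(0,\xi'_0))^T$, point 1 of \cref{lem:phi3props} gives
\[
\del_x\phi_4(\hat z,\hat w,\hat x)=\del_x\phi_2(\hat w,0)-\del_x\hat\phi(0,0)=0 .
\]
Moreover $\del_w\phi_4(\hat z,\hat w,\hat x)=\del_w\phi_2(\hat w,\hat x)=0$, and $\del_z\phi_4=i\hat c\,x$ vanishes at $x=0$. This yields the stated identity $-\del_z\phi_4(\hat z,\hat w,\hat x)=0=-2i\del_z\Phi^\sharp(0)$, and $\phi_4(\hat z,\hat w,\hat x)=0=\Phi^\sharp(0)$. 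For holomorphic $\phi_4$ the differential of $\phi_4^{i\RR}$ vanishes exactly where $\del\phi_4$ vanishes. Hence $(\hat z,\hat x)$ is a critical point of $\Psi_1(z,\hat w,x)=\Phi^\sharp(z)-\phi_4^{i\RR}(z,\hat w,x)$, since $d\Phi^\sharp(0)=0$.

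\textbf{Step 2: second-order data and the determinant.} The Hessians are
\[
\del_z^2\phi_4=0,\qquad \del_z\del_x\phi_4=i\hat c\,\id,\qquad \del_x^2\phi_4=\del_x^2\phi_2(\hat w,0)-i\hat c\,\id,
\]
and, by \cref{lem:phi3props}, $\del_w\del_x\phi_4=\del_w\del_x\phi_2$ is invertible and $\del_z\del_w\phi_4=0$. The determinant in the statement is therefore that of
\[
\begin{pmatrix}0&i\hat c\,\id\\ \del_w\del_x\phi_2&\del_x^2\phi_2-i\hat c\,\id\end{pmatrix},
\]
which equals $\pm(i\hat c)^n\det\del_w\del_x\phi_2\neq0$.

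\textbf{Step 3: saddle property of $\Psi_1$.} The real Hessian of $\Psi_1$ in $(z,x)$ is that of $-\mathrm{Im}\,Q$, with $Q$ the holomorphic quadratic form above, plus $\hat c\,((z')^{\RR})^2$. I would exhibit two complementary $2n$-dimensional real subspaces: one on which the Hessian is negative definite and one on which it is positive definite. Both should come from the explicit leading part of $Q$, namely $2i\hat c\,z\cdot x-i\hat c\,x^2$, which is exactly the FBI-type structure. The terms coming from $\del_x^2\phi_2$ are bounded independently of $\hat c$, so for $\hat c$ large they are a relatively small perturbation, which preserves the signature $(2n,2n)$.

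\textbf{Step 4: definition of $\Phi^{\sharp,\ast}$ and the remaining conditions.} By Step 2 the critical point $(z(w),x(w))$ of $\Psi_1(\cdot,w,\cdot)$ persists smoothly in $w$, by the implicit function theorem. I define $\Phi^{\sharp,\ast}(w)$ as the critical value; condition 3 of \cref{def:sjostrand_Triad} then holds by construction. For condition 2, I would invoke the general duality in Tr\`eves' treatment of Sj\"ostrand triads, in the spirit of \cite[\S~20.5]{MR4436039}. This duality deduces the saddle property of $\Psi_2(\hat z,w,x)$ from condition 1 together with the nondegeneracy of the mixed determinant. If that reference does not apply verbatim, I would instead compute $\del_w^2\Phi^{\sharp,\ast}(\hat w)$ from the quadratic approximation, using $\del_w^2\phi_2=i\,\id$, and check the signature of $\Psi_2$ directly.

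The main obstacle is Step 3. The term $\Phi^\sharp$ involves only $(z')^{\RR}$ and is not pluriharmonic, so the pluriharmonic signature argument does not apply directly. Establishing the signature requires a careful choice of splitting subspaces, and the $\hat c$-dependence must be tracked so that the unlimited growth in $\hat c$ does not spoil the perturbation argument.
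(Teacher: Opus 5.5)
Your Steps 1 and 2 are correct and match the paper; the determinant is indeed $\pm(i\hat c)^n\det\del_w\del_x\phi_2$. The ``duality'' you invoke in Step 4 is essentially what the paper checks by hand, using Haynsworth's formula and the Schur complement $V=A_{w,z}-A_{w,x}A_{x,x}^{-1}A_{x,z}$, whose nondegeneracy reduces to that of $\del_w\del_x\phi_2$. Note that the $ww$-block, and hence $\del_w^2\phi_2$, cancels in the Hessian of $\Psi_2$, so your fallback via $\del_w^2\phi_2=i\,\id$ is beside the point.

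The genuine gap is Step 3. The $O(\hat c)$ part of the Hessian of $\Psi_1$ is degenerate, so the claim that ``the $\del_x^2\phi_2$ terms are a small perturbation that preserves the signature'' is false. With $z=a+ib$ and $x=s+it$, the $\hat c$-part of $\Psi_1$ (including $\Phi^\sharp$) is
\[
	\hat c\Big(\tfrac12\abs{a'}^2-a\cdot s+b\cdot t+\tfrac12\abs{s}^2-\tfrac12\abs{t}^2\Big)\,.
\]
Its restriction to the $(a',s')$-variables is $\tfrac{\hat c}{2}\abs{a'-s'}^2$. The form therefore has the $(n-1)$-dimensional kernel $\{a'=s',\ a_n=s_n=0,\ b=t=0\}$, i.e.\ real $z=x=(s',0)$, and signature $(2n,n+1)$. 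So no $2n$-dimensional negative subspace can come from the leading part. On this kernel it is the $\hat c$-independent terms that decide the signature.

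The missing ingredient is the sign of that $O(1)$ term on the kernel. There the $\phi_2$-contribution equals $-\tfrac12(s')^T\,\mathrm{Im}\,\del_{x'}^2\phi_2(\hat w,0)\,s'=-\tfrac12\abs{s'}^2$, by part 2 of \cref{lem:phi3props}: $\del_{y'}^2\phi_2=i\,\id$, which comes from the Gaussian in $\psi_0$. This supplies the $n-1$ missing negative directions and gives $(2n,2n)$. If $\mathrm{Im}\,\del_{x'}^2\phi_2$ were negative you would get $(3n-1,n+1)$ and no saddle, so no argument that treats $\del_x^2\phi_2$ as negligible for large $\hat c$ can work.

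The paper builds this in through Haynsworth's formula. In its notation, $A^\RR(A^{i\RR})^{-1}A^\RR=0$ reduces the signature to $\mathrm{sign}(-A^{i\RR})+\mathrm{sign}(A^{i\RR}+B)$. The relevant Schur complement then has top-left $(n-1)\times(n-1)$ block $-\id+O(\hat c^{-1})$, and that $-\id$ is exactly $-\mathrm{Im}\,\del_{x'}^2\phi_2$.

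The gap also propagates to Step 4. The implicit function theorem you use to define $(z(w),x(w))$, and hence $\Phi^{\sharp,\ast}$, needs nondegeneracy of the $(z,x)$-Hessian of $\Psi_1$ from Step 3, not the mixed determinant from Step 2.
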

\begin{proof}
We begin by showing that 
\[
	\Psi_1(z,x) \coloneqq \Phi^\sharp(z)-\phi_4^{i\RR}(z,\hat w, x)
\]
has a saddle point at $(0,0)$ and that $\phi_4(0,(0,1),0) = 0 = \Phi^\sharp(0)$, the latter part being true by explicit computation.

\textbf{First order:} Note first that
\[
	-\del_x\Psi_1(0,0) = \del_x \phi_4^{i\RR} = \frac{1}{2i}\del_{x^\RR}(\phi_2-\hat\phi) = 0\,,
\]
and $\del_z \Phi^\sharp = 0$ and $\del_z \phi_4^{i\RR}=0$ which gives $\del_z\Psi_1 = 0$. Thus, indeed $\del_{(z,x)} \Psi_1 (0,0) = 0$.

\textbf{Second order:} Notice that %
\begin{equation}\label{eq:hesspsi1}
	M \coloneqq \del_{(z^\RR,x^\RR,z^{i\RR},x^{i\RR})}^2\Psi_1 = \begin{pmatrix}A^{i\RR}+B& A^{\RR} \\ A^{\RR} & -A^{i\RR}\end{pmatrix}\,, 
\end{equation}
where starting from the top-left and going clockwise the blocks are given by $\del^2_{(z^\RR,x^\RR)}$, $\del_{(z^\RR,x^\RR)}\del_{(z^{i\RR},x^{i\RR})}$, $\del_{(z^{i\RR},x^{i\RR})}\del_{(z^\RR,x^\RR)}$, and $\del^2_{(z^{i\RR},x^{i\RR})}$ and
\begin{equation}\label{eq:hesspsi1AB}
	 A \coloneqq -\del_{(z^\RR,x^\RR)}^2\phi_4\,,\quad B \coloneqq \begin{pmatrix}\hat c\id_{(n-1)} & 0 \\ 0&0 \end{pmatrix}\,.
\end{equation}
We calculate that
\[
	A = \begin{pmatrix}0 & -i\hat c\id \\ -i\hat c\id & i\hat c\id-\del_{x^\RR}^2\phi_2\end{pmatrix}\,,
\]
which is invertible and note that
\[
	A^{i\RR}+ B = \begin{pmatrix} \begin{pmatrix}\hat c\id_{(n-1)} & 0 \\ 0&0 \end{pmatrix} & -\hat c\id \\ -\hat c\id & \hat c\id - \del_{x^\RR}^2\phi_2^{i\RR}\end{pmatrix}\,.
\]
Using that $A^{i\RR}$ is invertible by explicit computation, Sylvester's law of inertia, or the Haynsworth inertia additivity formula imply that 
\[
	\mathrm{sign}(M) = \mathrm{sign}(-A^{i\RR}) + \mathrm{sign}(A^{i\RR}+B-A^\RR(-A^{i\RR})^{-1} A^\RR)\,.
\]
Because $A^\RR = \begin{pmatrix}0 & 0 \\ 0 & -\del_{x^\RR}^2\phi_2^{\RR}\end{pmatrix}$, by explicit computation we find that $A^\RR(-A^{i\RR})^{-1} A^\RR = 0$ so that
	\begin{equation}\label{eq:signM}
		\mathrm{sign}(M) = \mathrm{sign}(-A^{i\RR}) + \mathrm{sign}(A^{i\RR}+B)\,.
	\end{equation}
Using Haynsworth's formula,
\[
	\mathrm{sign}(A^{i\RR}+B) = \mathrm{sign}(\hat c\id-\del_{x^\RR}^2\phi^{i\RR}_2) + \mathrm{sign}\left(\begin{pmatrix}\hat c\id_{(n-1)} & 0 \\ 0&0 \end{pmatrix}-\hat c^2 (\hat c \id -\del_{x^\RR}^2\phi_2^{i\RR})^{-1}\right)\,.
\]
First, if $\hat c>0$ is sufficiently large, the first term on the RHS is $(n,0)$. Second, %
using Neumann series, 
\[
	-\hat c^2 (\hat c \id -\del_{x^\RR}^2\phi_2^{i\RR})^{-1}=-\hat c\id - \del_{x^\RR}^2\phi_2^{i\RR} + \mathcal{O}(\hat c^{-1})\,,
\]
so that 
\begin{align*}
	\begin{pmatrix}\hat c\id_{(n-1)} & 0 \\ 0&0 \end{pmatrix}&-\hat c^2 (\hat c \id -\del_{x^\RR}^2\phi_2^{i\RR})^{-1} = \begin{pmatrix}0& 0 \\ 0&-\hat c \end{pmatrix} - \del_{x^\RR}^2\phi_2^{i\RR} + \mathcal{O}(\hat c^{-1}) = \begin{pmatrix}-\id& * \\ * & -\hat c+ * \end{pmatrix} + \mathcal{O}(\hat c^{-1})\,,
\end{align*}
by the explicit formula for $\del_{x^\RR}^2\phi_2^{i\RR}$ calculated in \cref{lem:phi3props}, and where $*$ is a term we do not compute but that is independent of $\hat c$. Using Haynsworth's formula, we find that the signature of the first matrix on the RHS is 
\[
	\mathrm{sign}(-\id) + \mathrm{sign}(-\hat c + *)\,,
\]
which is $(0,n)$ for $\hat c>0$ sufficiently large. In particular, we have for sufficiently large $\hat c>0$ that $\mathrm{sign}(\begin{pmatrix}\hat c\id_{(n-1)} & 0 \\ 0&0 \end{pmatrix}-\hat c^2 (\hat c \id -\del_{x^\RR}^2\phi_2^{i\RR})^{-1}) = (0,n)$, and thus we conclude that $\mathrm{sign}(A^{i\RR}+B)=(n,n)$.

The same type of arguments (but simpler) lead to 
\[
	\mathrm{sign}(A^{i\RR}) = \mathrm{sign}(\hat c\id-\del_{x^\RR}^2\phi_2^{i\RR}) + \mathrm{sign}(- \hat c^2(\hat c\id -\del_{x^\RR}^2\phi_2^{i\RR})^{-1})\,,
\]
where taking $\hat c>0$ large enough yields that $A^{i\RR}$ (and thus also $-A^{i\RR}$) has signature $(n,n)$.

By \cref{eq:signM}, this implies that the matrix in \cref{eq:hesspsi1} has signature $(2n,2n)$ (and thus is non-degenerate) so that we have the desired saddle point of $\Psi_1$. %

\textbf{The `dual' exponent weight:}

Introduce $\Phi^{\sharp,\ast}(w) = (\Phi^\sharp-\phi_4^{i\RR})(z(w),w,x(w))$, where $(z(w),x(w))$ is defined by $\del_{(z,x)}(\Phi^\sharp-\phi_4^{i\RR})(z,w,x) = 0\iff (z,x)=(z(w),x(w))$. Note also that $\Phi^{\sharp,\ast}(\hat w)=0$.

If we show that 
\[
	\Psi_2(w,x) \coloneqq \Phi^{\sharp,\ast}(w)+\phi_4^{i\RR}(\hat z, w, x)
\]
has a saddle point at $(\hat w, \hat x) = ((0,1),0)$, then by our choice of $\Phi^{\sharp,\ast}$ we will have completed the proof of the fact that $(\phi_4,\Phi^\sharp,\Phi^{\sharp,\ast})$ is a Sj\"ostrand triad at $(0,(0,1),0)$. 

\textbf{First order:} By \cref{lem:phi3props},
\[
	\del_{w}\Phi^{\sharp,\ast}(\hat w) = -\del_{w}\phi_4^{i\RR}(\hat z, \hat w, \hat x) = -\del_{w}\phi_2^{i\RR}(\hat w, \hat x) = 0 = \del_w\phi^{i\RR}_4(\hat z,\hat w,\hat x)\,,%
\]
so that $\del_w \Psi_2(\hat w, \hat x) =0$. Furthermore, by explicit calculation $\del_{x}\Psi_2(\hat w,\hat x) = \del_{x}\Psi_1(\hat w, \hat x) = 0$ so that we have the desired critical point at $(\hat w, \hat x)$.

\textbf{Second order:} denote $A_{u,v} \coloneqq \del_{(u^\RR,u^{i\RR})}\del_{(v^\RR,v^{i\RR})}(\Phi^\sharp -\phi_4^{i\RR})(0,(0,1),0)$ for $u,v \in \{w,z,x\}$ and $\td{M} \coloneqq \del^2_{(z^\RR,z^{i\RR},x^{\RR},x^{i\RR})}\Psi_1(0,0)$, which is a reordering of the matrix $M$ in \cref{eq:hesspsi1}:
\[
	\td{M} \coloneqq \begin{pmatrix}M_1 & M_2 \\ M_3 & M_4\end{pmatrix}\,,\quad\text{where}\quad M_4 = \del_{(x^\RR,x^{i\RR})}^2(-\phi^{i\RR}_4)(0,(0,1),0) = \begin{pmatrix}\hat c\id & 0 \\ 0&-\hat c\id\end{pmatrix} + \del_{(x^\RR,x^{i\RR})}^2(-\phi^{i\RR}_2)(0,(0,1),0)\,.
\]
In particular, for sufficiently large $\hat c>0$, the signature of $M_4$ is $(n,n)$ which also implies that it is non-degenerate. Furthermore, we had already computed that $M$ has signature $(2n,2n)$ (and thus $\td{M}$ has signature $(2n,2n)$ by Sylvester's law of inertia) so that the Haynsworth formula tells us that
\begin{equation}\label{eq:signZ}
	\mathrm{sign}(M_1 - M_2M_4^{-1}M_3) = (n,n)\,.
\end{equation}
Furthermore, as a consequence of the implicit function theorem, we have
\[
	\del_{(w^\RR,w^{i\RR})}^2\Phi^{\sharp,\ast}(0,(0,1)) = A_{w,w} - A_{w,(z,x)}\td{M}^{-1}A_{w,(z,x)}^T\,,
\]
and in particular,
\[
	W \coloneqq \del_{(w^{\RR},w^{i\RR},x^{\RR},x^{i\RR})}^2 \Psi_2((0,1),0) = \begin{pmatrix} - A_{w,(z,x)}\td{M}^{-1}A_{w,(z,x)}^T & -A_{w,x} \\ -A_{w,x}^T & -A_{x,x}\end{pmatrix}\,.
\]
Note that $A_{x,x} = M_4$, so that we can use Haynsworth's formula to calculate that 
\begin{equation}\label{eq:signW}
	\mathrm{sign}(W) = \mathrm{sign}(-M_4) + \mathrm{sign}(- A_{w,(z,x)}\td{M}^{-1}A_{w,(z,x)}^T + A_{w,x}M_4^{-1} A_{w,x}^T)\,.
\end{equation}
Defining $Z = (A_{z,z}-A_{z,x}A_{x,x}^{-1}A_{x,z})^{-1} = (M_1 - M_2M_4^{-1}M_3)^{-1}$ (which has signature $(n,n)$ due to \cref{eq:signZ}),
\[
	\td{M}^{-1} = \begin{pmatrix}Z & -ZM_2M_4^{-1} \\ -M_4^{-1}M_3Z & M_4^{-1}+M_4^{-1}M_3ZM_2M_4^{-1}\end{pmatrix}\,,
\]
and by explicit computation we find that for $V \coloneqq A_{w,z}-A_{w,x}A_{x,x}^{-1}A_{x,z}$,
\[
	- A_{w,(z,x)}\td{M}^{-1}A_{w,(z,x)}^T + A_{w,x}M_4^{-1} A_{w,x}^T = -VZV^T\,.
\]
In particular, if $V$ is non-degenerate, then Sylvester's law of inertia says that $\mathrm{sign}(-VZV^T) = \mathrm{sign}(-Z) = (n,n)$. Therefore, showing that $V$ is non-degenerate will imply that $\mathrm{sign}(W) = (2n,2n)$ by \cref{eq:signW}. 

Now notice that $A_{w,z} = 0$ and $A_{x,z} = -\del_{(x^\RR,x^{i\RR})}\del_{(z^\RR,z^{i\RR})}\phi^{i\RR}_4 = \begin{pmatrix}-\hat c \id & 0 \\ 0 & \hat c\id\end{pmatrix}$, which is invertible. In particular, $V$ is non-degenerate if and only if $A_{w,x}$ is non-degenerate, and 
\[
	A_{w,x} = -\begin{pmatrix}\td{B}^{i\RR} & \td{B}^\RR \\ \td{B}^\RR & -\td{B}^{i\RR}\end{pmatrix}\,,\quad\text{with}\quad \td{B} = \del_{w^\RR}\del_{x^\RR}\phi_4 = -\begin{pmatrix}i\id & \xi'_0 \\ -(\del_{\eta'}r^{1/2}(0,\xi'_0))^T & -ir^{1/2}(0,\xi'_0)\end{pmatrix}^T \,,
\]
where the Schur complement test implies that $A_{w,x}$ is non-degenerate. Thus, we have verified that $\Psi_2$ has the desired saddle point. 

In particular, by the definition of $\Phi^{\sharp,\ast}$, we have indeed verified that $(\phi_4, \Phi^\sharp, \Phi^{\sharp,\ast})$ is a Sj\"ostrand triad at $(\hat z, \hat w, \hat x)$.

Finally, we calculate that 
\[
	 \begin{pmatrix}\del_z\del_w\phi_4 & \del_z\del_x \phi_4 \\ \del_w\del_x\phi_4 & \del_x^2\phi_4\end{pmatrix} =  \begin{pmatrix}0 & i\hat c\id \\ \del_w\del_x\phi_4 & \del_x^2\phi_4\end{pmatrix}\,,
\]
which is non-degenerate if and only if
\[
	\del_w\del_x\phi_4 = \del_w\del_x\phi_2
\]
is non-degenerate, which it is. This completes the proof.
\end{proof}

Throughout the rest of this subsection we write $\hat w = (0,1), r_0 \coloneqq r^{1/2}(0,\xi'_0)>0$ (so that $\hat \xi = -2ir_0 e_n$, see \eqref{eq:defhatphi}), and $B_\rho \coloneqq \{x\in \CC^n\colon \abs{x}< \rho\}$ for $\rho>0$. Recall from the proof of \cref{lem:tdTinvnew} the function
\begin{equation}\label{eq:defofpsi1}
    \Psi_1(z,x) = \Phi^\sharp(z) - \phi_4^{i\RR}(z,\hat w, x)\,,
\end{equation}
where $\phi_4$ and $\Phi^{\sharp,\ast}$ were defined in \eqref{eq:def_phi4}.  
As explained at the beginning of \cref{sec:Au}, we would like the operator with phase $\phi_4$ and some amplitude to be a germ FIO (\cite[Def.~20.5.7]{MR4436039}), for which we require a contour of integration in $(z,x)$ that is strongly well-shaped for $\Psi_1$ at $(0,0)$. Define, for $\rho>0$, the contour of integration
\[
    \mathfrak{C}_\rho \coloneqq \{(z,x)\in \CC^{2n} \colon z= \bar x, \abs{x} < \rho\}\,,
\]
which is an open subset of the totally real affine subspace $\{(\bar x, x)\}$ of $\CC^{2n}$ of real dimension $2n$.
\begin{lemma}\label{lem:contour}
    For $\hat c>0$ sufficiently large, there is $\rho_0>0$ so that for all $\rho \in (0,\rho_0]$,
    \begin{enumerate}
        \item The contour $\mathfrak{C}_\rho$ is strongly well-shaped at $(0,0)$ for $\Psi_1$.
        \item Let $\Phi(x) \coloneqq -\phi_2^{i\RR}(\hat w, x)$. There is a constant $c_0\in \CC\sem$ depending only on $\hat c>0$ and the dimension $n$ so that with $c_h \coloneqq c_0 h^n$, there is a compact $K\subset B_\rho$ and complex open neighborhood $U$ of $0$ and $C,\eps >0$ so that for all $g\in \mathcal{O}^{(\Phi)}(B_\rho)$,
        \[
            \int_{\mathfrak{C}_{\rho/2}} e^{\frac{i}{h}(\hat \phi(z,y)-\hat\phi(z,x))} g(x;h) \dd z\dd x - c_h g(y;h) \in \mathcal{O}^{(-\omega)}(U)\,,
        \]
        and the LHS above is bounded on $U$ by $Ce^{-\eps/h} \sup_{x\in K} e^{-\Phi(x)/h}\abs{g(x;h)}$. 
    \end{enumerate}
\end{lemma}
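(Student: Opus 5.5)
The plan is to compute the function $\Psi_1$ explicitly on the contour $z=\bar x$ and read off a negative definite quadratic form (part 1). Part 2 then follows from an exact Gaussian (quadratic-phase) stationary phase computation combined with a deformation argument, in which part 1 controls the boundary contributions.

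\textbf{Part 1.} First I substitute $z=\bar x$, $w=\hat w$ into $\Psi_1(z,x)=\Phi^\sharp(z)-\phi_2^{i\RR}(\hat w,x)+\hat\phi^{i\RR}(z,x)$. The definition \eqref{eq:defhatphi} gives
\[
\hat\phi^{i\RR}(\bar x,x)=\tfrac{\hat c}{2}\big(\abs{x^\RR}^2-\abs{x^{i\RR}}^2\big)-\hat c\abs{x}^2-(x\cdot\hat\xi)^{i\RR}.
\]
Together with $\Phi^\sharp(\bar x)=\frac{\hat c}{2}\abs{(x')^\RR}^2$, all $\hat c$-dependent quadratic terms collapse to
\[
-\tfrac{\hat c}{2}(x_n^\RR)^2-\tfrac{3\hat c}{2}\abs{x^{i\RR}}^2 .
\]
This is strongly negative in every direction except $(x')^\RR$. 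The linear terms cancel, since the first-order computation in the proof of \cref{lem:tdTinvnew} shows $(0,0)$ is critical for $\Psi_1$. In the remaining $(x')^\RR$ direction, \cref{lem:phi3props} gives $\del_{y'}^2\phi_2(\hat w,0)=i\id$, so $-\phi_2^{i\RR}$ contributes $-\frac12\abs{(x')^\RR}^2$. The cross terms of $-\phi_2^{i\RR}$ involving $x_n^\RR$ or $x^{i\RR}$ are $\hat c$-independent, bounded by $C\abs{x}(\abs{x_n^\RR}+\abs{x^{i\RR}})$. I absorb them by Cauchy--Schwarz into $-\frac14\abs{(x')^\RR}^2$ and into the $\hat c$-terms once $\hat c$ is large. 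The cubic Taylor remainder is $\mathcal{O}(\abs{x}^3)$ and is absorbed by choosing $\rho_0$ small. This yields $\Psi_1(\bar x,x)\le -c\abs{x}^2$ on $\mathfrak C_\rho$, and since $\abs{(\bar x,x)}^2=2\abs{x}^2$ this is strong well-shapedness. The main care point here is tracking the signs of the linear terms $(x\cdot\hat\xi)^{i\RR}$ against $\del_{y_n}\phi_2$; I would double-check that they cancel, as the criticality statement requires.

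\textbf{Part 2.} The phase $F(z,x;y)=\hat\phi(z,y)-\hat\phi(z,x)$ is a quadratic polynomial:
\[
F=\tfrac{i\hat c}{2}(y^2-x^2)-i\hat c(y-x)\cdot z-(y-x)\cdot\hat\xi .
\]
For fixed $y$ near $0$, its unique critical point in $(z,x)$ is $x=y$, $z=y-\hat\xi/(i\hat c)$, and the Hessian is constant and nondegenerate. The key weight estimate is that, for $g\in\mathcal O^{(\Phi)}(B_\rho)$, the integrand has modulus at most $e^{\eps'/h}\exp\big(h^{-1}(\Phi(x)-F^{i\RR})\big)\,e^{\Phi(y)/h}$-type factors. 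Comparing with the definitions, this becomes
\[
\exp\big(h^{-1}\big(\Psi_1(z,x)-\Psi_1(z,y)+\Phi^\sharp\text{-terms}\big)\big),
\]
so part 1 (and its variant with $y$ as a parameter near $0$, by continuity) shows that $\mathfrak C_{\rho/2}$ is a good contour: the exponent is maximal, equal to $\Phi(y)$, only at the critical point. Near $\del\mathfrak C_{\rho/2}$ it is below $\Phi(y)-c\rho^2/8$.

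I would then deform $\mathfrak C_{\rho/2}$, keeping its boundary fixed (Stokes, the integrand being holomorphic), to a contour through the critical point on which $F$ is exactly of steepest descent type. There I apply exact Gaussian stationary phase. Since $F$ is quadratic and the $z$-integral is a Fourier-type integral in the variable $y-x$, the result is $(2\pi h)^n(\det\tfrac{1}{i}\del^2F)^{-1/2}\,g(y;h)$ up to an error. This error comes only from the cut-off at the contour boundary, is exponentially small relative to $\sup_K e^{-\Phi/h}\abs{g}$, and holds for $y$ in a small complex neighbourhood $U$ of $0$ and $K$ a compact neighbourhood of $\overline{B_{\rho/2}}$ inside $B_\rho$. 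The constant $c_0$ is $(2\pi)^n(\det\tfrac1i\del^2F)^{-1/2}$, depending only on $\hat c$ and $n$, and is nonzero.

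\textbf{Main obstacle.} The main obstacle is justifying the deformation uniformly in $y\in U$: the critical point $z=y+2r_0e_n/\hat c$ is not on $\mathfrak C_{\rho/2}$. I would handle this by a homotopy of contours $\{(\bar x+s\cdot 2r_0e_n/\hat c,\,x)\}_{s\in[0,1]}$, shrinking $\rho$ relative to $1/\hat c$ only if needed. The boundary pieces stay exponentially below $\Phi(y)$ by the strict inequality from part 1.
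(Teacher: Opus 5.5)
Your Part~1 is correct and is the paper's computation. The gap is in Part~2.

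\textbf{The wrong saddle point.} You aim your deformation at the holomorphic critical point $z=y+2r_0e_n/\hat c$ of $F$, but that point is irrelevant. The reason is that $g$ is not a symbol: it carries the weight $e^{\Phi/h}$, and $\Phi(x)=-2r_0x_n^\RR+\mathcal O(\abs{x}^2)$ has non-zero gradient at $0$. This linear term cancels the $+2r_0x_n^\RR$ in $\hat\phi^{i\RR}(\bar x,x)$. That cancellation is why Part~1 gives $\Phi(x)+\hat\phi^{i\RR}(\bar x,x)\le -c\abs{x}^2$ on $\mathfrak C_\rho$. As you yourself note, $\mathfrak C_{\rho/2}$ is already good for the weighted exponent: the saddle point that matters is on the contour, and nothing needs to be deformed.

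\textbf{The translation breaks the estimates.} On $z=\bar x+s\,2r_0e_n/\hat c$ the weighted exponent at $y=0$ becomes $\Phi(x)+\hat\phi^{i\RR}(\bar x,x)-2sr_0x_n^\RR$, and Part~1 does not control the extra term. At the point $x=-(\rho/2)e_n$ of the side surface swept out by $\del\mathfrak C_{\rho/2}$, the exponent is $\approx -\frac{\hat c}{8}\rho^2+sr_0\rho$, up to $\hat c$-independent $\mathcal O(\rho^2)$ terms. This is positive once $\rho\lesssim 8sr_0/\hat c$.
\begin{itemize}
\item So your claim that these pieces stay exponentially below $\Phi(y)$ `by part~1' fails for small $\rho$. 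The lemma must hold for every $\rho\in(0,\rho_0]$ with $\hat c$ fixed, and shrinking $\rho$ relative to $1/\hat c$ makes this worse, not better.
\item At $s=1$ the exponent even reaches $\approx 2r_0^2/\hat c>\Phi(0)$ in the interior, near $x\approx-(2r_0/\hat c)e_n$. On that contour no stationary-phase argument with error terms can give an $\mathcal O^{(-\omega)}$ remainder; only an exact identity could.
\item Your assertion that `exact Gaussian stationary phase' returns $c_hg(y;h)$ up to a boundary error for arbitrary $g\in\mathcal O^{(\Phi)}(B_\rho)$ is precisely the content of the lemma. You state it rather than prove it.
\item You also say the boundary is kept fixed and then translate it.
\end{itemize}

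\textbf{The missing idea.} Put the $x$-dependent non-linear part of the phase into the input instead of chasing the critical point of $F$. With $\zeta=-i\hat cz$ and $E(x;h)=e^{-\frac1h(\frac{\hat c}{2}x\cdot x+2r_0x_n)}$ one has $e^{\frac ih(\hat\phi(z,y)-\hat\phi(z,x))}=e^{\frac ih(y-x)\cdot\zeta}E(y;h)E(x;h)^{-1}$. The paper then proceeds as follows:
\begin{enumerate}
\item it reads the integral as a germ pseudodifferential operator with symbol $1$ acting on $gE^{-1}\in\mathcal O^{(\Phi_E)}$;
\item it checks, via the Part~1 computation, that $0$ is a saddle point of $\Phi_E$ and that $\{(x,-i\hat c\bar x)\}$ is strongly well-shaped;
\item it invokes Tr\`eves (Thm.~21.1.2, Cor.~21.1.8, Rem.~21.1.9) to identify this operator with the identity.
\end{enumerate}

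If you want a self-contained argument in your spirit, stay on $z=\bar x$. Write $x=y+t$. Then the integrand is $E(y;h)e^{-\hat c\abs{t}^2/h}H(t)$ times a constant multiple of Lebesgue measure, where $H(t)=e^{-\frac{\hat c}{h}\bar y\cdot t}E(y+t;h)^{-1}g(y+t;h)$ is holomorphic and $H(0)=E(y;h)^{-1}g(y;h)$.
\begin{itemize}
\item By the mean value property, the integral over $\abs{x-y}<\rho/2$ is exactly $c_hg(y;h)$, up to a Gaussian tail of size $\mathcal O(e^{-\hat c\rho^2/(8h)})\abs{g(y;h)}$.
\item The difference from the integral over $\abs{x}<\rho/2$ is supported near $\abs{x}=\rho/2$. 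There Part~1, together with continuity in $y$, gives the required bound $Ce^{-\eps/h}\sup_Ke^{-\Phi/h}\abs{g}$.
\end{itemize}
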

\begin{proof}
    We first verify \eqref{eq:f_neg_def}. By the definitions of $\hat\phi, \Phi^\sharp$ we have
    \begin{equation}\label{eq:contcalc}
        \hat \phi^{i\RR}(\bar x, x) = -\frac{\hat c}{2}\abs{x^\RR}^2 - \frac{3\hat c}{2}\abs{x^{i\RR}}^2+2r_0 x_n^\RR\,, \quad \Phi^\sharp(\bar x) = \frac{\hat c}{2}\abs{(x^\RR)'}^2\,,
    \end{equation}
    and by the properties of $\phi_2$ (\cref{lem:phi3props}) and Taylor's theorem, with $M = \del_y^2 \phi_2(\hat w,0)$,
    \[
        -\phi^{i\RR}_2(\hat w, x) = -2r_0x_n^\RR - \frac{1}{2}\mathrm{Im}(x^TMx)+\mathcal{O}(\abs{x}^3)\,,\quad \mathrm{Im}(x^TMx) \geq \abs{(x^\RR)'}^2 - 2C_M(\abs{(x^\RR)'}(\abs{x^\RR_n}+\abs{x^{i\RR}})+\abs{x_n^\RR}^2+\abs{x^{i\RR}}^2)
    \]
    for some $C_M>0$ depending only on $M$. Thus,
    \[
        \Psi_1(\bar x, x) \leq -\frac{1}{2}\abs{(x^\RR)'}^2 + C_M\abs{(x^\RR)'}(\abs{x^\RR_n}+\abs{x^{i\RR}}) - \left(\frac{\hat c}{2}-C_M\right)(\abs{x_n^\RR}^2+\abs{x^{i\RR}}^2) - \frac{\hat c}{2}\abs{x^{i\RR}}^2 + \mathcal{O}(\abs{x}^3)\,,
    \]
    so that for $\hat c>0$ sufficiently large, the quadratic form (on $x$) on the RHS is negative definite. In particular, there is some $c_\psi>0$ so that $\Psi_1(\bar x, x) \leq -c_\psi\abs{x}^2 = -c_\psi (\abs{\bar x}^2+\abs{x}^2)/2$ on $\mathfrak{C}_{\rho/2}$ for small $\rho>0$. Since $\Psi_1(0,0)=0$ the proof of part 1. is complete.

    We introduce $\zeta = -i\hat c z$ and $E(x;h) \coloneqq e^{-\frac{1}{h}(\frac{\hat c}{2}x\cdot x+2r_0x_n)}$ which is a non-vanishing entire function. By \cref{eq:defhatphi},
    \[
        e^{\frac{i}{h}(\hat \phi(z,y)-\hat\phi(z,x))} = e^{\frac{i}{h}(y-x)\cdot \zeta}E(y;h)E(x;h)^{-1}\,,
    \]
    and after the change of variables $z\mapsto \zeta$,
    \begin{equation}\label{eq:germpsido}
        \int_{\mathfrak{C}_{\rho/2}} e^{\frac{i}{h}(\hat \phi(z,y)-\hat\phi(z,x))} g(x;h) \dd z\dd x = c_1E(y;h)\int_{\td{\mathfrak{C}}_{\rho/2}}e^{\frac{i}{h}(y-x)\cdot \zeta}g(x;h)E(x;h)^{-1}\dd x \dd \zeta\,,
    \end{equation}
    where $c_1$ is a non-vanishing constant depending only on $n$ and $\hat c$ and $\td{\mathfrak{C}}_\rho = \{(x,-i\hat c\bar x)\colon \abs{x}<\rho\}$. The integral on the RHS is \cite[Eq.~(21.1.2)]{MR4436039} with symbol identically $1$, output variable $y$ and input $g E^{-1}\in \mathcal{O}^{(\Phi_E)}(B_\rho)$, where 
    \[
        \Phi_E(x) \coloneqq \Phi(x)+\frac{\hat c}{2}\mathrm{Re}(x\cdot x)+2r_0x_n^\RR\,.
    \]
    Observe that $\Phi_E(0)=0, \del_x\Phi_E(0) = 0$, and the Hessian of $\Phi_E$ is $\hat c \mathrm{diag}(\id, -\id) + \mathcal{O}(1)$ which is non-degenerate of signature $(n,n)$ for $\hat c>0$ large enough. In particular, $0$ is a saddle point of $\Phi_E$ and on $\td{\mathfrak{C}}_\rho$, by \cref{eq:contcalc},
    \[
        \Phi_E(x) - \mathrm{Im}((0-x)\cdot \zeta) = \Phi_E(x) -\hat c \abs{x}^2 = \Psi_1(\bar x, x)- \frac{\hat c}{2}\abs{(x^\RR)'}^2 \leq -c_\psi\abs{x}^2\,,
    \]
    so that $\td{\mathfrak{C}}_\rho$ is a strongly well-shaped contour for $\Phi_E(x) - \mathrm{Im}((0-x)\cdot \zeta)$ at $(0,0)$. This verifies the hypotheses of \cite[Thm.~21.1.2]{MR4436039} so that the integral on the RHS of \cref{eq:germpsido} is a germ pseudo-differential operator on $\mathcal{O}^{(\Phi_E)}_0/\mathcal{O}^{(-\omega)}_0$ with symbol $1$, which is the identity by \cite[Cor.~21.1.8]{MR4436039} (using the independence with respect to the contour \cite[Rem.~21.1.9]{MR4436039}). We obtain the first part of statement 2. 

    The quantitative part of statement 2. is obtained by tracing that \cite[Prop.~20.2.14]{MR4436039} is stated with an error bound $Ce^{-\kappa/h}\sup_{K}e^{-\Phi_E/h}\abs{gE^{-1}(\cdot;h)}$ for some $C,\kappa>0$, and (the proofs of) \cite[Thm.~21.1.7,~Prop.~20.4.3,~Prop.20.4.8]{MR4436039} bound error terms by $C\sup_{K}e^{-\Phi_E/h}\abs{g E^{-1}}$ for some compact $K\Subset B_\rho$ containing all contours involved, uniformly in $y$ near $0$. Because $e^{-\Phi_E/h}\abs{g E^{-1}} = e^{-\Phi/h}\abs{g}$ and $\abs{E(y;h)}\leq e^{C\abs{y}/h}$ for some $C>0$, after shrinking $U$ the proof is complete. (Note that the standing assumption of \cite[\S~21.1.3]{MR4436039} that $\del_x\Phi_E(0) \neq 0$ is not used in the proofs of the results cited here.)
\end{proof} 

We require the quantitative version of statement 2. above only in order to apply that result a family of functions $g$ depending holomorphically on some parameter. 

For $\rho \in (0,\rho_0]$ and $c_h$ as in \cref{lem:contour}, for $f$ holomorphic near $\bar B_\rho$ and $w$ near $\hat w$ define
\begin{equation}\label{eq:germFIO}
    \mathcal{T}_\rho f(w;h) \coloneqq \int_{\mathfrak{C}_{\rho/2}} e^{\frac{i}{h}\phi_4(z,w,x)}c_h^{-1}b^\ast(w,x;h)d(w_n,x;h) f(z;h)\dd z \dd x\,,
\end{equation}
which is the integral in \cite[Eq.~(20.5.7)]{MR4436039} with phase $\phi_4$, contour $\mathfrak{C}_{\rho/2}$, and amplitude $p \coloneqq c_h^{-1}b^\ast d = (2\pi h)^n c_0^{-1}b^\ast d$, which is independent of $z$, and is an elliptic cl.a.s. because its principal part $(2\pi h)^n c_0^{-1}b_0^\ast d_0$ equals $(2\pi h)^{n}c_0^{-1}$ on $\{x_n = 0\}$ (\cref{prop:K1,prop:K2}) and is thus non-zero near $(\hat w,0)$. That products of cl.a.s. are cl.a.s. follows from the results in \cite[\S~19.1]{MR4436039}.

\begin{lemma}\label{lem:replace}
    Let $\hat c>0$ be sufficiently large and $\rho \in (0,\rho_0]$ with $\rho_0$ sufficiently small. 
    \begin{enumerate}
        \item The integral \cref{eq:germFIO} defines a classical elliptic germ FIO transform 
        \[
            \mathcal{T} \colon \bl{O}^{(\Phi^\sharp)}_0/\bl{O}^{(-\omega)}_0 \to \bl{O}^{(\Phi^{\sharp,\ast})}_{\hat w}/\bl{O}^{(-\omega)}_{\hat w}
        \]
        that is invertible on these spaces.
        \item If $u\in L_{c,+}^\infty(\RR^n)$ has support sufficiently contracted about $0$, then for $A$ from \cref{eq:defofAFBI},
        \[
            \mathcal{T}_{\rho}(Au) - Su\in \bl{O}^{(-\omega)}_{\hat w}\,.
        \]
    \end{enumerate}
\end{lemma}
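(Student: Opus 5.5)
For part 1, the plan is to check the hypotheses of Trèves' theory of germ FIO transforms in \cite[\S~20.5]{MR4436039}. \cref{lem:tdTinvnew} gives most of them: $(\phi_4,\Phi^\sharp,\Phi^{\sharp,\ast})$ is a Sjöstrand triad at $(0,\hat w,0)$, the non-degeneracy condition on the mixed Hessian holds, and $\del_z\phi_4$ vanishes at the base point. \cref{lem:contour}, part 1, supplies a contour $\mathfrak{C}_{\rho/2}$ that is strongly well-shaped for $\Psi_1$ at $(0,0)$. The amplitude $p=c_h^{-1}b^\ast d$ is an elliptic cl.a.s., as noted right after \cref{eq:germFIO}.

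With these in hand, \cite[Def.~20.5.7]{MR4436039} and the accompanying well-definedness result (independence of the contour modulo $\bl{O}^{(-\omega)}$) show that $\mathcal{T}_\rho$ induces a map $\bl{O}^{(\Phi^\sharp)}_0/\bl{O}^{(-\omega)}_0\to \bl{O}^{(\Phi^{\sharp,\ast})}_{\hat w}/\bl{O}^{(-\omega)}_{\hat w}$, independent of $\rho$. For invertibility I would use the theorem asserting that an elliptic germ FIO with a Sjöstrand-triad phase and non-degenerate mixed Hessian has an inverse of the same type. This inverse is built from the adjoint phase, and composing the two gives an elliptic germ $\Psi$DO, which can be inverted. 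I would only need to match our setting to those hypotheses, and the determinant condition in \cref{lem:tdTinvnew} is exactly the one required.

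For part 2, I would insert the definition of $A$ into $\mathcal{T}_\rho$ and use Fubini to write
\[
\mathcal{T}_\rho(Au)(w)=\int u(y)\Big(\int_{\mathfrak{C}_{\rho/2}} e^{\frac{i}{h}(\hat\phi(z,y)-\hat\phi(z,x))}c_h^{-1}g_w(x;h)\dd z\dd x\Big)\dd y,
\]
where $g_w(x;h)\coloneqq e^{\frac{i}{h}\phi_2(w,x)}b^\ast(w,x;h)d(w_n,x;h)$. This rewriting uses $\phi_4=\phi_2-\hat\phi$ and the fact that the amplitude does not depend on $z$. Next I would check that $g_w\in\mathcal{O}^{(\Phi_w)}(B_\rho)$ with $\Phi_w(x)=-\phi_2^{i\RR}(w,x)$, and that at $w=\hat w$ this is exactly the weight $\Phi$ of \cref{lem:contour}.

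The inner integral is then $g_w(y;h)$ plus an error. The error is controlled by the quantitative bound in \cref{lem:contour}, part 2, namely $Ce^{-\eps/h}\sup_K e^{-\Phi/h}\abs{g_w}$. At $w=\hat w$ this supremum is $\le C$ up to subexponential factors. For $w$ near $\hat w$ it is at most $e^{C\abs{w-\hat w}/h}$, which is absorbed by shrinking the neighbourhood of $\hat w$. Integrating against $u(y)$ with $y$ in a small neighbourhood $U$ of $0$ (using the support hypothesis on $u$) then gives $Su$ plus a term in $\bl{O}^{(-\omega)}_{\hat w}$. Holomorphy in $w$ of the error follows because both sides are holomorphic in $w$.

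The main obstacle is this uniformity in the parameter $w$. \cref{lem:contour} is stated for a single fixed weight $\Phi$, while $g_w$ lives naturally in the weight $\Phi_w$. I would handle this by comparing weights directly: $\abs{\Phi_w-\Phi}\le C\abs{w-\hat w}$ on $K$. This makes $\sup_K e^{-\Phi/h}\abs{g_w}\le e^{C\abs{w-\hat w}/h}$, and choosing $\abs{w-\hat w}<\eps/(2C)$ leaves a net decay of $e^{-\eps/(2h)}$.
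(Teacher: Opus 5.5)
Your proposal is correct and follows essentially the same route as the paper. For part 1 you check the hypotheses of Tr\`eves' well-definedness and inversion results (Thm.~20.5.6, Def.~20.5.7, Thm.~21.2.12 there) via \cref{lem:tdTinvnew,lem:contour}. For part 2 you use Fubini, the quantitative reproducing estimate of \cref{lem:contour}, and the Lipschitz comparison $\abs{g_w}\leq Ce^{(\Phi+L\abs{w-\hat w})/h}$, then restrict to $\abs{w-\hat w}<\eps/(2L)$; this is exactly how the paper handles the uniformity in $w$.
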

Above we have written the germ FIO transform $\mathcal{T}$ without a subscript $\rho$, because once the transform is well-defined for a sufficiently small $\rho$, as an operator on germ spaces it no longer depends on the contour of integration (in particular $\rho$ itself) modulo exponentially decaying terms, see \cite[\S~20.5]{MR4436039}.
\begin{proof}
    \cref{lem:contour} implies that $\mathcal{T}$ is well-defined (\cite[Thm.~20.5.6,~Def.~20.5.7]{MR4436039}). By \cref{lem:tdTinvnew}, $(\phi_4,\Phi^\sharp, \Phi^{\sharp,\ast})$ is a Sj\"ostrand triad at $(0,\hat w, 0)$, and the same lemma gives the hypotheses of \cite[Thm.~21.2.12]{MR4436039}, which completes the proof of the first part. 

    Define $g_w(x;h)\coloneqq e^{\frac{i}{h}\phi_2(w,x)}b^\ast(w,x;h)d(w_n,x;h)$ which is holomorphic on $B_\rho$ for $w$ near $\hat w$ provided $\rho$ is sufficiently small. Using Fubini's theorem
    \begin{equation}\label{eq:fub}
        \mathcal{T}_\rho (Au)(w;h) = c_h^{-1}\int u(y) \int_{\mathfrak{C}_{\rho/2}}e^{\frac{i}{h}(\hat \phi(z,y)-\hat \phi(z,x))}g_w(x;h)\dd z \dd x \dd y\,.
    \end{equation}
    Because $b^\ast d = \mathcal{O}(h^0)$ near $(\hat w,0)$, we obtain
    \[
        \abs{g_w(x;h)} \leq Ce^{(\Phi(x)+L\abs{w-\hat w})/h}
    \]
    for $x\in B_\rho$ and $w$ near $\hat w$, $\Phi(x) = -\phi^{i\RR}_2(\hat w,x)$ and $L$ a Lipschitz constant of $\phi_2^{i\RR}$ in $w$. We find that \cref{lem:contour} implies that the inner contour integral in the RHS of \cref{eq:fub} evaluates to $c_hg_w(y;h)+R_w(y;h)$ with $\sup_{y\in U}\abs{R_w(y)}\leq Ch^{-n}e^{(-\eps+L\abs{w-\hat w})/h}$. If $\supp u \subset U$, then by \cref{eq:defofS}, 
    \[
        \mathcal{T}_\rho (Au)(w;h) = \int u(y)g_w(y;h)\dd y+ c_h^{-1}\int u(y)R_w(y;h)\dd y = Su(w;h)+\bl{O}^{(-\omega)}_{\hat w}
    \]
    if we choose $\abs{w-\hat w}< \eps/(2L)$, since $\mathcal{T}_\rho (Au)(w;h)$ and $Su(w;h)$ are holomorphic in $w$ near $\hat w$.
\end{proof}

\begin{lemma}\label{lem:phiflat}
	Define 
	\[
		\Phi^\flat(z) \coloneqq \sup_{\RR^n\ni y, y_n\geq 0}-\hat\phi^{i\RR}(z,y) 	
	\]
	which is continuous on $\CC^n$ and satisfies
	\[
			\Phi^\flat(z) = \frac{\hat c}{2}((z')^{\RR})^2 + \begin{cases}\hat c/2(z_n^\RR-2\hat c^{-1}r^{1/2}(0,\xi'_0))^2&\quad\text{if}\ z_n^\RR \geq 2\hat c^{-1}r^{1/2}(0,\xi'_0) \\ 0 &\quad\text{if}\ z_n^\RR < 2\hat c^{-1}r^{1/2}(0,\xi'_0)\end{cases}\,.
	\]
	Furthermore, $\hat\phi$ is an FBI phase function at $(z^\circ_\nu,0), z^\circ_\nu = i\hat c^{-1}\hat\xi + i\nu e_n$, where $e_n$ is the $n$-th standard basis vector and $\nu > 0$ is arbitrary, with $\del_y\hat\phi(z^\circ_\nu,0)=\hat c\nu e_n$.

	In particular, $A \colon L_{c,+}^\infty(\RR^n)\to \mathcal{O}^{(\Phi^\flat)}(\CC^n)$ is continuous (without any germ restriction).
\end{lemma}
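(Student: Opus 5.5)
The plan is to compute everything explicitly from the quadratic formula \eqref{eq:defhatphi}. Since $\hat\xi = -2ir_0 e_n$ with $r_0 = r^{1/2}(0,\xi'_0)>0$, we have $-y\cdot\hat\xi = 2ir_0 y_n$. For real $y$ this gives
\[
	\hat\phi(z,y) = i\frac{\hat c}{2}y^2 - i\hat c\, y\cdot z + 2ir_0 y_n\,,
\]
and taking imaginary parts,
\[
	-\hat\phi^{i\RR}(z,y) = -\frac{\hat c}{2}\abs{y}^2 + \hat c\, y\cdot z^{\RR} - 2r_0 y_n\,.
\]
The supremum over $y\in\RR^n$ with $y_n\ge 0$ then separates into independent maximizations in $y'$ and in $y_n$.

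In the tangential variables, the unconstrained maximum of $-\frac{\hat c}{2}\abs{y'}^2+\hat c\, y'\cdot (z')^{\RR}$ is attained at $y' = (z')^{\RR}$ and equals $\frac{\hat c}{2}((z')^{\RR})^2$. In the normal variable we maximize the concave function $-\frac{\hat c}{2}y_n^2 + (\hat c z_n^{\RR}-2r_0)y_n$ over $y_n\ge 0$. Its vertex lies at $y_n = z_n^{\RR}-2\hat c^{-1}r_0$. If this is nonnegative, the maximum is $\frac{\hat c}{2}(z_n^{\RR}-2\hat c^{-1}r_0)^2$. Otherwise the maximum is attained at $y_n=0$ and equals $0$. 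This yields the stated formula. Continuity is immediate from the formula, since the two branches agree on $z_n^{\RR}=2\hat c^{-1}r_0$; alternatively, it follows because the supremum is locally attained on a compact set.

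For the FBI phase property at $(z^\circ_\nu,0)$, I will check the three defining conditions.
\begin{enumerate}
	\item $\hat\phi(z,0)=0$ for every $z$. Also $\del_y\hat\phi(z,y) = i\hat c y - i\hat c z - \hat\xi$. At $y=0$ and $z = z^\circ_\nu = i\hat c^{-1}\hat\xi + i\nu e_n$ this gives $-i\hat c(i\hat c^{-1}\hat\xi + i\nu e_n) - \hat\xi = \hat c\nu e_n$, which is real and nonzero for $\nu>0$. In the convention of the definition, $-\hat\xi_{\mathrm{FBI}} = \hat c\nu e_n$. One should double-check that $z^\circ_\nu = 2\hat c^{-1}r_0 e_n + i\nu e_n$ is indeed the intended point: $i\hat c^{-1}\hat\xi = 2\hat c^{-1}r_0 e_n$, so it is.
	\item $\del_z\del_y\hat\phi = -i\hat c\,\id$, which is invertible.
	\item $\mathrm{Im}\,\del_y^2\hat\phi = \hat c\,\id$, which is positive definite.
\end{enumerate}

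For continuity of $A$, take $u\in L^\infty_{c,+}$, so that $\supp u$ is a compact subset of $\{y_n\ge0\}$. Then
\[
	\abs{Au(z;h)} \le \int e^{-\hat\phi^{i\RR}(z,y)/h}\abs{u}\,\dd y \le \abs{\supp u}\,\norm{u}_{L^\infty}\, e^{\Phi^\flat(z)/h}\,,
\]
by the definition of $\Phi^\flat$ as a supremum over the half space. This bounds $e^{-\Phi^\flat/h}\abs{Au}$ uniformly on all of $\CC^n$ and all $h\in(0,1]$, which is stronger than the $e^{\eps/h}$ loss allowed in the definition of $\mathcal{O}^{(\Phi^\flat)}$. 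Holomorphy in $z$ follows by differentiating under the integral, since the integrand is entire in $z$ and the integral is over a compact set.

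Linearity together with this bound gives continuity with respect to the natural seminorms. The one subtlety is that the constant depends on the measure of $\supp u$. Continuity should therefore be read on each subspace with support in a fixed compact set, which is the natural inductive-limit topology on $L^\infty_{c,+}$. No step is a real obstacle; the only care needed is the sign bookkeeping of $\hat\xi$ in the imaginary part.
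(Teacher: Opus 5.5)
Your proposal is correct and matches the paper's proof: the same explicit maximization of $-\hat\phi^{i\RR}(z,y)=-\frac{\hat c}{2}\abs{y}^2+\hat c\,y\cdot z^{\RR}-2r_0y_n$, done separately in $y'$ and in $y_n\ge 0$, and the same direct check of the three FBI-phase conditions at $(z^\circ_\nu,0)$. The only difference is that for the continuity of $A$ you write out the one-line bound $\abs{Au(z;h)}\le \norm{u}_{L^1}e^{\Phi^\flat(z)/h}$, which is exactly what the modification of Tr\`eves' Prop.~22.2.7 cited in the paper provides.
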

\begin{proof}
For $y$ real,
\[
	-\hat\phi^{i\RR}(z,y) = -{\hat c\over 2}y^2 +\hat cz^\RR \cdot y - 2r^{1/2}(0,\xi'_0)y_n\,,
\]
for which one can explicitly see that the supremum in the definition of $\Phi^\flat$ is attained and can be computed explicitly as stated. To verify that $A$ is a germ FBI transform at $(z^\circ_\nu,0)$ we must only check that $\hat \phi$ is an FBI phase function at $(z^\circ_\nu,0)$: 
\[
	\hat\phi(z^\circ_\nu,0) = 0\,, \del_y\hat\phi(z^\circ_\nu,0) = \hat c\nu e_n\,, \det\del_z\del_y\hat\phi(z^\circ_\nu,0) = (-i \hat c)^n \neq 0\,, \mathrm{Im}\del_y\del_y\hat\phi(z^\circ_\nu,0) = \hat c\id > 0\,.
\]
A minor modification of \cite[Prop.~22.2.7]{MR4436039} to reflect the additional assumption that if an input function is supported in $\{y_n\geq 0\}$, we may define $\Phi^\flat$ by taking the supremum over $y_n\geq 0$, the proof is complete.
\end{proof}

Combining the previous two results, 
\begin{lemma}\label{lem:fbifirst}
Let $\hat c>0$ be sufficiently large. If $u\in L_{c,+}^\infty(\RR^n)$ satisfies $Su\in\bl{O}^{(-\omega)}_{(0,1)}$ and has support sufficiently contracted about $0$, then $Au \in \bl{O}^{(-\omega)}_0$.
\end{lemma}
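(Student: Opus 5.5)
We combine \cref{lem:replace,lem:phiflat}. The idea is to view $Au$ as an element of the germ space $\bl{O}^{(\Phi^\sharp)}_0$ and then invert the elliptic germ FIO $\mathcal{T}$, which maps $Au$ to $Su$ modulo exponentially small errors.

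The first step is to check that the class of $Au$ lies in $\bl{O}^{(\Phi^\sharp)}_0$. By \cref{lem:phiflat} we have $Au\in\mathcal{O}^{(\Phi^\flat)}(\CC^n)$. By the explicit formula there, $\Phi^\flat(z)=\Phi^\sharp(z)$ whenever $z_n^\RR<2\hat c^{-1}r^{1/2}(0,\xi'_0)$, and this set contains a neighborhood of $z=0$. Hence the germ of $Au$ at $0$ belongs to $\bl{O}^{(\Phi^\sharp)}_0$. This is exactly where the half-space support $\supp u\subset\{y_n\geq0\}$ enters: the term $-2r_0y_n$ in $-\hat\phi^{i\RR}$ has a good sign, so the weight is flat in the $z_n$ direction at the origin.

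The second step uses part 2 of \cref{lem:replace}. After shrinking $\supp u$ about $0$, it gives $\mathcal{T}_\rho(Au)-Su\in\bl{O}^{(-\omega)}_{\hat w}$. Since $Su\in\bl{O}^{(-\omega)}_{(0,1)}$ by hypothesis, we obtain $\mathcal{T}_\rho(Au)\in\bl{O}^{(-\omega)}_{\hat w}$. Because $\bl{O}^{(-\omega)}_{\hat w}\subset\bl{O}^{(\Phi^{\sharp,\ast})}_{\hat w}$ (as $\Phi^{\sharp,\ast}(\hat w)=0$ and the weight is continuous), the class of $\mathcal{T}(Au)$ in $\bl{O}^{(\Phi^{\sharp,\ast})}_{\hat w}/\bl{O}^{(-\omega)}_{\hat w}$ vanishes.

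The third step uses part 1 of \cref{lem:replace}. Since $\mathcal{T}$ is invertible as a map $\bl{O}^{(\Phi^\sharp)}_0/\bl{O}^{(-\omega)}_0\to\bl{O}^{(\Phi^{\sharp,\ast})}_{\hat w}/\bl{O}^{(-\omega)}_{\hat w}$, it follows that $Au\in\bl{O}^{(-\omega)}_0$.

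The main obstacle is making sure that the operator $\mathcal{T}_\rho$ acting on the concrete function $Au$ agrees with the abstract germ operator $\mathcal{T}$ acting on its class. Concretely, one must check that $Au$ is holomorphic near $\bar B_\rho$ and satisfies the $\Phi^\sharp$-weighted bounds uniformly on the contour $\mathfrak{C}_{\rho/2}$. Changing the representative by an element of $\bl{O}^{(-\omega)}_0$ changes $\mathcal{T}_\rho$ only by $\bl{O}^{(-\omega)}_{\hat w}$, by the contour-independence statements of \cite[\S~20.5]{MR4436039}. Both facts hold here: $Au$ is entire, and its bound is global by \cref{lem:phiflat}. It also needs to be checked that the neighborhood where $\Phi^\flat=\Phi^\sharp$ contains $\bar B_\rho$. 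This holds for $\rho<2\hat c^{-1}r_0$, so we take $\rho_0$ smaller if necessary, with $\hat c$ fixed first.
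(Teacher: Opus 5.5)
Your proposal is correct and follows the paper's proof essentially step for step. You place $Au$ in $\bl{O}^{(\Phi^\sharp)}_0$ via \cref{lem:phiflat} and $\Phi^\flat=\Phi^\sharp$ near $0$, use part 2 of \cref{lem:replace} with the hypothesis on $Su$ to get $\mathcal{T}(Au)=0$ in the quotient, and then invert $\mathcal{T}$ by part 1; your extra remarks (taking $\rho<2\hat c^{-1}r_0$ after fixing $\hat c$, and independence of $\mathcal{T}_\rho$ from the choice of representative) only make explicit what the paper leaves implicit.
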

\begin{proof}
Let $\rho\in (0,\rho_0]$ with $\rho_0>0$ sufficiently small from \cref{lem:contour}. Near $0$ we have $\Phi^\sharp = \Phi^\flat$ (defined in \cref{lem:tdTinvnew}, \cref{lem:phiflat} respectively), and \cref{lem:phiflat} then gives $Au \in \mathcal{O}^{(\Phi^\sharp)}(B_\rho)$ for $\rho< \rho_0$ sufficiently small. By \cref{lem:replace} and the hypothesis on $Su$ we have $\mathcal{T}_{\rho}(Au) \in \bl{O}_{\hat w}^{(-\omega)} \subset \bl{O}_{\hat w}^{(\Phi^{\sharp,\ast})}$ (using $\Phi^{\sharp,\ast}((0,1))=0$,  \cite[Prop.~20.2.13]{MR4436039}), so that $\mathcal{T}(Au) = 0$ by \cref{lem:replace}. Inverting $\mathcal{T}$, we find that $Au = 0$ in $\bl{O}_{0}^{(\Phi^\sharp)}/\bl{O}_{0}^{(-\omega)}$, which is to say that $Au \in \bl{O}_{0}^{(-\omega)}$. 
\end{proof}

\subsection{Proof of \cref{thm:localmain}}

There is only one more obstacle we must overcome: $A$ is not a germ FBI transform (\cite[\S~22]{MR4436039}) at $(0,0)$ (only a germ Fourier-like operator), because the phase $\hat\phi$ satisfies $\del_y\hat\phi(0,0) = -\hat\xi$ which has an imaginary part. However, at $z^\circ_\nu = i\hat c^{-1}\hat\xi +i\nu e_n, y=0$ we have seen in \cref{lem:phiflat} that it is indeed a germ FBI transform. 
\begin{lemma}\label{lem:complexshift}
	If $u\in L^\infty_{c,+}(\RR^n)$ with $Su\in\bl{O}^{(-\omega)}_{(0,1)}$ has support sufficiently contracted about $0$, then for $\nu>0$ sufficiently small,
	\[
		Au = 0 \quad\text{in}\ \bl{O}^{(\Phi^\flat)}_{z^\circ_\nu}/\bl{O}^{(-\omega)}_{z^\circ_\nu}\,.
	\]
	\end{lemma}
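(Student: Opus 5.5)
We start from \cref{lem:fbifirst}, which already gives $Au\in\bl{O}^{(-\omega)}_0$, that is, exponential decay of $Au$ on a small complex ball $B_\delta$ around $z=0$. The task is to move this decay from $0$ to $z^\circ_\nu = i\hat c^{-1}\hat\xi+i\nu e_n$, measured against the weight $\Phi^\flat$. First note $i\hat c^{-1}\hat\xi = 2\hat c^{-1}r_0 e_n$ is real. Hence $z^\circ_\nu$ has real part $2\hat c^{-1}r_0e_n$ and imaginary part $\nu e_n$, so it is not close to $0$ unless $\hat c$ is large. We use the freedom in $\hat c$ only through the earlier lemmas. The plan is to exploit that $Au$ is an entire function with the global bound $|Au(z)|\le C_\eps e^{(\Phi^\flat(z)+\eps)/h}$ on compacts (\cref{lem:phiflat}). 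We then propagate the smallness near $0$ to $z^\circ_\nu$ by a subharmonicity / two-constants (Hadamard three-lines type) argument along the complex line, or the complex region, joining $0$ and $z^\circ_\nu$.

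\textbf{Step 1: the weight is pluriharmonic on the relevant region.} By the explicit formula in \cref{lem:phiflat}, on the half-space $\{z_n^\RR<2\hat c^{-1}r_0\}$ we have $\Phi^\flat(z)=\frac{\hat c}{2}((z')^\RR)^2$. This is independent of $z_n$ and therefore pluriharmonic in $z_n$. The point $z^\circ_\nu$ lies exactly on the boundary $z_n^\RR=2\hat c^{-1}r_0$, and $0$ lies inside the region. Moreover, $\Phi^\flat$ is $C^1$ across the boundary, with the extra term vanishing quadratically there. Fix $z'=0$ and consider the one-variable entire function $F(\zeta)=Au(0,\zeta)$. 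It satisfies $|F(\zeta)|\le C_\eps e^{(\varphi(\zeta)+\eps)/h}$, where $\varphi(\zeta)=\frac{\hat c}{2}((\zeta^\RR-2\hat c^{-1}r_0)_+)^2$. It also satisfies $|F|\le Ce^{-\kappa/h}$ on $|\zeta|<\delta$, and uniformly for $z'$ in a small ball.

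\textbf{Step 2: propagation.} Apply a two-constants theorem to the subharmonic function $h\log|F|$ on a domain $D$ in the $\zeta$-plane. We choose $D$ to be a thin strip/rectangle $\{\zeta^\RR<2\hat c^{-1}r_0+\eta,\ |\zeta^{i\RR}|<R\}$ containing both the disc $B_\delta$ and a neighborhood of $z^\circ_{\nu,n}$. On $D$ we have $\varphi\le \hat c\eta^2/2$. The harmonic measure $\omega$ of $B_\delta$ relative to $D\setminus B_\delta$ is bounded below by some $\omega_0>0$ near $z^\circ_{\nu,n}$. Hence near that point
\[
h\log|F|\le (1-\omega_0)(\hat c\eta^2/2+\eps)-\omega_0\kappa+O(h\log C_\eps).
\]
Choosing $\eta$ and $\eps$ small compared to $\omega_0\kappa$ yields $|F|\le e^{-\kappa'/h}$ near $z^\circ_{\nu,n}$. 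Since $\Phi^\flat\ge0$, this is exponential decay relative to $e^{\Phi^\flat/h}$. Doing this uniformly for $z'$ near $0$ gives $Au\in e^{\Phi^\flat/h}\bl{O}^{(-\omega)}_{z^\circ_\nu}$. Because $\Phi^\flat(z^\circ_\nu)=0$ and $\Phi^\flat$ is continuous, this is the claimed vanishing in $\bl{O}^{(\Phi^\flat)}_{z^\circ_\nu}/\bl{O}^{(-\omega)}_{z^\circ_\nu}$.

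\textbf{Main obstacle.} The delicate point is uniformity. The lower bound $\omega_0$ on harmonic measure degrades as $\eta\to0$ and as $\hat c$ changes the distance $2\hat c^{-1}r_0$. We must ensure that $\eps$ (from the $\mathcal O^{(\Phi^\flat)}$ bound) and $\eta$ can be chosen after $\omega_0$ and $\kappa$ are fixed. For this we first fix $D$ with a fixed margin $\eta_0$. We then bound $\varphi\le\hat c\eta_0^2/2$ only on a small portion of $\partial D$, which has small harmonic measure seen from $z^\circ_{\nu,n}$. Concretely, we split $\partial D$ into the part with $\zeta^\RR\le 2\hat c^{-1}r_0$, where $\varphi=0$, and a short segment beyond it. Taking $\nu$ small is what keeps $z^\circ_\nu$ within the region where this works; this is why the statement restricts to small $\nu$. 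If this quantitative bookkeeping fails, the fallback is to invoke Sjöstrand's analytic continuation of decay for FBI transforms along pluriharmonic directions of the weight, as in \cite[\S~22]{MR4436039}.
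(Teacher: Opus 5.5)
Your strategy is the same as the paper's. You freeze $z'$, use subharmonicity of $h\log\abs{Au}$ in $z_n$, and use that $\Phi^\flat$ is independent of $z_n$ on $\{z_n^\RR\le 2\hat c^{-1}r_0\}$ and grows only quadratically beyond that line. You then push the decay from \cref{lem:fbifirst} near $z_n=0$ to $(z^\circ_\nu)_n$, which lies on the line. The paper shifts $z^\circ_\nu$ to the origin and takes this one-variable step from the proof of \cite[Lem.~4.1]{zbMATH05707695}. That step is the whole content of the lemma, so it is where your proposal must be right, and as written it is not.

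\textbf{The gap.} There are two problems with your justification of the one-variable step.
\begin{enumerate}
\item $\omega_0$ cannot be fixed before $\eta$. The point $(z^\circ_\nu)_n$ is at distance $\eta$ from the right edge of $D$ and at distance about $2\hat c^{-1}r_0$ from $B_\delta$. Comparing with the harmonic function $(2\hat c^{-1}r_0+\eta-\zeta^\RR)/(2\hat c^{-1}r_0+\eta-\delta)$ gives $\omega_0\le C\eta$. So ``choose $\eta$ small compared to $\omega_0\kappa$'' is circular.
\item The repair in your last paragraph goes the wrong way. The part of $\partial D$ where $\varphi>0$ contains the entire right edge, not just a short segment. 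By odd reflection across that flat edge, its harmonic measure seen from $(z^\circ_\nu)_n$ is $1-O(\eta)$, not small.
\end{enumerate}

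\textbf{What closes the argument.} The cited lemma encodes a competition of orders in $\eta$.
\begin{enumerate}
\item On $D$ one has $\varphi\le\hat c\eta^2/2$. This is quadratic because the extra term in $\Phi^\flat$ vanishes to second order on the line.
\item The harmonic measure of $\partial B_\delta$ at every point within $\eta/2$ of $(z^\circ_\nu)_n$ is at least $c\eta$, with $c>0$ independent of $\eta$. To see this, monotonicity in the domain gives a positive lower bound, uniform in $\eta$, on a vertical segment $\{\zeta^\RR=\hat c^{-1}r_0\}$. A harmonic barrier such as $\alpha(x+\beta(x^2-y^2))$, with $x$ the distance to the right edge and $y=\zeta^{i\RR}-\nu$, then carries this bound to the edge with only linear loss.
\end{enumerate}
Hence $h\log\abs{F}\le\hat c\eta^2/2+\eps-c\eta\kappa+O(h\log C_\eps)$ near $(z^\circ_\nu)_n$. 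Fix $\eta<c\kappa/\hat c$ first, then $\eps\ll\eta\kappa$, then $\abs{z'}$ small enough to absorb $\frac{\hat c}{2}((z')^\RR)^2$. This gives uniform exponential decay on a fixed neighbourhood of $z^\circ_\nu$. With this estimate supplied, your route is a self-contained substitute for the citation; without it, the crucial step is unsupported.

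\textbf{Minor points.}
\begin{enumerate}
\item Your $D$ is unbounded to the left, while the $\mathcal{O}^{(\Phi^\flat)}$ bound holds only on compacts. Truncate $D$, or use the global bound coming from the explicit form of $A$, as the paper does.
\item In your argument $\nu$ plays no role. The paper needs $\nu>0$ for the FBI-phase property in \cref{lem:phiflat}. It takes $\nu$ small only so that the decay disc around $-(z^\circ_\nu)_n$ contains a disc centred on the negative real axis, matching the cited lemma.
\end{enumerate}
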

\begin{proof}
For $\nu >0$ sufficiently small (as will be chosen throughout this proof) writing $z^\circ = z^\circ_\nu$, let us define
\[
	\Phi^{\flat,\circ}(z)\coloneqq \Phi^\flat(z+z^\circ) = \frac{\hat c}{2}((z')^{\RR})^2  + \begin{cases}\frac{\hat c}{2}(z_n^\RR)^2 \quad&\text{if}\ z_n^\RR\geq 0\\ 0\quad&\text{else}\end{cases}\,,
\]
where we used that $\hat \xi= -(0, 2ir^{1/2}(0,\xi'_0))$. Our assumptions and \cref{lem:fbifirst} imply
\[
	A^\circ u(z) \coloneqq Au(\cdot + z^\circ) \in \mathcal{O}^{(\Phi^{\flat,\circ})}(\CC^n)\,,\quad\text{and}\quad A^\circ u(z)\in \bl{O}^{(-\omega)}_{-z^\circ}\,,
\]
and the proof is complete if we show that $A^\circ u \in \bl{O}^{(-\omega)}_{0}$. We give $z_n$ a distinguished role and thus define, for some $\eps^\circ >0$ sufficiently small and some sufficiently large constant $C>0$ independent of $h$,
\[
	F(z', z_n;h) \coloneqq e^{-\eps^\circ/h}C^{-1}A^\circ u((z',z_n);h)\,,
\]
which is an entire function of $z_n$, holomorphic in $z'$ near $0$. Due to the explicit nature of $A^\circ$, that $u$ is compactly supported and the phase in $A^\circ$ is quadratic, for all $z'$ near $0 = (z^\circ)'$ we have the bound
\[
	e^{-{\Phi^{\flat,\circ}}(z',s)\over h}\abs{F(z',s;h)}\leq \begin{cases}1 \quad\text{for all}\ s\in \CC\\ e^{-\eps h^{-1}} \quad\text{for}\ \abs{s+z_n^\circ}\leq \delta' \end{cases}
\]
for some $\eps,\delta>0$ independent of $\eps^\circ$ if $\eps^\circ>0$ is sufficiently small. Note that if $\nu>0$ is chosen sufficiently small, then the circle $\abs{s+z_n^\circ} = \delta$ intersects the negative half of the real line $s^{i\RR}=0, s^{\RR}<0$ in two places, so that there are some $s^\circ,\delta > 0$ so that, for $z'$ sufficiently near $0$,
\[
	e^{-{\Phi^{\flat,\circ}}(z',s)\over h}\abs{F(z',s;h)}\leq \begin{cases}1 \quad\text{for all}\ s\in \CC\\ e^{-\eps h^{-1}} \quad\text{for}\ \abs{s+s^\circ}\leq \delta \end{cases}
\]

We will apply (the proof of) \cite[Lem.~4.1]{zbMATH05707695} to complete the proof. Define 
\[
	g(z',s) \coloneqq h\log \abs{F(z',s;h)}%
\]
which satisfies
\[
	g(z',s)-\frac{\hat c}{2}((z')^\RR)^2 \leq \begin{cases}{\hat c\over 2}(s^\RR)^2 \quad&\text{for}\ s^\RR\geq 0 \\ 0 \quad&\text{for}\ s^{\RR}\leq 0 \\ -\eps\quad&\text{for}\ \abs{s+ s^\circ}\leq \delta\end{cases}\,.
\]
Notice that the function $f(z',s)\coloneqq {2\over \hat c}g(z',-s)-((z')^\RR)^2$ has the same properties as the function given in the proof of \cite[Lem.~4.1]{zbMATH05707695} that is also denoted by $f$, uniformly for $z'$ near $0$. Following the proof therein, one finds that $f(z',s) \leq -\eps'$, for some $\eps'>0$ as long as $\abs{s^\RR}\leq \theta_\RR$ and $\abs{s^{i\RR}}\leq \theta_{i\RR}$ for some $\theta_\RR,\theta_{i\RR}>0$. 
In particular, $g(z',s) \leq -{\hat c\eps'\over 2} + \frac{\hat c}{2}((z')^\RR)^2$ and as in the proof of \cite[Lem.~4.1]{zbMATH05707695}, for $z'$ sufficiently near $0$ we deduce that
\[
	\abs{F(z',s;h)} = e^{\frac{g(z',s)}{h}}\leq e^{-\frac{\eps''}{h}}%
\]
for some $\eps''>0$ and for $s$ with $\abs{s^\RR}\leq \theta_\RR, \abs{s^{i\RR}}\leq \theta_{i\RR}$.

In particular, if we choose $\eps^\circ \in (0,\eps'')$ sufficiently small, then we have shown that $A^\circ u =0$ in $\bl{O}^{(\Phi^{\flat,\circ})}_{0}/\bl{O}^{(-\omega)}_{0}$, which completes the proof.
\end{proof}

We can now state
\begin{corollary}\label{cor:vanish}
    If $u\in L^\infty_{c,+}(\RR^n)$ satisfies $S u \in \bl{O}^{(-\omega)}_{(0,1)}$ and has support sufficiently contracted about $0$, then $u$ vanishes near $0$.
\end{corollary}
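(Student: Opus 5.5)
By \cref{lem:complexshift}, for $\nu>0$ small, $Au=0$ in $\bl{O}^{(\Phi^\flat)}_{z^\circ_\nu}/\bl{O}^{(-\omega)}_{z^\circ_\nu}$. By \cref{lem:phiflat}, $\hat\phi$ is an FBI phase at $(z^\circ_\nu,0)$ with $\del_y\hat\phi(z^\circ_\nu,0)=\hat c\nu e_n$. Hence $A$ is a germ FBI transform there, and the exponential decay of $Au$ near $z^\circ_\nu$ is exactly the criterion (\cite[Thm.~22.3.15]{MR4436039}, \cite{AST_1982__95__R3_0}) for
\[
	(0,-\hat c\nu e_n)\notin \wf(u)\quad\text{(or }(0,\hat c\nu e_n)\text{, depending on the sign convention of the phase).}
\]
I will first check which sign the convention gives. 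Since $\wf(u)$ is conic, this excludes one of the two conormal directions $\pm e_n$ to $\{y_n=0\}$ at $0$.

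The geometric input is that $\supp u\subset\{y_n\geq 0\}$ and $0\in\del\{y_n\ge0\}$. By the microlocal Holmgren theorem / analytic continuation across a hypersurface, \cite[Thm.~8.5.6']{hoermander1}, if $u$ is supported on one side of a real-analytic (here flat) hypersurface and $0$ lies in the support, then both conormal covectors at $0$ belong to $\wf(u)$. Together with the previous paragraph, this shows $0\notin\supp u$, so $u$ vanishes in a neighborhood of $0$. This is the claim, granted the support-contraction hypotheses of \cref{lem:complexshift}.

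The main obstacle is the bookkeeping between the germ FBI characterization and the one-sided conclusion. Specifically, I must check two things. First, the sign of $\del_y\hat\phi(z^\circ_\nu,0)$ must match the conormal direction that Hörmander's theorem places in $\wf(u)$; since that theorem gives both $\pm$ conormals, either sign suffices. Second, the germ space $\bl{O}^{(\Phi^\flat)}_{z^\circ_\nu}$ must agree with the weight used in the FBI characterization, i.e.\ near $z^\circ_\nu$ the function $\Phi^\flat$ must coincide with $\sup_{y\in\RR^n}-\hat\phi^{i\RR}(z,y)$ restricted appropriately. This holds because, near $z^\circ_\nu$, the real part of the $n$-th coordinate lies below the threshold $2\hat c^{-1}r^{1/2}(0,\xi'_0)$ in the formula for $\Phi^\flat$. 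The characterization is therefore unaffected by using the supremum over $\{y_n\ge0\}$, as already noted in the proof of \cref{lem:phiflat}. For a fully rigorous finish, I would invoke the analytic wavefront set characterization via germ FBI transforms for compactly supported $L^\infty$ functions, then apply \cite[Thm.~8.5.6']{hoermander1} directly.
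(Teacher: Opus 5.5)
Your proposal is correct and is the paper's proof. \cref{lem:complexshift,lem:phiflat} together with \cite[Thm.~22.3.15]{MR4436039} give $(0,-\nu\hat c e_n)\notin\wf(u)$, and with the convention $\del_y\varphi=-\xi$ for FBI phases this is the exterior conormal to $\{y_n\geq 0\}$; \cite[Thm.~8.5.6']{hoermander1} then yields $0\notin\supp u$. One correction to your side remark: $z^\circ_\nu=2\hat c^{-1}r^{1/2}(0,\xi'_0)e_n+i\nu e_n$ lies exactly on the kink of $\Phi^\flat$, not below the threshold, but that weight check is unnecessary anyway, since vanishing in the quotient just means $Au\in\bl{O}^{(-\omega)}_{z^\circ_\nu}$, which does not depend on the weight.
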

\begin{proof}
    Let $S u \in \bl{O}^{(-\omega)}_{(0,1)}$ for some $u\in L^\infty_{c,+}(\RR^n)$. Due to \cref{lem:complexshift,lem:phiflat} and \cite[Thm.~22.3.15]{MR4436039} we have $(0,-\nu \hat c e_n)\not\in \wf(u)$. 

	Now since $\supp u \subset \{y_n \geq 0\}$, %
	microlocal analytic continuation \cite[Thm.~8.5.6']{hoermander1} then shows that $u$ vanishes near the origin. 
\end{proof}

\begin{proof}[Proof of \cref{thm:localmain}]
    According to \cref{cor:Ks}, the proof is a direct consequence of \cref{cor:vanish}.
\end{proof}

\subsection{Proofs of \cref{thm:harmonic,thm:lin}}

\begin{proof}[Proof of \cref{thm:lin}]
	Because $\dot\Lambda_q^\Sigma$ is linear in $q$, we must only show that  if $\dot\Lambda_q^\Sigma$ vanishes near $\hat y$ for some $q\in L^\infty(M)$, then $q=0$ a.e. near $\hat y$. Because $-\dot\Lambda_q = K^\ast q K$, this is a direct consequence of \cref{thm:localmain}. Finally, the last statement in the theorem follows from the analytic identity theorem.
\end{proof}

Having shown \cref{thm:localmain}, we give the
\begin{proof}[Proof of \cref{thm:harmonic}]
    We remark first that since we assumed that $P$ does not have Dirichlet eigenvalue $0$ on $M$, it does not have Dirichlet eigenvalue $0$ on each connected component of $M$. Thus, in order to show that $q = 0$ a.e. in $M$, it suffices to assume that $M$ is connected, because one may use the same argument for each connected component of $M$. We thus assume that $M$ is connected.

    We shall follow \cite{zbMATH05707695} and \cite{feizmohammadi2024partialdatainverseproblems}. Let $y_1 \in M^{\mathrm{int}}$ be fixed arbitrarily and choose $y_0 \in \partial M$ so that $\partial M$ is analytic near $y_0$ and there is $\gamma \colon [0,1] \to M$ smooth with $\gamma(0)=y_0, \gamma(1) = y_1$ and $\gamma((0,1]) \subset M^{\mathrm{int}}$, and $\gamma$ is non-self-intersecting. %
    Introduce analytic boundary normal coordinates in some open neighborhood $W$ of $y_0$ in $M$. We may assume that $W\cap \del M\subset 
    \Sigma$ for $\Sigma\ni y_0$ from the statement of \cref{thm:harmonic}.
    
    Define $\Gamma_\eps(s) \coloneqq \{y \in M \colon d_g(\gamma([0,s]),y) \leq \eps\}$ for $s\in [0,1]$ and $\eps>0$. According to \cref{thm:localmain}, if $\delta>0$ is sufficiently small we know that $\mathrm{ess}\ \supp q \cap B^M_{\delta}(y_0) = \emptyset$. In particular, if $\eps>0$ is sufficiently small and we define the closed set
    \[
	    J = \{t \in [0,1]\colon q = 0 \ \text{a.e. on}\  \Gamma_\eps(t)\}\,,
    \]
    then there is some $s_0 \in J$ with $s_0>0$ and $\Gamma_\eps(1) \cap \partial M \subset B_\delta^M(y_0)$ (where we use that $\gamma((0,1])\subset M^{\mathrm{int}}$). We assume also that $\eps>0$ is smaller than half the injectivity radius of $M$. We keep this choice of $\eps>0$ fixed from now onward. We conclude that if $\eps_0>0$ is sufficiently small, then 
    \[
        B_{\eps_0}^M(\{y \in M\colon d_g(\gamma([s_0,1]),y) \leq \eps\}) \cap \partial M = \emptyset\,,
    \]
    and fix this $\eps_0>0$. 

    Note that the set $J$ defined above is non-empty and closed: the only non-trivial argument to prove this occurring when $(t_j)_{j\in\mathbb{N}}\subset J$ is an increasing sequence converging to some $t \in (0,1]$ and we wish to show $t\in J$. In this case, $\Gamma_\eps(t)\setminus \bigcup_{j\geq 1}\Gamma_{\eps}(t_j) \subset \del B_{\eps}(\gamma(t))$, which has volume zero and thus $q=0$ a.e. there, proving that $t\in J$, and thus $J$ is closed. 
    
    Showing that $J$ is open implies that $\mathrm{ess}\ \supp q$ is empty in a neighborhood of $y_1$. Since $y_1 \in M^{\mathrm{int}}$ was chosen arbitrarily, the proof would then be complete.

    Assume that $s \in J$. If $s< s_0$, then by our definition of $J$ we know that $[0,s_0] \subset J$ so that an open neighborhood of $s$ lies in $J$. We may thus assume that $s\geq s_0$. Also fix some $\eps' \in (\eps/2,\eps)$.

    It is possible to define a compact manifold $M' \subset M$ so that 
    \begin{enumerate}
        \item $M'$ and $M \setminus M'$ are connected, $q$ vanishes a.e. on $M\setminus M'$, 
        \item $\del M'$ is smooth,
        \item $P$ does not have Dirichlet eigenvalue $0$ on $M'$,
        \item $\del M' \supset (\del B_\eps^M(\gamma(s))\setminus \Gamma_{\eps'}(s))$,
        \item and the boundary of $M'$ is analytic in $\del B_\eps^M(\gamma(s))\setminus \Gamma_{\eps'}(s)$.
    \end{enumerate}
    \begin{figure}[h]
    \begin{tikzpicture}
  \coordinate (A) at (0.00, 0.00);
  \coordinate (B) at (0.00, 0.00);
  \coordinate (C) at (-4.00, 0.00);
  \coordinate (D) at (1.50, 1.00);
  \coordinate (E) at (-0.97, 0.55);
  \coordinate (F) at (-0.97, 0.55);
  \coordinate (G) at (-1.00, 1.00);
  \coordinate (H) at (-3.95, 0.57);
  \coordinate (I) at (-1.05, 1.10);
  \coordinate (J) at (-3.95, -0.63);
  \coordinate (K) at (-0.90, 0.00);
  \coordinate (L) at (-3.99, 0.3);
  \coordinate (M) at (-1.02, 0.86);
  \coordinate (N) at (-3.96, -0.37);
  \coordinate (O) at (-0.89, 0.25);
  \coordinate (P) at (-2.25, 0.00);
  \coordinate (Q) at (-1.40, 0.17);
  \coordinate (R) at (-1.50, 0.75);
  \coordinate (S) at (-2.75, 0.50);
  \coordinate (T) at (-1.6, -0.6);
  \coordinate (U) at (-0.25, -0.25);
  \coordinate (V) at (-0.74, 1.17);

  \draw[thick] (A) circle (4.00);
  \draw (C) -- (D);
  \draw[pattern color=black, pattern=north east lines] (E) circle (0.55);
  \draw[dashed] (H) -- (I);
  \draw[dashed] (J) -- (K);
  \draw[thick, dotted] (-0.89, 0.25) arc (-75.0:99.0:0.31);
  \draw[thick] (-3.8,-1.2) arc (-80:82:1.2);
  \draw[thick, dotted] (L) -- (M);
  \node at (-3.34,1.34) {$W$};
  \draw[thick, dotted] (N) -- (O);
  \fill[pattern color=black, pattern=north east lines] (L) -- (C) -- (N) -- (P) -- (Q) -- (R) -- (S) -- cycle;
  \node at (T) {$\Gamma_\eps(\gamma([0,s]))$};
  \fill[black] (C) circle (4pt);
  \fill[black] (D) circle (4pt);
  \fill[black] (E) circle (4pt);
  \fill[black] (H) circle (0pt);
  \fill[black] (I) circle (0pt);
  \fill[black] (J) circle (0pt);
  \fill[black] (K) circle (0pt);
  \fill[black] (L) circle (0pt);
  \fill[black] (M) circle (0pt);
  \node[black, very thick] at (-1.37,0.16) {$\bl{\times}$};
  \node[black, very thick] at (-1.5,0.75) {$\bl{\times}$};
  \fill[black] (N) circle (0pt);
  \fill[black] (O) circle (0pt);
  \fill[black] (C) circle (4pt);
  \node[above left, black] at (C) {$y_0$};
  \fill[black] (D) circle (4pt);
  \node[above right, black] at (D) {$y_1$};
  \node[above, black] at (V) {$B_\eps(\gamma(s))$};
\end{tikzpicture}%
\caption{The manifold $M$ is represented by the interior of the big circle, and $W$ by the interior of the semicircle about $y_0$. Everything outside of the dashed area is $M_0$, the region inside the dotted line is $\Gamma_{\eps/2}(\gamma([0,s]))$. The two thick $\bl{\times}$ marks denote the two locations at which we must smooth out the boundary of $M_0$, which can be done entirely within the outer `tube' $\Gamma_{\eps}(\gamma([0,s]))$, on which we already know that $q$ vanishes a.e.}
\label{fig:fig}
\end{figure}
    To construct $M'$, merely start with 
    \[
        M_0 = M\setminus \left(B_{\eps}^M(\gamma(s))\cup B^M_{\eps/2}(\gamma([0,s]))\right)\,,
    \]
    which satisfies properties 1., 4. and 5, see \cref{fig:fig}. By smoothing out the boundary of $M_0$ near $\del B_\eps^M(\gamma(s))\cap \Gamma_{\eps/2}(s)$ within $\Gamma_{\eps'}(s)\setminus \Gamma_{\eps/2}(s)$ and otherwise slightly modifying $M_0$ away from $B_\eps^M(\gamma(s))$ (see \cref{cor:perturbDEV}), making sure these modifications occur only where we know $q=0$ a.e., one obtains $M'$ with the desired properties. 

    Recall that the proof is complete if we can show that every $x \in \del B_\eps^M(\gamma(s))$ has an open neighborhood on which $q$ vanishes. When $x \in \del B_\eps^M(\gamma(s))\cap \Gamma_{\eps'}(s)$, since $\eps'<\eps$ we have $B_{\eps-\eps'}(x)\subset \Gamma_\eps(s)$ and $q=0$ a.e. there already. 

    We thus assume that $x\in \del B_\eps^M(\gamma(s))\setminus \Gamma_{\eps/2}(s) \subset \del M'$ where we used property 4. of $M'$. As a consequence of Runge approximation (\cref{thm:runge} applied to $L=-P$, wherein we choose $\Xi = \emptyset$ which allows taking $\Gamma \subset \Sigma$ with $\del M\setminus \bar M'\subset \Gamma\subset B_\delta^M(y_0)\cap \del M$ for $\delta>0$ small), if $K_{M'}$ is the Poisson operator for $P$ on $M'$, for any $f\in H^{r+3/2}(\del M')$ with $\supp f\vert_{\del M\cap M'} = \emptyset$ there is a sequence $(f_j)_{j\in\mathbb{N}} \subset H^{r+3/2}(\del M)$ with $\supp f_j \subset \Sigma$ so that $K(f_j)\vert_{M'} \to K_{M'}(f)$ in $H^{r+2}$ for all $r\geq 0$. 

    Introduce analytic boundary normal coordinates near $x$ in $M'$. Performing the constructions of \cref{prop:K1,prop:K2} on $M'$ in these coordinates, and using that $q = 0$ on $M\setminus M'$, we have 
    \[
        \int_{M'} q \overline{K_{M'}(\chi e^{\frac{i}{h}\psi_0})} K_{M'}(\td{\chi}_{j_\vartheta(h)} e^{\frac{i}{h}\psi_1}) \dd V_g(y) = \lim_{j,k \to \infty}\int_M q \overline{K(f_{1,j})} K(f_{2,k}) \dd V_g(y) = 0\,,
    \]
    where $(f_{1,j})_{j\in\mathbb{N}},(f_{2,k})_{k\in\mathbb{N}} \subset C^\infty_c(\Sigma)$. Invoking the local \cref{thm:localmain} with $M$ replaced by $M'$, we find that $q$ vanishes near $x$. By compactness, we have shown that there is some $\eps_0>0$ so that $q$ vanishes on $B_{\eps+\eps_0}(\gamma(s))$, which implies that the set $J$ is open. The proof is complete.
\end{proof}

\section{Local recovery of a symbol}\label{sec:findq} 

We will prove the `only if' direction of \cref{thm:localsymb} by finding a function $\td{q}$ that is analytic near $0$ for which $\sigma_b^{\td{q}}$ is (almost) a cl.a.s. By asymptotically expanding $\sigma_b^{\td{q}}$ in powers of $h$, by \cref{prop:symbcalc} we then have access to each term in the formal expansion of the cl.a.s. $\sigma_a^{\td{q}}$, and if $\td{q}$ is chosen correctly these terms will match up with some given cl.a.s. $\sigma_a^{q}$. We will show that $\sigma_b^{\td{q}}$ determines (in a triangular fashion) the full Taylor jet of $\td{q}$ at the boundary, which will facilitate choosing $\td{q}$ as desired. 

We let $a, \psi$ be from \cref{prop:symbcalc} and consider the operator $T$ with kernel
\begin{equation}\label{eq:defofT}
	T(x',y,\xi') \coloneqq (2\pi)^{n-1}\int e^{i(x'-y')\cdot(\eta'-\xi')}e^{i(\psi(y,\xi')-\psi^\ast(y,\eta'))}a(y,\xi') a^\ast(y,\eta')\dd\eta'\,,
\end{equation}
where we consider $\xi' \in \RR^{n-1}\sem$ a parameter. 

Define by $\eps_0 \in (0,1/2)$ the constant so that 
\begin{equation}\label{eq:giveeps0}
	j\geq 0 \implies a_j,\psi \in \mathcal{O}(\{(y,\eta') \in \CC^n\times \CC^{n-1}\sem\colon \abs{y}\leq 2\eps_0, \abs{y_n} \leq \eps_0, \eta' \in \Theta_{\eps_0}\})\,.
\end{equation}

\begin{proposition}\label{prop:allthingst}
	Let $\xi'_0\in S^{n-2}$ be arbitrary and fixed. There is an open complex neighborhood $\hat U$ of $(0,1)\times \CC^n$ in $\CC^{n-1}\times \CC\times \CC^n$ and a formal cl.a.s $\ud{t}(y',\tau,\eta;h) = h\sum_{k\geq 0}h^k t_{k}(y',\tau,\eta)$ with $(t_k)_{k\in\mathbb{N}} \subset \mathcal{O}(\hat U)$ (and fiber variable $\tau$), and a continuous function $t(y',\tau,\eta;h)$ so that for all complex $\tau$ sufficiently near $1$ and for every sufficiently small compact complex neighborhood $K$ of $0$ there is $C>0$ so that for all $C_0>0$ sufficiently large, all $N\in\mathbb{N}$, and all $h > 0$ so that $C_0hN \leq 1/2-\eps_0$, we have
	\begin{equation}\label{eq:tpolybound}
		\sup_{y'\in K}\abs{h^{-1}t- \sum^{N-1}_{k=0} h^k t_k} \leq C^{N+1} h^N \sum_{j=0}^Nj!\abs{\eta}^{N-j}\,.
	\end{equation}
	The function $t_0$ is non-vanishing and $t_{k}(y',\tau,\eta)$ is a polynomial of degree $k$ in $\eta$ with holomorphic coefficients (in $y',\tau$). Writing $t_{k,j}$ for the $j$-th order homogeneous part of $t_k$ ($j\leq k$), we have
	\begin{equation}\label{eq:boundsont_k}
		\abs{t_{k,j}}\leq C^{k+1} (k-j)! \abs{\eta}^j\,,\qquad \abs{t_k} \leq C^{k+1} k! \abs{\eta}^k_+\,.
	\end{equation}

	Finally, for any $q \in C_c^\infty(\RR^n)$ having sufficiently small support near the origin we have
	\begin{equation}\label{eq:sigmaist}
		\tau \in \RR\ \text{near}\ 1 \implies T(\mathbb{1}_{\{y_n\geq 0\}}q)(x',h^{-1}\tau\xi'_0) = \iint e^{i(x'-y)\cdot\eta} t(y',\tau,\eta;h) q(y)\dd\eta \dd y
	\end{equation}
	for each $h>0$ small enough, where
	\[
		 \iint e^{i(x'-y)\cdot\eta} t(y',\tau,\eta;h) q(y)\dd\eta \dd y = \iint e^{i(x'-y)\cdot\eta} (1+\abs{\eta}^2)^{-k}(1+\Delta_y)^{k}\left(t(y',\tau,\eta;h) q(y)\right)\dd\eta \dd y
	\]
	is defined as an oscillatory integral for $k\in \mathbb{N}$ sufficiently large. Above, we wrote $(x'-y)$ for $((x',0)-y)$.
\end{proposition}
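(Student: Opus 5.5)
The plan is to obtain $t$ by an explicit change of variables in \cref{eq:defofT}, followed by two expansions: a Taylor expansion in the tangential frequency and a Laplace (boundary-layer) expansion in $y_n$. Throughout I write $\xi'=h^{-1}\tau\xi'_0$.

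\textbf{Step 1: tangential variables.} In \cref{eq:defofT} I substitute $\eta'=\xi'+\zeta'$. Then
\[
e^{i(x'-y')\cdot(\eta'-\xi')}=e^{i(x'-y')\cdot\zeta'},
\]
and $\zeta'$ will become the tangential component of the new frequency $\eta=(\zeta',\eta_n)\in\RR^n$. By homogeneity of $\psi$ in the fiber,
\[
\psi^\ast(y,\xi'+\zeta')=h^{-1}\psi^\ast(y,\tau\xi'_0+h\zeta').
\]
For $\abs{h\zeta'}$ small (inside $\Theta_{\eps_0}$, cf. \cref{eq:giveeps0}) I Taylor expand $\psi^\ast$ and $a^\ast$ in $h\zeta'$ around $\tau\xi'_0$. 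Since $\psi=\phi-y'\cdot\eta'$ vanishes at $y_n=0$ by \cref{eq:phi2}, I can write
\[
\psi(y,\xi')-\psi^\ast(y,\eta')=h^{-1}y_n\,\Theta(y,\tau,h\zeta').
\]
By \cref{eq:phi2},
\[
\Theta(y',0,\tau,0)=i\bigl(r^{1/2}(y',0,\tau\xi'_0)+\overline{r^{1/2}}(y',0,\tau\xi'_0)\bigr),
\]
which for real $\tau$ equals $2ir^{1/2}$. Hence $\mathrm{Im}\,\Theta\geq c>0$ near $(0,1)$, so $e^{\frac{i}{h}y_n\Theta}$ is a decaying boundary-layer factor on $\{y_n\geq 0\}$. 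The region $\abs{\zeta'}\geq \eps_0/h$ I would cut off with a smooth partition. There the $\eta'$-integral is in the regime where $\xi'$ and $\eta'$ are not comparable, and I expect it to contribute only terms bounded by $C^{N+1}h^N\abs{\eta}^N$. I would absorb those into the remainder in \cref{eq:tpolybound}; this is what the polynomial growth in $\abs{\eta}$ on the right-hand side allows.

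\textbf{Step 2: normal variable.} Applying $T$ to $\mathbb{1}_{\{y_n\geq0\}}q$ leaves, for fixed $y'$ and $\zeta'$, an integral
\[
\int_0^\infty e^{\frac{i}{h}y_n\Theta(y,\tau,h\zeta')}F(y,\tau,h\zeta';h)\,q(y',y_n)\,\dd y_n ,
\]
where $F$ collects the amplitudes $a\,a^\ast$. I expand this by repeated integration by parts in $y_n$ (the Laplace expansion). Each step produces a factor $h\,(i\Theta)^{-1}$ and one $y_n$-derivative, and the boundary terms at $y_n=0$ involve $\del_{y_n}^{j}q(y',0)$. Each such boundary term I rewrite through Fourier inversion as
\[
\del_{y_n}^{j}q(y',0)=(2\pi)^{-n}\iint e^{i((x',0)-y)\cdot\eta}(i\eta_n)^{j}q(y)\,\dd\eta\,\dd y\Big|_{x'=y'},
\]
and I merge it with the $e^{i(x'-y')\cdot\zeta'}$ factor from Step 1. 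Collecting powers of $h$ then gives \cref{eq:sigmaist} with $h^{-1}t\sim\sum_k h^k t_k$. Here $t_k$ is a polynomial of degree $k$ in $\eta=(\zeta',\eta_n)$: each power of $h$ is paired either with one power of $\zeta'$ (from the Taylor expansion) or with one $y_n$-derivative (from the Laplace expansion), and the latter falls either on $q$ (a power of $\eta_n$) or on $F$ (no power of $\eta$). The leading term is
\[
t_0=\frac{a_0a_0^\ast}{-i\Theta}\Big|_{y_n=0,\ \zeta'=0}\neq 0,
\]
which is nonvanishing because $a_0=(2\pi)^{1-n}$ on $y_n=0$ by \cref{eq:symbola} and $\Theta\neq0$ there. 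All coefficients are holomorphic in $(y',\tau)$ because $a_j$ and $\psi$ are holomorphic on the domain in \cref{eq:giveeps0}.

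\textbf{Step 3: estimates.} The bounds \cref{eq:boundsont_k} follow from Cauchy inequalities on the polydisc of size $\eps_0$. In $t_{k,j}$ the $j$ powers of $\eta$ carry no factorial, while the remaining $k-j$ derivatives act on analytic data, namely $F$, $\Theta^{-1}$ and the Taylor coefficients in $h\zeta'$. Each of these contributes a factor $C$ times a derivative count, which gives the $(k-j)!$.

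\textbf{Main obstacle.} I expect the remainder estimate \cref{eq:tpolybound} to be the hardest part, since it must be uniform in $N$ with the explicit constraint $C_0hN\leq 1/2-\eps_0$. For it I would stop both the Taylor expansion and the integration by parts at order $N$. I would then bound the Taylor remainder in $h\zeta'$ by the integral form of the remainder combined with Cauchy estimates; this requires $\abs{h\zeta'}$ and the $y_n$-layer of width $\sim hN$ to stay inside the holomorphy region, which is where the constraint on $hN$ enters. The Laplace remainder I would bound by $\int_0^\infty e^{-cy_n/h}y_n^{N}\,\dd y_n\lesssim (Ch)^{N+1}N!$, distributed between derivatives of $F$ and powers of $\eta_n$ exactly as in Step 3. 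This produces the structure $\sum_j j!\abs{\eta}^{N-j}$. I would first make sure that this combinatorial bookkeeping, in particular the Leibniz distribution between $F$ and $q$ and between the Taylor and Laplace contributions, closes with a single constant $C$; the relevant combinatorial estimates are the ones recalled in \cref{sec:comb}.
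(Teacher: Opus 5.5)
\textbf{The gap: you never construct $t$.} \eqref{eq:sigmaist} is an exact identity for one specific continuous function $t$, and \eqref{eq:tpolybound} is a pointwise-in-$\eta$ estimate for that same function.

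Your Step 2 expands with $q$ still inside the integral. Integrating by parts in $y_n$ produces boundary terms $\del_{y_n}^j q(y',0)$, which you then Fourier-invert. It also leaves a remainder of the form $h^N\int_0^\infty e^{\frac{i}{h}y_n\Theta}\,\del_{y_n}^N(\cdots q)\,\dd y_n$, which involves $q$ at interior points $y_n>0$. This recovers the formal coefficients $t_k$, and at best an operator-level asymptotic expansion of $T(\mathbb{1}_{\{y_n\geq 0\}}q)$ whose remainders depend on $q$. ``Collecting powers of $h$'' does not produce a function $t$ for which \eqref{eq:sigmaist} holds exactly: an asymptotic series does not determine $t$, and \eqref{eq:sigmaist} is not an asymptotic statement. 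Your treatment of the region $\abs{\zeta'}\geq\eps_0/h$ has the same problem, since it only makes sense once $t$ exists as a function of $\eta$.

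The missing idea is to Fourier-invert $q$ in the normal variable \emph{before} expanding. Insert a cut-off $\chi(z_n)$, which is allowed since $\supp q\subset\{\abs{y_n}\leq\eps_0/2\}$, and write $q(y',z_n)=(2\pi)^{-1}\iint e^{i(z_n-y_n)\eta_n}q(y',y_n)\,\dd\eta_n\,\dd y_n$. Combined with your Step 1 substitution, this turns \eqref{eq:sigmaist} into an identity with $t$ given explicitly by \eqref{eq:definet}. That $t$ is a one-sided Fourier transform in $z_n$ of the boundary-layer amplitude, so continuity and holomorphy in $y'$ are immediate. After rescaling $z_n=hz$, $\eta_n$ enters only through $e^{ihz\eta_n}$. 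You then Taylor expand this factor, $e^{\rho}$ and $b$ in $h$, and integrate termwise against $e^{-cz}$. This gives the polynomial structure of $t_k$ and the pointwise remainder \eqref{eq:tpolybound}. The case $h\abs{\eta'}\geq\eps_0$ becomes trivial: you bound $h^{-1}t$ and each $h^kt_k$ separately using $1\leq(h\abs{\eta'}/\eps_0)^N$. The Taylor expansion of $e^{ihz\eta_n}$ replaces your integration by parts and its boundary terms.

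\textbf{A second issue in your remainder analysis.} You plan to Taylor expand $a^\ast(y,h^{-1}(\tau\xi'_0+h\zeta'))$ in $h\zeta'$. You attribute the constraint $C_0hN\leq 1/2-\eps_0$ to keeping the $y_n$-layer and $h\zeta'$ inside the holomorphy region. But $a$ is only a finite realization of a cl.a.s., so as a function of $h$ it is not analytic at $h=0$. You must first replace $a$ and $a^\ast$ by partial sums $\sum_{j<N}h^ja_j$, with error $C^{N+1}N!\,h^N$. By \cref{lem:recoversymbol} this holds only when the frequency satisfies $\abs{h^{-1}(\tau\xi'_0+h\zeta')}\geq C_0N$, and that is exactly where the constraint on $hN$ enters (for $h\abs{\zeta'}\leq\eps_0$ and $\abs{\tau-1}<1/2$). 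Only after this step can you Taylor expand each $a_j(y',hz,h\zeta'+\tau\xi'_0)$ in $h$ using Cauchy estimates. The normal variable is kept inside the holomorphy region by the cut-off $\chi(hz)$, independently of $N$.
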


Before turning to the proof of \cref{prop:allthingst}, we shall state some intermediate results.
\begin{lemma}\label{lem:bproperties}
	Let $\xi'_0\in S^{n-2}$ be arbitrary and fixed. For $a$ from \cref{eq:symbola}, define for $\tau \in \RR$ near $1$
	\[
		b(y',z_n,\tau,\eta';h) \coloneqq (2\pi)^{n-1}a(y',h z_n,h^{-1}\tau\xi'_0)a^\ast(y',hz_n,h^{-1}(h\eta'+\tau\xi'_0))\,.
	\]
	and for integers $k\geq 0$ and $\tau$ complex near $1$,
	\[
		b_k(y',z_n,\tau,\eta') \coloneqq (2\pi)^{n-1}\sum_{s+l+j+m=k} \frac{\del^s_h}{s!}\left(a^\ast_{j}(y',hz_n,h\eta'+\tau\xi'_0)\right)\frac{\del^m_h}{m!}\left(a_{l}(y',hz_n,\tau\xi'_0)\right)\Big\vert_{h=0}\,.
	\]
	For $k \geq 1$, $b_k$ is a polynomial of degree $k-1$ in $\eta'$ and a polynomial of degree $k$ in $(\eta',z_n)$ with analytic coefficients and $b_0=(2\pi)^{1-n}$, and each $b_k$ depends holomorphically on $\tau$ near $1$. Furthermore, for every compact $K$ there is $C>0$ so that for every $k\geq 0$ we have
	\begin{equation}\label{eq:boundonb_k}
		\sup_{y'\in K}\abs{b_k} \leq C^{k+1} \sum_{j+m\leq k} j! \abs{\eta'}^{k-j-m} \abs{z_n}_+^m \leq C^{k+1}k! \abs{z_n}_+^k\abs{\eta'}^k_+\,.
	\end{equation}

	Furthermore, for every sufficiently large $C_0>0$ there is $C>0$ so that for every $N\in\mathbb{N}$ if $\abs{hz_n}\leq \eps_0$ and $C_0hN \leq 1/2-\eps_0$, we have for $\tau\in \RR$ near $1$,
	\begin{equation}\label{eq:diffboundbb_k}
 		\sup_{y'\in K} \abs{b(y',z_n,\tau,\eta';h)- \sum_{k=0}^{N-1} h^k b_k(y',z_n,\tau,\eta')} \leq C^{N+1} h^N\sum_{s+l=0}^N\abs{z_n}_+^l s! \abs{\eta'}_+^{N-s-l}\,.
	\end{equation}
\end{lemma}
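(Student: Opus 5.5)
The plan is to expand both factors of $b$ in their classical analytic symbol expansions, with the prescribed rescalings, and then Taylor expand in $h$. By \cref{cor:kerndia}, $a(y,\eta')\sim\sum_j \abs{\eta'}^{-j}a_j(y,\abs{\eta'}^{-1}\eta')$. Since the $a_j$ come from the glued solutions of \cref{prop:K2} (see \cref{rem:uniformity}), we may regard each $a_j(y,\eta')$ as positively homogeneous of degree $-j$ and holomorphic for $\eta'\in\Theta_{\eps_0}$, with $\abs{a_j}\le C^{j+1}j!$ on the relevant compact set. Homogeneity then gives $a_l(y',hz_n,h^{-1}\tau\xi'_0)=h^l a_l(y',hz_n,\tau\xi'_0)$ and $a^\ast_j(y',hz_n,h^{-1}(h\eta'+\tau\xi'_0))=h^j a^\ast_j(y',hz_n,h\eta'+\tau\xi'_0)$. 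So $b$ is formally $(2\pi)^{n-1}\sum_{j,l}h^{j+l}a^\ast_j(\cdot,h\eta'+\tau\xi'_0)a_l(\cdot,\tau\xi'_0)$. Taylor expanding each factor in $h$ at $h=0$ (through both $hz_n$ and $h\eta'$) and collecting powers of $h$ produces exactly the stated $b_k$. This defines $b_k$ for complex $\tau$ near $1$, holomorphically, because the $a_j$ are holomorphic in $\eta'$ near $\tau\xi'_0$.

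For the structural claims, note that $\frac{\del_h^s}{s!}a^\ast_j(y',hz_n,h\eta'+\tau\xi'_0)\vert_{h=0}$ is a homogeneous polynomial of degree $s$ in $(z_n,\eta')$. Similarly, $\frac{\del_h^m}{m!}a_l(y',hz_n,\tau\xi'_0)\vert_{h=0}=\frac{1}{m!}\del_{y_n}^m a_l(y',0,\tau\xi'_0)z_n^m$. So $b_k$ is a polynomial of degree at most $k$ in $(\eta',z_n)$. For the finer claim of degree $k-1$ in $\eta'$, the boundary conditions $a_j(y',0,\cdot)=(2\pi)^{1-n}\delta_{0j}$ are the key input. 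The top $\eta'$-degree term in the $h^k$ coefficient arises only from $s=k$, $j=l=m=0$, namely $\frac{1}{k!}\del_h^k a_0^\ast(y',hz_n,h\eta'+\tau\xi'_0)\vert_{h=0}$. Its pure $\eta'^k$ part is $\frac{1}{k!}(\eta'\cdot\del_{\eta'})^k a_0^\ast(y',0,\tau\xi'_0)$, which vanishes because $a_0(y',0,\cdot)$ is constant. This also gives $b_0=(2\pi)^{n-1}(2\pi)^{2(1-n)}=(2\pi)^{1-n}$.

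For the bound \eqref{eq:boundonb_k}, I would use Cauchy estimates on polydiscs. In the $\eta'$ variable, $a_j^\ast$ is holomorphic on a ball of fixed radius about $\tau\xi'_0$ with $\abs{a_j}\le C^{j+1}j!$. In $y_n$, it is holomorphic for $\abs{y_n}\le\eps_0$ by \eqref{eq:giveeps0}. The Cauchy inequalities then give $\abs{\frac{1}{s!}\del_h^s a^\ast_j(\ldots)\vert_{h=0}}\le C^{j+s+1}j!\,(\abs{z_n}+\abs{\eta'})^s$. Summing over $s+l+j+m=k$, using $j!\,l!\le (j+l)!$ and the combinatorial estimates of \cref{sec:comb}, yields the middle expression and hence the crude bound.

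The main obstacle is the remainder estimate \eqref{eq:diffboundbb_k}. There I would split $a=\sum_{j<N}(\ldots)+(a-\sum_{j<N}\ldots)$. The symbol tail is bounded using the finite-realization property and the standard estimate $\abs{a-\sum_{j<N}\abs{\eta'}^{-j}a_j}\le C^{N+1}N!\abs{\eta'}^{-N}$ for $\abs{\eta'}$ large with $N\lesssim\abs{\eta'}$. This is where the condition $C_0hN\le 1/2-\eps_0$ and the restriction $\tau$ real are used: they guarantee that $h^{-1}(h\eta'+\tau\xi'_0)$ stays in $\Theta_{\eps_0}$ with large modulus, at least when $\abs{h\eta'}$ is small. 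The finite Taylor remainders in $h$ are bounded by the integral form of Taylor's theorem together with the same Cauchy estimates, holding as long as $h(\abs{z_n}+\abs{\eta'})$ stays in the domain of holomorphy. When $\abs{h\eta'}$ is not small, I would instead bound $b$ and each term $h^kb_k$ crudely. The factor $\abs{\eta'}_+^{N-s-l}h^N$ then dominates, since $h\abs{\eta'}\gtrsim 1$, and the claimed right-hand side absorbs everything after enlarging $C$.
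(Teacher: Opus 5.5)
Your proposal is correct and follows essentially the same route as the paper. Homogeneity of the $a_j$ reduces $b$ to $\sum_{j,l}h^{j+l}a_j^\ast a_l$, and the boundary values $a_j(y',0,\cdot)=(2\pi)^{1-n}\delta_{0j}$ kill the pure $\eta'$-derivatives, giving $\eta'$-degree at most $k-1$ and $b_0=(2\pi)^{1-n}$; Cauchy inequalities in $h$ then bound $b_k$, and the remainder is split into the regimes $h\abs{\eta'}$ bounded below (crude bounds on $b$ and on each $h^kb_k$) and $h\abs{\eta'}$ small (the appendix finite-realization tail estimate, applicable since $\abs{\eta'+h^{-1}\tau\xi'_0}\geq h^{-1}(1/2-\eps_0)\geq C_0N$, together with Taylor remainders in $h$), with only cosmetic differences such as isolating the single term $s=k$, $j=l=m=0$ for the degree claim instead of noting directly that $(\eta'\cdot\nabla_{\xi'})^k a_j^\ast(y',0,\tau\xi'_0)=0$ unless $j=k=0$.
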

In the statement above, as well as throughout this document, $\eps_0$ always refers to \cref{eq:giveeps0}.
\begin{proof}
	Recall the notation in \cref{eq:symbola}. For any $j\in\mathbb{N}$, by direct computation,
	\begin{equation}\label{eq:ajtaylork}
		\del_h^k a_j^\ast(y',hz_n,h\eta'+\tau\xi'_0)\vert_{h=0} = \sum_{l=0}^k {k \choose l} z_n^l (\del_{y_n}^l (\eta'\cdot \nabla_{\xi'})^{k-l}a_j^\ast)(y',0,\tau\xi'_0)\,.
	\end{equation}
	Recalling the second part of \cref{eq:symbola}, we have
	\[
		((\eta'\cdot \nabla_{\xi'})^{k}a_j^\ast)(y',0,\tau\xi'_0) = \begin{cases} (2\pi)^{1-n}\quad\text{if}\ \ j = k = 0 \\ 0\quad\text{else}\end{cases}\,,\qquad a_j^\ast(y',0,\tau\xi'_0) = (2\pi)^{1-n}\delta_{0j}\,.
	\]
	In particular, \cref{eq:ajtaylork} is a polynomial of degree $\leq k-1$ in $\eta'$ if $k \geq 1$ and identically $(2\pi)^{1-n}$ if $j=k=0$.

	On the other hand,
	\begin{equation}\label{eq:otherajtaylor}
		\del_h^k(a_j(y',hz_n,\tau\xi'_0))\vert_{h=0} = z_n^k (\del_{y_n}^ka_j)(y',0,\tau\xi'_0)
	\end{equation}
	is a polynomial of degree $0$ in $\eta'$ for all $j,k\geq 0$ and identically $1$ if $j=k=0$. Thus, by inspecting the explicit formula of $b_k$ we have indeed found that $b_0 = (2\pi)^{-n+1}$ and $b_k$ is a polynomial of degree $\leq k-1$ in $\eta'$ for $k \geq 1$. The holomorphic dependence of $b_k$ on $\tau$ comes from its explicit form and the fact that each $a_j, a_j^\ast$ is holomorphic in the $\eta'$ variable away from the origin. 

	As a direct consequence of Cauchy's inequalities, the bound $\abs{a_j^\ast(y',0,\tau\xi'_0)} \leq C^j j!$, and the explicit formula \cref{eq:ajtaylork} we have (for every compact $K$)
	\begin{equation}\label{eq:easyaastbound}
		\sup_{y'\in K}\abs{\del_h^k a_j^\ast(y',hz_n,h\eta'+\tau\xi'_0)\vert_{h=0}} \leq C^{k+j+1} j! k! \sum_{s=0}^k \abs{z_n}^s\abs{\eta'}^{k-s}\,.
	\end{equation}
	for some $C>0$ independent of $j,k$. A similar bound holds for \cref{eq:otherajtaylor} which leads to
	\[
		\abs{b_k} \leq \sum_{s+l+j+m=k} C^{s+j+1} j! C^{l+m+1}\abs{z_n}^m l! \sum_{w=0}^s \abs{z_n}^w \abs{\eta'}^{s-w}\,.
	\]
	Using \cref{eq:countsum} leads to the first part of \cref{eq:boundonb_k}. (Observe that the sum of the exponents of $j$, $l$, $\abs{\eta'}$, and $\abs{z_n}$ is always $k$, having bounded $j! \leq j^j, l! \leq l^l$.)
	To obtain the second inequality in \cref{eq:boundonb_k} we merely have to take into account \cref{eq:countsum} a second time.

	We turn now to proving \cref{eq:diffboundbb_k} so that from now on we have $h\abs{z_n} \leq \eps_0$ and $C_0hN \leq 1/2-\eps_0$ for some sufficiently large $C_0>0$ and $\tau \in \RR$ near $1$.

	\textbf{Case 1:} $h\abs{\eta'} \geq \eps_0$. %
	Because $a$ is the holomorphic finite realization of a formal cl.a.s. of order $0$, by \cref{lem:recoversymbol} it is bounded. Thus,
	\begin{equation}\label{eq:boundjustb}
		\abs{b(y',z_n,\tau,\eta';h)} \leq C = Ch^N\abs{\eta'}^N (h\abs{\eta'})^{-N} \leq C\eps_0^{-N} h^N \abs{\eta'}^N\,.
	\end{equation}
	Furthermore, using \cref{eq:boundonb_k} together with $1 \leq \eps_0^{-j} h^j \abs{\eta'}^j$ for all $j\geq 0$ we find that
	\[
		h^k\abs{b_k} \leq h^N C^{k+1} \sum_{j+m=k} j! \abs{\eta'}^{N-j-m} \abs{z_n}_+^m\,.
	\]
	Plugging these two estimates into the calculation
	\[
		\abs{b(y',z_n,\tau,\eta';h) - \sum_{k=0}^{N-1} h^k b_k(y',z_n,\tau,\eta')} \leq \abs{b} + \sum_{k=0}^{N-1} h^k \abs{b_k}\,,
	\]
	gives \cref{eq:diffboundbb_k} in this case.
	
	\textbf{Case 2:} assume that $h\abs{\eta'} \leq \eps_0$. This implies that $\abs{\eta'+h^{-1}\tau\xi'_0}/C_0\geq N$ for $\abs{\tau-1}< 1/2$ because we had assumed $hC_0N\leq 1/2-\eps_0$. By \cref{eq:symbola} and \cref{lem:recoversymbol} there is $C>0$ so that for all $N\in\mathbb{N}$,
	\[
		a^\ast(y',hz_n,h^{-1}(h\eta'+\tau\xi'_0)) = \sum_{j=0}^{N-1} h^ja^\ast_j(y',hz_n,h\eta'+\tau\xi'_0) + R_{N,a^\ast}\,,\qquad \abs{R_{N,a^\ast}} \leq C^{N+1}N!\abs{\eta'+h^{-1}\tau\xi'_0}^{-N}\,.
	\]
	Using the reverse triangle inequality and $\abs{\tau\xi'_0} \geq 1/2$ (for $\tau$ near $1$) to write $\abs{\eta'+h^{-1}\tau\xi'_0}^{-1} = h\abs{h\eta'+\tau\xi'_0}^{-1}\leq h(1/2-\eps_0)^{-1}$, we find
	\begin{equation}\label{eq:rnaastbound}
		\abs{a^\ast(y',hz_n,h^{-1}(h\eta'+\tau\xi'_0)) - \sum_{j=0}^{N-1} h^ja^\ast_j(y',hz_n,h\eta'+\tau\xi'_0)} \leq C^{N+1} h^N N!\,,
	\end{equation}
	and we have the analogous estimates for $a$.
	At this point, let us write, for any $N\in \mathbb{N}$, 
	\begin{align*}
		b(y',z_n,\tau,\eta';h) &= \left(a^\ast(y',hz_n,h^{-1}(h\eta'+\tau\xi'_0)) - \sum_{j=0}^{N-1} h^j a_j^\ast(y',hz_n,h\eta'+\tau\xi'_0)\right)a(y',hz_n,h^{-1}\tau\xi'_0) \\
		&+ \sum_{j=0}^{N-1}h^ja_j^\ast(y',hz_n,h\eta'+\tau\xi'_0) \left(a(y',hz_n,h^{-1}\tau\xi'_0)-\sum_{k=0}^{N-1-j}h^k a_k(y',hz_n,\tau\xi'_0)\right) \\
		&+ \sum_{j=0}^{N-1} h^ja_j^\ast(y',hz_n,h\eta'+\tau\xi'_0) \sum_{k=0}^{N-1-j} h^ka_k(y',hz_n,\tau\xi'_0)\,.%
	\end{align*}
	Noting that for some $C>0$ we have $\abs{a_j^\ast(y',hz_n,h\eta'+\tau\xi'_0)} \leq C^{j+1} j!$ uniformly in all variables because $h\abs{\eta'} \leq \eps_0 < 1$, using \cref{eq:rnaastbound} (and its analogy for $a$) 
	upon rewriting the last term above, we find 
	\begin{equation}\label{eq:firstboundonb}
		\abs{b- \sum_{s=0}^{N-1}h^s\sum_{j+k=s}a_j^\ast a_k} \leq C^{N+1} h^N N!\,.
	\end{equation}
	We now write
	\begin{equation}\label{eq:ajk}
	\begin{aligned}
		a_j^\ast a_k &= a_k\left(a_j^\ast-\sum_{s=0}^{N-j-k-1} h^s \frac{\del_h^s (a_j^\ast)\vert_{h=0}}{s!}\right) + \sum_{s=0}^{N-j-k-1} h^s \frac{\del_h^s (a_j^\ast)\vert_{h=0}}{s!} \left(a_k - \sum_{w=0}^{N-j-k-s-1}h^w \frac{\del_h^w (a_k)\vert_{h=0}}{w!}\right) \\
		&+ \sum_{s=0}^{N-j-k-1} h^s \frac{\del_h^s (a_j^\ast)\vert_{h=0}}{s!}\sum_{w=0}^{N-j-k-s-1}h^w \frac{\del_h^w (a_k)\vert_{h=0}}{w!}\,.
	\end{aligned}
	\end{equation}
	Using Taylor's theorem and the Cauchy inequalities with radius $r = \min\{\eps_0\abs{\eta'}^{-1},\eps_0\abs{z_n}^{-1},\eps_0\}$ (recall $h\abs{\eta'} \leq \eps_0$ and $h\abs{z_n}\leq \eps_0$), we have for any $k$,
	\[
		\abs{a^\ast_j(y',hz_n,h\eta'+\tau\xi'_0) - \sum_{l=0}^{k-1} \frac{h^l}{l!}\del_h^l (a^\ast_j(y',hz_n,h\eta'+\tau\xi'_0))\vert_{h=0}} \leq C^{j+1}j! h^k\eps_0^{-k}\sum_{l=0}^{k} \abs{z_n}_+^l \abs{\eta'}^{k-l}\,.
	\]
	Therefore,
	\[
		\abs{a^\ast_j(y',hz_n,h\eta'+\tau\xi'_0) - \sum_{l=0}^{N-1-j-k} h^l (l!)^{-1}\del_h^l (a^\ast_j(y',hz_n,h\eta'+\tau\xi'_0))\vert_{h=0}}  \leq C^{N+1}j! h^{N-j-k} \sum_{s=0}^{N-j-k}\abs{z_n}_+^s\abs{\eta'}^{N-j-k-s}\,.
	\]
	Repeating analogous calculations for $a(y',hz_n,\tau\xi'_0)$, we find
	\[
		\abs{a_k(y',hz_n,\tau\xi'_0) - \sum_{w=0}^{N-1-j-k-s} h^w (w!)^{-1}\del_h^w (a_k(y',hz_n,\tau\xi'_0))\vert_{h=0}} \leq  C^{N+1}k! h^{N-j-k-s}\sum_{w=0}^{N-j-k-s}\abs{z_n}_+^w\abs{\eta'}^{N-j-k-s-w}\,.
	\]
	Using that $\abs{a_k}=\abs{a_k(y',hz_n,\tau\xi'_0)}\leq C^{k+1}k!$ uniformly in all variables as well as the bounds given above, we find that from \cref{eq:ajk} that
	\begin{equation}\label{eq:ajakbound}
		\abs{a_j^\ast a_k - \sum_{f=0}^{N-1-j-k} h^f \sum_{s+w=f}\frac{\del_h^s(a_j^\ast)\vert_{h=0}}{s!}\frac{\del_h^w (a_k)\vert_{h=0}}{w!}}\leq C^{N+1} h^{N-j-k}j!k!\sum_{s+l=0}^{N-j-k} s!\abs{z_n}_+^l\abs{\eta'}^{N-j-k-s-l}\,.
	\end{equation}
	Combining \cref{eq:ajakbound,eq:firstboundonb} and recalling the formula of $b_k$ we find \cref{eq:diffboundbb_k}.
\end{proof}

\begin{lemma}\label{lem:rhoprops}
	Let $\xi'_0\in S^{n-2}$ be arbitrary and fixed. Using notation from \cref{cor:kerndia}, define for $\tau \in \RR$ near $1$, 
	\[
		\rho(y',z_n,\tau,\eta';h) \coloneqq i(\psi(y',hz_n,h^{-1}\tau\xi'_0)-\psi^\ast(y',hz_n,h^{-1}(h\eta'+\tau\xi'_0)))\,.
	\]
	For any integer $k\geq 0$, $\del^k_h e^\rho\vert_{h=0}$ is a polynomial of degree $k$ in $\eta'$ with analytic coefficients (in $\tau$ as well) and for every complex compact neighborhood $K$ of $0$ there are $C,c>0$ so that for any integer $k \geq 0$, %
	\begin{equation}\label{eq:etabounds}
		\sup_{y'\in K}\abs{\del^k_h e^\rho\vert_{h=0}} \leq C^{k+1} k!  e^{-z_n c}\sum_{j=0}^k \abs{z_n}^j\abs{\eta'}^{k-j}\,.
	\end{equation}
	For any integers $j,k\geq 0$,
	\begin{equation}\label{eq:rhoboundwithint}
		\int_0^\infty \abs{z_n}_+^j \abs{\frac{\del_h^k e^{\rho}}{k!}\vert_{h=0}}\dd z_n \leq C^{k+j+1} j! \sum_{s=0}^k s!\abs{\eta'}^{k-s}\,.
	\end{equation}
	In addition, let $\chi \in C^\infty_c(\RR)$ be identically $1$ in $\abs{z_n} \leq \eps_0/2$ and vanish in $\abs{z_n} \geq \eps_0$. For every $j,N\in\mathbb{N}$ and every compact $K$ we have, for $\tau \in \RR$ near $1$,
	\begin{equation}\label{eq:rhodiffint}
		\sup_{y'\in K}\abs{\int_0^\infty \abs{z_n}_+^j \left(e^\rho - \sum_{k=0}^{N-1} h^k \frac{\del_h^k e^\rho\vert_{h=0}}{k!}\right)\chi\dd z_n} \leq C^{N+j+1}h^Nj!\sum_{s=0}^N s!\abs{\eta'}^{N-s}\,.
	\end{equation}
\end{lemma}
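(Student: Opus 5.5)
The plan is to rewrite $\rho$ explicitly in terms of the Cauchy data of $\phi$ from \cref{eq:phi2}, and then treat it as a holomorphic function of $h$ in a disc whose radius is dictated by $\abs{z_n}$ and $\abs{\eta'}$. Since $\psi=\phi-y'\cdot\eta'$ and $\phi(y',0,\eta')=y'\cdot\eta'$, we have $\psi(y',0,\eta')=0$. By the fundamental theorem of calculus in $y_n$ (as in the proof of \cref{prop:K1}),
\[
\psi(y',hz_n,\eta')=ihz_n\int_0^1 r^{1/2}\bigl(y',shz_n,\del_{y'}\phi(y',shz_n,\eta')\bigr)\dd s .
\]
By homogeneity of degree $1$ in $\eta'$, this gives
\[
\rho=-z_n\bigl(R(y',hz_n,\tau\xi'_0)+R^\ast(y',hz_n,h\eta'+\tau\xi'_0)\bigr),
\]
where $R(y',t,\theta)\coloneqq\int_0^1 r^{1/2}(y',st,\del_{y'}\phi(y',st,\theta))\dd s$ is holomorphic for $\abs{t}\le\eps_0$ and $\theta\in\Theta_{\eps_0}$ near $\tau\xi'_0$. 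At $h=0$ we get $\rho\vert_{h=0}=-2z_nr^{1/2}(y',0,\tau\xi'_0)$, whose real part is $\ge 2cz_n$ for $\tau$ near $1$ and $y'$ near $0$. This is the source of the factor $e^{-cz_n}$.

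For \eqref{eq:etabounds}, set $F(h)\coloneqq e^{\rho}$. It is holomorphic in $h$ for $\abs{h}\le r\coloneqq \delta\min\{\abs{\eta'}^{-1},\abs{z_n}^{-1},1\}$ with $\delta$ small, because then $h\eta'+\tau\xi'_0\in\Theta_{\eps_0}$ and $\abs{hz_n}\le\eps_0$. On this disc, a Taylor bound gives
\[
\abs{\rho-\rho\vert_{h=0}}\le C\abs{z_n}\abs{h}(\abs{z_n}+\abs{\eta'})\le C\delta\abs{z_n},
\]
so $\abs{F}\le e^{-cz_n}$ after shrinking $\delta$. Cauchy's inequality then gives $\abs{\del_h^kF(0)}\le k!\,r^{-k}e^{-cz_n}$, and $r^{-k}\le\delta^{-k}(1+\abs{z_n}+\abs{\eta'})^k$. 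This yields the bound with a sum over $\abs{z_n}^j\abs{\eta'}^{k-j}$; the constant term coming from $1$ can be absorbed since we may restrict to $\abs{\eta'}+\abs{z_n}\ge 1$ or allow $\abs{\cdot}_+$ there. That $\del_h^kF(0)$ is a polynomial of degree $k$ in $\eta'$ follows by Faà di Bruno: $\del_h^m\rho\vert_{h=0}$ is $z_n$ times a polynomial of degree $\le m$ in $(\eta',z_n)$ that is homogeneous in a suitable sense, namely the $m$-th $h$-derivative of a function of $(hz_n,h\eta')$. Holomorphy in $\tau$ is inherited from $R$.

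For \eqref{eq:rhoboundwithint}, I integrate \eqref{eq:etabounds} against $\abs{z_n}_+^j$. This gives $k!\sum_s\abs{\eta'}^{k-s}\int_0^\infty\abs{z_n}_+^{j+s}e^{-cz_n}\dd z_n\le k!\sum_s C^{j+s}(j+s)!\abs{\eta'}^{k-s}$. To reach the stated form I need the sharper Cauchy bound, not the crude one. The point is that only $s$ powers of $z_n$ accompany a factor $(\cdot)!$. I would redo the estimate as follows: expand $e^{\rho}=e^{\rho_0}e^{\rho-\rho_0}$ and use Cauchy with a radius adapted separately to $z_n$ and $\eta'$. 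The result should be $\abs{\del_h^kF(0)}/k!\le C^{k+1}e^{-cz_n/2}\sum_s \abs{z_n}^s\abs{\eta'}^{k-s}$ (the $k!$ cancelling against Cauchy's $k!$), with the exponential reserve $e^{-cz_n/2}$ left over. Integration then gives $C^{j+s}(j+s)!\le C'^{\,j+s}j!s!$ via $\binom{j+s}{s}\le 2^{j+s}$, which is the claim.

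For \eqref{eq:rhodiffint}, on $\supp\chi$ we have $\abs{hz_n}$ small. I would split into the cases $h\abs{\eta'}\ge\delta$ and $h\abs{\eta'}\le\delta$, exactly as in \cref{lem:bproperties}. In the first case I bound both $e^\rho$ (using $\mathrm{Re}\,\rho\le -cz_n$, after shrinking $\eps_0$) and the Taylor sum by \eqref{eq:etabounds}, and multiply by $(h\abs{\eta'}/\delta)^N\ge1$. In the second case, Taylor's remainder on the disc of radius $r$ from the first step gives $\abs{F(h)-\sum_{k<N}h^k\del_h^kF(0)/k!}\le (h/r)^N\sup\abs{F}/(1-h/r)$. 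This is bounded by $C^Nh^N e^{-cz_n/2}\sum_s\abs{z_n}^s\abs{\eta'}^{N-s}$ whenever $h/r\le1/2$, which holds when $h\abs{z_n}\lesssim\eps_0$ and $h\abs{\eta'}\le\delta$. Integrating in $z_n$ as before finishes the estimate.

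The main obstacle is that bookkeeping: proving the factorial only attaches to the $z_n$ powers, i.e. getting $\abs{\del_h^kF(0)}\le C^{k+1}k!\,e^{-cz_n/2}\sum_s\abs{z_n}^s\abs{\eta'}^{k-s}$ uniformly even for large $z_n$. For that I would choose the Cauchy radius $\delta/(1+\abs{z_n}+\abs{\eta'})$ and use the combinatorial estimates of Appendix~\ref{sec:comb} (\cref{eq:countsum}).
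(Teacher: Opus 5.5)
Your proof is correct in substance, and for \eqref{eq:etabounds} it takes a genuinely different and shorter route than the paper. The paper applies Cauchy's inequalities to $\rho$ alone, obtaining $\abs{\del_h^m\rho\vert_{h=0}}\le C^m m!\,\abs{z_n}(\abs{z_n}^m+\abs{\eta'}^m)$. It then expands $\del_h^k e^\rho\vert_{h=0}$ over set partitions via Fa\`a di Bruno and bounds the partition sum with the Lah-number estimate of \cref{lem:faabound}. Finally it implicitly absorbs the leftover factor $\sum_i\abs{z_n}^{k-i}i!$ into the exponential, at the cost of shrinking $c$. You instead apply Cauchy's inequality once to $F(h)=e^{\rho(h)}$ on a disc on which $\abs{F}\le e^{-cz_n}$. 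This gives \eqref{eq:etabounds} in one step, with no powers of $z_n$ left to absorb. The same disc also supplies the Taylor remainder for Case 2 of \eqref{eq:rhodiffint}, so you need Fa\`a di Bruno only for the qualitative polynomiality claim, just as the paper does. The paper's route records an explicit partition formula for each coefficient; yours is more economical. Your handling of \eqref{eq:rhoboundwithint} (Gamma integrals plus \eqref{eq:sumfac}) and your case split for \eqref{eq:rhodiffint} coincide with the paper's.

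Some points need repair.

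(i) Do not cap the radius at $\delta$, and do not use $\delta/(1+\abs{z_n}+\abs{\eta'})$ as in your last paragraph. Since $\rho$ depends on $h$ only through $(hz_n,h\eta')$, holomorphy and $\abs{\rho-\rho\vert_{h=0}}\le C\delta\abs{z_n}$ already hold on $\abs{h}\le\delta/\max\{\abs{z_n},\abs{\eta'}\}$. Then $r^{-k}\le\delta^{-k}\sum_j\abs{z_n}^j\abs{\eta'}^{k-j}$ gives \eqref{eq:etabounds} exactly. With the cap you only get $(1+\abs{z_n}+\abs{\eta'})^k$, and you cannot simply restrict to $\abs{\eta'}+\abs{z_n}\ge1$: for $\eta'=0$ and $z_n\to0$ the stated right side is $O(\abs{z_n}^k)$. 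The weaker bound would still suffice for \eqref{eq:rhoboundwithint} and \eqref{eq:rhodiffint}, whose right sides contain an $\eta'$-free term, but not for \eqref{eq:etabounds} itself.

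(ii) The ``main obstacle'' you describe does not exist. In your paragraph on \eqref{eq:rhoboundwithint} you dropped the $1/k!$. Once it is restored, \eqref{eq:etabounds} divided by $k!$ is precisely the ``sharper'' bound you then set out to prove. It is already uniform for large $z_n$, because the radius shrinks like $\delta/z_n$ while $\abs{F}\le e^{-cz_n}$ persists.

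(iii) In Case 2, the condition $h/r\le1/2$ needs $hz_n\le\delta/2$, whereas $\supp\chi(h\,\cdot)$ only gives $hz_n\le\eps_0$. On the range $\delta/(2h)\le z_n\le\eps_0/h$, bound $e^\rho$ and each Taylor term crudely, using that $h^kz_n^s\abs{\eta'}^{k-s}\le1$ there. Then use $e^{-c\delta/(4h)}\le C^NN!\,h^N$; this is absorbed by the $s=N$ term on the right of \eqref{eq:rhodiffint}.

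Also, in Case 1 the bound $\mathrm{Re}\,\rho\le -cz_n$ for large real $h\eta'$ does not come from your disc, on which $R$ was only defined near $\tau\xi'_0$. It comes from positivity and homogeneity of $r^{1/2}$ on real covectors, as in the paper's \eqref{eq:rhounif}.
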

\begin{proof}
	Recalling \cref{cor:kerndia}, 
	using Taylor's theorem with remainder of order $1$ (with respect to $z_n$) as well as the homogeneity of $\phi, r^{1/2}$ of degree $1$,
	\begin{equation}\label{eq:another_rho}
		\rho = -z_n \int_0^1 r^{1/2}(y',shz_n,\del_{y'}\phi(y',shz_n,\tau\xi'_0)) + r^{1/2}(y',shz_n,\del_{y'}\phi(y',shz_n,h\eta'+\tau\xi'_0))^\ast \dd s
	\end{equation}
	and using the positivity and homogeneity of $r$ we find, for $z_n \geq 0$,
	\begin{equation}\label{eq:rhounif}
		\abs{e^\rho} \leq e^{-z_n c(\abs{\tau}+\abs{h\eta'+\tau\xi'_0})}
	\end{equation}
	for some $c>0$ uniform in all variables. 

	We proceed to proving the second part of \cref{eq:etabounds}. By \cref{eq:another_rho}, $\rho$ is holomorphic with respect to $h$ at $h=0$ as long as $\abs{h z_n}, \abs{h \eta'}$ are sufficiently small. Working in the separate cases $\abs{z_n} \leq \abs{\eta'}$ and $\abs{\eta'} \leq \abs{z_n}$, we deduce from the Cauchy inequalities applied to \cref{eq:another_rho} that
	\[
		\sup_{y'\in K}\abs{\del^k_h \rho\vert_{h=0}}\leq C^{k}k!\abs{z_n} (\abs{z_n}^k + \abs{\eta'}^k) \leq C^{k}k!\sum_{j=0}^k\abs{z_n}^{j+1}\abs{\eta'}^{k-j}
	\]
	for some $C>0$ independent of $k$. Using Fa\`a di Bruno's formula we know that, for $\Pi_k$ the set of partitions of $\{1,\dots,k\}$, for all $k\geq 0$,
	\begin{equation}\label{eq:faaonrho}
		\del^k_h e^\rho\vert_{h=0} = e^{\rho}\vert_{h=0} \sum_{\pi \in \Pi_k} \prod_{B\in\pi} \del^{\abs{B}}_h \rho\vert_{h=0}\,,
	\end{equation}
	which together with \cref{eq:another_rho} immediately implies that $\del_h^ke^\rho\vert_{h=0}$ is a polynomial of degree $k$ in $\eta'$ with analytic coefficients. 

	Furthermore, from \cref{eq:rhounif,eq:faaonrho} we obtain (for $\eta'\neq 0$)
	\[
		\abs{\del_h^k e^{\rho}\vert_{h=0}} \leq e^{-z_n c} C^k \abs{\eta'}^k \sum_{\pi \in \Pi_k} \abs{z_n}^{\abs{\pi}} \prod_{B\in\pi} \abs{B}!\sum_{j=0}^{\abs{B}} (\abs{z_n}/\abs{\eta'})^j\,,
	\]
	where we used that $\sum_{B\in \pi} \abs{B} = k$ for any $\pi\in \Pi_k$. Note that for any $i_1,\dots,i_k \geq 0$ with $i_1+\dots+i_k = k$ and any $\zeta\geq 0$ we have
	\[
		\left(\sum_{j=0}^{i_1} \zeta^j\right)\cdots \left(\sum_{j=0}^{i_k} \zeta^j\right) \leq \sum_{j=0}^{k} \sum_{\substack{i_1,\dots,i_k \geq 0\\ i_1+\dots+i_k=j}} \zeta^j \leq 4^{k}\sum_{j=0}^k \zeta^j\,,
	\]
	where we used \cref{eq:countsum}. Plugging in $\abs{z_n}\abs{\eta'}^{-1}$ for $\zeta$ we find 
	\[
		\abs{\del_h^k e^{\rho}\vert_{h=0}} \leq e^{-z_n c}C^{k}4^k \sum_{j=0}^k \abs{z_n}^j \abs{\eta'}^{k-j} \sum_{\pi \in \Pi_k} \abs{z_n}^{\abs{\pi}} \prod_{B\in \pi} \abs{B}!\,.
	\]
	Applying \cref{lem:faabound} completes the proof of \cref{eq:etabounds}.

	To obtain \cref{eq:rhoboundwithint} we apply \cref{eq:etabounds} to find that for any $j,k\geq 0$,
	\[
		\int_0^\infty \abs{z_n}_+^j \abs{\frac{\del_h^k e^{\rho}}{k!}\vert_{h=0}}\dd z_n \leq C^{k+1} \sum_{s=0}^k \abs{\eta'}^{k-s} \int_0^\infty \abs{z_n}_+^{j+s} e^{-z_n c}\dd z_n\,,
	\]
	and because we identify the Gamma function here, we have
	\[
		 \int_0^\infty \abs{z_n}_+^{j+s} e^{-z_n c}\dd z_n\leq \int_0^\infty (1+z_n^{j+s}) e^{-z_n c}\dd z_n = c^{-1} + c^{-1-j-s} (j+s)!\,,
	\]
	which leads to \cref{eq:rhoboundwithint} by applying \cref{eq:sumfac}.

	To prove \cref{eq:rhodiffint}, we again make a case distinction. 

	\textbf{Case 1:} We have $h\abs{\eta'} \geq \eps_0$. First, \cref{eq:rhoboundwithint} implies that
	\[
		\sum_{k=0}^{N-1} h^k \int_0^\infty \abs{z_n}_+^j\abs{\frac{\del_h^k e^\rho\vert_{h=0}}{k!}} \dd z_n \leq \sum_{k=0}^{N-1} h^k C^{k+1} j!\sum_{s=0}^k s!\abs{\eta'}^{k-s} \leq h^N C^{N+1} j!\sum_{s=0}^{N} s! \abs{\eta'}^{N-s}\,.
	\]
	On the other hand, \cref{eq:rhounif} implies 
	\[
		\int_0^\infty \abs{z_n}_+^j\abs{e^\rho} \abs{\chi}\dd z_n \leq Cj! \leq C^{N+1}j!h^N\abs{\eta'}^N\,,
	\]
	for some uniform constant $C>0$. Combining these two estimates gives \cref{eq:rhodiffint} in this case. 

	\textbf{Case 2:} We have $h\abs{\eta'} \leq \eps_0$. Using Taylor's theorem and the Cauchy inequalities we find that
	\[
		\abs{e^\rho - \sum_{k=0}^{N-1} h^k \frac{\del_h^k e^\rho\vert_{h=0}}{k!}} \leq C^{N+1} h^N e^{-z_n c} \sum_{s=0}^N\abs{z_n}_+^s\abs{\eta'}^{N-s}\,,
	\]
	and arguing analogously as for the proof of \cref{eq:rhoboundwithint} we find \cref{eq:rhodiffint}.
\end{proof}

\begin{lemma}\label{lem:justt}
	Let $\xi'_0\in S^{n-2}$ be arbitrary and fixed. Let $\chi \in C^\infty_c(\RR)$ be identically $1$ in $\abs{z_n} \leq \eps_0/2$ and vanish in $\abs{z_n} \geq \eps_0$. Define for $\tau \in \RR$ near $1$,%
	\begin{equation}\label{eq:definet}
		\begin{gathered}
		t(y',\tau,\eta;h)\coloneqq \\ %
		(2\pi)^{n-2}\int_0^\infty e^{i(\psi(y',z_n,h^{-1}\tau\xi'_0)-\psi^\ast(y',z_n,\eta'+h^{-1}\tau\xi'_0)+z_n\eta_n)}a(y',z_n,h^{-1}\tau\xi'_0)a^\ast(y',z_n,\eta'+h^{-1}\tau\xi'_0)\chi(z_n)\dd z_n\,,
		\end{gathered}
	\end{equation}
	which is holomorphic with respect to $y'$ near $0$ for any fixed $\eta\in \RR^{n}$ and jointly continuous in $(y',\tau,\eta')$ for $y'$ near $0$ and all $\eta \in \RR^n$. Furthermore, for any $k\geq 0$ define
	\begin{equation}\label{eq:tkformula}
			t_k(y',\tau,\eta) \coloneqq (2\pi)^{-1}\sum_{s+j+l=k} \int_0^\infty b_l(y',z_n,\tau,\eta') \frac{\del_h^j (e^{\rho(y',z_n,\tau,\eta';h)})}{j!}\vert_{h=0} \frac{(iz_n\eta_n)^s}{s!} \dd z_n\,.
	\end{equation}
	We have $t_0(y',\tau,\eta) = (2\pi)^{-n}(2r^{1/2}(y',0,\tau\xi'_0))^{-1}$ and the function $t_k$ is a polynomial of degree $k$ with respect to $\eta$ and we denote by $t_{k,j}$ the $j$-th order homogeneous term of $t_k$ for any $0\leq j\leq k$. For some $C>0$ independent of all variables we have
	\begin{equation}\label{eq:boundsont_kOLD}
		\abs{t_{k,j}}\leq C^{k+1} (k-j)! \abs{\eta}^j\,,\qquad \abs{t_k} \leq C^{k+1} k! \abs{\eta}^k_+\,,
	\end{equation}
	and each $t_k$ depends holomorphically on $\tau$ for $\tau$ near $1$ and each $t_k$ is positively homogeneous of degree $-k-1$ in $\tau$. 

	Finally, for any complex compact neighborhood $K$ of $0$, for every sufficiently large $C_0>0$, any $N$, and any $h>0$ so that $C_0hN \leq 1/2-\eps_0$, there is some $C>0$ so that
	\begin{equation}\label{eq:diffboundt}
		\sup_{y'\in K}\abs{h^{-1}t - \sum_{k=0}^{N-1} h^k t_k} \leq C^{N+1}h^N \sum_{s=0}^N s!\abs{\eta}^{N-s}\,.
	\end{equation}
\end{lemma}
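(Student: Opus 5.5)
The plan is to rescale $z_n = h\tilde z_n$ in \cref{eq:definet}, so that $h^{-1}t$ becomes an integral over $\tilde z_n\in[0,\infty)$ of exactly the three factors already analysed. Using $\psi(y',hz_n,\cdot)$ and homogeneity of $\psi,a_j$ in the fibre, the substitution gives
\[
	h^{-1}t(y',\tau,\eta;h) = (2\pi)^{-1}\int_0^\infty b(y',z_n,\tau,\eta';h)\, e^{\rho(y',z_n,\tau,\eta';h)}\, e^{i z_n h\eta_n}\chi(hz_n)\dd z_n\,,
\]
with $b$ from \cref{lem:bproperties} and $\rho$ from \cref{lem:rhoprops}. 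Here one writes $\eta'\to h\eta'$ inside $b,\rho$ after rescaling $\eta$ as well. Concretely, I would first check that the normalisation matches, so that $t$ is evaluated at semiclassically rescaled $\eta$ and the factor $(2\pi)^{n-2}\cdot(2\pi)^{n-1}$ combines correctly with $b_0=(2\pi)^{1-n}$. Holomorphy in $y'$ and continuity follow directly from the holomorphy of $a,\psi$ on the domain \cref{eq:giveeps0}, together with the exponential decay \cref{eq:rhounif}, which justifies differentiating under the integral.

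Next I would compute the formal expansion. Expanding each factor in $h$ at $h=0$ gives $b\sim\sum h^l b_l$, $e^\rho\sim\sum h^j\del_h^je^\rho\vert_{h=0}/j!$ and $e^{iz_nh\eta_n}=\sum h^s(iz_n\eta_n)^s/s!$, while $\chi(hz_n)\equiv1$ formally. Collecting powers yields exactly \cref{eq:tkformula}. For $t_0$, $b_0=(2\pi)^{1-n}$ and $e^{\rho}\vert_{h=0}=e^{-2z_nr^{1/2}(y',0,\tau\xi'_0)}$ by \cref{eq:another_rho} (using $r$ real and even in the fibre), whose integral gives $(2r^{1/2})^{-1}$. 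The polynomial structure follows from the corresponding statements in \cref{lem:bproperties,lem:rhoprops}: the degrees in $\eta'$ are $l-1$ (or $0$) and $j$, and the degree in $\eta_n$ is $s$, so the total degree is at most $k$. Holomorphy in $\tau$ is inherited termwise. Homogeneity of degree $-k-1$ in $\tau$ follows by scaling $z_n\mapsto z_n/\tau$, since the Taylor coefficients in $h$ of homogeneous functions evaluated at $h\eta'+\tau\xi'_0$ and $hz_n$ scale accordingly. For the bounds \cref{eq:boundsont_kOLD}, I would insert \cref{eq:boundonb_k} and the $z_n^s$ factors into \cref{eq:rhoboundwithint} (with $j$ there equal to the total $z_n$-power $m+s$) and use $(m+s)!/s!\le 2^{m+s}m!$ together with the combinatorial sums of Appendix~\ref{sec:comb}. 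Tracking the $\eta$-degree $j$ against the factorial $(k-j)!$ comes from the structure $\sum s!\abs{\eta'}^{k-s}$ in both lemmas.

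The main obstacle is the remainder estimate \cref{eq:diffboundt}. Here I would telescope the product as in the proof of \cref{lem:bproperties}:
\[
	b e^\rho e^{ihz_n\eta_n}\chi - \sum_{l+j+s<N}(\dots) = (b-b_{<N})e^\rho e^{ihz_n\eta_n}\chi + b_{<N}(e^\rho-e^\rho_{<N-l})e^{ihz_n\eta_n}\chi + \dots\,,
\]
and estimate each piece with \cref{eq:diffboundbb_k}, \cref{eq:rhodiffint}, and the elementary Taylor bound $\abs{e^{ix}-\sum_{s<m}(ix)^s/s!}\le\abs{x}^m/m!$ for real $x=hz_n\eta_n$. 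The $z_n$-integrals are handled by the decay $e^{-cz_n}$ of \cref{eq:rhounif}. The support condition $hz_n\le\eps_0$ from $\chi$ is what makes \cref{eq:diffboundbb_k} applicable, and the discrepancy $1-\chi(hz_n)$ in the formal terms is exponentially small, $\lesssim e^{-c\eps_0/(2h)}$, which is absorbed into $C^{N+1}h^NN!$ since $C_0hN\le 1/2-\eps_0$. The delicate point is bookkeeping the factorials so that the final form is $\sum_s s!\abs{\eta}^{N-s}$ rather than $N!\abs{\eta}^N_+$. For this I would expect to lean repeatedly on \cref{eq:countsum,eq:sumfac}, noting that powers $\abs{z_n}^m$ only ever produce $m!$ through the Gamma integral, which merges with the $\abs{\eta}$-power deficits.
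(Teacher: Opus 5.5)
Your proposal is correct and is essentially the paper's proof. The substitution $z_n\mapsto hz_n$ expresses $h^{-1}t$ through $b$, $e^{\rho}$ and $e^{ihz_n\eta_n}$; note that no rescaling of $\eta$ is needed, since $\eta'+h^{-1}\tau\xi'_0=h^{-1}(h\eta'+\tau\xi'_0)$ already matches the definitions of $b$ and $\rho$. Collecting powers of $h$ then gives \eqref{eq:tkformula} and $t_0$, and the telescoped remainder is bounded with \eqref{eq:diffboundbb_k}, \eqref{eq:rhodiffint}, \eqref{eq:rhoboundwithint}, the Taylor bound for $e^{ix}$ and the exponential smallness of the $1-\chi(hz_n)$ piece, exactly as in the paper. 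The only deviation is that you get homogeneity of $t_k$ in $\tau$ by rescaling $z_n$ directly in the integrals defining $t_k$, whereas the paper uses $t(y',s\tau,\eta;h)=t(y',\tau,\eta;h/s)$ and uniqueness of asymptotic expansions; both work.
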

\begin{proof}
	That $t$ is holomorphic with respect to $y'$ for $y'$ near $0$ and all $\eta'\in \RR^{n-1}$ follows from \cref{cor:kerndia} and the fact that $\chi$ is compactly supported so that we can interchange integration and $\bar\del_{y'}$. 

	That $t_k$ is a polynomial of degree $k$ in $\eta$ with analytic coefficients follows from \cref{lem:bproperties,lem:rhoprops} and that these depend holomorphically on $\tau$ near $1$ is a consequence of the fact that this is true for each $b_k$ and by observing the explicit definition of $\rho$ and each $t_k$. Because $b_0 \equiv (2\pi)^{-n+1}$ and the explicit formula for $t_0$ and $\rho\vert_{h=0}$, we find the given expression for $t_0$. 
	Furthermore, \cref{eq:boundsont_kOLD} is a consequence of \cref{eq:boundonb_k,eq:rhoboundwithint}. 

	Performing the substitution $z_n \mapsto hz_n$ in \cref{eq:definet}, we find
	\[
		t(y',\tau,\eta;h) =  h(2\pi)^{-1}\int_0^\infty e^{\rho(y',z_n,\tau,\eta';h)+ihz_n\eta_n} b(y',z_n,\tau,\eta';h)\chi(hz_n)\dd z_n\,.
	\]
	Note that to calculate $t-\sum^{N-1}_{k=0}h^k t_k$ for any $N\in\mathbb{N}$, we write
	\begin{gather*}
		2\pi h^{-1}t \\
		\begin{aligned} &= \int_0^\infty e^{ihz_n\eta_n}\left(e^\rho-\sum^{N-1}_{k=0} h^k \frac{\del_h^k e^\rho\vert_{h=0}}{k!}\right)b\chi\dd z_n + \int_0^\infty e^{ihz_n\eta_n}\sum^{N-1}_{k=0} h^k \frac{\del_h^k e^\rho\vert_{h=0}}{k!}\left(b-\sum_{j=0}^{N-k-1}h^jb_j\right)\chi\dd z_n \\
		&+ \int_0^\infty e^{ihz_n\eta_n}\sum^{N-1}_{k=0} h^k\frac{\del_h^k e^\rho\vert_{h=0}}{k!}\sum_{j=0}^{N-k-1}h^jb_j(\chi-1)\dd z_n +   \int_0^\infty \sum^{N-1}_{k=0} h^k\frac{\del_h^k e^\rho\vert_{h=0}}{k!}\sum_{j=0}^{N-k-1}h^jb_j\sum_{l=0}^{N-k-j-1}h^l \frac{(iz_n\eta_n)^l}{l!} \dd z_n\\ 
		&+\int_0^\infty \sum^{N-1}_{k=0} h^k\frac{\del_h^k e^\rho\vert_{h=0}}{k!}\sum_{j=0}^{N-k-1}h^jb_j \left(e^{ihz_n\eta_n} - \sum_{l=0}^{N-k-j-1}h^l \frac{(iz_n\eta_n)^l}{l!}\right)\dd z_n\,.
		\end{aligned}
	\end{gather*}
	The fourth term is equal to $(2\pi)\sum_{k=0}^{N-1} h^k t_k$ after reordering and one may use the bounds \cref{eq:rhodiffint,eq:boundonb_k} (and writing $b= b-b_0+b_0$) to bound the first term, \cref{eq:diffboundbb_k,eq:rhoboundwithint} to bound the second term and the first part of \cref{eq:boundonb_k}, and \cref{eq:etabounds} and \cref{eq:rhoboundwithint} to bound the third term, as well as Taylor's theorem and \cref{eq:boundonb_k,eq:rhoboundwithint} to bound the last term to obtain \cref{eq:diffboundt}.%

	To prove the homogeneity of each $t_k$ we proceed as follows: using \cref{eq:definet}, for any $s$ real near $1$, we have $h^{-1} t(y',s\tau,\eta;h) = h^{-1}t(y',\tau,\eta;h/s) = s^{-1} (h/s)^{-1} t(y',\tau,\eta;h/s)$, so that expanding the LHS in terms of $t_k$ and the RHS in terms of $t_k$, we obtain the formal sums $\sum_{k\geq 0} h^k t_k(y',s\tau,\eta)$ and $\sum_{k\geq 0} h^k s^{-k-1} t_k(y,\tau,\eta)$ respectively, and the uniqueness of asymptotic (Poincar\'e) expansions implies that $s^{-k-1} t_k(y,\tau,\eta) = t_k(y',s\tau,\eta)$ for real $s$ near $1$ and all $k$. Since however, both terms are holomorphic with respect to $s$, this equality extends to complex $s$.
\end{proof}

We can now give the
\begin{proof}[Proof of \cref{prop:allthingst}]
	Assuming that $q \in C_c^\infty(\RR^n)$ is supported in $\abs{y_n}\leq \eps_0/2$, by introducing a smooth cut-off $\chi \in C_c^\infty(\RR)$ identically $1$ in $\abs{z_n} \leq \eps_0/2$ and vanishing in $\abs{z_n} \geq \eps_0$, we may write for any $\xi' \in \RR^{n-1}\sem$, by substitution,
	\begin{gather*}
		T(\mathbb{1}_{\{y_n\geq 0\}}q)(x',\xi') = \\
		\iint e^{i(x'-y')\cdot\eta'}\int_0^\infty e^{i(\psi(y',z_n,\xi')-\psi^\ast(y',z_n,\eta'+\xi'))}a(y',z_n,\xi') a^\ast(y',z_n,\eta'+\xi')q(y',z_n)\chi(z_n)\dd z_n\dd\eta'\dd y'\,.
	\end{gather*}
	Using Fourier inversion $q(y',z_n) = (2\pi)^{-1}\iint e^{i(z_n-y_n)\cdot \eta_n}q(y',y_n)\dd \eta_n\dd y_n$, %
	we find the formula \cref{eq:sigmaist} for $t$ defined by \cref{eq:definet}. \cref{lem:justt} completes the proof: because $t$ is continuous in $\eta' \in \RR^n$, the integral above is interpreted as the oscillatory integral ($\xi' = h^{-1}\tau\xi'_0 \in \RR^{n-1}\sem$)
	\[
		T(\mathbb{1}_{\{y_n\geq 0\}}q)(x',h^{-1}\tau\xi'_0) = \iint e^{i(x'-y)\cdot\eta}(1+\abs{\eta}^2)^{-k} (1+\Delta_y)^k \left(t(y',\tau,\eta;h) q(y)\right)\dd\eta\dd y\,,
	\]
	for any $k\in \mathbb{N}$ and taking, say, $N=1$ in \cref{eq:tpolybound}, and applying the Cauchy estimates guarantees that the above is defined via an absolutely integrable integrand when $k$ is sufficiently large.
\end{proof}

\subsection{Finding an analytic candidate}

For $t_k$ from \cref{prop:allthingst}, we write
\[
	\mathrm{Op}^R(t_k) = \int e^{i(x-y)\cdot \eta} t_k(y',\tau,\eta) \dd\eta
\]
for the differential operator defined by $t_k$, where the superscript $R$ signifies the right-quantization. 

We can now prove
\begin{proposition}\label{prop:sigmatotq}
	Let $\xi'_0\in S^{n-2}$ be fixed and $\tau\in \CC$ near $1$. 

	If $\td{q} \in C_c^\infty(\RR^n)\cap C^\omega(U)$ where $U$ is an open neighborhood of $0$ has support sufficiently near $0$, then $h\sum_{k\geq 0}h^k\mathrm{Op}^R(t_k)\td{q}(x',0)$ is a formal cl.a.s. of order $-1$ defined in an open complex neighborhood of $(0,1)$ and $\sigma_a^{\mathbb{1}_{\{y_n\geq 0\}}\td{q}}(x';h^{-1}\tau\xi'_0) \sim h\sum_{k\geq 0}h^k\mathrm{Op}^R(t_k)\td{q}(x',0)$ there.%
\end{proposition}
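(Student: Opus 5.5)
The argument has two parts: first show that the formal sum $h\sum_k h^k\mathrm{Op}^R(t_k)\td q(x',0)$ is a formal cl.a.s.; then identify it with the expansion of $\sigma_a$ using \cref{prop:direc1}, \cref{prop:symbcalc} and \cref{prop:allthingst}.

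\textbf{Step 1 (formal symbol).} Since $t_k$ is a polynomial of degree $k$ in $\eta$ with coefficients holomorphic in $(y',\tau)$, $\mathrm{Op}^R(t_k)\td q(x',0)$ is a differential operator of order $k$ applied to $\td q$. Using the homogeneous decomposition $t_k=\sum_{j\le k}t_{k,j}$, it equals
\[
\sum_{j\le k}\sum_{\abs{\alpha}=j} c_{k,\alpha}(x',\tau)\,\del^\alpha\td q(x',0),
\]
where the coefficients $c_{k,\alpha}$ are taken with the coefficient functions evaluated at $y'=x'$ after the right quantization. Because the coefficients depend only on $y'$, the right quantization can be computed by Leibniz: derivatives falling on the coefficients are tangential and are controlled by Cauchy estimates. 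By \cref{eq:boundsont_k}, $\abs{c_{k,\alpha}}\le C^{k+1}(k-j)!$ on a complex neighborhood (Cauchy estimates in $\eta$ with radius $1$). Analyticity of $\td q$ near $0$ gives $\abs{\del^\alpha\td q}\le C^{j+1}j!$ on a complex neighborhood of $x'=0$. Summing with \cref{eq:countsum}-type counting, each term is bounded by $C^{k+1}k!$. Homogeneity of degree $-k-1$ in $\tau$ comes from \cref{lem:justt}. So $h\sum h^k(\cdots)$ is a formal cl.a.s. of order $-1$ in the fiber variable $\tau$, holomorphic in $(x',\tau)$ near $(0,1)$.

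\textbf{Step 2 (identification).} By \cref{prop:direc1}, $\sigma_a^{\mathbb 1\td q}(x',h^{-1}\tau\xi'_0)$ is already a cl.a.s. of order $-1$ near $(0,1)$. It therefore suffices to show its formal expansion agrees termwise with the one in Step 1. Two cl.a.s. with the same $\mathcal O(h^\infty)$ asymptotics on real $(x',\tau)$ have equal formal symbols (uniqueness of Poincaré expansions), and agreement of holomorphic coefficients on reals extends to the complex neighborhood. By \cref{prop:symbcalc}, $\sigma_a-\sigma_b=\mathcal O(h^\infty)$ for real $(x',\tau)$. It remains to show
\[
\sigma_b^{\mathbb 1\td q}(x',h^{-1}\tau\xi'_0)-h\sum_{k<N}h^k\mathrm{Op}^R(t_k)\td q(x',0)=\mathcal O(h^{N+1})
\]
for each fixed $N$. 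Since $\sigma_b=T(\mathbb 1\chi\td q\mu)$ and $\chi\td q\mu$ can be absorbed into $\td q$ near $0$ (where $\chi\equiv1$), \cref{eq:sigmaist} writes $\sigma_b$ as $\iint e^{i(x'-y)\cdot\eta}t\,\td q\,\dd\eta\,\dd y$. Substituting $h^{-1}t=\sum_{k<N}h^kt_k+r_N$ gives the main terms $h\,\mathrm{Op}^R(t_k)\td q$, since Fourier inversion of a polynomial symbol yields the differential operator.

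\textbf{Step 3 (remainder; main obstacle).} The error is
\[
h\iint e^{i(x'-y)\cdot\eta}r_N(y',\tau,\eta;h)\td q(y)\,\dd\eta\,\dd y,
\]
where by \cref{eq:tpolybound} $\abs{r_N}\le C^{N+1}h^N\langle\eta\rangle^N$, valid only for $C_0hN\le 1/2-\eps_0$. For fixed $N$ that holds for small $h$. I would treat the integral as an oscillatory integral, integrating by parts with $(1+\abs{\eta}^2)^{-m}(1+\Delta_y)^m$ to gain decay against the $\langle\eta\rangle^N$ growth, with $m>(N+n)/2$. This requires bounds on $y'$-derivatives of $r_N$, which follow from holomorphy in $y'$ and Cauchy estimates on $K$. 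It also uses the smoothness of $\td q\in C_c^\infty$. The resulting bound is $\mathcal O(h^{N+1})$, which completes the identification.

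The delicate point is that $t$ is only continuous in $\eta$, so only $y$-integration by parts is available. I expect this is fine, since the difference bound is uniform and the $\langle\eta\rangle^{-2m}$ decay makes the integral absolutely convergent.
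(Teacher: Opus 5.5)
Your proposal is correct and follows the paper's proof essentially step for step. You bound the formal symbol via the homogeneous decomposition $t_k=\sum_{j\le k}t_{k,j}$ and the analyticity of $\td q$. You then identify $\sigma_b$ through \cref{eq:sigmaist} and \cref{eq:tpolybound}, integrating by parts in $y$ against $(1+\abs{\eta}^2)^{-m}(1+\Delta_y)^m$, and conclude with \cref{prop:symbcalc}, \cref{prop:direc1}, uniqueness of Poincar\'e expansions at real $(x',\tau)$, and holomorphic extension. Your extra care with the Leibniz terms in the right quantization, and with absorbing $\chi$ and $\mu$ into $\td q$, only spells out what the paper leaves implicit.
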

\begin{proof}
	Let $\td{q}\in C^\omega(U)$. We know from \cref{eq:boundsont_k} that for every $k\geq 0$, $\mathrm{Op}^R(t_k)\td{q} = \sum_{j=0}^k \mathrm{Op}^R(t_{k,j})\td{q}$, where $t_{k,j}$ is a polynomial of degree $j$ in $\eta$ bounded uniformly in $y'$ above by $C^{k+1}(k-j)!\abs{\eta}^j$. Furthermore, because $\mathrm{Op}^R(t_k)\td{q}$ is a differential operator of order $k$ with analytic coefficients acting on $\td{q}$ by \cref{prop:allthingst}, for some $C>0$ and all $k\in\mathbb{N}$
	\[
		\abs{\mathrm{Op}^R(t_k)\td{q}} \leq \sum_{j=0}^k C^{k+1} (k-j)! C^{j+1} j! \leq C^{2k+2} (k+1)! \leq (3C)^{2k} k!\,,
	\]
	in a complex neighborhood of $(0,1)$ (with respect to $x,\tau$), so that we have indeed shown that $\sum_{k\geq 0} h^k \mathrm{Op}^R(t_k)\td{q}\vert_{x_n=0}$ is a formal cl.a.s. near $(0,1)$ (with respect to $x',\tau$).

	According to \cref{eq:sigmaist}, when $\tau\in\RR$ is near $1$, we have
	\[
		\sigma_b^{\mathbb{1}_{\{y_n\geq 0\}}\td{q}}(x',h^{-1}\tau\xi'_0)= T(\mathbb{1}_{\{y_n\geq 0\}}\td{q})(x',h^{-1}\tau\xi'_0) = \iint e^{i(x'-y)\cdot\eta} t(y',\tau,\eta;h) \td{q}(y)\dd\eta \dd y\,. %
	\]
	Now relying on \cref{eq:tpolybound} together with the Cauchy inequalities and using that $\td{q}\in C_c^\infty$, for $j$ sufficiently large, 
	\[
		\abs{\iint e^{i(x'-y)\cdot\eta}\left(t - h\sum_{k=0}^{N-1}h^k t_k\right)\td{q} \dd\eta\dd y} = \abs{\iint e^{i(x'-y)\cdot\eta}(1+\abs{\eta}^2)^{-j}(1+\Delta_y)^j\left(\left(t - h\sum_{k=0}^{N-1}h^k t_k\right)\td{q}\right) \dd\eta\dd y} = \mathcal{O}(h^{N+1})\,.
	\]
	Recall from \cref{prop:symbcalc}, for $(x',\tau)$ real near $(0,1)$,
	\[
		(\sigma_a^{\mathbb{1}_{\{y_n\geq 0\}}\td{q}}-\sigma_b^{\mathbb{1}_{\{y_n\geq 0\}}\td{q}})(x',h^{-1}\tau \xi'_0) = \mathcal{O}_M(h^M)
	\]
	for all $M$. By \cref{prop:direc1}, we may expand $\sigma_a^{\mathbb{1}_{\{y_n\geq 0\}}\td{q}}\sim h\sum_{k\geq 0}h^k\sigma_k$ for some $\sigma_k$, and since (Poincar\'e) asymptotic expansions of functions are unique (\cite[\S~2.1(iii)]{NIST:DLMF}, \cite{zbMATH00994432}), we have shown that for real $(x',\tau)$ near $(0,1)$ we have
	\[
		\sigma_k = \mathrm{Op}^R(t_k)\td{q}\,.
	\]
	Furthermore, because both the LHS and RHS above are holomorphic functions of $(x',\tau)$ near $(0,1)$, and their restrictions to a real open neighborhood of $(0,1)$ align, they must coincide for complex $(x',\tau)$ near $(0,1)$ (see for example \cite[Thm.~3.1.15]{hoermander1}). 
\end{proof}

\begin{proposition}\label{thm:foundtdq}
	Let $\xi'_0\in S^{n-2}$ be fixed arbitrarily and $\tau\in \CC$ sufficiently near $1$. For every $q\in L^\infty(\RR^n)$ supported in $\{y_n \geq 0\}$ for which $\sigma_a^q(x',h^{-1}\tau\xi'_0)$ is a cl.a.s. of order $-1$ (in $h>0$, depending holomorphically on $(x',\tau)$ near $(0,1)$), there is $\td{q}\in C_c^\infty(\RR^n)$ that is analytic near $0$ so that $\sigma_a^{q-\mathbb{1}_{\{y_n\geq 0\}}\td{q}}(x',h^{-1}\tau\xi'_0)$ is a holomorphic function of $(x',\tau)$ on an open complex neighborhood of $(0,1)$ so that %
	\[
		\sigma_a^{q-\mathbb{1}_{\{y_n\geq 0\}}\td{q}}(x',h^{-1}\tau\xi'_0) \in \bl{O}^{(-\omega)}_{(0,1)}\,.
	\]
\end{proposition}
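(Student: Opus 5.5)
The plan is to solve triangularly for the boundary Taylor jet of $\td q$ from the formal expansion of $\sigma_a^q$, then realize that jet as a genuine analytic function near $0$.

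\textbf{Step 1: the triangular system.} Write $\sigma_a^q(x',h^{-1}\tau\xi'_0)\sim h\sum_{k\geq 0}h^k\sigma_k(x',\tau)$, with $\sigma_k$ holomorphic near $(0,1)$ and $\abs{\sigma_k}\leq C^{k+1}k!$. By \cref{prop:sigmatotq}, for analytic $\td q$ the $k$-th coefficient of $\sigma_a^{\mathbb{1}_{\{y_n\geq0\}}\td q}$ is $\mathrm{Op}^R(t_k)\td q(x',0)=\sum_{j\leq k}\mathrm{Op}^R(t_{k,j})\td q(x',0)$. Here $t_{k,k}$ is homogeneous of degree $k$ in $\eta$. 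By \cref{lem:justt} and the formulas \cref{eq:tkformula}, \cref{eq:etabounds}, its $\eta_n^k$-coefficient comes only from the $s=k$ term:
\[
(2\pi)^{-n}\frac{i^k}{k!}\int_0^\infty z_n^k e^{-2z_n r^{1/2}(y',0,\tau\xi'_0)}\dd z_n=(2\pi)^{-n}\, i^k\,\bigl(2r^{1/2}(y',0,\tau\xi'_0)\bigr)^{-k-1}.
\]
This coefficient is nonzero. Hence, taking $D_{y_n}$ as the distinguished derivative and viewing $\mathrm{Op}^R(t_k)\td q\vert_{y_n=0}$ as a differential operator in $y$ restricted to the boundary, the equation for level $k$ has the form
\[
c_k(x',\tau)\,\del_{y_n}^k\td q(x',0)+(\text{tangential derivatives of }\del^{l}_{y_n}\td q(\cdot,0),\ l<k)=\sigma_k(x',\tau).
\]
The tangential terms come from $t_{k,j}$ with lower $\eta_n$-degree. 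The functions $q_l(x'):=\del^l_{y_n}\td q(x',0)$ are then determined recursively, in an analytic neighborhood of $0$, by setting $\tau=1$ (or any fixed $\tau$).

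\textbf{Step 2: estimates on the jet.} The goal is $\sup_{\abs{x'}<r}\abs{q_k}\leq C^{k+1}k!$, so that $\td q(x',y_n):=\sum_k q_k(x')y_n^k/k!$ converges and is analytic near $0$. The tools are the bounds \cref{eq:boundsont_k}, $\abs{t_{k,j}}\leq C^{k+1}(k-j)!\abs{\eta}^j$, together with Cauchy inequalities on a family of shrinking polydiscs (nested radii $r(1-l/k)$-type, Nirenberg--Ovsyannikov style). An induction then controls tangential derivatives of earlier $q_l$, using the combinatorial estimates of the appendix. I expect this to be the main obstacle: the loss of derivatives in $x'$ at every step must be absorbed into the factorial, mirroring a Cauchy--Kovalevskaya majorant argument.

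\textbf{Step 3: consistency in $\tau$.} The jet was built from a single value of $\tau$, but the symbol matches for all $\tau$ near $1$. The plan is to argue this via homogeneity. Each $t_k$ is homogeneous of degree $-k-1$ in $\tau$ (\cref{lem:justt}), and $\sigma_a^q(x',h^{-1}\tau\xi'_0)$ depends only on $h^{-1}\tau$. Uniqueness of asymptotic expansions then forces $\sigma_k(x',s\tau)=s^{-k-1}\sigma_k(x',\tau)$. Consequently $\mathrm{Op}^R(t_k)\td q(x',0)=\sigma_k$ for $\tau=1$ propagates to all $\tau$ near $1$, first for real $\tau$ and then for complex $\tau$ by holomorphy.

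\textbf{Step 4: conclusion.} Multiply $\td q$ by a cutoff so that $\td q\in C_c^\infty(\RR^n)$, analytic near $0$, with support small enough for \cref{prop:sigmatotq}. By \cref{prop:sigmatotq}, $\sigma_a^{\mathbb{1}_{\{y_n\geq0\}}\td q}$ is a cl.a.s. with the same formal expansion as $\sigma_a^q$. By linearity of $q\mapsto\sigma_a^q$, the difference $\sigma_a^{q-\mathbb{1}\td q}$ is holomorphic near $(0,1)$ and is a cl.a.s. whose formal symbol vanishes. By the definition of finite realization (\cref{def:sym}), with all partial sums zero, it is bounded by $Ce^{-\eps/h}$ on compacts, i.e.\ it lies in $\bl{O}^{(-\omega)}_{(0,1)}$.
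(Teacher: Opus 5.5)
Your Steps 1, 3 and 4 coincide with the paper's proof. The paper uses the same leading term $(2r^{1/2})^{-k-1}\del_{x_n}^k$ and recursion \eqref{eq:qkdef}, the same reduction to $\tau=1$ by homogeneity, and the same conclusion via \cref{prop:sigmatotq} and the definition of a finite realization. Your computation of the $\eta_n^k$-coefficient is also correct.

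The gap is Step 2, which you rightly flag. The paper also only asserts this step, ``by an inductive argument'' from \eqref{eq:boundsont_k}, so its justification has the same problem. No majorant or nested-Cauchy-estimate induction that uses only \emph{size} bounds on the coefficients of $\mathrm{Op}^R(t_k)$ can give $\abs{q_k}\leq C^{k+1}k!$.

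To see this, take the flat Laplacian with $n=2$, $\xi'_0=1$, $\tau=1$. Then $\psi=iy_2\abs{\eta_1}$, $a\equiv(2\pi)^{-1}$ and $t_k=(2\pi)^{-2}(-1)^k2^{-k-1}(\eta_1-i\eta_2)^k$, hence
\[
\mathrm{Op}^R(t_k)\td q(x_1,0)=2^{-k-1}(\del_{x_2}+i\del_{x_1})^k\td q(x_1,0),\qquad q_k=2^{k+1}\sigma_k-\sum_{m<k}\binom{k}{m}(i\del_{x_1})^{k-m}q_m .
\]
Now compare the operators $2^{-k-1}\bigl(2\del_{x_2}^k-(\del_{x_1}+\del_{x_2})^k\bigr)$. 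Their coefficients have exactly the same absolute values, so they have the same leading term and satisfy every bound in \eqref{eq:boundsont_k}. Their recursion, however, is $q_k=2^{k+1}\sigma_k+\sum_{m<k}\binom{k}{m}\del_{x_1}^{k-m}q_m$.

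Take $\sigma_0=\tfrac12(1-x_1)^{-1}$ and $\sigma_k=0$ for $k\geq1$. This gives $q_k=c_k\,\del_{x_1}^k(1-x_1)^{-1}$, where $c_k=\sum_{m<k}\binom{k}{m}c_m$ are the Fubini numbers, $c_k\sim k!/(2(\log 2)^{k+1})$. So $q_k(0)=c_k\,k!$ and $\sum_k q_k(0)x_2^k/k!$ diverges for every $x_2\neq0$. The jet is uniquely determined by the triangular system, so this data has no analytic solution. In the true problem the bound on $q_k$ therefore rests on cancellations, and any absolute-value induction discards them.

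What is missing is an argument that uses the \emph{structure} of the $t_k$, not just their size. In the model this is transparent: $\del_{x_2}+i\del_{x_1}$ is a complex vector field transversal to $\{x_2=0\}$. Straightening it gives the explicit solution
\[
\td q(x_1,x_2)=\sum_{k\geq0}2^{k+1}\sigma_k(x_1-ix_2)\,\frac{x_2^k}{k!},
\]
which converges for $\abs{x_2}<(2C)^{-1}$ whenever $\abs{\sigma_k}\leq C^{k+1}k!$ on a complex neighborhood.

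For $n\geq3$ or variable coefficients, $t_{k,k}$ is no longer a power of a single vector field, and a genuinely analytic construction of $\td q$ is needed instead of a Cauchy--Kovalevskaya majorant on \eqref{eq:qkdef}. Possible routes are:
\begin{itemize}
\item exploit that $h^{-1}t$ is the Laplace-type integral in the normal variable from \cref{lem:justt}, Borel-sum $\sum_k\sigma_k s^k/k!$, and invert that transform;
\item recast the conditions as a non-characteristic analytic Cauchy problem.
\end{itemize}
As it stands, your plan does not establish the existence of the analytic $\td q$, which is the substantive content of the proposition.
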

\begin{proof}
	According to our assumption we can write, for some $\sigma_k$,
	\begin{equation}\label{eq:clasexp}
		h^{-1}\sigma_a^q(x',h^{-1}\tau\xi'_0) \sim \sum_{k\geq 0}h^k \sigma_k(x',\tau\xi'_0)\,.
	\end{equation}
	Because $\sigma_a^q(x',h^{-1}\tau\xi'_0)$ is a function of $\tau/h$, for real $s$ near $1$, $h^{-1}\sigma_a^q(x',h^{-1}s\tau\xi'_0) = s^{-1}(h/s)^{-1}\sigma_a^q(x',(h/s)^{-1}\tau\xi'_0)$, so that expanding both of these expressions in powers of $h$ (using \cref{eq:clasexp}), one obtains homogeneity of degree $k+1$ of each $\sigma_k(x',h^{-1}s\tau\xi'_0)$ for real $s$ near $1$, which due to holomorphicity and the identity theorem is upgraded to the desired homogeneity: $\sigma_k(x',\tau) = \tau^{-k-1}\sigma_k(x',1)$ for all complex $(x',\tau)$ near $(0,1)$ and all $k$.

	We seek $\td{q}$ of the form
	\begin{equation}\label{eq:tdq}
		\td{q}(x) = \chi(x)\sum_{k\geq 0} q_k(x';\xi'_0)\frac{x_n^k}{k!}\,,
	\end{equation}
	where $\chi \in C_c^\infty$ is identically $1$ near $0$, and we want 
	\begin{equation}\label{eq:whatwewant}
		\mathrm{Op}^{R}(t_k)\td{q}(x',0) = \tau^{-k-1}\sigma_k(x',\xi'_0) = \sigma_k(x',\tau\xi'_0)
	\end{equation}
	for all $k\geq 0$. Note that if $\tau$ is real and near $1$, then $t_k$ is homogeneous of degree $-k-1$ in $\tau$ by \cref{prop:allthingst}. In particular, if we obtain	    
	\[
		\mathrm{Op}^{R}(t_k)\td{q}(x',0)\vert_{\tau =1} = \sigma_k(x',\xi'_0)
	\]
	then by virtue of the homogeneity we have \cref{eq:whatwewant} for $\tau$ real near $1$ and because both the LHS and RHS are holomorphic in $\tau$ near $1$ we will then have it for all complex $\tau$ near $1$. Thus we restrict to $\tau = 1$. 

	Because $t_0 =(2\pi)^{-n}(2r^{1/2}(y',0,\tau\xi'_0))^{-1}$, define (the factor $(2\pi)^{-n}$ disappears due to Fourier inversion when applying $\mathrm{Op}^R(t_0)$)
	\[
		q_0(x';\xi'_0) = 2 r^{1/2}(x',0,\xi'_0)\sigma_0(x';\xi'_0)\,.
	\]
	We use the explicit formula for $t_k$ from \cref{eq:tkformula}: (which uses that $b_0=(2\pi)^{-n+1}$ and $e^{\rho}\vert_{h=0}$ is explicit too)
	\[
		\mathrm{Op}^R(t_k) = (2r^{1/2}(x',0,\xi'_0))^{-k-1}\del_{x_n}^k + \sum_{j=0}^{k-1} \del_{x_n}^j P_{k,j}(x',D';\xi'_0)\,,
	\]
	where $P_{k,j}$ are differential operators of order $k-j$ in $y'$ with analytic coefficients. %
	We want the RHS above when applied to $\td{q}$ and evaluated at $(x',0)$ to equal $\sigma_k(x',\xi'_0)$, so we inductively define
	\begin{equation}\label{eq:qkdef}
		q_k(x';\xi'_0) \coloneqq (2r^{1/2}(x',0,\xi'_0))^{k+1}\left(\sigma_k(x';\xi'_0)- \sum_{j=0}^{k-1} \del_{x_n}^j P_{k,j}(x',D';\xi'_0)\sum_{s=0}^{k-1} q_s(x')\frac{x_n^s}{s!}\vert_{x_n=0}\right)\,.
	\end{equation}
	Noting that according to \cref{eq:boundsont_k}, we can write $\sum_{j=0}^{k-1} \del_{x_n}^j P_{k,j} = \sum_{j=0}^k \td{P}_{k,j}$, where each $\td{P}_{k,j}$ is a differential operator of order $j$ with analytic coefficients bounded by $C^{k+1}(k-j)!$, we deduce (by an inductive argument) that each $q_k$ defined according to \cref{eq:qkdef} is analytic in $x'$ and satisfies $\abs{\del^\alpha q_k} \leq C^{\abs{\alpha}+k+1}\alpha!k!$ (with $C>0$ independent of $k$). In particular, defining $\td{q}$ according to \cref{eq:tdq} and choosing the support of $\chi$ therein sufficiently small to guarantee convergence of the series, $\td{q}$ is analytic near $x=0$ and (modulo the cut-off $\chi$) uniquely determined by the choices of $q_k$ we have made.

	Because $\chi$ in $\td{q}$ is identically $1$ near the origin, we have achieved that 
	\[
		h^{-1}\sigma_a^q(x',h^{-1}\tau\xi'_0)\sim \sum_{k\geq 0} h^k\sigma_k(x',\tau\xi'_0) = \sum_{k\geq 0}h^k\mathrm{Op}^{R}(t_k)\td{q}(x',0)\,.
	\]
	Now, \cref{prop:sigmatotq} states that %
	\[
		h^{-1}\sigma_a^{\mathbb{1}_{\{y_n\geq 0\}}\td{q}}(x',h^{-1}\tau\xi'_0) \sim \sum_{k\geq 0}h^k\mathrm{Op}^{R}(t_k)\td{q}(x',0)\,,
	\]
	which is to say that for complex $(x',\tau)$ sufficiently near $(0,1)$,
	\[
		\sigma_a^{q- \mathbb{1}_{\{y_n\geq 0\}}\td{q}}(x',h^{-1}\tau\xi'_0)=\sigma_a^q(x',h^{-1}\tau\xi'_0) - \sigma_a^{\mathbb{1}_{\{y_n\geq 0\}}\td{q}}(x',h^{-1}\tau\xi'_0) \sim 0\,.
	\]
	At first, the term on the LHS is \emph{a priori} merely a distribution (well-defined since the support of the argument is in $y_n\geq 0$) in space parametrized by $\tau\xi'_0$, but nevertheless the term in the middle is the finite realization of the formal cl.a.s. $0$, which by definition of cl.a.s. gives the desired exponential decay.
\end{proof}

\subsection{Proof of \cref{thm:localsymb}}

As mentioned in the revised \cite{1312.4065v2}, we do not actually require the fact that $\dot\Lambda_q$ is analytic off the diagonal near $0$ in the following proof except for defining the analytic symbol $\sigma_a^q$. 
\begin{proof}[Proof of \cref{thm:localsymb}]
	The implication 2.$\implies$ 1. of this statement was shown in \cref{prop:direc1}. According to \cref{prop:sigmatotq} we know that $\sigma_a^{1_{\{y_n\geq 0\}}\td{q}}$ has lowest order term
	\[
		\mathrm{Op}(t_0)\td{q}(x',0) = (2r^{1/2}(x',0,\tau\xi'_0))^{-1}\td{q}(x',0)\,,
	\]
	which is non-vanishing near $(0,1)$ if and only if $\td{q}(0) \neq 0$. We turn to proving that 1. $\implies$ 2.

    We first reduce to the case that $q \in L^\infty(M)$ is supported sufficiently close to $\hat y = 0$. Choose $\chi_1 \in C_c^\infty(M)$ identically $1$ near $\hat y$ and supported sufficiently close to $\hat y$. The kernel 
    \[
        \dot\Lambda_{(1-\chi_1)q}(x',z') = -\int_M (1-\chi_1(y))q(y)K(y,x')K(y,z')\dd V_g(y) 
    \] 
    is jointly analytic near $(0,0)$ because the Poisson kernels are integrated only where they are analytic. In particular, by the contour formula given in \cref{prop:anasymb} for $\sigma_a^{(1-\chi_1)q}$, a contour deformation using Stokes' theorem to the contour $\td{\gamma}_{\xi'}(t') = t'+ir\xi'_0$ shows that $\sigma_a^{(1-\chi_1)q}(x',h^{-1}\tau\xi'_0) \in \bl{O}_{(0,1)}^{(-\omega)}$. As a consequence, $\sigma_a^{\chi_1 q}$ is a cl.a.s. of order $-1$ for $(x',\tau)$ near $(0,1)$.

    Now choose $\td{q}$ according to \cref{thm:foundtdq} applied to $\sigma_a^{\chi_1 q}$ and denote 
    \[
        u  = \mathbb{1}_{\{y_n\geq 0\}}\td{q} - q = \mathbb{1}_{\{y_n\geq 0\}}\td{q} - \chi_1 q - (1-\chi_1)q\,,
    \]
    so that $\sigma^u_a(x',h^{-1}\tau\xi'_0)\in \bl{O}^{(-\omega)}_{(0,1)}$. 

	For $(\td{\chi}_j)_{j\in \mathbb{N}}$ an Ehrenpreis cut-off sequence identically $1$ and supported near $0$, for $j_\vartheta$ from \cref{prop:K2}, and $\chi_0 \in C_c^\infty(\del M)$ identically $1$ near $0$ and $\supp \chi_0 \subset \{\td{\chi}_j \equiv 1\forall j\}$, define
	\[
		g_{\tau,h}(z') = \tilde{\chi}_{j_\vartheta(h)}(z')e^{\frac{i}{h}\tau z'\cdot\xi'_0}\,,\quad f_{v',\tau,h}(x') = \chi_0(x')e^{\frac{i}{h}\left( \frac{i}{2}(v'-x')^2+\frac{i}{2}(\tau-1)^2+x'\cdot\xi'_0\right)}\,,
	\]
    Because $\td{q}$ is analytic near the origin, using \cref{prop:direc1} and the assumption on $\dot\Lambda_q$, we know that $\dot\Lambda_u$ is analytic off the diagonal near the origin which allows us to apply \cref{prop:anasymb}: for $\vartheta>0$ sufficiently small we have
	\[
		-\dot\Lambda_u g_{\tau,h} = e^{\frac{i}{h}\tau x'\cdot\xi'_0}\sigma_a^{u}(x',h^{-1}\tau\xi'_0) + e^{\frac{i}{h}\tau x'\cdot\xi'_0}\bl{O}^{(-\omega)}_{(0,1)} =  \bl{O}^{(-\omega)}_{(0,1)}\,.
	\]
	Let $\chi\in C_c^\infty$ be a cut-off supported sufficiently close to $0$ and identically $1$ near $0$. The kernel $\dot\Lambda_{(1-\chi)u}$ is analytic near $(0,0)$ because the two Poisson kernels in its definition are separated from the diagonal, and we may assume that $\supp \chi_j$ is a subset of this neighborhood of analyticity for all $j$. In particular, we may use repeated partial integrations to find that, for $\vartheta>0$ sufficiently small, 
	\[
		\dot\Lambda_{(1-\chi)u} g_{\tau,h} \in \bl{O}^{(-\omega)}_{(0,1)}\,.
	\]
	Because $\dot\Lambda_u g_{\tau,h} = \dot\Lambda_{\chi u} g_{\tau,h} + \dot\Lambda_{(1-\chi)u} g_{\tau,h}$, we find that, using \cref{cor:Ks} and \cref{eq:defofS}, 
	\[
		\bl{O}^{(-\omega)}_{(0,1)}\ni \langle\dot\Lambda_{\chi u} g_{\tau,h},f_{v',\tau,h}\rangle = -\int \chi u Kg_{\tau,h} \overline{Kf_{v',\tau,h}}\dd V_g = S_{-\xi'_0}(\chi u)(v',\tau;h)+\bl{O}^{(-\omega)}_{(0,1)}\,.
	\]
	Choosing the support of $\chi$ sufficiently small, \cref{cor:vanish} implies that $\chi u$, and thus $u$, vanishes near the origin which completes the proof.
\end{proof}

\startappendix

\subsection{Some facts about classical analytic symbols}\label{sec:cla}

As a direct consequence of a minor modification of the proof of \cite[Lem.~1.21]{zbMATH08037414}, we have
\begin{lemma}\label{lem:holsymb}
	Let $\ud{a}\coloneqq (a_j)_{j\in\mathbb{N}} \subset \mathcal{O}(U)$ be a formal cl.a.s. where $U = U_1\times U_2 \subset \CC^{n_1+n_2}$ with $U_1$ some open set and $U_2$ a complex conic open neighborhood of $\RR^{n_2}\sem$ and each $a_j$ is positively homogeneous of degree $-j$. There is $\eps>0$ and a finite realization $a \in \mathcal{O}(U_1\times \Theta_{\eps}^\circ)$ of $\ud{a}$, and $a$ is a symbol of order $0$ in the usual smooth Kohn-Nirenberg sense.
\end{lemma}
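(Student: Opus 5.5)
The plan is a Borel-type construction: first build a holomorphic extension of each $a_j$ to a complex conic set of the form $U_1\times\Theta_\eps$, then sum the terms with cut-offs in $\abs{\eta}$ that are holomorphic in $\eta$. Since $U_2$ is a complex conic open neighborhood of $\RR^{n_2}\sem$, compactness of the unit sphere gives $\eps>0$ (after shrinking $U_1$ to a relatively compact subset) such that $\{\eta\in\Theta_\eps\colon 1/2\le\abs{\eta^\RR}\le 2\}\subset U_2$. Homogeneity of degree $-j$ then extends each $a_j$ holomorphically to all of $\Theta_\eps$. The bound $\sup\abs{a_j}\le C^{j+1}j!$ on the unit sphere propagates to $\abs{a_j(z,\eta)}\le C^{j+1}j!\abs{\eta}^{-j}$ on $U_1\times\Theta_\eps$, with $\abs{\eta}$ comparable to $\abs{\eta^\RR}$ there.

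The candidate realization is
\[
a(z,\eta)\coloneqq\sum_{j\ge0}\bigl(1-e^{-\lambda(\eta)/(Cj)}\bigr)^{j}a_j(z,\eta),
\]
where $\lambda(\eta)\coloneqq(\eta\cdot\eta)^{1/2}$ is the principal branch. This is holomorphic on $\Theta_\eps$ with $\mathrm{Re}\lambda\ge c\abs{\eta}$ for $\eps$ small. The factor has absolute value at most $2^{j}$ always, and it is $\mathcal{O}((\abs{\eta}/(Cj))^{j})$ for $\abs{\eta}\lesssim Cj$. Combined with $C^{j+1}j!\abs{\eta}^{-j}$, each term is bounded by $C'^{j}j!/j^{j}$ for $\abs{\eta}\le 2Cj$, and by $(2C/\abs{\eta})^{j}j!$ for $\abs{\eta}\ge 2Cj$. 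The series therefore converges locally uniformly once the constant in the exponent is taken large, which gives holomorphy. For the finite-realization estimate, write $\eta=h^{-1}\xi$ with $\abs{\xi}=1$ and split at $j\le R^{-1}h^{-1}$. For those $j$, $1-(1-e^{-\lambda/(Cj)})^{j}\le j e^{-c/(Cjh)}\le je^{-cR/(Ch)}$, and multiplying by $C^{j+1}j!h^{j}$ summed over $j\le 1/(Rh)$ gives $e^{-\eps'/h}$ for $R$ large. The tail $j>1/(Rh)$ is bounded by $\sum(2Ch)^{j}j!\le\sum(2C/R)^{j}$, which is exponentially small in $1/h$ for $R$ large. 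Near $\eta=0$ one replaces $a$ by $a\cdot$(holomorphic factor); to obtain holomorphy on $B_\eps(0)$ as well, for $\abs{\eta}<\eps$ each factor $(1-e^{-\lambda/(Cj)})^j$ is actually $\mathcal{O}((\lambda/Cj)^j)$, which cancels the $\abs{\eta}^{-j}$ singularity. I would instead use the entire function $(1-e^{-\eta\cdot\eta/(Cj)^2})^{j}$, which avoids the branch of $\lambda$ at the origin and makes the sum holomorphic on $\Theta_\eps^\circ$.

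For the Kohn--Nirenberg statement, I would use Cauchy estimates on polydiscs of radius $\sim\abs{\eta}$ inside $\Theta_\eps$ to get $\abs{\del_\eta^\alpha a}\le C_\alpha\abs{\eta}_+^{-\abs{\alpha}}$ for real $\eta$, and do the same in $z$ on a smaller $U_1$. The main obstacle is the choice of holomorphic cut-off together with the uniform estimates: the factor must stay bounded by $C^{j}$ on the whole complex sector $\Theta_\eps$, not only on real $\eta$, and $\mathrm{Re}(\eta\cdot\eta)>0$ there is exactly what $\Theta_\eps$ guarantees for small $\eps$. This is the minor modification of \cite[Lem.~1.21]{zbMATH08037414}.
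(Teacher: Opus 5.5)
Your construction has two genuine gaps. The first is the realization estimate. The factor $F_j=(1-e^{-\lambda(\eta)/(Cj)})^j$ does not yield a finite realization. The step $je^{-c/(Cjh)}\le je^{-cR/(Ch)}$ is false: $j\le R^{-1}h^{-1}$ only gives $jh\le R^{-1}$, hence $e^{-c/(Cjh)}\le e^{-cR/C}$, which is a constant. This is not a bookkeeping slip. The quantity $1-F_j\approx je^{-\lambda/(Cj)}$ is exponentially small in $\abs{\eta}/j$, not in $\abs{\eta}$. For example, take $n_2=1$, $a_j(\eta)=j!\,\eta^{-j}$ on $\CC\setminus i\RR$ and $\eta=h^{-1}$. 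All terms of $\sum_{j\le 1/(Rh)}(1-F_j)\,j!\,h^j$ are positive, and the single term with $j\approx h^{-1/2}$ already has size $\exp\bigl(-(\tfrac12+o(1))h^{-1/2}\log(1/h)\bigr)$. The terms with $j>1/(Rh)$ contribute only $O(e^{-c/h})$ and cannot cancel it, so the error is not $O(e^{-\eps/h})$. The entire variant $(1-e^{-\eta\cdot\eta/(Cj)^2})^j$ fails for the same reason. Your tail bound is also reversed, since $hj>1/R$ there; the right tool is $\abs{F_j}\le(\abs{\lambda}/(Cj))^j$, valid on all of $\Theta_\eps$ because $\abs{1-e^{-w}}\le\abs{w}$ for $\mathrm{Re}\,w\ge0$. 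What you need is a factor whose distance to $1$ is $e^{-c\abs{\eta}}$ as soon as $\abs{\eta}\ge C'j$. One choice is the truncated Laplace transform (Borel--Laplace summation) $F_j(\mu)=\frac{1}{(j-1)!}\int_0^{\mu}t^{j-1}e^{-t}\,\dd t=1-e^{-\mu}\sum_{k<j}\mu^k/k!$ with $\mu=\lambda(\eta)/C$. Write $\abs{a_j}\le C_a^{j+1}j!\abs{\eta}^{-j}$ on $\Theta_\eps$. Then $\abs{F_j}\le\abs{\mu}^j/j!$ gives $\abs{F_ja_j}\le C_a(C_a/C)^j$ uniformly on $\Theta_\eps$. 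Also $\abs{1-F_j}\le je^{-\mathrm{Re}\,\mu}(e\abs{\mu}/j)^j$ for $j\le\abs{\mu}$, which is $\le je^{-c\abs{\eta}}$ for $j\le\abs{\eta}/R$ once $R$ is large.

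The second gap is that holomorphy on $B_\eps(0)$ is not obtained. The $j=0$ term carries no cut-off at all. More fundamentally, the $a_j$ are only defined on the cone $\Theta_\eps$, which contains no punctured neighbourhood of $0$. A factor vanishing at $0$ therefore only makes the terms bounded on $\Theta_\eps\cap B_\eps(0)$; it gives no continuation to $B_\eps(0)\setminus\Theta_\eps$, and neither Riemann's nor Hartogs' extension theorem applies. In fact a function holomorphic near $0$ that is positively homogeneous on an open cone must be a homogeneous polynomial. Concretely, take $n_2=1$, $a_0=\pm1$ on $\{\pm\mathrm{Re}\,\eta>0\}$ and $a_j=0$ for $j\ge1$. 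Your $a$ is then $a_0$ itself, equal to $1$ and $-1$ on the two components of $\Theta_\eps\cap B_\eps(0)$. So it is not the restriction of a holomorphic function on $B_\eps(0)$, nor continuous at $\eta=0$, and the Kohn--Nirenberg claim fails there too. By contrast, $\tanh\eta$ is an admissible realization. What is missing is a mechanism that produces a holomorphic function near $\eta=0$ not built from the $a_j$ there. The paper gets this by citation: it uses the proof of \cite[Lem.~1.21]{zbMATH08037414} with the cut-off $\chi_0$ replaced by a smooth function vanishing near the origin and equal to $1$ away from it (alternatively \cite[Lem.~17.2.27,~Prop.~17.2.1]{MR4436039} together with \cite[Lem.~1.31]{zbMATH08037414}). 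One way to do it yourself is the following. Take a realization holomorphic on a slightly wider cone, as in the previous paragraph, and multiply it by such a smooth cut-off. Then convolve in $\zeta\in\RR^{n_2}$ with the entire kernel $\pi^{-n_2/2}e^{-(\eta-\zeta)\cdot(\eta-\zeta)}$, having first replaced $\ud{a}$ by the formal cl.a.s.\ $e^{-\Delta_\eta/4}\ud{a}$ (flat Laplacian). The result is entire in $\eta$, and a contour shift near $\mathrm{Re}\,\eta$ shows that it realizes $\ud{a}$ on $\Theta_\eps$.
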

Note that the assumptions on \cite[Lem.~1.21]{zbMATH08037414} are stronger than those in \cref{lem:holsymb}: their notion of formal analytic symbol assumes that each $a_j$ is holomorphic in the fiber at the origin. Nevertheless, the proof of \cite[Lem.~1.21]{zbMATH08037414} applies in our setting provided one replaces their $\chi_0$ by some smooth function vanishing near the origin, and identically $1$ away from the origin. One could also deduce \cref{lem:holsymb} as a succession of the following results: \cite[Lem.~17.2.27]{MR4436039}, \cite[Prop.~17.2.1]{MR4436039} and \cite[Lem.~1.31]{zbMATH08037414}.

\begin{lemma}\label{lem:recoversymbol}
	Let $m\in\RR$ and $p(x,\xi) \sim \sum_{k\geq 0} p_k(x,\xi)$ be a formal classical analytic symbol in $\Omega\times \Gamma$ according to \cref{def:sym}, where each $p_k$ is positively homogeneous of degree $m-k$ in $\xi$, and let $p\sim \ud{p}$ be a cl.a.s. 

	For every compact subset $K\Subset \Omega\times \Gamma$ and every sufficiently large $C_0>0$ there is $C>0$ so that for every $N\in\mathbb{N}, N\geq 1$ and $r\geq C_0N$ we have
	\begin{equation}\label{eq:RNerr}
		\sup_K\abs{p(x,r\xi)-\sum_{k=0}^{N-1} r^{m-k}p_k(x,\xi)}\leq C^{N+1}N!r^{m-N}\,.
	\end{equation}
\end{lemma}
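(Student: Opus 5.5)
We split into the regimes $r\xi$ "large relative to $N$" and use the definition of finite realization together with Cauchy estimates on the homogeneous terms. Fix $K\Subset\Omega\times\Gamma$. Since $p_k$ is positively homogeneous of degree $m-k$, restricting to $\abs{\xi}$ in a compact set costs only a constant, so we may take $K\subset \Omega\times(\Gamma\cap\{\abs{\xi}=1\})$. Throughout, write $h=r^{-1}$, so that $p(x,r\xi)=p(x,h^{-1}\xi)$ and $r^{m-k}p_k(x,\xi)=h^{k-m}p_k(x,\xi)$.

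\emph{Step 1 (bounds on terms).} By \cref{def:sym}, $\sup_K\abs{p_k}\le C_1^{k+1}k!$. The quantity to bound equals
\[
\Bigl(p(x,h^{-1}\xi)-\sum_{0\le k\le R^{-1}h^{-1}}h^{k-m}p_k\Bigr)+\sum_{N\le k\le R^{-1}h^{-1}}h^{k-m}p_k .
\]
The first bracket is $\le Ce^{-\eps h^{-1}}$ for $R$ large, by the finite realization property. (If $R^{-1}h^{-1}<N$, the sign of the tail sum flips, but the bound below still covers it.)

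\emph{Step 2 (the tail).} Use $k!\le N!\,k^{k-N}$ and $k\le R^{-1}h^{-1}$, which give $h^{k-N}k^{k-N}\le R^{-(k-N)}$. Hence
\[
h^{k-m}C_1^{k+1}k!\le C_1^{N+1}N!\,h^{N-m}(C_1/R)^{k-N}.
\]
Choosing $R>2C_1$, the geometric sum is $\le 2C_1^{N+1}N!\,h^{N-m}$. For the reverse case, when $N>R^{-1}h^{-1}$ and the terms $R^{-1}h^{-1}<k<N$ must be added back, we need $hk\lesssim 1$, i.e.\ $r\ge C_0N$ with $C_0$ large. This is the role of the hypothesis. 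In that range each term is $h^{k-m}C_1^{k+1}k!\le C_1^{N+1}N!\,h^{N-m}(C_1 hN)^{-(N-k)}\cdot(\ldots)$, and it is controlled because $hN\ge R^{-1}$ keeps $(hN)^{-1}$ bounded by $R$. This gives a bound of the form $C^{N+1}N!\,h^{N-m}$ after enlarging $C$.

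\emph{Step 3 (exponential error).} Absorb $e^{-\eps h^{-1}}$ into $C^{N+1}N!\,h^{N-m}$ using $\sup_{h} h^{-N}e^{-\eps/h}\le (N/\eps)^N e^{-N}\le \eps^{-N}N!$, together with $h^{m}\le 1$ or $h^m\le C_0^{\abs{m}}$ as appropriate (since $h\le 1/(C_0N)\le 1$).

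The main obstacle is Step 2: keeping the two truncation indices ($N$ versus $R^{-1}h^{-1}$) consistent, and making the constant $C$ uniform in $N$. This is where $r\ge C_0N$ with $C_0\ge R$ is needed, so that the problematic case collapses and only the clean geometric tail estimate is required.
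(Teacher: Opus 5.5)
This is essentially the paper's argument. You truncate the finite realization at index $r/R\ge N$ (the paper simply takes $R=C_0$), bound the tail $\sum_{N\le k\le r/R}$ geometrically, and absorb $Ce^{-\eps r}$ via $r^N e^{-\eps r}\le \eps^{-N}N!$. Since $C_0\ge R$ forces $r/R\ge N$, the ``reverse case'' in Step~2 never occurs, and that digression can be deleted.

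There is one slip, in Step~3 for $m<0$. There $h^{m}=r^{\abs{m}}\ge (C_0N)^{\abs{m}}$ is unbounded, so it is not $\le C_0^{\abs{m}}$. Either of two repairs works:
\begin{itemize}
\item First reduce to $m=0$ as the paper does, noting that the realization error becomes $r^{-m}Ce^{-\eps r}\le C_m e^{-\eps r/2}$, which is still exponentially small.
\item Alternatively, bound directly $r^{N+\abs{m}}e^{-\eps r}\le C_m(2/\eps)^N N!$.
\end{itemize}
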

\begin{proof}
	After dividing the desired estimate by $r^m$, we may assume that $m=0$. After replacing $r$ by $r\abs{\xi}$ and $\xi$ by $\xi\abs{\xi}^{-1}$, by modifying $C,C_0>0$ we may assume that $K \subset \{\abs{\xi}=1\}$.

	Since $N \leq r/C_0$, if $C_0$ is sufficiently large,
	\[
		p(x,r\xi)-\sum_{k=0}^{N-1} r^{-k}p_k(x,\xi) = p(x,r\xi) - \sum_{k\leq r/C_0} r^{-k} p_k(x,\xi) + \sum_{N \leq k\leq r/C_0} r^{-k}p_k(x,\xi)
	\]
	where the difference of the first two terms is bounded above in absolute value by $Ce^{-\eps r}$ for some $C,\eps>0$. By standard estimates (\cite[Eq.~(1.2.16)]{doi:10.1142/1550}) we have
	\[
		e^{-\eps r} \leq \eps^{-N} N^{N} r^{-N} \leq (e\eps^{-1})^{N} r^{-N} N!\,.
	\]
	For the second term we estimate
	\begin{align*}
		\abs{\sum_{N \leq k\leq r/C_0} r^{-k}p_k} &\leq \sum_{N \leq k\leq r/C_0} r^{-k}C^kk! = \sum_{0\leq k\leq r/C_0-N} r^{-k-N}C^{k+N}(k+N)!\\
		&= r^{-N} (2C)^N N! \sum_{0\leq k\leq r/C_0-N} (2Ck/r)^k \leq r^{-N}(2C)^N N! \sum_{0\leq k\leq r/C_0-N} e^{-k}\,,
	\end{align*}
	where we used the estimates $(k+N)! \leq 2^{k+N} k! N! \leq 2^{k+N}N! k^k$. The last inequality is provided $C_0>2Ce$ is large enough. Then we may bound
	\[
		\sum_{0\leq k\leq r/C_0-N} e^{-k} \leq \sum_{k=0}^\infty e^{-k} \leq \frac{1}{1-e^{-1}}\,,
	\]
	so that we indeed find \cref{eq:RNerr}.
\end{proof}

\subsection{Combinatorial estimates}\label{sec:comb}

The estimates of this section are taken from \cite[\S~1.2]{doi:10.1142/1550}. 

We have the following: for any multi-index $\alpha \in \mathbb{N}^n_0$%
\begin{equation}\label{eq:factbounds}
	\alpha! \leq \abs{\alpha}! \leq \abs{\alpha}^{\abs{\alpha}} \leq e^{\abs{\alpha}}\abs{\alpha}! \leq (en)^{\abs{\alpha}}\alpha!\,.%
\end{equation}
Stirling's approximations will give much more precise statements, but this suffices for our needs.

Furthermore, for any $j,k\in\mathbb{N}$, 
\begin{equation}\label{eq:sumfac}
	(j+k)! \leq 2^{j+k} j!k!\,.%
\end{equation}

By \cite[Thm.~2.2]{zbMATH08031921},
\begin{equation}\label{eq:countsum}
	\sum_{\substack{i_1,\dots,i_k \geq 0,\\ i_1+\dots+i_k=j}} 1 = {k-1+j\choose k-1} \leq 2^{k-1+j}\,,
\end{equation}
where the inequality came from applying \cref{eq:sumfac}.

\begin{lemma}\label{lem:faabound}
	Let $x \geq 0, k\in \mathbb{N}, k\geq 1$ and $\Pi_k$ be the set partitions of $\{1,\dots,k\}$. We have
	\[
		\sum_{\pi \in \Pi_k} x^{\abs{\pi}}\prod_{B\in\pi}\abs{B}! \leq 4^{k}\sum_{j=0}^k x^{k-j}j!\,.
	\]
\end{lemma}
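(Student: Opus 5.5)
Write $N_\pi\coloneqq\prod_{B\in\pi}\abs{B}!$. I would group the partitions by their number of blocks $m=\abs{\pi}$, which is a purely combinatorial reduction. The claim then follows once we show
\[
	S_{k,m}\coloneqq\sum_{\pi\in\Pi_k,\ \abs{\pi}=m}N_\pi\leq 4^k (k-m)!\,,\qquad 1\leq m\leq k\,.
\]
Indeed, setting $j=k-m$ gives $\sum_\pi x^{\abs{\pi}}N_\pi=\sum_{m=1}^k x^m S_{k,m}\leq 4^k\sum_{j=0}^{k-1}x^{k-j}j!$, which is bounded by the right-hand side.

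To bound $S_{k,m}$, I would count set partitions with $m$ blocks of prescribed sizes $b_1,\dots,b_m$ (taken as an unordered multiset). The number of such partitions is $k!/(\prod_i b_i!\,\prod_s \mu_s!)$, where $\mu_s$ is the multiplicity of the size $s$. After multiplying by $N_\pi=\prod_i b_i!$, each size multiset contributes at most $k!/\prod_s\mu_s!\leq k!$. Passing to ordered compositions $(b_1,\dots,b_m)$ of $k$ into positive parts, this overcounts each multiset by exactly $m!/\prod_s\mu_s!$, and this gives the exact identity
\[
	S_{k,m}=\frac{k!}{m!}\,\#\{(b_1,\dots,b_m)\in\mathbb{N}_{\geq1}^m:\textstyle\sum b_i=k\}=\frac{k!}{m!}\binom{k-1}{m-1}\,.
\]

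It remains to show $\frac{k!}{m!}\binom{k-1}{m-1}\leq 4^k(k-m)!$. By \cref{eq:countsum}, or directly, $\binom{k-1}{m-1}\leq 2^{k-1}$. By \cref{eq:sumfac} with $j=k-m$ we have $k!\leq 2^k m!(k-m)!$, so $k!/m!\leq 2^k(k-m)!$. Multiplying these two bounds gives $S_{k,m}\leq 2^{2k-1}(k-m)!\leq 4^k(k-m)!$, which completes the argument.

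The only delicate step is the counting identity, specifically handling correctly the unordered nature of the blocks. If that step gave trouble, I would fall back on an induction on $k$: remove the block containing the element $k$ and use the resulting recursion for $S_{k,m}$. I expect the direct count to work, however.
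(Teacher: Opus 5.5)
Your proof is correct and follows the same route as the paper. Both arguments identify the coefficient of $x^m$, namely $\sum_{\abs{\pi}=m}\prod_{B\in\pi}\abs{B}!$, with the Lah number ${k-1\choose m-1}\frac{k!}{m!}$ and bound it by $2^{2k-1}(k-m)!\leq 4^k(k-m)!$ using ${n\choose r}\leq 2^n$. The only difference is how the Lah-number identity is obtained: you derive it by directly counting set partitions with prescribed block sizes, whereas the paper applies Fa\`a di Bruno's formula to $e^{(y-1)x}\circ(1-y)^{-1}$ and cites Comtet's evaluation $B_{k,j}(1!,2!,\dots)={k-1\choose j-1}\frac{k!}{j!}$, so your version is self-contained.
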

\begin{proof}
	Define $f(y) = e^{(y-1)x}$ and $g(y) = \frac{1}{1-y} = \sum_{k\geq 0}y^k$ for $\abs{y}<1$. We have $\del^n_y g(0)=n!$ and $\del^n_y f(1)=x^n$ for all $n\geq 0$. By Fa\`a di Bruno's formula (first the combinatorial version, then the (equivalent) one using Bell polynomials $B_{k,j}$),
	\[
		\del^k (f\circ g)(0)= \sum_{\pi \in \Pi_k} x^{\abs{\pi}}\prod_{B\in\pi}\abs{B}! = \sum_{j=1}^k x^j B_{k,j}(1!,2!,\dots,(k-j+1)!)\,.
	\]
	According to \cite[\S~3.3~Thm.~B]{zbMATH03443655}, $B_{k,j}(1!,2!,\dots,(k-j+1)!) = {k-1 \choose j-1}\frac{k!}{j!}$, which are called the Lah numbers. 

	Changing variables $m = k-j$, and using 
	\[
		{k-1 \choose j-1}\frac{k!}{j!} = {k \choose m}{k-1\choose m}m! \leq 2^k 2^{k-1} m! \leq 4^k m!\,, 
	\]
	we have
	\[
		\sum_{\pi \in \Pi_k} x^{\abs{\pi}}\prod_{B\in\pi}\abs{B}! = \sum_{j=0}^{k-1} x^{k-j} {k \choose j}{k-1\choose j}j! \leq 4^k \sum_{j=0}^{k-1} x^{k-j}j!\,,
	\]
	which completes the proof.
\end{proof}

\subsection{Dirichlet eigenvalues and Runge approximation}\label{sec:EVrunge}

In this section we consider manifolds $(M,g)$ of dimension $n\geq 2$, and say that a formally self-adjoint differential operator $L$ of order $m$ is strongly elliptic if its principal symbol $l$ satisfies $l(x,\xi) \geq c \abs{\xi}^m$ for all $\xi\neq 0$ and some $c>0$. We first extend the strict domain monotonicity of Dirichlet eigenvalues (\cite{zbMATH03230686}, \cite{zbMATH00048198}) to a class of elliptic differential operators of arbitrary order. 
\begin{theorem}\label{thm:DEV}
	Let $(M,g)$ be a compact Riemannian manifold with smooth boundary. Let $L$ be a strongly elliptic formally self-adjoint differential operator with smooth coefficients on $M$ of even order $m$. For each non-empty open $U \subset M$ with $C^{\infty}$ boundary, the Dirichlet realization of $L$ in $L^2(U)$, i.e. the restriction of $L$ to the domain $H^m(U) \cap H^{m/2}_0(U)$ has real discrete spectrum bounded below and unbounded above: one may enumerate the eigenvalues $\lambda_1(U) \leq \lambda_2(U) \leq \dots \subset \RR$ counted according to multiplicity.

    Furthermore, for any $U_1, U_2 \subset M$ open sets with $C^{\infty}$ boundary,
    \begin{enumerate}
        \item if $U_1 \subset U_2$, then $\lambda_k(U_1) \geq \lambda_k(U_2)$ for all $k\in \mathbb{N}$, and $\lambda_k(U_1) = \lambda_k(U_2)$ for all $k$ implies $U_1 = U_2$.
        \item For any sequence of open sets $W_j \subset M, j\in\mathbb{N}$ each having $C^{\infty}$ boundary so that $W_j \subset W_{j+1} \forall j$ and $W \coloneqq \bigcup_{j\geq 1}W_j$ has $C^{\infty}$ boundary, we have $\lambda_k(W_j) \to \lambda_k(W)$ as $j\to\infty$ for all $k\in \mathbb{N}$.
    \end{enumerate}
    Assume in addition that $L$ satisfies the following unique continuation property: for any connected open $G\subset M$, $\lambda\in\RR$, and any $u \in H^m(G)\cap H^{m/2}_0(G)$ satisfying $L u = \lambda u$ in $G$ and vanishing on some open subset of $G$, we have $u\equiv 0$ on $G$. This additional assumption implies that
    \begin{enumerate}\setcounter{enumi}{2}
        \item If $U_1 \subset U_2$ and $U_2\setminus U_1$ contains an open set and $U_2$ is connected, then $\lambda_k(U_1) > \lambda_k(U_2)$ for all $k\in \mathbb{N}$.
    \end{enumerate}
\end{theorem}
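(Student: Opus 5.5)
The plan is to follow the classical min–max approach. The Dirichlet realization is handled through the quadratic form $a(u,u)=\langle Lu,u\rangle$ on $H^{m/2}_0(U)$. Because $L$ is strongly elliptic and formally self-adjoint, Gårding's inequality gives
\[
a(u,u)\geq c\norm{u}_{H^{m/2}}^2 - C\norm{u}_{L^2}^2 .
\]
Hence the form is closed, symmetric and bounded below, and its Friedrichs operator is self-adjoint. Elliptic regularity for the Dirichlet problem (smooth boundary) identifies its domain with $H^m(U)\cap H^{m/2}_0(U)$. The resolvent is compact, since $H^{m/2}_0(U)\hookrightarrow L^2(U)$ compactly by Rellich. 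So the spectrum is real, discrete and bounded below; it is unbounded above because $L^2(U)$ is infinite dimensional. We then have the Courant characterization
\[
\lambda_k(U)=\min_{\dim V=k,\ V\subset H^{m/2}_0(U)}\ \max_{0\neq u\in V} \frac{a(u,u)}{\norm{u}^2}.
\]

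\textbf{Part 1.} Extension by zero embeds $H^{m/2}_0(U_1)$ into $H^{m/2}_0(U_2)$ and preserves both $a$ and the $L^2$ norm. The min–max formula then gives $\lambda_k(U_1)\geq\lambda_k(U_2)$ immediately.

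For the rigidity statement, suppose $U_1\neq U_2$. Since both sets are open with smooth boundary, $U_2\setminus \bar U_1$ contains a small ball $B$. Eigenvalue equality for all $k$ alone does not force a contradiction without some further input. I would argue via Weyl asymptotics: $\lambda_k(U)\sim c_L\,(k/\mathrm{vol}'(U))^{m/n}$, where $\mathrm{vol}'$ is a volume weighted by the principal symbol. Equality of all eigenvalues then forces equal weighted volumes, contradicting the strict inclusion $\mathrm{vol}'(U_1)<\mathrm{vol}'(U_2)$. A more elementary alternative is to use the min–max over the direct sum of $H^{m/2}_0(U_1)$ and $H^{m/2}_0(B)$. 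That sum is a space of eigenvalue counts of $U_1\sqcup B$, which exceeds those of $U_1$ at high energy, so $N_{U_2}(\lambda)\geq N_{U_1}(\lambda)+N_B(\lambda)>N_{U_1}(\lambda)$ for large $\lambda$. This contradicts $\lambda_k(U_1)=\lambda_k(U_2)$ for all $k$. I would use this counting-function argument; it needs only that $N_B(\lambda)\to\infty$.

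\textbf{Part 2.} Monotonicity gives $\lambda_k(W_j)\geq\lambda_k(W)$, and $\lambda_k(W_j)$ is nonincreasing in $j$. For the reverse inequality, note that $C_c^\infty(W)$ is dense in $H^{m/2}_0(W)$. Take the span $V$ of the first $k$ eigenfunctions of $W$ and approximate it in the form norm by a $k$-dimensional space of $C_c^\infty(W)$ functions. By compactness, that space has support inside some $W_j$. Continuity of the Rayleigh quotient in the form norm then gives $\limsup_j\lambda_k(W_j)\leq\lambda_k(W)+\eps$ for every $\eps>0$.

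\textbf{Part 3.} This is the main obstacle, since simple monotonicity is not strict. Suppose $\lambda_k(U_1)=\lambda_k(U_2)=\lambda$, and let $B\subset U_2\setminus U_1$ be open. Choose the index so that $\lambda_{k-1}(U_1)<\lambda$, or else handle multiplicity by counting. Let $E$ be the span of the eigenfunctions of $U_1$ with eigenvalue at most $\lambda$, extended by zero; it has dimension at least $k$, and its Rayleigh quotients are at most $\lambda$. By min–max on $U_2$ and a standard argument, any $u\in E$ orthogonal to the first $k-1$ eigenfunctions of $U_2$ that attains the quotient $\lambda$ must be an eigenfunction of $U_2$. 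Such a $u$ can be found by dimension counting, since $\dim E\geq k$. Then $u\in H^m(U_2)\cap H^{m/2}_0(U_2)$ satisfies $Lu=\lambda u$ on $U_2$ and vanishes on $B$. Unique continuation on the connected set $U_2$ forces $u\equiv0$, a contradiction. The delicate point is the regularity of the zero-extended $u$ across $\del U_1$. This is supplied because $u$ is a minimizer of the form on $U_2$, and hence a weak eigenfunction there.
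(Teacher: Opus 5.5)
Your proposal is correct. It differs from the paper mainly in how the rigidity half of 1 (and, less essentially, 2) is obtained.

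The paper puts all domains into one Hilbert space. It takes the shifted form $\langle Lu,v\rangle+b\langle u,v\rangle$ on $H^{m/2}_0(M)$ and identifies $H^{m/2}_0(U)$ with the closed subspace $D_U$, the closure of $C_c^\infty(U)$. It then quotes an abstract result on nested form domains. That result gives monotonicity, the implication from equality of all eigenvalues to $D_{U_1}=D_{U_2}$, and convergence along increasing families whose union is dense. The only geometric inputs are that $D_{U_1}=D_{U_2}$ forces $U_1=U_2$, and that $\bigcup_j D_{W_j}$ is dense in $D_W$.

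You instead argue by hand from min--max, and for rigidity you exploit the locality of $L$. The form splits over $U_1$ and a ball $B$ with $\bar B\subset U_2\setminus\bar U_1$, so $N_{U_2}(\lambda)\geq N_{U_1}(\lambda)+N_B(\lambda)$. This is self-contained and even quantitative: since $N_B(\lambda)\to\infty$, it shows $\lambda_k(U_2)<\lambda_k(U_1)$ for infinitely many $k$. The paper's route, by contrast, never uses the splitting over disjoint supports and works for arbitrary nested form domains. For the contradiction itself, $N_B(\lambda)\geq 1$ for $\lambda\geq\lambda_1(B)$ already suffices.

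Your Part 2 is the hand-made version of the density argument the paper cites. Your Part 3 is the standard min--max plus unique continuation argument that the paper invokes only by reference. Two small points there:
\begin{itemize}
\item The hedge about choosing $k$ with $\lambda_{k-1}(U_1)<\lambda$ is unnecessary, because your dimension count works for every $k$.
\item The regularity of the zero-extended $u$ is automatic. Expanding $u$ in eigenfunctions of $U_2$ places it in the $\lambda$-eigenspace of the Dirichlet realization on $U_2$, whose domain is $H^m(U_2)\cap H^{m/2}_0(U_2)$.
\end{itemize}
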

\begin{proof}
    The basic properties of the spectrum of the Dirichlet realization of $L$ follow from \cite[Thm.~1.7.2]{zbMATH00857699}, see also \cite[Chp.~2~\S~9]{MR350177}. We shall invoke \cite{zbMATH01346906} to prove the statements 1. and 2., for which we shall provide some set-up. The G\aa{}rding inequality, see \cite[Thm.~1.7.2]{zbMATH00857699}, implies there is some constant $b>0$ so that the form (defined initially on $C_c^\infty(M^{\mathrm{int}})$)
    \[
        A(u,v) \coloneqq \langle Lu, v\rangle_{L^2(M)} + b \langle u,v\rangle_{L^2(M)}
    \]
    extends continuously to a positive closed form on $H_0^{m/2}(M)$, and defines an equivalent norm to $\norm{\cdot}_{H^{m/2}}$. 

    For every smooth open $U\subset M$, denote by $D_U$ the closure in $H_0^{m/2}(M)$ of the zero extension to $M$ of functions in $C_c^\infty(U)$, so that $D_U$ can be identified with $H^{m/2}_0(U)$. The operator corresponding to the restriction of the form $A$ to $D_U$ is $L_U+b$, where $L_U$ is the Dirichlet realization of $L$ on $H^m(U) \cap H^{m/2}_0(U)$. 
    
    By the Rellich-Kondrachov embedding theorem \cite[Thm.~2.34]{MR1636569}, the embedding of $D_U$ into $L^2(U)$ is compact, and if $U_1\subset U_2$, then $D_{U_1} \subset D_{U_2}$ for any smooth open $U_1, U_2 \subset M$, and if $D_{U_1}=D_{U_2}$, then we must have $U_1 = U_2$. If $(W_j)_{j\in \mathbb{N}}, W$ are according to 2. in the statement, the union $\bigcup_{j=1}^\infty D_{W_j}$ is dense in $D_W$: every function $\in C_c^\infty(W)$ is supported in some $W_j$. Thus, the statements 1. and 2. are consequences of \cite{zbMATH01346906} after subtracting the common shift $b$ from the eigenvalues. 
    
    The proof of statement 3. under the additional unique continuation assumption follows the proof of \cite[Thm.~2.3]{zbMATH03393073} almost verbatim, see also the proof of \cite[Prop.~C.3]{busch2025generalizedboundaryrigidityminimal}, and we shall not reiterate it.
\end{proof}
In the situation of \cref{thm:DEV} (without the additional unique continuation assumption on $L$), for any $U\subset M$ with $C^{\infty}$ boundary, we call the set $(\lambda_k(U))_{k\in \mathbb{N}}$ the \emph{Dirichlet eigenvalues of $L$ restricted to $U$}. We say that $L$ \emph{has Dirichlet eigenvalue $0$ restricted to $U$} if there is some $k\in \mathbb{N}$ so that $\lambda_k(U) = 0$.

Repeating the proof of \cite[Prop.~4.3]{busch2025generalizedboundaryrigidityminimal}, which merely relies on the strict monotonicity of eigenvalues (3.) and the `continuity' of the eigenvalues with respect to the domain (2.), we have
\begin{corollary}\label{cor:perturbDEV}
    In the setting of \cref{thm:DEV}, assuming that $L$ satisfies the unique continuation property therein, if $M' \subset M$ is some connected open set with $C^{\infty}$ boundary so that $L$ has Dirichlet eigenvalue $0$ restricted to $M'$, for any $x \in \overline{M'}$ and open neighborhood $W$ of $x$ in $\overline{M'}$, there is some open neighborhood $U\subset W$ of $x$ so that $M'\setminus \bar U$ is connected, has $C^{\infty}$ boundary and $L$ does not have Dirichlet eigenvalue $0$ restricted to $M'\setminus \bar U$. 
\end{corollary}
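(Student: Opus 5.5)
The statement to prove is \cref{cor:perturbDEV}: given a connected $M'$ on which $L$ has Dirichlet eigenvalue $0$, remove a small closed neighbourhood $\bar U$ of a prescribed point so that eigenvalue $0$ disappears, while keeping $M'\setminus\bar U$ connected with smooth boundary. The plan is to combine strict domain monotonicity (part 3 of \cref{thm:DEV}) with continuity of eigenvalues under increasing unions (part 2).

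First, I would fix a nested family of admissible holes. Choose $U_0\subset W$, an open neighbourhood of $x$ with smooth boundary, small enough that $M'\setminus\bar U_0$ is connected with $C^\infty$ boundary.
\begin{itemize}
\item If $x$ is interior, $U_0$ is a small geodesic ball.
\item If $x\in\del M'$, $U_0$ is a small half-ball whose boundary is smoothed where it meets $\del M'$, so that $M'\setminus\bar U_0$ has smooth boundary.
\end{itemize}
Connectedness follows because removing a small contractible set from a connected manifold of dimension $n\geq 2$ does not disconnect it. Then take a one-parameter family $U_t\subset U_0$, $t\in(0,1]$, of shrinking holes of the same type (for instance, images of $U_0$ under a smooth contraction toward $x$), strictly increasing as $t\downarrow 0$. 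With $V_t\coloneqq M'\setminus\bar U_t$, each $V_t$ is connected with smooth boundary, and $V_t\subset V_s$ for $s<t$ with $V_s\setminus V_t$ containing an open set.

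Next, consider $\lambda_k(V_t)$. By part 3 of \cref{thm:DEV}, each $\lambda_k(V_t)$ is strictly decreasing as $t\downarrow 0$. For any sequence $t_j\downarrow 0$, the union $\bigcup_j V_{t_j}$ is $M'\setminus\{x\}$ or $M'$ up to a null set; I would arrange that the union is $M'$ with smooth boundary, possibly by letting the holes shrink onto a boundary point. Then part 2 gives $\lambda_k(V_{t_j})\to\lambda_k(M')$.

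Let $\lambda_{k_0}(M')=0$, and suppose $0$ has multiplicity $m$ with indices $k_0,\dots,k_0+m-1$. Strict monotonicity gives $\lambda_k(V_t)>\lambda_k(M')$. Continuity gives that, for $t$ small, $\lambda_{k_0-1}(V_t)<0$ (if $k_0>1$, since $\lambda_{k_0-1}(M')<0$) and $\lambda_{k_0+m}(V_t)>0$ stays close to its limit. Meanwhile $\lambda_{k_0},\dots,\lambda_{k_0+m-1}$ of $V_t$ are strictly positive but tend to $0$. Hence no eigenvalue of $V_t$ equals $0$ for all small $t$, and $U\coloneqq U_t$ does the job.

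Main obstacle: the delicate point is matching the continuity hypothesis of part 2, which requires the union of the increasing family to have smooth boundary and equal $M'$. For interior $x$ the union is $M'\setminus\{x\}$, whose $H_0^{m/2}$-space coincides with that of $M'$ only when $m/2\le n/2$ (points have zero capacity). I would therefore instead take holes that are thin neighbourhoods shrinking to $x$, and invoke the abstract result behind part 2 (\cite{zbMATH01346906}), which needs only density of $\bigcup D_{V_t}$ in $D_{M'}$. If capacity issues arise for large $m$, the alternative is holes attached to $\del M'$ near $x$, which exhaust $M'$ properly, whenever that placement is allowed.
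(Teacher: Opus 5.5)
Your argument depends on the continuity step $\lambda_k(V_t)\to\lambda_k(M')$ as $t\downarrow 0$. As you suspected, this step fails in the generality of the statement.

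If $x$ is an interior point, every admissible hole contains $x$, so $\bigcup_t V_t\subset M'\setminus\{x\}$. Part 2 of \cref{thm:DEV} then does not apply, because the union has no smooth boundary. When $m>n$, the embedding $H^{m/2}\hookrightarrow C^0$ makes evaluation at $x$ a continuous functional on $D_{M'}$ that vanishes on every $D_{V_t}$. Hence $\bigcup_t D_{V_t}$ is not dense in $D_{M'}$, and the $\lambda_k(V_t)$ converge to eigenvalues of a point-constrained problem rather than to $\lambda_k(M')$. In that case your claim that $\lambda_{k_0-1}(V_t)<0$ for small $t$ is unjustified, and it need not hold.

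Your fallback of attaching the holes to $\del M'$ does not produce a neighbourhood of a prescribed interior point $x$ inside $W$. So the corollary, which is stated for arbitrary even order $m$, remains unproved when $x\in M'$ and $m>n$. Your route does work for $m\le n$ (including $m=2$, the case the paper actually uses) and for $x\in\del M'$.

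The missing observation is that continuity is not needed; strict monotonicity alone suffices, and this is exactly the paper's proof. Take a nested sequence $V_1\subset V_2\subset\cdots\subset M'$ of your domains, each connected, with $M'\setminus V_j$ and $V_{j+1}\setminus V_j$ containing open sets. Let $k_0$ be the first index with $\lambda_{k_0}(M')=0$.
\begin{itemize}
\item For all $i\ge k_0$, part 3 of \cref{thm:DEV} gives $\lambda_i(V_j)\ge\lambda_{k_0}(V_j)>\lambda_{k_0}(M')=0$.
\item For each of the finitely many $i<k_0$, the sequence $j\mapsto\lambda_i(V_j)$ is strictly decreasing, so it vanishes for at most one $j$.
\end{itemize}
Thus all but at most $k_0-1$ indices $j$ give a domain without Dirichlet eigenvalue $0$. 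This requires no identification of limits and no capacity considerations; the paper's proof uses only part 3, despite the remark preceding the corollary.

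If you prefer to keep your limiting argument, you can repair it without identifying the limit. Each $\lambda_i(V_t)$ is monotone and bounded below by $\lambda_i(M')$, so it converges to some $\mu_i$. If $\mu_i<0$, it is eventually negative; if $\mu_i\ge 0$, strictness gives $\lambda_i(V_t)>\mu_i\ge 0$.
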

\begin{proof}
    Assume that $\lambda_k(M') = 0$ and $\lambda_i(M') < 0$ for all $i < k$, i.e., $\lambda_k(M')$ is the first zero Dirichlet eigenvalue. Let $U_j \subset W, j\in\mathbb{N}$ be a sequence of bounded open neighborhoods of $x$ with smooth boundary so that $\bigcap_{j \geq 1} U_j = \{x\}$, $U_{j+1} \subset U_{j}$, $U_{j}\setminus \bar U_{j+1}$ contains and open set and $M'\setminus \bar U_j$ is connected for every $j$. Define $\Omega_j = M'\setminus \bar U_j$, and note that strict monotonicity of eigenvalues gives $\lambda_k(\Omega_j) > 0$ for all $j$, which completes the proof if $k=1$: choose $U = U_1$, say.

    If, however, $k>1$, because $\lambda_k(\Omega_j)>0$ for any $j$, if $i\in \mathbb{N}$ so that $\lambda_i(\Omega_j)=0$, we must have $i < k$. Thus consider the following list of $k-1$ different sequences 
    \[
        (\lambda_1(\Omega_j))_{j\in\mathbb{N}}, \dots, (\lambda_{k-1}(\Omega_j))_{j\in\mathbb{N}}\,.
    \]
    Strict monotonicity of eigenvalues implies that each of these sequences is strictly decreasing, so that there is some $j^\circ$ sufficiently large so that 
    \[
        \lambda_1(\Omega_{j^\circ})\neq 0, \dots, \lambda_{k-1}(\Omega_{j^\circ}) \neq 0\,,
    \]
    for if, say $\lambda_i(\Omega_j)=0$ for some $i,j$, we must have $\lambda_i(\Omega_{j+1})>0$. Since $\lambda_{l}(\Omega_{j^\circ}) \geq \lambda_k(\Omega_{j^\circ})>0$ for all $l\geq k$, the proof is complete by choosing $U= U_{j^\circ}$.
\end{proof}
In words: if $M'$ happens to be a domain on which $L$ has Dirichlet eigenvalue $0$, then any sufficiently small modification of $M'$ will remedy this property.

\begin{remark}
   The unique continuation property imposed on $L$ in \cref{thm:DEV} is automatically true if $m=2$ by the usual unique continuation statements (\cite{zbMATH03174689}, \cite[\S~17.2]{zbMATH05129478}). On the other hand, this is also true if $L$ has analytic coefficients in $M$ without any restriction on its order (\cite[Thm.~8.6.1]{hoermander1}). Note also that it suffices to require that the sets $U, U_1, U_2, W_j, W$ in \cref{thm:DEV} have $C^{1,1}$ boundary if $m=2$. 
\end{remark}

We also have
\begin{proposition}\label{prop:GK}
	Let $(M,g)$ be a compact Riemannian manifold with smooth boundary. Let $L$ be a strongly elliptic formally self-adjoint differential operator with smooth coefficients on $M$ of even order $m$. If $L$ does not have Dirichlet eigenvalue $0$ (restricted to $M$), the Green's and Poisson operators $G$ and $K$ are well-defined and for every $s\geq m$
	\[
		G \colon H^{s-m}(M) \to H^{s}(M)\cap H^{m/2}_0(M)\,,\qquad K \colon \prod_{j=0}^{m/2-1} H^{s-j-\frac{1}{2}}(\partial M) \to H^s(M)
	\]
	are continuous. Here, $G, K$ are uniquely determined by the property that 
    \[
        LG u = u\,, \quad (\del_\nu^j Gu\vert_{\del M})_{j=0}^{m/2-1} = 0\,, \qquad \forall u \in H^{s-m}(M)\,,
    \]
    and
    \[
            LK g = 0\,, \quad (\del_\nu^j Kg\vert_{\del M})_{j=0}^{m/2-1} = g\,, \qquad \forall g \in \prod_{j=0}^{m/2-1} H^{s-j-\frac{1}{2}}(\partial M)\,.
    \]
\end{proposition}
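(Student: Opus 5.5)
The plan is to build $G$ and $K$ from the Dirichlet realization $L_M$ of $L$ from \cref{thm:DEV}, and to get the Sobolev mapping properties from standard elliptic boundary regularity for the Dirichlet problem.

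\textbf{Construction of $G$.} By \cref{thm:DEV}, $L_M$ with domain $H^m(M)\cap H^{m/2}_0(M)$ is self-adjoint with discrete real spectrum. The hypothesis that $0$ is not an eigenvalue therefore makes $L_M$ boundedly invertible on $L^2(M)$. Set $G\coloneqq L_M^{-1}$. Then for $u\in L^2(M)$ we have $Gu\in H^m(M)\cap H^{m/2}_0(M)$, which gives the case $s=m$. The Dirichlet trace condition is encoded in $H^{m/2}_0(M)$, since $\del_\nu^j Gu\vert_{\del M}=0$ for $j<m/2$.

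\textbf{Higher regularity of $G$.} For $s>m$ we invoke elliptic boundary regularity for the Dirichlet system $\{L,\del_\nu^j\}_{j<m/2}$. This system satisfies the Lopatinskii--Shapiro condition, since $L$ is strongly elliptic and Dirichlet conditions are always covering for strongly elliptic operators. Regularity then gives the a priori estimate
\[
	\norm{v}_{H^s}\leq C\Bigl(\norm{Lv}_{H^{s-m}}+\sum_{j<m/2}\norm{\del_\nu^j v}_{H^{s-j-1/2}(\del M)}+\norm{v}_{L^2}\Bigr),
\]
together with the statement that $v\in H^m$ and $Lv\in H^{s-m}$ imply $v\in H^s$. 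Applied to $v=Gu$, and with the $L^2$ term absorbed via the $L^2$ bound on $G$, this yields continuity $H^{s-m}\to H^s\cap H^{m/2}_0$.

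\textbf{Construction of $K$.} Take boundary data $g=(g_j)$ with $g_j\in H^{s-j-1/2}(\del M)$. By the trace theorem there is a continuous right inverse $E\colon \prod_j H^{s-j-1/2}(\del M)\to H^s(M)$ with $\del_\nu^j Eg\vert_{\del M}=g_j$. Define
\[
	Kg\coloneqq Eg-GLEg.
\]
Since $LEg\in H^{s-m}$, the regularity of $G$ gives $Kg\in H^s$ continuously. Moreover $LKg=0$, and $Kg$ has the boundary traces $g$ because $GLEg$ has vanishing Dirichlet traces.

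\textbf{Uniqueness.} Uniqueness of both $G$ and $K$ follows by the same argument. If $v\in H^m$ solves $Lv=0$ with vanishing Dirichlet data, then $v\in H^m\cap H^{m/2}_0$ lies in $\ker L_M=\{0\}$. This also shows that $K$ is independent of the choice of extension $E$.

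\textbf{Main obstacle.} The one genuinely nontrivial input is the boundary regularity estimate for arbitrary even order $m$. I would cite Agmon--Douglis--Nirenberg, or the general elliptic boundary problem theory for strongly elliptic operators with Dirichlet data. Everything else is functional analysis on the spectral result of \cref{thm:DEV}.
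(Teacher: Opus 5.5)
Your argument is correct, but it takes a different route from the paper. The paper works directly at each Sobolev level $s$. It cites that the full boundary problem $u\mapsto \bigl(Lu,(\partial_\nu^j u\vert_{\partial M})_{j<m/2}\bigr)$ from $H^s(M)$ to $H^{s-m}(M)\times\prod_j H^{s-j-1/2}(\partial M)$ is Fredholm (H\"ormander, Thm.~20.1.2, or Taylor). It then cites that the Dirichlet restriction $H^s\cap H^{m/2}_0\to H^{s-m}$ has index zero (an exercise in Taylor), so injectivity already gives bijectivity. Finally, an abstract result (Thm.~11.14 of the cited reference) produces $G$ and $K$ as the two components of the inverse of the full boundary problem.

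You instead get invertibility at the $L^2$ level from the discreteness of the Dirichlet spectrum in \cref{thm:DEV}. Self-adjointness is not actually needed there: discrete spectrum with $0$ not an eigenvalue already puts $0$ in the resolvent set. You then bootstrap to $H^s$ with Agmon--Douglis--Nirenberg regularity, and build $K=E-GLE$ explicitly from a trace lift, with uniqueness coming from $\ker L_M=\{0\}$.

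Your route uses the hypothesis in exactly the form the paper defines it, namely through the eigenvalues $\lambda_k(M)$ of \cref{thm:DEV}. It replaces the index-zero citation by the spectral statement, and it makes the formula for $K$ and the uniqueness argument transparent. The paper's route is shorter and treats every $s$ uniformly without a bootstrap. The essential analytic input is the same in both: elliptic boundary regularity for the Dirichlet problem of a strongly elliptic operator, which satisfies the Lopatinskii--Shapiro condition.
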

\begin{proof}
	According to \cite[Thm.~20.1.2]{zbMATH05129478} or \cite[Prop.~5.11.16,~Prop.~5.11.10]{zbMATH05771951} the operator $H^s(M) \ni u \mapsto \{Lu\}\times\prod_{j=0}^{m/2-1} \{\del_\nu^j u\vert_{\partial M}\} \in H^{s-m}(M)\times \prod_{j=0}^{m/2-1} H^{s-j-\frac{1}{2}}(\partial M)$ is Fredholm. Furthermore, \cite[Exc.~5.11.3]{zbMATH05771951} implies that the map $H^s(M)\cap H^{m/2}_0(M) \ni u\mapsto Lu \in H^{s-m}(M)$ is Fredholm of index $0$, and since we assumed its injectivity, it is bijective. In particular, $L$ satisfies the assumptions of \cite[Thm.~11.14]{zbMATH05366499}, which completes the proof. We could have also just invoked \cite[Chp.~2~Eq.~(8.14)]{MR350177}.
\end{proof}

We shall extend the Runge approximation result of \cite[Prop.~A.2]{zbMATH07203082} to the present situation.
\begin{theorem}\label{thm:runge}
	Let $(M,g)$ be a connected compact Riemannian manifold with smooth boundary, let non-empty $U \subset M$ be open, so that $U$ has smooth boundary, $M\setminus \bar U$ is connected, and $\del M \setminus \bar U$ has non-empty interior. Let $\Xi \subset \bar U \cap \del M$ be closed in $\del M$ (we allow $\Xi$ to be empty) and let $\Gamma\subset \del M$ be open with $\Gamma \supset \Xi$ and $\Gamma \cap (M\setminus \bar U) \neq \emptyset$ so that $\Gamma \cup (\del M \cap \bar U)$ is open in $\del M$. Let $\Gamma_0 \subset \Gamma\setminus \bar U$ be a non-empty open subset of $\del M$. Let $L$ be a strongly elliptic formally self-adjoint differential operator with smooth coefficients on $M$ of even order $m$ so that
	\begin{enumerate}
	\item $L$ does not have Dirichlet eigenvalue $0$ (restricted to $M$).
	\item We assume the following unique continuation property: any $u \in C^\infty((M^{\mathrm{int}}\setminus \bar U)\cup \Gamma_0)$ so that $L^\ast u = 0$ in $M^{\mathrm{int}}\setminus \bar U$ and $\del_\nu^j u\vert_{\Gamma_0} = 0 \forall j\geq 0$ vanishes in $M^{\mathrm{int}}\setminus \bar U$.
	\end{enumerate}
	For any $s \geq m$, the set
	\[
		\mathcal{R} = \left\{u\vert_U \colon u\in H^s(M), L u = 0\ \text{in}\ M, (\del_\nu^j u\vert_{\del M})_{j=0}^{m/2-1} \in C_c^\infty(\Gamma)^{m/2}\right\}
	\]
	is dense in 
    \[
        \mathcal{S} = \left\{u \in H^s(U) \colon Lu = 0 \ \text{in}\ U, (\del_\nu^j u\vert_{(\del M \cap \bar U)\setminus \Xi})_{j=0}^{m/2-1} = 0\right\}
    \]
    in the $H^s(U)$ norm.
\end{theorem}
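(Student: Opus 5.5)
The plan is the standard Hahn--Banach duality argument, following \cite[Prop.~A.2]{zbMATH07203082}. Since $\mathcal{R}\subset\mathcal{S}$ (elements of $\mathcal{R}$ have boundary data supported in $\Gamma$, and on $(\del M\cap\bar U)\setminus\Xi$ these data need not vanish a priori, so this inclusion is checked first; where it fails one replaces $\mathcal{S}$ by its intersection with the closure, but in the intended application $\Gamma\cap\bar U\subset \Xi\cup(\text{interior})$ is arranged), it suffices to show that every continuous linear functional $\ell\in (H^s(U))^\ast$ vanishing on $\mathcal{R}$ vanishes on $\mathcal{S}$. We represent $\ell$ by a distribution $F$ supported in $\bar U$, $F\in H^{-s}(M)$ with $\supp F\subset \bar U$, acting as $\ell(u)=\langle F,\tilde u\rangle$ for any extension $\tilde u$ of $u$.

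Next we solve the adjoint Dirichlet problem: let $w=G_{L^\ast}F$, which is well defined since $L$ is formally self-adjoint and has no Dirichlet eigenvalue $0$, so that \cref{prop:GK} applies (after extending to negative Sobolev orders by duality/transposition). Then $L^\ast w=F$ in $M$ with vanishing Dirichlet data $\del_\nu^jw|_{\del M}=0$ for $j<m/2$. For $u=Kg$ with $g\in C_c^\infty(\Gamma)^{m/2}$, Green's formula gives $0=\ell(u)=\langle L^\ast w,u\rangle=\sum_j\langle B_j w, g_j\rangle_{\del M}$, where $B_j$ are the complementary (Neumann-type, orders $m-1-j$) boundary operators. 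Since the $g_j$ are arbitrary in $C_c^\infty(\Gamma)$, all Cauchy data of $w$ vanish on $\Gamma$, in particular on $\Gamma_0$. Since $L^\ast w=F=0$ in $M^{\mathrm{int}}\setminus\bar U$, $w$ is smooth there up to $\Gamma_0$ by elliptic regularity, and the unique continuation hypothesis 2 gives $w\equiv 0$ in $M\setminus\bar U$ ($M\setminus\bar U$ connected).

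Finally, for $u\in\mathcal{S}$ we compute $\ell(u)=\langle L^\ast w,\tilde u\rangle$ by Green's formula on $U$: as $w$ vanishes outside $\bar U$, $w$ has vanishing Cauchy data on $\del U\cap M^{\mathrm{int}}$ (from outside, hence as a distribution supported in $\bar U$), and $w$ has vanishing Dirichlet data on $\del M$, while the Neumann data of $w$ vanish on $\Gamma$. The boundary terms on $\del M\cap\bar U$ then pair the Neumann data of $w$ with the Dirichlet data of $u$; these vanish on $\Gamma\supset \Xi$ for $w$ and on $(\del M\cap\bar U)\setminus\Xi$ for $u$, and openness of $\Gamma\cup(\del M\cap\bar U)$ ensures these two sets cover without a gap at the interface. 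Hence $\ell(u)=\langle w,Lu\rangle_U=0$.

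The main obstacle is making the last Green's identity rigorous at low regularity: $F\in H^{-s}$ is supported on $\bar U$ including $\del U$, so $w$ is only in $H^{m-s}$ near $\del U$ and traces are not directly defined. I would handle this as in \cite{zbMATH07203082}: approximate $u\in\mathcal{S}$ by extension $\tilde u\in H^s(M)$ and write $\ell(u)=\langle L^\ast w,\tilde u\rangle=\langle w,L\tilde u\rangle+\text{boundary terms on }\del M$, noting $w$ is smooth near $\del M\setminus\bar U$ and vanishes outside $\bar U$, so $\langle w,L\tilde u\rangle$ only sees $L\tilde u$ on $\bar U$ where it is $0$, using that $\supp w\subset\bar U$ and $\del U$ smooth so that $H^{-s}$ distributions supported in $\bar U$ are limits of $C_c^\infty(U)$ functions, and the $\del M$ boundary terms are controlled by the covering argument above.
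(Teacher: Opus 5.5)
Your proposal is correct and is essentially the paper's argument. Your transposition solution $w=G_{L^*}F$ and its generalized Neumann data are exactly the paper's $\varphi$, defined by $\langle\varphi,h\rangle=\langle\hat F,Gh\rangle$, and $\rho$, defined by $\langle\rho,g\rangle=\langle\hat F,Kg\rangle$. The low-regularity obstacle you flag is handled in the paper by three moves: deriving $\langle\hat F,u\rangle=\langle\varphi,Lu\rangle+\langle\rho,\gamma u\rangle$ for all $u\in H^s(M)$ directly from $u=GLu+K\gamma u$; extending $\varphi$ by zero to a closed manifold $N$ so that $\tilde L^*\tilde\varphi=0$ across $\Gamma\setminus\bar U$, which lets interior regularity give vanishing of all normal derivatives on $\Gamma_0$ without boundary regularity; and taking the extension $\hat u$ supported in a neighbourhood $W$ of $\bar U$ with $W\cap\partial M\subset\Gamma\cup(\partial M\cap\bar U)$, so that $\varphi|_W$ is supported in $\bar U$. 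Your worry about whether $\mathcal R\subset\mathcal S$ is beside the point, since the Hahn--Banach argument yields $\mathcal S\subset\overline{\mathcal R}$ directly, which is what the statement asserts and what is used later.
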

The two extremal cases above arise when $\Xi = \emptyset$ or $\Xi = \bar U \cap \del M$. 
\begin{remark}
    As before, if $L$ is of order $m=2$ or has analytic coefficients in $M$, it automatically satisfies assumption 2. above for any open $\Gamma_0 \subset \del M\setminus \bar U$. 
\end{remark}
\begin{proof}[Proof of \cref{thm:runge}]
	As in the proof of \cite[Prop.~A.2]{zbMATH07203082} we take arbitrary $F \in (H^s(U))^\ast$ satisfying $\langle F,Kf\vert_U \rangle = 0$ for all $f\in C_c^\infty(\Gamma)^{m/2}$. By the Hahn-Banach theorem, proving that $\langle F, u \rangle = 0$ for all $u\in\mathcal{S}$ completes the proof. 

	We first extend $F$ to $\hat F \in (H^s(M))^\ast$ as $\langle \hat F,u\rangle \coloneqq \langle F, u\vert_U\rangle$ for all $u\in H^s(M)$. Furthermore, invoking \cref{prop:GK}, we define
	\[
		\varphi \in (H^{s-m}(M))^\ast\,,\quad \langle \varphi, h\rangle \coloneqq \langle \hat F, G h\rangle \quad \forall h\in H^{s-m}(M)\,,
	\]
	and
	\[
		\rho \in \prod_{j=0}^{m/2-1} (H^{s-j-1/2}(\partial M))^\ast = \prod_{j=0}^{m/2-1} H^{-(s-j-1/2)}(\partial M)\,, \quad \langle \rho, g \rangle \coloneqq \langle \hat F, Kg\rangle\quad\forall g\in \prod_{j=0}^{m/2-1} H^{s-j-1/2}(\partial M)\,.
	\]
	Defining the trace map $\gamma\colon H^s(M) \ni u \mapsto (\del_\nu^0 u\vert_{\partial M},\dots, \del_\nu^{m/2-1} u\vert_{\partial M})\in \prod_{j=0}^{m/2-1} H^{s-j-1/2}(\partial M)$, any $u \in H^s(M)$ can be written $u = u - K(\gamma u) + K(\gamma u) = u_0 + K(\gamma u)$ with $GL u_0 = u_0$. In particular,
	\begin{equation}\label{eq:imp}
		\langle \hat F, u\rangle = \langle \hat F, u_0 \rangle + \langle \hat F, K(\gamma u)\rangle = \langle \varphi, Lu_0\rangle + \langle \rho, \gamma u\rangle =  \langle \varphi, Lu\rangle + \langle \rho, \gamma u\rangle\,,\quad \forall u \in H^s(M)\,,
	\end{equation}
	where we used the definitions of $\varphi, \rho$ and that $L K = 0$. Now by assumption, for all $f\in C_c^\infty(\Gamma)^{m/2}$, 
	\begin{equation}\label{eq:imp2}
		0 = \langle F, Kf\vert_U\rangle = \langle \hat F, Kf\rangle = \langle \rho, f\rangle\,,\quad \text{so that}\quad \rho\vert_{\Gamma} = 0\,,
	\end{equation}
	(or more precisely, $\rho\vert_{\Gamma^{m/2}} = 0$).

	Introduce some compact manifold $N$ without boundary so that $N\supset M$ and extend $\varphi$ as $\td{\varphi}$ to $(H^{s-m}(N))^\ast = H^{m-s}(N)$ (\cite[\S~4~Prop.~3.2]{zbMATH05771951}) defined by $\langle \td{\varphi}, \psi \rangle_{N} = \langle \varphi, \psi\vert_M\rangle$ for all $\psi \in H^{s-m}(N)$. We also extend $L$ elliptically with smooth coefficients to $N$ as $\td{L}$. We will show that $\supp \varphi \subset \bar U \cup (\del M\setminus \Gamma)$: for all $\psi \in C_c^\infty(N)$ so that $\supp \psi \cap \bar U = \emptyset$ and $\supp \psi\vert_{\del M} \subset \Gamma$, from \cref{eq:imp,eq:imp2} we conclude
	\[
		\langle \td{L}^\ast \td{\varphi}, \psi \rangle_{N} = \langle \td{\varphi}, \td{L}\psi\rangle_{N} = \langle \varphi, L\psi\rangle = \langle \hat F, \psi\rangle - \langle \rho, \gamma \psi\rangle = 0\,,
	\]
	where we also used that $\supp \hat F \subset \bar U$. Therefore, elliptic regularity for $\td{L}$ (\cite[Cor.~8.3.2]{hoermander1}) implies that $\td{\varphi} \in C^\infty(N \setminus (\bar U \cup (\del M\setminus \Gamma)))$, which implies that $\td{\varphi}$ is smooth in an open neighborhood of $\Gamma_0 \subset \del M$ in $N$ and we know $\td{\varphi}$ vanishes in $N\setminus M$. We conclude that $\varphi \in C^\infty((M^{\mathrm{int}}\setminus \bar U)\cup \Gamma_0)$ satisfies $L^\ast \varphi = 0$ in $M^{\mathrm{int}}\setminus \bar U$ and $\del_\nu^j \varphi\vert_{\Gamma_0} = 0\forall j\geq 0$ so that the unique continuation assumption implies that $\varphi$ vanishes in $M^{\mathrm{int}}\setminus \bar U$. 

	Denote by $\Gamma' \supset \bar U \cap \del M$ some non-empty open subset of $\del M$ so that $\Gamma' \subset \Gamma \cup ((\del M \cap \bar U)\setminus \Xi)$. Given arbitrary $u\in\mathcal{S}$, there is an open $W\supset \bar U$ (with respect to the topology on $M$) with $W\cap \del M \subset \Gamma'$ and an extension $\hat u \in H^s(M)$ of $u$ so that $\supp \hat u \subset W$. %
	In particular, $\supp \gamma \hat u\vert_{\del M} \subset \Gamma'$. By assumption on $u$ and the fact that $\hat u$ extends $u$, we know that $\gamma \hat u \vert_{(\bar U \cap \del M)\setminus \Xi} = 0$, which combined with \cref{eq:imp2} (and the fact that $\Gamma' \subset \Gamma \cup ((\del M \cap \bar U)\setminus \Xi)$) implies $\langle \rho, \gamma \hat u\rangle = 0$.

	Thus, \cref{eq:imp} implies
	\[
		\langle F, u \rangle_U = \langle \hat F, \hat u\rangle = \langle \varphi, L\hat u\rangle = \langle \varphi\vert_W, L\hat u\vert_W\rangle_W\,.
	\]
	Recall that $\supp \varphi \subset \bar U \cup (\del M \setminus \Gamma)$ and because $W$ was chosen so that
	\[
		W\cap \del M \subset \Gamma' \subset \Gamma \cup ((\del M \cap \bar U)\setminus \Xi) = \Gamma \cup (\del M \cap \bar U)\,,
	\]
	we have $\supp \varphi\vert_W \subset \bar U$, so that we may interpret $\varphi\vert_W$ as an element of $H^{m-s}_{\bar U}(N) = \{v \in H^{m-s}(N)\colon \supp v \subset \bar U\}$, so that according to \cite[Thm.~3.29]{zbMATH01446717} there are some $\varphi_j \in C_c^\infty(U)$ so that $\varphi_j \to \varphi\vert_W$ as $j\to \infty$ in $H^{m-s}(N)$. We thus have
	\[
		\langle F, u \rangle_U = \langle \varphi\vert_W, L\hat u\vert_W\rangle_W = \lim_{j\to\infty}\langle \varphi_j, L \hat u\rangle_N = \lim_{j\to\infty}\langle \varphi_j, L u\rangle_U = 0\,,
	\]
	completing the proof.
\end{proof}

\printbibliography[title=Bibliography] 

\end{document}